\numberwithin{equation}{section}
\theoremstyle{plain}
\newtheorem{theorem}{Theorem}[section]
\newtheorem{lemma}[theorem]{Lemma}
\newtheorem{corollary}[theorem]{Corollary}
\theoremstyle{definition}
\newtheorem{definition}[theorem]{Definition}
\newtheorem{notations}[theorem]{Notations}
\newtheorem{remark}[theorem]{Remark}
\newcommand{\C}{\mathbb{C}}
\newcommand{\actson}{\curvearrowright}
\newcommand{\ot}{\otimes}
\newcommand{\N}{\mathbb{N}}
\newcommand{\T}{\mathbb{T}}
\newcommand{\cF}{\mathcal{F}}
\newcommand{\recht}{\rightarrow}
\newcommand{\eps}{\varepsilon}
\newcommand{\ovt}{\mathbin{\overline{\otimes}}}
\newcommand{\om}{\omega}
\newcommand{\cP}{\mathcal{P}}
\newcommand{\cK}{\mathcal{K}}
\newcommand{\cM}{\mathcal{M}}
\newcommand{\F}{\mathbb{F}}
\begin{document}

\title [Cartan subalgebras of amalgamated free product II$_1$ factors]
{Cartan subalgebras of amalgamated free product \\ II$_1$ factors\\ Sous-alg\`{e}bres de Cartan de produit amalgam\'{e} de facteurs de type II$_1$}

\author [Adrian Ioana]
{By Adrian Ioana\\  \\ With an appendix by Adrian Ioana and Stefaan Vaes\\ \\}

\thanks{Mathematics Department, University of California, San Diego, CA 90095-1555 (United States). aioana@ucsd.edu. Supported by  NSF  Grant DMS \#1161047, NSF Career Grant DMS \#1253402, and a Sloan Foundation Fellowship.}

\dedicatory{\large{\bf {\it Dedicated to Sorin Popa}}}

\keywords{II$_1$ factor, Cartan subalgebra, amalgamated free product \\ facteur de type II$_1$, sous-alg\`{e}bre de Cartan, produit amalgam\'{e}}
\footnote{2010 Mathematics Subject Classification: 46L10, 46L36, 37A20}

\begin{abstract} 

We study  Cartan subalgebras in the context of amalgamated free product II$_1$ factors and obtain several uniqueness and non-existence results.  We prove that if $\Gamma$ belongs to a large class of amalgamated free product groups (which contains the free product of any two infinite groups) then any II$_1$ factor $L^{\infty}(X)\rtimes\Gamma$ arising from a free ergodic probability measure preserving action of  $\Gamma$ has a unique Cartan subalgebra, up to unitary conjugacy. We also prove that if $\mathcal R=\mathcal R_1*\mathcal R_2$  is the free product of any two non-hyperfinite countable ergodic probability measure preserving equivalence relations, then the II$_1$ factor $L(\mathcal R)$ has a unique Cartan subalgebra, up to unitary conjugacy. Finally, we show that the free product $M=M_1*M_2$ of any two II$_1$ factors  does not have a Cartan subalgebra.
More generally, we prove that if $A\subset M$ is a diffuse amenable von Neumann subalgebra and $P\subset M$ denotes the algebra generated by its normalizer,  then  either $P$ is amenable, or a corner of $P$ can be unitarily conjugate into $M_1$ or $M_2$.
\vskip 0.1in
\noindent
{R\'{E}SUM\'{E}.}
Nous \'{e}tudions les sous-alg\`{e}bres de Cartan dans le contexte du produit amalgam\'{e} de facteurs de type II$_1$  et nous obtenons plusieurs r\'{e}sultats d'unicit\'{e} et de non-existence. Nous d\'{e}montrons que, si $\Gamma$ appartient \`{a} une grande classe de produits amalgam\'{e}s de groupes (qui contient le produit libre de deux groupes infinis), alors tout facteur de type II$_1$  associ\'{e} \`{a} une action libre ergodique de $\Gamma$ a une sous-alg\`{e}bre de Cartan unique, \`{a} conjugaison unitaire. Nous d\'{e}montrons aussi que, si $\mathcal R=\mathcal R_1*\mathcal R_2$  est le produit libre de toute relation d'\'{e}quivalence ergodique non-hyperfinie d\'{e}nombrable, alors le facteur de type II$_1$  $L(\mathcal R)$ a une sous-alg\`{e}bre de Cartan unique, \`{a} conjugaison unitaire. Enfin, nous d\'{e}montrons que le produit libre $M = M_1 * M_2$  de tout facteur de type II$_1$ n'a pas de sous-alg\`{e}bre de Cartan. Plus g\'{e}n\'{e}ralement, nous d\'{e}montrons que, si $A\subset M$  est une sous-alg\`{e}bre de von Neumann amenable et non-atomique et si $P\subset M$ d\'{e}signe l'alg\`{e}bre engendr\'{e}e par son normalisateur, alors soit $P$  est amenable, soit un coin de $P$ peut \^{e}tre unitairement conjugu\'{e} dans $M_1$  ou  $M_2$.

 \end{abstract}

\maketitle

\section {Introduction} 
A {\it Cartan subalgebra} of a II$_1$ factor $M$ is a maximal abelian von Neumann subalgebra $A$ whose normalizer generates  $M$.
The study of Cartan subalgebras
plays a central role in the classification of II$_1$ factors arising from probability measure preserving (pmp)  actions. If $\Gamma\curvearrowright (X,\mu)$ is a free ergodic pmp action of a countable group $\Gamma$, then the {\it group measure space} II$_1$ factor $L^{\infty}(X)\rtimes\Gamma$ \cite{MvN36} contains $L^{\infty}(X)$  as a Cartan subalgebra. In order to classify $L^{\infty}(X)\rtimes\Gamma$ in terms of the action $\Gamma\curvearrowright X$, one would ideally aim to show that $L^{\infty}(X)$ is its unique Cartan subalgebra (up to conjugation by an automorphism). Proving that certain classes of group measure space  II$_1$ factors  have a unique Cartan subalgebra  is useful because it  reduces their classification, up to isomorphism, to the classification of the corresponding actions, up to orbit equivalence.  Indeed, following \cite{Si55, FM77}, two free ergodic pmp actions $\Gamma \curvearrowright X$ and $\Lambda\curvearrowright Y$ are {\it orbit equivalent} if and only if there exists an isomorphism $\theta:L^{\infty}(X)\rtimes\Gamma\rightarrow L^{\infty}(Y)\rtimes\Lambda$ such that $\theta(L^{\infty}(X))=L^{\infty}(Y)$.

In the case of II$_1$ factors  coming from actions of amenable groups, both the classification and uniqueness of Cartan problems have been completely settled since the early 1980's.
A celebrated theorem of A. Connes \cite{Co76} asserts that all II$_1$ factors arising from free ergodic pmp actions of infinite amenable groups are isomorphic to the hyperfinite II$_1$ factor, $R$. Additionally, \cite{CFW81} shows that any two Cartan subalgebras of $R$ are conjugate by an automorphism of $R$. 

For a long time, however, the questions of classification and 
uniqueness of Cartan subalgebras for II$_1$ factors associated with actions of non-amenable groups, were considered intractable.
During the last decade, S. Popa's {\it deformation/rigidity} theory has led to spectacular progress in the classification of  group measure space II$_1$ factors (see the surveys \cite{Po07,Va10a,Io12}). This was in part made possible by several results providing classes of group measure space II$_1$ factors  that have a unique Cartan subalgebra, up to unitary conjugacy. The first such classes were obtained by N. Ozawa and S. Popa in their breakthrough work \cite{OP07,OP08}. They showed that II$_1$ factors $L^{\infty}(X)\rtimes\Gamma$ associated with free ergodic {\it profinite} actions of free groups $\Gamma=\mathbb F_n$ and their direct products $\Gamma=\mathbb F_{n_1}\times\mathbb F_{n_2}\times...\times\mathbb F_{n_k}$ have a unique Cartan subalgebra, up to unitary conjugacy. Recently, this result has been extended to profinite actions of hyperbolic groups \cite{CS11} and of direct products of hyperbolic groups \cite{CSU11}. The proofs of these results rely both on the fact that free groups (and, more generally, 
hyperbolic groups, see \cite{Oz07},\cite{Oz10}) are {\it weakly amenable}  and that the actions are profinite.

In a very recent breakthrough, S. Popa and S. Vaes succeeded in removing the profiniteness assumption on the action and obtained wide-ranging unique Cartan subalgebra results. They proved that
if $\Gamma$ is either a weakly amenable group with $\beta_1^{(2)}(\Gamma)>0$ \cite{PV11} or a hyperbolic group \cite{PV12} (or 
a direct product of groups in one of these classes), then II$_1$ factors $L^{\infty}(X)\rtimes\Gamma$ arising from {\it arbitrary} free ergodic pmp actions of $\Gamma$ have a unique Cartan subalgebra, up to unitary conjugacy. 
Following \cite[Definition 1.4]{PV11}, such groups $\Gamma$, whose every  action gives rise to a II$_1$ factor with a unique Cartan subalgebra,  are called $\mathcal C$-{\it rigid} (Cartan rigid).

 In this paper we study Cartan subalgebras of tracial amalgamated free product von Neumann algebras $M=M_1*_{B}M_2$ (see \cite{Po93,VDN92} for the definition). 
 Our methods are best suited to the case when $M=L^{\infty}(X)\rtimes\Gamma$ comes from an action of an amalgamated free product group $\Gamma=\Gamma_1*_{\Lambda}\Gamma_2$. In this context, by imposing that the inclusion $\Lambda<\Gamma$ satisfies a weak malnormality condition \cite{PV09}, we prove that $L^{\infty}(X)$ is the unique Cartan subalgebra of $M$, up to unitary conjugacy, for {\it any} free ergodic pmp action $\Gamma\curvearrowright X$.
   
\begin{theorem}\label{main} Let $\Gamma=\Gamma_1*_{\Lambda}\Gamma_2$ be an amalgamated free product group such that $[\Gamma_1:\Lambda]\geqslant 2$ and  $[\Gamma_2:\Lambda]\geqslant 3$.  Assume that there exist $g_1,g_2,...,g_n\in\Gamma$ such that $\cap_{i=1}^ng_i\Lambda g_i^{-1}$ is finite. 
 Let $\Gamma\curvearrowright (X,\mu)$ be any free ergodic pmp action of $\Gamma$ on a standard probability space $(X,\mu)$. 
 
 Then the II$_1$ factor $M=L^{\infty}(X)\rtimes\Gamma$ has a unique Cartan subalgebra, up to unitary conjugacy.

Moreover, the same holds if $\Gamma$ is replaced with a direct product of finitely many such groups $\Gamma$.
\end{theorem}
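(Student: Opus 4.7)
The plan is to follow the Popa deformation/rigidity paradigm for unique Cartan subalgebra results. Write $M_i = L^\infty(X)\rtimes\Gamma_i$ and $B = L^\infty(X)\rtimes\Lambda$, so that $M = M_1 *_B M_2$ is an amalgamated free product of tracial von Neumann algebras. Let $A \subset M$ be an arbitrary Cartan subalgebra. Our goal is to prove $A \prec_M L^\infty(X)$ in Popa's intertwining-by-bimodules sense, since for a pair of Cartan subalgebras this relation is automatically upgraded to unitary conjugacy by a standard argument.

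The key technical step I would establish is a dichotomy for the amalgamated free product $M$: for any diffuse amenable subalgebra $A_0 \subset pMp$ with normalizer algebra $Q = \mathcal N_{pMp}(A_0)''$, either $A_0 \prec_M B$, or $Q$ is amenable relative to $M_1$ or $M_2$ inside $M$. To prove this, I would invoke the $s$-malleable deformation $(\widetilde M,(\alpha_t))$ of $M$ attached to the amalgamated free product structure, in the spirit of Ioana--Peterson--Popa. Amenability of $A_0$ yields the weak compactness property of Ozawa--Popa for the action of $Q$ on $A_0$; applying $\alpha_t$ to this weakly compact set and running a transversality / spectral gap analysis inside $\widetilde M$ produces the required dichotomy, according to whether $\alpha_t$ converges uniformly on $A_0$ (first alternative) or not (second alternative).

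Applying the dichotomy with $A_0 = A$ and $Q = \mathcal N_M(A)'' = M$, the second alternative would make $M$ amenable relative to some $M_i$ inside $M$. The hypotheses $[\Gamma_1:\Lambda]\geq 2$ and $[\Gamma_2:\Lambda]\geq 3$ together with the finiteness of $\bigcap_i g_i\Lambda g_i^{-1}$ produce, via a ping-pong argument, a non-abelian free subgroup of $\Gamma$ acting on $\Gamma/\Gamma_i$ in a way that is incompatible with such relative amenability; hence the first alternative holds, and $A \prec_M B$. To upgrade this to $A \prec_M L^\infty(X)$, I would then exploit the weak malnormality of $\Lambda$: since $A$ is regular in $M$, the intertwining $A \prec_M L^\infty(X)\rtimes \Lambda$ propagates under conjugation by $g_1,\ldots,g_n$ to $A \prec_M L^\infty(X)\rtimes (g_i\Lambda g_i^{-1})$ for every $i$, and a standard combination argument yields $A \prec_M L^\infty(X)\rtimes \bigcap_i g_i\Lambda g_i^{-1}$; finiteness of the intersection makes this last algebra a finite-index extension of $L^\infty(X)$, so in fact $A \prec_M L^\infty(X)$, as desired.

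The main obstacle I expect is the dichotomy itself, and specifically the extraction of information about the entire normalizer algebra $Q$, rather than merely about $A_0$, from the failure of uniform $\alpha_t$-convergence. Controlling the normalizer requires a careful $B$-bimodular spectral gap argument in $\widetilde M$, combining weak compactness with the free product structure of the deformation. The \emph{moreover} clause for direct products of groups of the above form then follows by combining the single-factor result with Ozawa--Popa-type techniques for transferring normalizer information between tensor factors of II$_1$ factors.
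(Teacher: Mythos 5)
Your overall architecture (write $M=M_1*_BM_2$, prove a normalizer dichotomy via the malleable deformation, rule out the amenability branch, then use regularity and weak malnormality to pass from $A\prec_M B$ to $A\prec_M L^{\infty}(X)$ and invoke Popa's conjugacy criterion for Cartan subalgebras) matches the paper in outline, but the central step is neither proved nor provable by the route you sketch. You assert that amenability of $A_0$ yields Ozawa--Popa weak compactness of the action of $\mathcal N_{pMp}(A_0)$ on $A_0$; it does not. Weak compactness is extracted from weak amenability (CMAP) of the ambient group or algebra, which is exactly what is \emph{not} assumed here: the whole point of the theorem is to cover non weakly amenable groups such as $SL_3(\mathbb Z)*\Sigma$. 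For this reason the paper runs no weak compactness or transversality argument inside $\tilde M$. Instead it rewrites $\tilde M=M*_B(B\bar{\otimes}L(\mathbb F_2))$ as a crossed product $N\rtimes\mathbb F_2$ and feeds $\theta_t(A)$ (which is amenable relative to $N$) into the Popa--Vaes dichotomy for crossed products by free groups (Theorem \ref{pv}); the genuinely new work is then (i) Theorem \ref{inter}, where a random-walk estimate shows that $\theta_t(A)\prec_{\tilde M}N$ forces $A\prec_M B$ or $\mathcal N_M(A)''\prec_M M_i$, and (ii) Theorems \ref{amena}, \ref{relamen} and \ref{relamen2}, which analyze the case that $\theta_t$ of the normalizer is amenable relative to $N$. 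Your proposal supplies no substitute for either ingredient.

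Moreover, the dichotomy as you state it -- for an arbitrary diffuse amenable $A_0$ over a general base $B$, with no hypothesis on the normalizer -- is stronger than what the paper establishes: Theorem \ref{general} requires the spectral gap condition $P'\cap M^{\omega}=\mathbb C1$, and the author explicitly says he could not remove it for general $B$. In your application ($A$ Cartan, $Q=M$) this condition amounts to $M$ not having property $\Gamma$, which fails whenever the action is not strongly ergodic; the paper circumvents this precisely by applying the machinery not to $M$ but to the comultiplication $\Delta(M)\subset M\bar{\otimes}L(\Gamma/\Sigma)$, where the needed relative commutant condition is verified for $L(\Gamma')$ via Corollary \ref{inner}, and the resulting co-amenability of $\Lambda'$ in $\Gamma'$ is then excluded by a coset argument. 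Nothing in your sketch addresses non strongly ergodic actions. Two further points: the passage from $A\prec_M L^{\infty}(X)\rtimes\Lambda$ to $A\prec_M L^{\infty}(X)\rtimes\bigcap_i g_i\Lambda g_i^{-1}$ is not a formal ``propagation under conjugation'' (conjugating by $u_{g_i}$ intertwines $u_{g_i}Au_{g_i}^*$, not $A$); it is the content of \cite[Proposition 8]{HPV10}, which uses regularity of $A$ and factoriality of $M$. And the moreover clause for direct products is obtained in the paper by applying the technical result with \emph{non-commutative} coefficients $D=L^{\infty}(X)\rtimes G_i$ (Theorem \ref{maintech}) and then combining the resulting intertwinings; a version proved only for the coefficient algebra $L^{\infty}(X)$, plus an appeal to ``tensor-factor transfer techniques,'' would not suffice.
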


This result provides  the first examples of $\mathcal C$-rigid groups $\Gamma$ that are not weakly amenable (take e.g. $\Gamma=SL_3(\mathbb Z)*\Sigma$, where $\Sigma$ is any non-trivial countable group).

Theorem \ref{main} generalizes and strengthens the main result of \cite{PV09}. Indeed, in the above setting, assume further that $\Lambda$ is amenable and that $\Gamma_2$ contains  either a non-amenable subgroup with the relative property (T) or two non-amenable commuting subgroups.
\cite[Theorem 1.1]{PV09} then asserts that $M$ has a unique {\it group measure space} Cartan subalgebra.

Theorem \ref{main} provides strong supporting evidence for a general conjecture which predicts 
that any group $\Gamma$ with  positive first $\ell^2$-Betti number, $\beta_1^{(2)}(\Gamma)>0$, is $\mathcal C$-rigid.  Thus, it implies that the free product $\Gamma=\Gamma_1*\Gamma_2$ of any two countable groups  satisfying $|\Gamma_1|\geqslant 2$ and $|\Gamma_2|\geqslant 3$, is $\mathcal C$-rigid.

Recently, there have been several results offering positive evidence for this conjecture. Firstly, it was shown in \cite{PV09} that if $\Gamma=\Gamma_1*\Gamma_2$, where $\Gamma_1$ is a property (T) group and $\Gamma_2$ is a non-trivial group, then any II$_1$ factor $L^{\infty}(X)\rtimes\Gamma$ associated with a free ergodic pmp action of $\Gamma$ has a unique group measure space Cartan subalgebra, up to unitary conjugacy (see also\cite{FV10,HPV10}). Secondly, the same has been proven in \cite{CP10} under the assumption that $\beta_1^{(2)}(\Gamma)>0$ and $\Gamma$ admits a non-amenable subgroup with the relative property (T). For a common generalization of the last two results, see \cite{Va10b}. Thirdly, we proved  that if $\beta_1^{(2)}(\Gamma)>0$, then $L^{\infty}(X)\rtimes\Gamma$ has a unique group measure space Cartan subalgebra whenever the action $\Gamma\curvearrowright (X,\mu)$ is either rigid \cite{Io11a} or compact \cite{Io11b}.
As already mentioned above, the conjecture has been very recently established in full generality for weakly amenable groups $\Gamma$ with $\beta_1^{(2)}(\Gamma)>0$ in \cite{PV11}.

As a consequence of Theorem \ref{main} we obtain a new family of W$^*$-superrigid actions.
Recall that a free ergodic pmp action $\Gamma\curvearrowright (X,\mu)$ is called W$^*$-{\it superrigid} if whenever $L^{\infty}(X)\rtimes\Gamma\cong L^{\infty}(Y)\rtimes\Lambda$, for some free ergodic pmp action $\Lambda\curvearrowright (Y,\nu)$, the groups $\Gamma$ and $\Lambda$ are isomorphic, and their actions are conjugate. The existence of virtually W$^*$-superrigid actions was proven in \cite{Pe09}. The first concrete families of W$^*$-superrigid actions were found in \cite{PV09} where it was shown for instance that Bernoulli actions of many amalgamated free product groups have this property. 
In \cite{Io10} we proved that Bernoulli actions of  icc property (T) groups are W$^*$-superrigid.
By combining Theorem \ref{main} with the cocycle superrigidity theorem \cite{Po06a}  we derive the following. 

\begin{corollary}\label{super} Let $\Gamma=\Gamma_1*_{\Lambda}\Gamma_{2}$ and $\Gamma'=\Gamma_1'*_{\Lambda'}\Gamma_2'$ be two amalgamated free product groups satisfying the hypothesis of Theorem \ref{main}. Denote $G=\Gamma\times\Gamma'$.

Then any free action of $G$ which is a quotient of the Bernoulli action $G\curvearrowright [0,1]^{G}$ is W$^*$-superrigid.
\end{corollary}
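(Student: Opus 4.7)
The plan is to combine Theorem \ref{main} with Popa's cocycle superrigidity theorem for Bernoulli actions \cite{Po06a} via the standard ``untwisting'' procedure that upgrades an orbit equivalence to a conjugacy. Suppose $G\curvearrowright (X,\mu)$ is a free pmp action which is a quotient of $G\curvearrowright [0,1]^{G}$, and assume we are given an isomorphism $\theta: L^{\infty}(X)\rtimes G \cong L^{\infty}(Y)\rtimes \Lambda$ with $\Lambda\curvearrowright (Y,\nu)$ a free ergodic pmp action. Setting $M=L^{\infty}(Y)\rtimes\Lambda$, the two subalgebras $\theta(L^{\infty}(X))$ and $L^{\infty}(Y)$ are both Cartan in $M$. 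Since each of $\Gamma$ and $\Gamma'$ satisfies the hypothesis of Theorem \ref{main}, so does the direct product $G=\Gamma\times\Gamma'$ by the ``moreover'' clause of that theorem, so $M$ has a unique Cartan subalgebra up to unitary conjugacy. This produces a unitary $u\in M$ with $u\,\theta(L^{\infty}(X))\,u^{*}=L^{\infty}(Y)$; by the Singer--Feldman--Moore correspondence \cite{Si55,FM77}, the actions $G\curvearrowright X$ and $\Lambda\curvearrowright Y$ are orbit equivalent.

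Next I would apply cocycle superrigidity. The groups $\Gamma$ and $\Gamma'$ are both non-amenable: under the hypothesis $[\Gamma_{1}:\Lambda]\geqslant 2$ and $[\Gamma_{2}:\Lambda]\geqslant 3$, the amalgamated free product $\Gamma_{1}*_{\Lambda}\Gamma_{2}$ contains a non-abelian free subgroup by Bass--Serre theory. Hence $G=\Gamma\times\Gamma'$ is a product of two infinite non-amenable groups, which is precisely the setting in which Popa's cocycle superrigidity theorem \cite{Po06a} asserts that every cocycle from the Bernoulli action (or any of its weakly mixing quotients, such as $G\curvearrowright X$, which is mixing since Bernoulli actions are) into a countable target group is cohomologous to a group homomorphism. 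Applied to the Zimmer cocycle $w:G\times X\to \Lambda$ associated with the orbit equivalence above, this yields a homomorphism $\delta:G\to\Lambda$ to which $w$ is cohomologous.

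The conclusion then follows from the by-now-standard untwisting argument (as carried out e.g.\ in \cite{PV09,Io10}): freeness and ergodicity of both actions, together with the cohomology of $w$ to a homomorphism, force $\delta$ to be a group isomorphism and the two actions to be conjugate via $\delta$. The main conceptual obstacle is the first step, verifying that Theorem \ref{main} applies to the product $G=\Gamma\times\Gamma'$ of two (possibly distinct) amalgamated free product groups rather than a single such $\Gamma$; this is precisely what the ``moreover'' clause of Theorem \ref{main} is designed to handle, after which the remaining steps are essentially routine given the strength of the inputs being combined.
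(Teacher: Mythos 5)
Your route is the one the paper takes: Theorem \ref{main} (through its direct-product clause) gives uniqueness of the Cartan subalgebra, hence any W$^*$-isomorphism yields an orbit equivalence, and Popa's cocycle superrigidity \cite{Po06a} for quotients of Bernoulli actions of $G=\Gamma\times\Gamma'$ (one factor non-amenable, the other infinite -- your free-subgroup observation is correct) untwists the Zimmer cocycle; the paper merely packages this second step as Theorem \ref{generalsuper} on Gaussian actions, checking s-malleability via \cite{Fu06} and the spectral gap/weak mixing conditions for the regular representation. Up to that point your argument agrees with the paper's.

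The gap is in your final step. It is not true that ``freeness and ergodicity of both actions, together with the cohomology of $w$ to a homomorphism, force $\delta$ to be a group isomorphism and the two actions to be conjugate.'' What untwisting gives in general is only a \emph{virtual} conjugacy: $\delta$ may a priori have finite kernel and finite-index image. Mixing of the quotient of the Bernoulli action (aperiodicity) rules out a proper finite-index image, but injectivity of $\delta$ requires knowing that $G$ has no non-trivial finite normal subgroup. This is exactly the point the paper addresses: in the proof of Theorem \ref{generalsuper} it first invokes Corollary \ref{inner} to see that $\Gamma$ and $\Gamma'$ (hence $G$) have no finite normal subgroups, and for that it uses the sharpened hypothesis $\cap_{i}g_i\Lambda g_i^{-1}=\{e\}$ rather than merely finite. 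The condition is not decorative: under the hypothesis of Theorem \ref{main} alone one can have $\Gamma=(F\times\mathbb Z)*_{F}(F\times\mathbb Z)=F\times\mathbb F_2$ with $F$ a non-trivial finite group, so $G$ has a finite normal subgroup; choosing a fundamental domain for the free $F$-action identifies $\mathcal R(G\curvearrowright X)$ with the orbit relation of the free ergodic action of $K\times(\mathbb F_2\times\Gamma')$ on $K\times (X/F)$ for any group $K$ with $|K|=|F|$, which for $K\not\cong F$ is not conjugate to the Bernoulli action although it produces the same group measure space algebra. So with only ``freeness and ergodicity'' the untwisting you describe breaks down; you must add the verification that $G$ has no non-trivial finite normal subgroups (as in Corollary \ref{inner}, i.e.\ working with the trivial-intersection form of the weak malnormality condition), and only then does the standard argument of \cite{PV09,Io10} upgrade the orbit equivalence to a conjugacy.
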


Next, we return to the study of Cartan subalgebras  of general amalgamated free product II$_1$ factors $M=M_1*_{B}M_2$. Assuming that $B$ is amenable and $M$ satisfies some rather mild conditions, we prove that any Cartan subalgebra $A\subset M$ has a corner which embeds into $B$, in the sense  of S. Popa's {\it intertwining-by-bimodules} \cite{Po03}  (see Theorem \ref{corner}). This condition, written in symbols as $A\prec_{M}B$, roughly means that $A$ can be conjugated into $B$ via a unitary element from $M$.

\begin{theorem}\label{cartan} Let $(M_1,\tau_1)$ and $(M_2,\tau_2)$ be two tracial von Neumann algebras with a common amenable von Neumann subalgebra $B$ such that ${\tau_1}_{|B}={\tau_2}_{|B}$. Assume that $M=M_1*_{B}M_2$ is a factor and that  either:

\begin{enumerate} 
\item $M_1$ and $M_2$ have no amenable direct summands, or
\item  $M$ does not have property $\Gamma$ and $pM_1p\not=pBp\not=pM_2p$, for any non-zero projection $p\in B$.
\end{enumerate}
If $A\subset M$ is a Cartan subalgebra, then $A\prec_{M}B$. 

\end{theorem}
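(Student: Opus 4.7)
Since $A$ is a Cartan subalgebra of the II$_1$ factor $M$, it is abelian and diffuse (hence amenable) and $\mathcal N_M(A)''=M$. The argument fits squarely in Popa's deformation/rigidity framework. I would embed $M\subset\widetilde M$ and equip $\widetilde M$ with a trace-preserving $s$-malleable deformation $(\theta_t)_{t\in\mathbb R}$ of $M$ that fixes $B$ pointwise, as in the free-product deformation of Ioana--Peterson--Popa adapted to the amalgamated setting. Because $A$ is amenable with normalizer generating $M$, the Ozawa--Popa construction yields a state on an appropriate $\widetilde M$-bimodule that is invariant under $\mathrm{Ad}(\mathcal N_M(A))$ and extends the trace on $A$; in other words, $A$ is weakly compact in $M$.

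Combining weak compactness of $A$ with the transversality of $\theta_t$, a now-standard Popa-style dichotomy yields: either (i) $\theta_t\to\mathrm{id}$ uniformly on $(A)_1$ as $t\to 0$, or (ii) $M=\mathcal N_M(A)''$ is amenable relative to $M_i$ inside $M$, for some $i\in\{1,2\}$. Since $\theta_t$ fixes $B$ pointwise, Popa's intertwining-by-bimodules criterion applied in case (i) directly yields $A\prec_M B$, which is the desired conclusion.

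It remains to rule out alternative (ii) under either hypothesis (1) or (2). Under (1), the relative amenability of $M$ over $M_i$ together with the amenability of $B$ propagates, via a Vaes-style argument exploiting the relative Haagerup-type property of $B\subset M_i$ inside the amalgamated free product, to produce an amenable direct summand of $M_i$---contradicting the no-amenable-summand assumption. Under (2), the hypothesis $pM_ip\neq pBp$ for every nonzero projection $p\in B$ provides the ``coarseness'' of $M_i$ over $B$ needed to convert relative amenability of $M$ over $M_i$ into a nontrivial central sequence in $M$, contradicting the non-$\Gamma$ assumption. The main obstacle is precisely this last step: once weak compactness is in hand, running the deformation argument is by now fairly standard, but translating ``$M$ is amenable relative to $M_i$'' into a concrete contradiction with hypotheses (1) or (2) requires a careful analysis in the spirit of Vaes of the structure of relative amenability and of $M$--$M_i$-bimodules in the amalgamated free product $M_1*_B M_2$.
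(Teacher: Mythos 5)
There is a genuine gap at the very first step of your plan: weak compactness of $A$ in $M$ does \emph{not} follow from $A$ being amenable (or abelian and diffuse) with $\mathcal N_M(A)''=M$. In \cite{OP07} weak compactness of a Cartan subalgebra is extracted from the complete metric approximation property (weak amenability) of the acting group, or from profiniteness of the action; neither is available here, where $M_1,M_2$ are arbitrary tracial algebras and there is no group action in sight. If weak compactness were automatic for regular amenable subalgebras, the weak amenability hypotheses in \cite{OP07,CS11,PV11} would be superfluous. The paper's proof is built precisely to avoid weak compactness: one writes $\tilde M=M*_B(B\bar{\otimes}L(\mathbb F_2))=N\rtimes\mathbb F_2$ with $N=\{u_gMu_g^*\,|\,g\in\mathbb F_2\}''$, notes that $\theta_t(A)$ is amenable relative to $N$ simply because $A$ is amenable, and applies the Popa--Vaes dichotomy for crossed products by free groups (Theorem \ref{pv}), which requires no weak compactness. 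The outcome is: either $\theta_t(A)\prec_{\tilde M}N$ for some $t$, or $\theta_t(M)$ is amenable relative to $N$ inside $\tilde M$ for all $t$.

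Even granting a dichotomy of the shape you state, both branches are incomplete. In your branch (i), uniform convergence of $\theta_t$ on $(A)_1$ does not ``directly'' give $A\prec_M B$: the relevant rigidity statement (Theorem \ref{ipp}, from \cite{IPP05}) concludes that either $A\prec_M B$ or $\mathcal N_M(A)''=M\prec_M M_i$, and the second alternative must be excluded separately; the paper does this using $pM_ip\neq pBp$ for projections $p\in B$ (automatic under hypothesis (1) because $B$ is amenable). Moreover, passing from $\theta_t(A)\prec_{\tilde M}N$ to this situation is itself nontrivial: it is Theorem \ref{inter}, proved via the random-walk estimates of Section 3, not a formal consequence of $\theta_t$ fixing $B$. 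In your branch (ii), the deformation argument naturally yields relative amenability over $N$ inside $\tilde M$, not over $M_i$ inside $M$, and converting this into a contradiction is where most of the work lies: the paper proves Theorem \ref{amena} (a positive-angle argument between $u_1^t\mathcal H{u_1^t}^*$ and $u_2^t\mathcal H{u_2^t}^*$, producing a direct summand of some $M_i$ amenable relative to $B$) to contradict (1), and Theorem \ref{relamen} (which uses $M'\cap M^{\omega}=\mathbb C1$ together with a decomposition $\mathcal H_0\oplus\mathcal H_1\oplus\mathcal H_2$ of $L^2(\langle\tilde M,e_N\rangle)$ and Lemmas \ref{calc}--\ref{gap}) to contradict (2). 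Your suggestion that relative amenability of $M$ over $M_i$ produces a nontrivial central sequence has no justification and is not how the non-$\Gamma$ hypothesis enters.
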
 
Recall that a {\it tracial von Neumann algebra} $(M,\tau)$ is a von Neumann algebra $M$ endowed with a normal faithful tracial state $\tau$. As usual,  we denote by  $\|x\|_2=\tau(x^*x)^{\frac{1}{2}}$  the induced Hilbert norm on $M$. Recall also that a II$_1$ factor $M$ has {\it property $\Gamma$}  if there exists a sequence $u_n\in M$ of unitary elements such that $\tau(u_n)=0$, for all $n$, and $\|u_nx-xu_n\|_2\rightarrow 0$, for every $x\in M$ \cite{MvN43}.

Theorem \ref{cartan} has two interesting  applications. 

Firstly, it yields a classification result for von Neumann algebras $L(\mathcal R)$ \cite{FM77} arising from 
the {\it free product}  $\mathcal R=\mathcal R_1*\mathcal R_2$  of two  equivalence relations (see \cite{Ga99} for the definition). For instance, it implies that if $\mathcal R_1$, $\mathcal R_2$ are ergodic and non-hyperfinite, then any countable pmp equivalence relation $\mathcal S$ such that $L(\mathcal S)\cong L(\mathcal R)$ is necessarily isomorphic to $\mathcal R$.
 More generally, we have

\begin{corollary}\label{equirel} Let $\mathcal R$ be a countable ergodic pmp equivalence relation on a standard probability space $(X,\mu)$. Assume that $\mathcal R=\mathcal R_1*\mathcal R_2$, for two equivalence relations $\mathcal R_1$ and $\mathcal R_2$ on $(X,\mu)$. Additionally, suppose that either:
\begin{enumerate}
\item ${\mathcal R_1}_{|Y}$ and ${\mathcal R_2}_{|Y}$ are not hyperfinite, for any Borel set $Y\subset X$ with $\mu(Y)>0$, or
\item $\mathcal R$ is strongly ergodic, and ${\mathcal R_1}$ and ${\mathcal R_2}$ have infinite orbits, almost everywhere.
\end{enumerate}

Then $L^{\infty}(X)$ is the unique Cartan subalgebra of $L(\mathcal R)$, up to unitary conjugacy.

Thus, if $L(\mathcal R)\cong L(\mathcal S)$, for any ergodic countable pmp equivalence relation $\mathcal S$, then $\mathcal R\cong\mathcal S$.
 
\end{corollary}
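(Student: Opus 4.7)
The plan is to realize $L(\mathcal R)$ as an amalgamated free product over its natural Cartan subalgebra, apply Theorem \ref{cartan} to conclude $A \prec_M L^\infty(X)$ for every Cartan $A$, and then close out with the standard Feldman-Moore reconstruction for the second assertion.

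First, the assumption $\mathcal R = \mathcal R_1 * \mathcal R_2$ gives the canonical identification $L(\mathcal R) = L(\mathcal R_1) *_{L^\infty(X)} L(\mathcal R_2)$. Setting $M = L(\mathcal R)$, $M_i = L(\mathcal R_i)$ and $B = L^\infty(X)$, the algebra $B$ is amenable (being abelian) and $M$ is a factor by ergodicity of $\mathcal R$, so Theorem \ref{cartan} will apply once one of its two hypotheses is checked. In case (1), any central projection of $M_i$ must lie in the Cartan subalgebra $L^\infty(X) \subset M_i$ and therefore corresponds to an $\mathcal R_i$-saturated Borel set $Y$; the corresponding corner is $L(\mathcal R_i|_Y)$, which by Connes-Feldman-Weiss \cite{CFW81} is amenable precisely when $\mathcal R_i|_Y$ is hyperfinite. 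Hypothesis (1) of the corollary thus rules out amenable direct summands of $M_1$ and $M_2$, verifying condition (1) of Theorem \ref{cartan}.

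In case (2), the infinite-orbit assumption on $\mathcal R_i$ gives $L(\mathcal R_i|_Y) \neq L^\infty(Y)$ for every positive-measure Borel set $Y$, so $pM_ip \neq pBp$ for every nonzero projection $p \in B$. It remains to show that strong ergodicity of $\mathcal R$ prevents $M$ from having property $\Gamma$: any asymptotically central sequence $u_n \in M$ asymptotically commutes with $B$, lies therefore in $B' \cap M^\omega$, must be absorbed into $B^\omega$ (using that $B \subset M$ is a MASA), and then must be scalar because it is also invariant under the normalizer of $B$ in $M$, which acts on $B$ via the full group of $\mathcal R$ and is strongly ergodic by hypothesis. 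This verifies condition (2) of Theorem \ref{cartan}.

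Applying Theorem \ref{cartan} yields $A \prec_M B$ for every Cartan subalgebra $A \subset M$. Since $B$ is itself Cartan in $M$, a standard consequence of Popa's intertwining-by-bimodules (see \cite{Po03}) upgrades this intertwining to unitary conjugacy: there exists $u \in \mathcal U(M)$ with $uAu^* = B$, proving the first conclusion. For the second assertion, any $*$-isomorphism $\theta: L(\mathcal R) \to L(\mathcal S)$ pulls the Cartan subalgebra $L^\infty(Y) \subset L(\mathcal S)$ back to a Cartan of $L(\mathcal R)$, which by the uniqueness just proved is unitarily conjugate to $L^\infty(X)$. Adjusting $\theta$ by this unitary yields an isomorphism carrying $L^\infty(X)$ onto $L^\infty(Y)$, and by \cite{FM77, Si55} this is equivalent to an orbit equivalence between $\mathcal R$ and $\mathcal S$, so $\mathcal R \cong \mathcal S$.

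The main obstacle I anticipate is the property $\Gamma$ verification in case (2): although the ingredients (Cartan MASA, ultrapower relative commutants, strong ergodicity, normalizer generation) are individually standard, combining them cleanly to conclude $M' \cap M^\omega = \mathbb C$ is the step that deserves the most care in writing out.
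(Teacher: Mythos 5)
Your overall strategy is the paper's: write $L(\mathcal R)=L(\mathcal R_1)*_{L^{\infty}(X)}L(\mathcal R_2)$, verify the hypotheses of Theorem \ref{cartan} (via \cite{CFW81} in case (1)), conclude $A\prec_M L^{\infty}(X)$, upgrade to unitary conjugacy by \cite{Po01}, and finish with \cite{FM77,Si55}. However, there is a genuine gap exactly at the step you flagged: in case (2) you claim that an element of $M'\cap M^{\omega}$, being in $B'\cap M^{\omega}$, ``must be absorbed into $B^{\omega}$ (using that $B\subset M$ is a MASA).'' That implication is false for Cartan subalgebras in general: for the hyperfinite II$_1$ factor $R=\overline{\bigotimes}\, M_2(\mathbb C)$ with its diagonal Cartan $B=\overline{\bigotimes}\, D_2$, the sequence of off-diagonal matrix units in the $n$-th tensor factor is central in $R$ and asymptotically commutes with $B$, yet has conditional expectation $0$ onto $B$, so $B'\cap R^{\omega}\not\subset B^{\omega}$ and even $R'\cap R^{\omega}\not\subset B^{\omega}$. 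So being a MASA (even Cartan, even with the central sequence commuting with the full normalizer) does not give the absorption you need, and without it your invocation of strong ergodicity has nothing to act on.

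The paper closes this gap using the free product position rather than maximal abelianness: Lemma \ref{bar} shows $M'\cap M^{\omega}\subset B^{\omega}$ for $M=M_1*_BM_2$ provided there exist unitaries $u\in M_1$ and $v,w\in M_2$ with $E_B(u)=E_B(v)=E_B(w)=E_B(w^*v)=0$. These unitaries are produced from the full groups: by \cite[Lemma 2.6]{IKT08}, the infinite-orbit hypothesis yields $\theta_1\in[\mathcal R_1]$ and $\theta_2,\theta_3\in[\mathcal R_2]$ acting essentially freely with $\theta_2(x)\neq\theta_3(x)$ a.e., and $u=u_{\theta_1},v=u_{\theta_2},w=u_{\theta_3}$ satisfy the required orthogonality conditions. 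Only after this does strong ergodicity enter, giving $M'\cap B^{\omega}=\mathbb C1$ and hence absence of property $\Gamma$. To repair your argument you should replace the ``MASA absorption'' step by this Lemma \ref{bar} argument (or an equivalent use of the free independence of $M_1$ and $M_2$ over $B$); the rest of your proof, including case (1) and the two concluding reductions, is correct and matches the paper.
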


Here, $\mathcal R_{|Y}:=\mathcal R\cap (Y\times Y)$ denotes the restriction of $\mathcal R$ to  $Y$. Recall that an ergodic countable pmp equivalence relation $\mathcal R$ on a probability space $(X,\mu)$ is called {\it strongly ergodic} if there does not exist a sequence of Borel sets $Y_n\subset X$ such that $\mu(Y_n)=\frac{1}{2}$, for all $n$, and $\mu(\theta(Y_n)\Delta Y_n)\rightarrow 0$, for any Borel automorphism $\theta$ of $X$ satisfying $(\theta(x),x)\in\mathcal R$, for almost every $x\in X$.

Secondly, Theorem \ref{cartan} allows us to show that the free product of any two diffuse tracial von Neumann algebras does not have a Cartan subalgebra.
By using the notion of free entropy for von Neumann algebras,   D. Voiculescu proved that the free group factors $L(\mathbb F_n)$  do not have Cartan subalgebras \cite{Vo95}. This result was extended in \cite[Lemma 3.7]{Ju05} to show that the free product $M=M_1*M_2$ of any two diffuse tracial von Neumann algebras $(M_1,\tau_1)$ and $(M_2,\tau_2)$, which are embeddable into $R^{\omega}$,  does not have a Cartan subalgebra. Here we prove this result without requiring that $M_1$ and $M_2$ embed into $R^{\omega}$. More generally, we have

\begin{corollary}\label{free}
Let $(M_1,\tau_1)$, $(M_2,\tau_2)$ be tracial von Neumann algebras satisfying $M_1\not=\mathbb C1\not= M_2$ and $\text{dim}(M_1)+\text{dim}(M_2)\geqslant 5$.

Then their free product $M=M_1*M_2$ does not have a Cartan subalgebra.
\end{corollary}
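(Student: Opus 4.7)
The plan is to apply Theorem \ref{cartan} with the common subalgebra chosen to be $B = \mathbb{C}1$, under clause (2), and then to contradict the diffuseness of any Cartan subalgebra of the II$_1$ factor $M$.

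First, I would verify the hypotheses of Theorem \ref{cartan}(2) in this setting. The algebra $B = \mathbb{C}1$ is trivially amenable and carries a unique (hence compatible) trace; the only nonzero projection in $B$ is $1$, so the condition $pM_ip \neq pBp$ reduces to the assumption $M_i \neq \mathbb{C}1$, which is part of the hypothesis. What remains is to know that $M = M_1 * M_2$ is a II$_1$ factor without property $\Gamma$. Under the assumptions $M_1, M_2 \neq \mathbb{C}$ and $\dim(M_1) + \dim(M_2) \geq 5$, factoriality and the II$_1$ type of $M$ go back to Dykema's analysis of free products of finite-dimensional and more general tracial algebras, and the absence of property $\Gamma$ (indeed, fullness) is also known in this generality, either by exhibiting a copy of $L(\mathbb{F}_2)$ inside $M$ together with standard fullness/absorption results, or by directly invoking the non-$\Gamma$ results for free products due to Barnett and Ueda.

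Granting these structural facts, Theorem \ref{cartan} forces any Cartan subalgebra $A \subset M$ to satisfy $A \prec_M \mathbb{C}1$. I would then reach a contradiction as follows. Since $A$ is a Cartan subalgebra of the II$_1$ factor $M$, it is diffuse, hence contains a Haar unitary $u$; setting $u_n = u^n$ gives a sequence of unitaries in $A$ with $u_n \to 0$ weakly in $L^2(M)$. Consequently, for all $x, y \in M$,
\[
\|E_{\mathbb{C}1}(x u_n y)\|_2 \;=\; |\tau(yxu_n)| \;\longrightarrow\; 0,
\]
which by Popa's intertwining-by-bimodules criterion \cite{Po03} is precisely the negation of $A \prec_M \mathbb{C}1$. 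This contradicts the conclusion of Theorem \ref{cartan}, so no Cartan subalgebra can exist in $M$.

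The only genuine obstacle is the preliminary verification above, namely guaranteeing that $M$ is a non-$\Gamma$ II$_1$ factor across the entire range of $(M_1, M_2)$ allowed by the hypothesis, especially the borderline finite-dimensional cases such as $\mathbb{C}^2 * \mathbb{C}^3 \cong L(\mathbb{Z}_2 * \mathbb{Z}_3)$; this is exactly where the bound $\dim(M_1)+\dim(M_2) \geq 5$ is essential, and one appeals to the cited structural results on free products to cover it uniformly.
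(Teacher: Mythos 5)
There is a genuine gap, and it sits exactly at the point you flag as a ``preliminary verification'': under the stated hypotheses $M=M_1*M_2$ need \emph{not} be a factor, so Theorem \ref{cartan} (whose standing assumption is that $M=M_1*_BM_2$ is a factor) cannot be applied to $M$ with $B=\mathbb C1$. The dimension bound $\dim(M_1)+\dim(M_2)\geqslant 5$ only rules out $\mathbb C^2*\mathbb C^2$; it does not force factoriality. For instance, $\mathbb C^2*\mathbb C^3$ with sufficiently unbalanced traces has minimal central projections (by Dykema's computations, atoms survive whenever minimal projections $p\in M_1$, $q\in M_2$ satisfy $\tau(p)+\tau(q)>1$), and your identification $\mathbb C^2*\mathbb C^3\cong L(\mathbb Z_2*\mathbb Z_3)$ is valid only for the uniform traces. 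The structural result actually available in this generality (Ueda, \cite[Theorem 4.1]{Ue10}, which is what the paper invokes) gives only a decomposition $M=Mz\oplus M(1-z)$ with $Mz$ a II$_1$ factor without property $\Gamma$ and $M(1-z)$ completely atomic, possibly nonzero; it does not say $M$ itself is a non-$\Gamma$ II$_1$ factor. (As a side remark, exhibiting a copy of $L(\mathbb F_2)$ inside $M$ would not prove absence of property $\Gamma$ anyway: $L(\mathbb F_2)\bar\otimes R$ has property $\Gamma$.)

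Because of this, the paper has to work harder than your one-step application of Theorem \ref{cartan}. It argues by contradiction and splits into two cases. If $M_1$ and $M_2$ are both completely atomic, it applies Corollary \ref{corgeneral2} to the regular diffuse amenable subalgebra (no factoriality needed there) and concludes that $M$ would be amenable, contradicting the presence of the non-$\Gamma$ factor summand $Mz$. If instead some $M_1p$ is diffuse for a central projection $p\in\mathcal Z(M_1)$, then $p\leqslant z$, and by \cite[Lemma 2.2]{Ue10} the corner decomposes as $pMp=M_1p*pNp$ with $pNp\neq\mathbb Cp$, where $N=(\mathbb Cp+M_1(1-p))\vee M_2$; since $pMp$ is a corner of the non-$\Gamma$ factor $Mz$, Theorem \ref{cartan}(2) with $B=\mathbb Cp$ applies to $pMp$ and yields the contradiction, essentially by your final argument that a diffuse algebra cannot satisfy $A\prec\mathbb C$. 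So your closing step is fine, but it is deployed in a setting (``$M$ is a non-$\Gamma$ II$_1$ factor'') that is false in general and must first be manufactured by Ueda's decomposition, the compression trick, and the separate atomic case.
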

Corollary \ref{free}  shows that if $M_1\not=\mathbb C1\not=M_2$ and $(\text{dim}(M_1),\text{dim}(M_2))\not=(2,2)$, then $M$ has no Cartan subalgebra. On the other hand, if $\text{dim}(M_1)=\text{dim}(M_2)=2$, then $M$ is of type I (see \cite[Theorem 1.1]{Dy93})  and therefore has a Cartan subalgebra.

So far, our results only apply to Cartan subalgebras of amalgamated free product von Neumann algebras $M=M_1*_{B}M_2$. From now on, we more generally study, in the spirit of \cite{OP07} and \cite{PV11}, normalizers of arbitrary diffuse amenable von Neumann subalgebras $A\subset M$.  
Recall that the {\it normalizer} of $A$ in $M$, denoted $\mathcal N_{M}(A)$, is the group of unitaries $u\in M$ such that $uAu^*=A$. 
Assuming that the normalizer of $A$ satisfies a certain spectral gap condition, we prove the following dichotomy: either a corner of $A$ embeds into $M_i$, for some $i\in\{1,2\}$, or the algebra generated by the normalizer of $A$ is amenable relative to $B$. More precisely, we show

\begin{theorem}\label{general} Let $(M_1,\tau_1)$ and $(M_2,\tau_2)$ be two tracial von Neumann algebras with a common von Neumann subalgebra $B$ such that ${\tau_1}_{|B}={\tau_2}_{|B}$.
Let $M=M_1*_{B}M_2$ and $A\subset pMp$ be a von Neumann subalgebra which is amenable relative to $B$, for some projection $p\in M$. Denote by $P=\mathcal N_{pMp}(A)''$ the von Neumann algebra generated by the normalizer of $A$ in $pMp$. Assume that $P'\cap (pMp)^{\omega}=\mathbb C1$, for a free ultrafilter $\omega$ on $\mathbb N$.

Then one of the following conditions holds true:

\begin{enumerate}
\item $A\prec_{M}B$.
\item $P\prec_{M}M_i$, for some $i\in\{1,2\}$.
\item$P$ is amenable relative to $B$.
\end{enumerate}
\end{theorem}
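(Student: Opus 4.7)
The strategy is Popa's deformation/rigidity method applied in the AFP setting, as developed by Ioana--Peterson--Popa and extended by Popa--Vaes. I would first introduce the IPP malleable deformation for $M = M_1 *_B M_2$: embed $M$ in a larger tracial algebra $\widetilde{M}$ together with a one-parameter group $(\alpha_t)_{t\in\mathbb{R}}$ of trace-preserving automorphisms of $\widetilde{M}$ fixing $B$ pointwise, and a period-two ``flip'' $\beta\in\operatorname{Aut}(\widetilde{M})$ with $\beta\alpha_t\beta = \alpha_{-t}$. This yields Popa's transversality inequality $\|\alpha_{2t}(x) - E_M\alpha_{2t}(x)\|_2 \geq c\,\|\alpha_t(x)-x\|_2$ for $x\in M$. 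A concrete model sets $\widetilde{M}_i = M_i *_B (B\,\bar\otimes\,L(\mathbb{F}_\infty))$, $\widetilde{M} = \widetilde{M}_1 *_B \widetilde{M}_2$, with $\alpha_t$ implemented by a path of unitaries in the adjoined free group factor. The structural fact specific to AFPs is that the $M$-$M$ bimodule $L^2(\widetilde{M})\ominus L^2(M)$ is weakly contained in the coarse relative bimodule $L^2(M)\,\bar\otimes_B\,L^2(M)$.

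Suppose we are not in case~(1), so $A\not\prec_M B$. Because $A$ is amenable relative to $B$, combining this with the coarse-bimodule weak containment and running a weak-compactness argument in the spirit of \cite{OP07,PV11} yields uniform convergence of the deformation on the unit ball of $A$, i.e.\ $\lim_{t\to 0}\sup_{a\in (A)_1}\|\alpha_t(a)-a\|_2=0$; the hypothesis $A\not\prec_M B$ is what rules out trivial obstructions in this step. The spectral-gap hypothesis $P'\cap M^\omega = \mathbb{C} 1$ then upgrades this to uniform convergence on $(P)_1$: if this failed, there would be sequences $t_n\to 0$ and $u_n\in\mathcal{U}(P)$ with $\|\alpha_{t_n}(u_n)-u_n\|_2$ bounded below; using transversality and the normalization relation $u_n A u_n^* = A$, one extracts a non-trivial element of $P'\cap M^\omega$, a contradiction.

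At this stage, transversality and uniform convergence of $\alpha_t$ on $(P)_1$ together with Popa's classical ``convergence-implies-intertwining'' mechanism force $P$ to embed, in the sense of intertwining-by-bimodules, into the ``$\alpha_t$-rigid'' part of $\widetilde{M}$ inside an appropriate Jones basic construction. For the IPP deformation of an AFP, the only sources of rigid behaviour are the two subalgebras $M_1, M_2$ themselves and contributions that remain inside the coarse $B$-bimodule. Exploiting the reduced-word decomposition
$$L^2(M) = L^2(B) \oplus \bigoplus_{n\geq 1}\,\bigoplus_{i_1\neq i_2 \neq \cdots \neq i_n}\, H_{i_1}\otimes_B H_{i_2}\otimes_B\cdots\otimes_B H_{i_n},\qquad H_i := L^2(M_i)\ominus L^2(B),$$
and the way spectral subspaces of $\alpha_t$ interact with reduced word length over $B$, one then forces either $P\prec_M M_i$ for some $i\in\{1,2\}$ (case~(2)), or that $L^2\langle P, e_B\rangle$ is weakly contained in the coarse $M$-$M$ bimodule $L^2(M)\,\bar\otimes_B\,L^2(M)$, which is the very definition of $P$ being amenable relative to $B$ (case~(3)). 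The main obstacle is precisely this last structural step: while uniform convergence of $\alpha_t$ on $P$ is by now a routine consequence of spectral gap plus relative amenability, the fixed-point structure of the IPP deformation in the AFP setting is strictly richer than in the tensor-product or Bernoulli contexts, and a delicate argument combining Popa's intertwining machinery with the free-product combinatorics of reduced words over $B$ is required to rule out ``mixed'' possibilities that would fall outside the three listed cases.
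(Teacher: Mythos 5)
The central step of your outline is false, and it is precisely the step that would have to carry all the weight. You claim that relative amenability of $A$ over $B$, together with $A\nprec_M B$, yields uniform convergence $\lim_{t\to 0}\sup_{a\in (A)_1}\|\alpha_t(a)-a\|_2=0$ on the unit ball of $A$. Uniform $\|\cdot\|_2$-convergence on $(A)_1$ is a rigidity-type property (it is exactly the input of the intertwining criterion, Theorem \ref{ipp}), and amenability of $A$ gives nothing of the sort; the weak-compactness arguments of \cite{OP07} that you invoke require weak compactness of the normalizing action, which comes from profiniteness/compactness of actions and is not available under the present hypotheses. Concretely, take $B=\mathbb C1$, $M_1=M_2=L(\mathbb Z)$, so $M=L(\mathbb F_2)$ with free generators $g_1,g_2$, and let $A=\{u_{g_1g_2}\}''$. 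Then $A$ is diffuse abelian, hence amenable relative to $B$, and $A\nprec_M B$. The unitaries $x_n=u_{(g_1g_2)^n}\in\mathcal U(A)$ lie in $\mathcal H_{2n}$, so by equation \ref{theta_t} one gets $\|\theta_t(x_n)-x_n\|_2^2=2\bigl(1-(\tfrac{\sin\pi t}{\pi t})^{4n}\bigr)\rightarrow 2$ as $n\rightarrow\infty$ for every fixed $t\in(0,1)$: there is no uniform convergence on $(A)_1$, and the same computation applies to the $\mathbb F_\infty$-variant of the deformation you describe. The example also kills the strategy structurally: $\langle g_1g_2\rangle$ is maximal cyclic, hence malnormal in $\mathbb F_2$, so $A$ is a singular masa, $P=\mathcal N_M(A)''=A$, and $P\nprec_M M_i$ for $i=1,2$; if your claimed convergence held, your own ``convergence-implies-intertwining'' mechanism would force $A\prec_M B$ or $P\prec_M M_i$, both false. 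Note that in this example the correct conclusion is alternative (3), which your scheme can never reach since you only ever derive intertwining into the rigid part of the deformation. The later steps inherit the problem: the hypothesis $P'\cap M^\omega=\mathbb C1$ cannot ``upgrade'' a convergence that is not there, and the final structural step, which you yourself flag as the main obstacle, is exactly where alternative (3) would have to be produced and is left unproved.

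The actual proof asserts no convergence for $A$ at all. One passes to $\tilde M=M*_B(B\bar\otimes L(\mathbb F_2))$ with the deformation $\theta_t$ and uses the crossed product picture $\tilde M=N\rtimes\mathbb F_2$, where $N=\{u_gMu_g^*\mid g\in\mathbb F_2\}''$. Since $B\subset N$ and $\theta_t(B)=B$, the algebra $\theta_t(A)$ is amenable relative to $N$, so the Popa--Vaes dichotomy (Theorem \ref{pv}) gives, for each $t\in(0,1)$, either $\theta_t(A)\prec_{\tilde M}N$ or $\theta_t(P)$ amenable relative to $N$. The first alternative is transferred back to $M$ by Theorem \ref{inter}, whose proof is a random-walk/word-length estimate showing that unitaries of $A$ supported on long alternating words become, after applying $\theta_t$, asymptotically orthogonal to $aNb$; this yields (1) or (2). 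The second alternative, holding for all $t$, is handled by Theorem \ref{relamen}, and this is where $P'\cap M^\omega=\mathbb C1$ is genuinely used: via Theorem \ref{gamma} it kills the component of the almost central vectors lying in $\bigoplus_{g\in\mathbb F_2}(L^2(M)\ominus\mathbb C)u_ge_Nu_g^*$, after which a spectral-gap argument inside $\mathbb F_2$ (Lemmas \ref{calc} and \ref{gap}) produces (2) or (3). None of these ingredients appears in your outline, so as it stands the proposal does not prove the theorem.
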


For the  definition of {\it relative amenability}, see Section \ref{relativeamen}. For now, note that if  $B$ is amenable, then $P$ is amenable relative to $B$ if and only if $P$ is amenable.

We believe that Theorem \ref{general} should hold without assuming that $P'\cap M^{\omega}=\mathbb C1$, but we were unable to prove this for general $B$. Nevertheless, in the case $B=\mathbb C$, a detailed analysis of the relative commutant $P'\cap M^{\omega}$ (see Section 6) enabled us to show that the condition $P'\cap M^{\omega}=\mathbb C1$ is indeed redundant.

\begin{corollary}\label{corgeneral} Let $(M_1,\tau_1)$, $(M_2,\tau_2)$ be two tracial von Neumann algebras. Let $M=M_1*M_2$ and $A\subset M$ be a diffuse amenable von Neumann subalgebra. Denote  $P=\mathcal N_{M}(A)''$.

Then either $P\prec_{M}M_i$, for some $i\in\{1,2\}$, or $P$ is amenable.
\end{corollary}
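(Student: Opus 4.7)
The plan is to reduce Corollary \ref{corgeneral} to Theorem \ref{general} with $B=\mathbb C$, absorbing the extra spectral gap hypothesis $P'\cap M^\omega=\mathbb C 1$ via the detailed analysis of $P'\cap M^\omega$ promised for Section~6. Specializing Theorem \ref{general} to $B=\mathbb C$, the first alternative $A\prec_M\mathbb C$ would force $A$ to have a non-zero atom and is ruled out by diffuseness of $A$, while amenability relative to $\mathbb C$ is just amenability. Hence, whenever $P'\cap M^\omega=\mathbb C 1$, Theorem \ref{general} directly yields the stated dichotomy ``$P\prec_M M_i$ for some $i$, or $P$ is amenable''.

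The entire problem therefore reduces to the case $P'\cap M^\omega\neq\mathbb C 1$. My strategy would be to promote a non-trivial central sequence for $P$ into a diffuse amenable enlargement of $A$ inside the ultrapower: pick a diffuse abelian subalgebra $Q\subset P'\cap M^\omega$ and form $\tilde A=A\vee Q\subset M^\omega$. Since $Q$ commutes with $P\supset A$, the algebra $\tilde A$ is the join of two commuting amenable subalgebras and is therefore amenable, and every $u\in\mathcal N_M(A)\subset P$ centralizes $Q$ while normalizing $A$, so $u\tilde A u^*=\tilde A$; thus the normalizer algebra of $\tilde A$ in $M^\omega$ contains all of $P$. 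A free-product analysis of this enlarged picture should then force either a corner of $\tilde A$ onto one ``factor side'' of the free product structure in the ultrapower, from which $P\prec_M M_i$ would be recovered by pulling back to $M$, or amenability of $P$.

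The main obstacle is implementing this last step, which is exactly what the structural results of Section~6 are meant to supply. Concretely, one needs fine control on how elements of $P'\cap M^\omega$ interact with the alternating-word decomposition of $M=M_1*M_2$, and must show that non-trivial central sequences for a non-amenable $P$ can always be arranged to ``localize'' in one of the two factors; this seems to call for a deformation/rigidity analysis of the natural free malleable deformation of a free product, in the spirit of \cite{PV09, PV11}, combined with an intertwining-by-bimodules argument carried out at the ultrapower level and then pulled back to $M$. Once such a localization is established, feeding the enlarged inclusion $\tilde A\subset M^\omega$ into a suitable variant of Theorem \ref{general}, or arguing directly by Popa-style patching across the ultrapower, should absorb all remaining cases into the amenability alternative.
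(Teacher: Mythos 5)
Your reduction of the case $P'\cap M^{\omega}=\mathbb C1$ to Theorem \ref{general} with $B=\mathbb C$ is correct and is exactly how the paper uses that theorem (diffuseness of $A$ rules out $A\prec_M\mathbb C$, and amenability relative to $\mathbb C$ is amenability). The problem is the complementary case, which is precisely where all the work lies, and there your argument is only a plan: the step that is supposed to ``force either a corner of $\tilde A$ onto one factor side or amenability of $P$'' is never carried out, and the mechanism you propose for it does not exist as stated. Theorem \ref{general} (and the Popa--Vaes dichotomy behind it) applies to subalgebras of an amalgamated free product $M=M_1*_BM_2$; your enlarged algebra $\tilde A=A\vee Q$ lives in $M^{\omega}$, and $M^{\omega}$ is \emph{not} a free product of $M_1^{\omega}$ and $M_2^{\omega}$ --- by Lemma \ref{P} these two ultrapowers are freely independent but generate only a proper subalgebra of $M^{\omega}$ --- so there is no free-product structure on $M^{\omega}$ into which the inclusion $\tilde A\subset M^{\omega}$ can be ``fed,'' and the intertwining obtained at the ultrapower level would still have to be pulled back to $M$, another step you do not supply. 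Phrases such as ``should then force,'' ``seems to call for,'' and ``should absorb'' mark genuine missing arguments, not routine verifications.

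What the paper actually does in this case is different and more delicate. First it reduces, via the maximal amenable central summand, to showing that $Pp\prec_M M_i$ whenever $p\in\mathcal Z(P)$ and $Pp$ has no amenable direct summand. Then Lemma \ref{gammadec} splits $(Pp)'\cap (pMp)^{\omega}$ into a completely atomic part (which moreover lies in $pMp$) and a diffuse part, cut by projections $e,f$. On the atomic part one cuts by a \emph{minimal} projection $e_0$ so that $(Pe_0)'\cap(e_0Me_0)^{\omega}=\mathbb Ce_0$, restoring the spectral gap hypothesis and allowing Theorem \ref{general} to be applied to the corner $Ae_0\subset e_0Me_0$. On the diffuse part one does not enlarge $A$ at all; instead one applies the Section~6 structural result, Theorem \ref{afpgamma}, to $Pf\subset fMf$ with $B=\mathbb C$: its first alternative would make $(Pf)'\cap(fMf)^{\omega}$ embed into $B^{\omega}=\mathbb C$, impossible since that commutant is diffuse, so either $Pf\prec_M M_i$ or a corner of $Pf$ is amenable, and the latter contradicts the choice of $p$. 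Your proposal contains neither the atomic/diffuse decomposition nor any substitute for Theorem \ref{afpgamma}, so the case $P'\cap M^{\omega}\neq\mathbb C1$ remains unproved.
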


For a more precise version of this result in the case $M_1$ and $M_2$ are II$_1$ factors, see Corollary \ref{corgeneral2}.

Finally, we present a new class of strongly solid von Neumann algebras.
Recall that a von Neumann algebra $M$ is called {\it strongly solid} if  $\mathcal N_{M}(A)''$ is amenable, whenever $A\subset M$ is a diffuse amenable von Neumann subalgebra \cite{OP07}. N. Ozawa and S. Popa proved in \cite{OP07} that the free group factors $L(\mathbb F_n)$ are strongly solid. More generally, I. Chifan and T. Sinclair recently showed that the von Neumann algebra $L(\Gamma)$ of any icc hyperbolic group $\Gamma$ is strongly solid \cite{CS11}.
 
The class of strongly solid von Neumann algebras is not closed under taking amalgamated free products. For instance, if $\mathbb F_2\curvearrowright (X,\mu)$ is a pmp action on a non-atomic probability space $(X,\mu)$, then the group measure space algebra $L^{\infty}(X)\rtimes\mathbb F_2=(L^{\infty}(X)\rtimes\mathbb Z)*_{L^{\infty}(X)}(L^{\infty}(X)\rtimes\mathbb Z)$ is not strongly solid, although the  algebras involved in its amalgamated free product decomposition are amenable and hence strongly solid. 

However, as an application of Theorem \ref{general}, we prove that the class of strongly solid von Neumann algebras is closed under free products (Corollary \ref{stro1})
More generally, we show that if $M_1$ and $M_2$ are strongly solid von Neumann algebras, then the amalgamated free product $M=M_1*_{B}M_2$ is strongly solid, provided that the inclusions $B\subset M_1$ and $B\subset M_2$ are {\it mixing}, and $B$ is amenable.

\begin{theorem}\label{strongly}
Let $(M_1,\tau_1)$ and  $(M_2,\tau_2)$ be strongly solid von Neumann algebras with a common amenable von Neumann subalgebra $B$ such that ${\tau_1}_{|B}={\tau_2}_{|B}$.   Assume that the inclusions $B\subset M_1$ and $B\subset M_2$ are mixing. Denote $M=M_1*_{B}M_2$.

Then $M$ is strongly solid.
\end{theorem}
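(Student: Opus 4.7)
Let $A \subset M$ be a diffuse amenable von Neumann subalgebra and set $P = \mathcal{N}_M(A)''$. We aim to prove that $P$ is amenable. After cutting with the central projection of $P$ supporting any amenable direct summand, we may assume $P$ has no amenable direct summand and seek a contradiction. The plan is to apply Theorem \ref{general} to the pair $(A,P)$: since $B$ is amenable, ``amenable relative to $B$'' coincides with ``amenable'', so case (3) of the trichotomy immediately contradicts our reduction, and it only remains to rule out cases (1) and (2).

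Case (1), $A \prec_M B$, is reduced to case (2) as follows. Since the inclusions $B \subset M_i$ are mixing and $A$ is diffuse, a standard principle for mixing amalgamated free product inclusions (in the spirit of Popa--Vaes and of Section 6 of this paper) asserts that $P = \mathcal{N}_M(A)'' \prec_M M_i$ for some $i \in \{1,2\}$. For case (2), $P \prec_M M_i$, we obtain nonzero projections $p \in P$, $q \in M_i$, a partial isometry $v \in qMp$, and an injective normal *-homomorphism $\theta : pPp \to qM_iq$ with $\theta(x) v = v x$ for all $x \in pPp$. After a standard refinement we may assume $\theta(pAp) \subset qM_iq$ is diffuse (and obviously still amenable), and by construction $\theta(pPp) \subset \mathcal{N}_{qM_iq}(\theta(pAp))''$. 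Since $M_i$ is strongly solid and strong solidity passes to corners, this latter normalizer is amenable; therefore $\theta(pPp)$, and hence $pPp$, is amenable, contradicting our reduction that $P$ has no amenable direct summand.

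The main obstacle is that Theorem \ref{general} requires the spectral gap hypothesis $P' \cap M^\omega = \mathbb{C} 1$, which need not hold a priori. This must be verified separately, and is the technical heart of the proof. Following the strategy of Corollary \ref{corgeneral}, where the hypothesis is removed for $B = \mathbb{C}$ via the analysis of relative commutants carried out in Section 6, one uses the mixing of the inclusions $B \subset M_i$ to show that any non-trivial central sequence for $P$ in $M^\omega$ localizes into $M_i^\omega$ or into $B^\omega$; combined with the strong solidity of $M_i$, this localization either places us back in case (1) with $A \prec_M B$ (already handled above) or forces $P$ to be amenable directly. Once this spectral gap reduction is established, the case analysis above completes the proof.
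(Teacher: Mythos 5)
Your overall architecture matches the paper's (reduce to a non-amenable direct summand, run the trichotomy of Theorem \ref{general}, use mixing to handle $A\prec_M B$, use strong solidity of $M_i$ to handle $P\prec_M M_i$, and bypass the spectral gap hypothesis via the Section 6 analysis), but two of your key steps have genuine gaps. First, in case (2) your ``standard refinement'' does not exist as stated: for a projection $p\in P$ not lying in $A$ (and intertwining of $P$ only produces projections in $P$), the set $pAp$ is not a von Neumann algebra, and more seriously, an intertwining homomorphism $\theta:pPp\to qM_iq$ does not transport the normalizer structure, so there is no reason why $\theta(pPp)$ should lie in the normalizer algebra of any diffuse amenable image inside $qM_iq$. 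The paper's fix is to first prove that the inclusions $M_i\subset M$ are themselves mixing (a short computation from the mixing of $B\subset M_j$, but a necessary preliminary you never invoke), then apply Lemma \ref{commutant} to get $\phi(p_0Pp_0)'\cap qMq\subset qM_iq$, hence $vv^*\in M_i$; this upgrades the intertwining to a genuine unital embedding of a corner of the regular inclusion $A\subset P$ into an amplification $\mathbb M_n(p_1M_ip_1)$, whose strong solidity (via \cite[Proposition 5.2]{Ho09}) then yields amenability of a central cut of $P$. Without the mixing of $M_i\subset M$ the step from $P\prec_M M_i$ to amenability of a corner of $P$ is not justified.

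Second, your sketch of the removal of the hypothesis $P'\cap M^{\omega}=\mathbb C1$ mis-describes the mechanism. What actually happens (as in the proof of Corollary \ref{corgeneral2}) is: Lemma \ref{gammadec} splits $P'\cap(pMp)^{\omega}$ into a completely atomic part, which lies in $M$, and a diffuse part. On the atomic part one passes to a minimal projection $e_0$, where $(Pe_0)'\cap(e_0Me_0)^{\omega}=\mathbb Ce_0$ and Theorem \ref{general} applies verbatim. On the diffuse part one cannot quote Theorem \ref{general} at all; instead one applies Theorem \ref{afpgamma}, whose relevant output is $(Pf)'\cap(fMf)^{\omega}\prec_{M^{\omega}}B^{\omega}$, and then the new Lemma \ref{ultrapower} --- whose proof uses the mixing of $B\subset M$ and the diffuseness of the relative commutant in an essential way --- to deduce $Pf\prec_M B$. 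In particular the conclusion of this branch is $P\prec_M B$ (not ``$A\prec_M B$'' and not ``$P$ is amenable directly''), strong solidity of $M_i$ plays no role at this stage, and there is no localization ``into $M_i^{\omega}$''. Since Lemma \ref{ultrapower} has no counterpart in the $B=\mathbb C$ case you cite, this is a missing ingredient rather than a routine adaptation, and your proposal as written does not close the spectral gap issue.
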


For the definition of mixing inclusions of von Neumann algebras, see Section \ref{sstrongly}. For now, let us point out that the inclusion $B\subset M$ is mixing whenever the $B$-$B$ bimodule $L^2(M)\ominus L^2(B)$ is contained in a multiple of the coarse $B$-$B$ bimodule $L^2(B)\otimes L^2(B)$.

Theorem \ref{strongly} implies that if $M_1,M_2,...,M_n$ are amenable von Neumann algebras with a common von Neumann subalgebra $B$ such that the inclusions $B\subset M_1,B\subset M_2,...,B\subset M_n$ are mixing, then $M=M_1*_{B}M_2*_{B}...*_{B}M_n$ is strongly solid (Corollary \ref{stro2}).

\vskip 0.1in
{\bf Comments on the proofs.} The most general type of result that we prove is Theorem \ref{general}. Let us say a few words about its proof.
Assume therefore that $A$ is a von Neumann subalgebra of an amalgamated free product von Neumann algebra $M=M_1*_{B}M_2$ that is amenable relative to $B$. We denote  $P=\mathcal N_{M}(A)''$ and assume that $P'\cap M^{\omega}=\mathbb C1$. 

Our goal is to show that either $A\prec_{M}M_i$, for some $i\in\{1,2\}$, or $P$ is amenable relative to $B$. This is enough to deduce the conclusion of Theorem \ref{general}, because by \cite[Theorem 1.1]{IPP05} the first case implies that either $A\prec_{M}B$ or $P\prec_{M}M_i$, for some $i\in\{1,2\}$.

The strategy of proof is motivated by a beautiful recent dichotomy theorem due to S. Popa and S. Vaes. To state the particular case of \cite[Theorem 1.6]{PV11} that will be useful to us,  let $\mathbb F_2\curvearrowright (N,\tau)$  be a trace preserving action of the free group $\mathbb F_2$ on a tracial von Neumann algebra $(N,\tau)$. Denote $\tilde M=N\rtimes\mathbb F_2$. Given a von Neumann subalgebra $D\subset \tilde M$ that is amenable relative to $N$, it is shown in \cite{PV11} that either $D\prec_{\tilde M}N$ or $\mathcal N_{\tilde M}(D)''$ is amenable relative to $N$.

In order to apply this result in our context, we use the {\it free malleable deformation} introduced in \cite{IPP05}. More precisely, define $\tilde M=M*_B(B\bar{\otimes}L(\mathbb F_2))$. Then $M\subset\tilde M$ and one constructs a 1-parameter group of automorphisms $\{\theta_t\}_{t\in\mathbb R}$ of $\tilde M$ as follows. Let $u_1,u_2\in L(\mathbb F_2)$ be the canonical generating unitaries and $h_2,h_2\in L(\mathbb F_2)$ be hermitian elements such that $u_1=\exp(ih_1)$ and $u_2=\exp(ih_2)$. For $t\in\mathbb R$, define the unitary elements $u_1^t=\exp(ith_1)$ and $u_2^t=\exp(ith_2)$. Then there exists an automorphism $\theta_t$ of $\tilde M$ such that $${\theta_t}_{|M_1}=\text{Ad}(u_1^t)_{|M_1},\;\;\;\; {\theta_t}_{|M_2}=\text{Ad}(u_2^t)_{|M_2}\;\;\;\;\text{and}\;\;\;\;{\theta_t}_{|L(\mathbb F_2)}=\text{id}_{L(\mathbb F_2)}.$$

The starting point of the proof is the key observation that $\tilde M$ can be written as $\tilde M=N\rtimes\mathbb F_2$, where $N$ is the von Neumann subalgebra of $\tilde M$ generated by $\{u_gMu_g^*\}_{g\in\mathbb F_2}$ and $\mathbb F_2$ acts on $N$ via conjugation with $\{u_g\}_{g\in\mathbb F_2}$.  

Now, let $t\in (0,1)$ and notice that $\theta_t(P)\subset\mathcal N_{\tilde M}(\theta_t(A))''$.  Since $A$ is amenable relative to $B$ and $\theta_t(B)=B\subset N$, we deduce that $\theta_t(A)$ is amenable relative to $N$.
By applying the dichotomy of \cite{PV11}, we conclude that either $\theta_t(A)\prec_{\tilde M}N$ or $\theta_t(P)$ is amenable relative to $N$. Since $t\in (0,1)$ is arbitrary, we are therefore in one of the following two cases:
\begin{enumerate}
\item $\theta_t(A)\prec_{\tilde M}N$, for some $t\in (0,1)$.
\vskip 0.1in
\item $\theta_t(P)$ is amenable relative to $N$, for any $t\in (0,1)$.
\end{enumerate}

The core of the paper consists of analyzing what can be said about the von Neumann subalgebras $A$ and $P$ of $M$ which satisfy these conditions. Note that since $\theta_1(M)\subset N$, these conditions are trivially satisfied for any subalgebra $A\subset M$ when $t=1$.

Thus, we prove in Section 3 that if (1) holds then $A\prec_{M}M_i$, for some $i\in\{1,2\}$. The proof of this result has two main ingredients. To explain what they are, assume by contradiction that $A\nprec_{M}M_i$, for  any $i\in\{1,2\}$. Then \cite[Theorem 3.1]{IPP05}  provides a sequence of unitary elements $u_k\in A$ which are asymptotically (i.e., as $k\rightarrow\infty$) supported  on words in $M_1\ominus B$ and $M_2\ominus B$ of length $\geqslant\ell$, for every $\ell\geqslant 1$. In the second part of the proof, we use a  calculation from the theory of random walks on groups to derive that the unitaries $\theta_t(u_k)\in\theta_t(A)$ are asymptotically perpendicular to $aNb$, for any $a,b\in\tilde M$. This contradicts the assumption that (1) holds.

In Sections 4 and 5 we investigate which von Neumann subalgebras $P\subset M$ satisfy (2). 

 Our first result in this direction applies in the particular case when $P=M$. More precisely, we prove that if  (2) holds for $P=M$, then either $M_1$ or $M_2$ must have a amenable direct summand (see Theorem \ref{amena}). In combination with the above, it follows that if $A\subset M$ is a Cartan subalgebra, then either $A\prec_{M}M_i$ or $M_i$ has an amenable direct summand, for some $i\in\{1,2\}$. This readily implies Theorem \ref{cartan} and Corollary \ref{equirel} under the first sets of conditions.

In general, however, we are only able to  treat von Neumann subalgebras $P\subset M$ which in addition to satisfying (2) also verify the spectral gap condition $P'\cap M^{\omega}=\mathbb C1$. Under these assumptions, we prove that either $P\prec_{M}M_i$, for some $i\in\{1,2\}$, or $P$ is amenable relative to $B$ (see Theorem \ref{relamen}). It is clear that  this result completes the proof of Theorem \ref{general}.

Note that if $M=M_1*M_2$ is a plain free product and $P'\cap M^{\omega}$ is diffuse, then we can show that either $P\prec_{M}M_i$, for some $i\in\{1,2\}$, or $P$ has an amenable direct 
summand (see Theorem \ref{afpgamma}). It follows
 that, in the case of plain free products, Theorem \ref{general} holds without the assumption $P'\cap M^{\omega}=\mathbb C1$. 
This explains why Corollary \ref{corgeneral} also  does not require this assumption.
 
\vskip 0.1in
{\bf Organization of the paper}.
Besides the introduction this paper has eight other sections. In Section 2 we recall the tools that are needed in the sequel as well as establish some new results. For instance, we prove that if  $A\subset M=M_1*_{B}M_2$ is a von Neumann subalgebra that is amenable relative to $M_1$, then either $A$ is amenable relative to $B$, or a corner of $\mathcal N_{M}(A)''$ embeds into $M_1$ (see Corollary \ref{211}).
We have described above the contents of Section 3-5. In Section 6, motivated by the hypothesis of Theorem \ref{general}, we study  the relative commutant $P'\cap M^{\omega}$, where $P$ is a von Neumann subalgebra of an amalgamated free product algebra $M=M_1*_{B}M_2$. Finally, Sections 7-9 are devoted to the proofs of the results stated in the introduction.

\vskip 0.1in
{\bf Dedication}. This paper is dedicated to Sorin Popa, with great affection and admiration.

\vskip 0.1in
{\bf Acknowledgements}. I am very grateful to R\'{e}mi Boutonnet, Ionut Chifan, Cyril Houdayer, Yoshimichi Ueda and Stefaan Vaes for many helpful comments on the first version of this paper. 
 In particular, I would like to thank Cyril and Stefaan for pointing out errors in the initial proofs of Lemmas \ref{ultrapower} and \ref{24}, respectively, and Yoshimichi for pointing out that Corollary \ref{free} holds in the present generality. Finally, I would like to thank the two referees whose comments helped improve the exposition.

\vskip 0.1in
{\bf Added in the proof}. Since the first version of this paper has been posted on the arXiv, there have been some related developments. 
 Firstly, R. Boutonnet, C. Houdayer and S. Raum generalized some of our results  to the non-tracial setting \cite{BHR12}. In particular, they extended Corollary \ref{free} to arbitrary von Neumann algebras. More recently, S. Vaes was able to remove the spectral gap assumption $P'\cap M^{\omega}=\mathbb C1$  from Theorem \ref{general}. This allowed him for instance to prove an improved, optimal version of Corollary \ref{equirel}, where one only assumes that almost every class of $\mathcal R_1$ has at least $2$ elements and almost every class of $\mathcal R_2$ has at least $3$ elements  \cite{Va13}.
 
 \vskip 0.1in
 {\bf Correction}. 
 Theorem 2.5 from the initial version of this paper (posted on the arXiv in July 2012) falsely asserted that the notions of spectral gap and $w$-spectral gap were equivalent for arbitrary inclusions of tracial von Neumann algebras (see the Appendix for the definitions). I am very grateful to Cyril Houdayer for pointing out this mistake. 
 The false assertion was only used in the proof of Theorem 5.1 to deduce spectral gap for an inclusion $A\subset pMp$ that was originally assumed to have $w$-spectral gap. However, the original proof of Theorem \ref{relamen} still works if the inclusion $A\subset pMp$ does not not necessarily have spectral gap, but instead satisfies a certain weaker technical property.
In the Appendix, written jointly with Stefaan Vaes, we prove that this technical property, which, a priori, sits in between spectral gap and $w$-spectral gap, is in fact equivalent to $w$-spectral gap. 

\section{preliminaries}
We start by recalling some of the terminology that we use in this paper. 

Throughout  we work with {\it  tracial} von Neumann algebras $(M,\tau)$, i.e. von Neumann algebras $M$  endowed with a faithful, normal, tracial state $\tau$. We assume that $M$ is {\it separable}, unless it is an ultraproduct algebra or we specify otherwise.

 We denote by $\mathcal Z(M)$ the {\it center} of $M$, by $\mathcal U(M)$ the {\it group of unitaries} of $M$ and by $(M)_1$ the {\it unit ball} of $M$. We say that a von Neumann subalgebra $A\subset M$ is {\it regular} in $M$ if $\mathcal N_{M}(A)''=M$.

For a free ultrafilter $\omega$ on $\mathbb N$, the {\it ultraproduct} algebra $M^{\omega}$ is defined as the quotient  $\ell^{\infty}(\mathbb N,M)/\mathcal I$, where $\mathcal I\subset\ell^{\infty}(\mathbb N,M)$ is the closed ideal of $x=(x_n)_n$ such that $\lim_{n\rightarrow\omega}\|x_n\|_2=0$. As it turns out, $M^{\omega}$ is a tracial von Neumann algebra, with its canonical trace given by $\tau_{\omega}((x_n)_n)=\lim_{n\rightarrow\omega}\tau(x_n)$.

If $M$ and $N$ are tracial von Neumann algebras, then an $M$-$N$ {\it bimodule} is a Hilbert space $\mathcal H$ endowed with commuting normal $*$-homomorphisms $\pi:M\rightarrow \mathbb B(\mathcal H)$ and $\rho:N^{op}\rightarrow \mathbb B(\mathcal H)$. For $x\in M,y\in N$ and $\xi\in\mathcal H$ we denote $x\xi y=\pi(x)\rho(y)(\xi)$.

Next, let $M,N,P$ be tracial von Neumann algebras. Let $\mathcal H$ and $\mathcal K$ be $M$-$N$ and $N$-$P$ bimodules.  Let $\mathcal K_0$ be vector subspace of vectors $\eta\in\mathcal K$ that are left bounded, i.e. for which there exists $c>0$ such that $\|x\eta\|\leqslant c\|x\|_2$, for all $x\in N$. The {\it Connes tensor product} $\mathcal H{\otimes}_N\mathcal K$ is defined as the separation/completion of the algebraic tensor product $\mathcal H\otimes\mathcal K_0$ with respect to the scalar product  $\langle \xi\otimes_N\eta,\xi'\otimes_N\eta'\rangle=\langle\xi y,\xi'\rangle$, where $y\in N$ satisfies $\langle x\eta,\eta'\rangle=\tau(xy)$, for all $x\in N$. Note that $\mathcal H{\otimes}_N\mathcal K$ carries a $M$-$P$ bimodule structure given by $x(\xi\otimes_N\eta)y=x\xi\otimes_N\eta y$. 

In the following six subsections we present the tools we will use in the proofs of our main results.

\subsection {Intertwining-by-bimodules} We first recall from  \cite [Theorem 2.1 and Corollary 2.3]{Po03} S. Popa's powerful {\it intertwining-by-bimodules} technique.

\begin {theorem}\cite{Po03}\label{corner} Let $(M,\tau)$ be a tracial von Neumann algebra and $P,Q\subset M$ be two (not necessarily unital) von Neumann subalgebras. 
Then the following are equivalent:

\begin{itemize}

\item There exist  non-zero projections $p\in P, q\in Q$, a $*$-homomorphism $\phi:pPp\rightarrow qQq$  and a non-zero partial isometry $v\in qMp$ such that $\phi(x)v=vx$, for all $x\in pPp$.

\item There is no sequence $u_n\in\mathcal U(P)$ satisfying $\|E_Q(xu_ny)\|_2\rightarrow 0$, for all $x,y\in M$.
\end{itemize}

If one of these conditions holds true,  then we say that {\it a corner of $P$ embeds into $Q$ inside $M$} and write $P\prec_{M}Q$.
\end{theorem}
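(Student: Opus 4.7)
The plan is to prove the two implications separately; the real content lies in $(2) \Rightarrow (1)$, for which I would run a convex-analysis argument inside the Jones basic construction, while $(1) \Rightarrow (2)$ is a direct calculation.

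\emph{Direction $(1) \Rightarrow (2)$.} Starting from $\phi : pPp \to qQq$ and $v \in qMp$ with $\phi(x)v = vx$, the identities $vp = v$ and $pv^* = v^*$ (both consequences of $v = qvp$) give, for every $u \in \mathcal{U}(P)$,
\[ v u v^* \;=\; v(pup)v^* \;=\; \phi(pup)\,vv^*, \qquad E_Q(v u v^*) \;=\; \phi(pup)\,E_Q(vv^*), \]
the second step using $\phi(pup) \in Q$. Since $E_Q(vv^*)$ is a nonzero positive element of $Q$ (its trace equals $\|v\|_2^2 > 0$), testing against spectral projections of $E_Q(vv^*)$ and using that $\phi$ is a $*$-homomorphism yields a quantitative obstruction to $\|E_Q(x u_n y)\|_2 \to 0$ holding simultaneously for all $x,y \in M$.

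\emph{Direction $(2) \Rightarrow (1)$.} I would argue contrapositively. Let $e_Q$ denote the Jones projection onto $L^2(Q) \subset L^2(M)$ and $\langle M, e_Q\rangle$ the basic construction algebra, equipped with its canonical semifinite trace $\mathrm{Tr}$ satisfying $\mathrm{Tr}(x e_Q y) = \tau(xy)$ for $x,y \in M$. Assuming no sequence $u_n \in \mathcal{U}(P)$ has $\|E_Q(xu_ny)\|_2 \to 0$ for all $x,y$, a diagonal argument over a countable $\|\cdot\|_2$-dense subset of $M$ produces $x_1,\dots,x_k, y_1,\dots,y_k \in M$ and $\delta > 0$ with
\[ \sum_{i=1}^k \|E_Q(x_i u y_i)\|_2^2 \;\geq\; \delta \qquad\text{for every } u \in \mathcal{U}(P). \]
Rewriting via $\|E_Q(z)\|_2^2 = \mathrm{Tr}(z^* e_Q z e_Q)$ and cyclicity of $\mathrm{Tr}$, this becomes $\mathrm{Tr}(\xi\,u\eta u^*) \geq \delta$ with $\xi := \sum_i x_i^* e_Q x_i$ and $\eta := \sum_i y_i e_Q y_i^*$, both positive elements of $\langle M, e_Q\rangle$ of finite $\mathrm{Tr}$.

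Next, I would consider $K := \overline{\mathrm{conv}}\{\,u\eta u^* : u \in \mathcal{U}(P)\,\}$ inside the Hilbert space $L^2(\langle M, e_Q\rangle, \mathrm{Tr})$. Uniform convexity produces a unique minimal-norm element $\eta_0 \in K$; uniqueness forces $u\eta_0 u^* = \eta_0$ for all $u \in \mathcal{U}(P)$, so $\eta_0 \in P' \cap \langle M, e_Q\rangle$, while the bound $\mathrm{Tr}(\xi\,\cdot\,) \geq \delta$ is preserved along convex combinations and $L^2$-limits and so forces $\eta_0 \neq 0$. Spectral theory then yields a nonzero projection $f \in P' \cap \langle M, e_Q\rangle$ with $\mathrm{Tr}(f) < \infty$. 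To extract $(v, \phi)$, I would identify $\langle M, e_Q\rangle$ with the algebra of bounded right-$Q$-linear operators on the Hilbert right-$Q$-module $L^2(M)$: the projection $f$ corresponds to a finitely generated right-$Q$-submodule of $L^2(M)$ which, because $[f, P] = 0$, carries a commuting left $P$-action. Choosing a nonzero left-bounded vector $\xi_0 \in fL^2(M)$ and applying polar decomposition to the associated bounded right-$Q$-linear map $L_{\xi_0} : L^2(Q) \to L^2(M)$ produces a partial isometry $v \in M$ together with projections $p \in P$, $q \in Q$ and a unital normal $*$-homomorphism $\phi : pPp \to qQq$ satisfying $\phi(x)v = vx$, as required.

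The main obstacle is the last step: turning an abstract nonzero finite-trace projection $f \in P' \cap \langle M, e_Q\rangle$ into concrete data $(v,\phi)$ inside $M$. It rests on the realization of $\langle M, e_Q\rangle$ as bounded right-$Q$-linear operators on $L^2(M)$, the correspondence between finite-trace projections and finitely generated right-$Q$-submodules of $L^2(M)$, and a careful orthonormalization/polar-decomposition argument ensuring that the resulting partial isometry lies in $M$ and not merely in $L^2(M)$.
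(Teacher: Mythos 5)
You should first note that the paper does not prove this statement at all: it is quoted verbatim from \cite{Po03}, so the only meaningful comparison is with Popa's original argument. Your direction $(2)\Rightarrow(1)$ does follow that route (finite test family plus $\delta$, the positive elements $\xi=\sum_i x_i^*e_Qx_i$, $\eta=\sum_i y_ie_Qy_i^*$, the minimal-norm element of the closed convex hull of $\{u\eta u^*\}$ in $L^2(\langle M,e_Q\rangle,\mathrm{Tr})$, a finite-trace projection $f\in P'\cap\langle M,e_Q\rangle$, then the right-$Q$-module analysis), and it is sound in outline; the caveats you yourself flag at the end are real but standard (a single cyclic submodule $\overline{\xi_0Q}$ is not $P$-invariant, the polar part of $L_{\xi_0}$ is a right-$Q$-linear partial isometry and not yet an element of $M$, and one first obtains $\phi:pPp\rightarrow q\,\mathbb{M}_n(Q)\,q$ and must still reduce to a corner $qQq$ with $q\in Q$).

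The genuine gap is in $(1)\Rightarrow(2)$, which you present as a direct calculation with the single pair $(x,y)=(v,v^*)$. The identity $E_Q(vuv^*)=\phi(pup)\,E_Q(vv^*)$ is correct, but it yields no uniform lower bound: $pup$ is not a unitary of $pPp$ when $u\in\mathcal U(P)$, and $\|pup\|_2$ (hence $\|\phi(pup)E_Q(vv^*)\|_2$, even after cutting by spectral projections of $E_Q(vv^*)$) can be arbitrarily small or zero. For instance, with $P=\mathbb M_2(\mathbb C)\otimes 1\subset M$, $p=e_{11}\otimes 1$ and $u_n$ rotations converging to the flip, one has $pu_np\rightarrow 0$ in norm, so $E_Q(vu_nv^*)\rightarrow 0$ for perfectly legitimate intertwining data, even though condition (2) of course still holds; the pair $(v,v^*)$ simply does not witness it. This is exactly where Popa's Corollary 2.3 does nontrivial work: the standard repair is to pass again through the basic construction, e.g.\ to form the nonzero positive finite-trace element $a=v^*e_Qv\in(pPp)'\cap\langle M,e_Q\rangle$ and pair it against $u(\cdot)u^*$ via $\mathrm{Tr}$ (which rules out vanishing nets of unitaries in $\mathcal U(pPp)$), and then bridge from the corner $pPp$ back to $\mathcal U(P)$ using partial isometries in $P$ realizing the central support of $p$ (equivalently, the standard cutting lemmas relating $P\prec_M Q$ and $pPp\prec_M Q$). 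Without some such argument, your proof of $(1)\Rightarrow(2)$ does not go through.
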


Note that if $M$ is not separable, then the same statement holds if the sequence $\{u_n\}_n$ is replaced by a net. 

\subsection {Relative amenability}\label{relativeamen}
A tracial von Neumann algebra $(M,\tau)$ is called {\it amenable} if there exists a net $\xi_n\in L^2(M)\bar{\otimes}L^2(M)$ such that $\langle x\xi_n,\xi_n\rangle\rightarrow \tau(x)$ and $\|x\xi_n-\xi_n x\|_2\rightarrow 0$, for every $x\in M$. By A. Connes' theorem \cite{Co76}, $M$ is amenable iff it is approximately finite dimensional, i.e. $M=(\cup_{n\geqslant 1}M_n)''$, for an increasing sequence $(M_n)_n$ of finite dimensional subalgebras of $M$.

Let $Q\subset M$ be a von Neumann subalgebra. {\it Jones' basic construction} $\langle M,e_Q\rangle$ is defined as the von Neumann subalgebra of $\mathbb B(L^2(M))$ generated by $M$ and the orthogonal projection $e_Q$ from $L^2(M)$ onto $L^2(Q)$.
Recall that $\langle M,e_Q\rangle$ has a faithful semi-finite  trace given by $Tr(xe_QyL)=\tau(xy)$ for all $x,y\in M$. We denote by $L^2(\langle M,e_Q\rangle)$ the associated Hilbert space and endow it with the natural $M$-bimodule structure. Note that $L^2(\langle M,e_Q\rangle)\cong L^2(M){\otimes}_QL^2(M)$, as $M$-$M$ bimodules.

Now, let $P\subset pMp$ be a von Neumann subalgebra, for some projection $p\in M$.
Following \cite[Definition 2.2]{OP07} we say that
 $P$ is {\it amenable relative to $Q$ inside $M$} if there exists a net $\xi_n\in L^2(p\langle M,e_Q\rangle p)$ such that $\langle x\xi_n,\xi_n\rangle\rightarrow \tau(x)$, for every $x\in pMp$, and $\|y\xi_n-\xi_n y\|_2\rightarrow 0$, for every $y\in P$. Note that when $Q$ is amenable, this condition is equivalent to $P$ being amenable.
 
 By \cite[Theorem 2.1]{OP07}, relative amenability is equivalent to the existence of a $P$-central state $\phi$ on $p\langle M,e_Q\rangle p$ such that $\phi_{|pMp}=\tau_{|pMp}$. Recall that if $S$ is a subset of a von Neumann algebra $\mathcal M$, then a state $\phi$ on $\mathcal M$ is said to be $S$-{\it central} if $\phi(xT)=\phi(Tx)$, for all $x\in S$ and $T\in\mathcal M$.

\begin{remark}\label{embedamen} Let $P\subset pMp$ and $Q\subset M$ be von Neumann subalgebras.

\begin{enumerate}
\item Suppose that  there exists a non-zero projection $p_0\in P$ such that $p_0Pp_0$ is amenable relative to $Q$ inside $M$. Let $p_1\in\mathcal Z(P)$ be the central support of $p_0$. Then $Pp_1$ is amenable relative to $Q$.
 Indeed, let $\xi_n\in L^2(p_0\langle M,e_Q\rangle p_0)$ be a net such that $\langle x\xi_n,\xi_n\rangle\rightarrow \tau(x)$, for every $x\in p_0Mp_0$, and $\|y\xi_n-\xi_n y\|_2\rightarrow 0$, for every $y\in p_0Pp_0.$ Also, let $\{v_i\}_{i=1}^{\infty}\subset P$ be partial isometries such that $p_1=\sum_{i=1}^{\infty}v_iv_i^*$ and $v_i^*v_i\leqslant p_0$, for all $i$. It is easy to see that the net $\eta_n=\sum_{i=1}^{\infty}v_i\xi_nv_i^* \in L^2(p_1\langle M,e_Q\rangle p_1)$ witnesses the fact that $Pp_1$ is amenable relative to $Q$. 

\item Suppose that there exists a non-zero projection $p_1\in P'\cap pMp$ such that $Pp_1$ is amenable relative to $Q$ inside $M$. Let $p_2\in\mathcal Z(P'\cap pMp)$ be the central support of $p_1$. By reasoning as in part (1) one deduces that $Pp_2$ is amenable relative to $Q$ inside $M$.

\item If $P\prec_{M}Q$, then there is a non-zero projection $p_0\in P$ such that $p_0Pp_0$ is amenable relative to $Q$. Thus by (1) and (2) there is a non-zero projection $p_2\in\mathcal Z(P'\cap pMp)$ such that $Pp_2$ is amenable relative to $Q$ inside $M$.

\end{enumerate}

\end{remark}

The following lemma, established in \cite[Corollary 2.3]{OP07} (see also \cite[Section 2.5]{PV11}), provides a very useful criterion for relative amenability. 

\begin{lemma}\cite{OP07}\label{op}
Let $(M,\tau)$ be a tracial von Neumann algebra and $Q\subset M$ be a von Neumann subalgebra. Let $P\subset pMp$ be a von Neumann subalgebra, for some projection $p\in M$.  Assume that there exists a $Q$-$M$ bimodule $\mathcal K$ and a net $\xi_n\in pL^2(M){\otimes}_Q\mathcal K$ such that \begin{itemize} 
\item $\limsup_{n}\|x\xi_n\|_2\leqslant \|x\|_2$, for all $x\in pMp$,
\item $\limsup_{n}\|\xi_n\|_2>0$, and
\item $\|y\xi_n-\xi_n y\|_2\rightarrow 0$, for all $y\in P$. 
\end{itemize}
Then $Pp'$ is amenable relative to $Q$ inside $M$, for some non-zero projection $p'\in\mathcal Z(P'\cap pMp)$.
\end{lemma}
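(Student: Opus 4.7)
The plan is to convert the net $(\xi_n)$ into a nonzero $P$-central positive functional on a corner of the basic construction, and then extract the projection $p'$ by a Radon--Nikodym and renormalization argument.

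First I would observe the bimodule identification
\[
 pL^2(M)\otimes_Q\mathcal K \;\cong\; pL^2(\langle M,e_Q\rangle)\otimes_M\mathcal K,
\]
which follows from the identification $L^2(\langle M,e_Q\rangle)\cong L^2(M)\otimes_Q L^2(M)$ of $M$-$M$ bimodules recalled above, combined with $L^2(M)\otimes_M\mathcal K=\mathcal K$. Under this isomorphism, the left $M$-action on the first tensor factor extends to a bounded left action of the whole basic construction $\langle M,e_Q\rangle$. This lets me define vector functionals $\phi_n(T):=\langle T\xi_n,\xi_n\rangle$ on the $C^*$-algebra $p\langle M,e_Q\rangle p$. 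The first hypothesis applied to $x=p$ gives $\|\xi_n\|_2\le 1+o(1)$, so after truncation the $\phi_n$ form a bounded net of positive functionals.

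Next, let $\phi$ be a weak-$*$ cluster point of $(\phi_n)$. From $\phi_n(x^*x)=\|x\xi_n\|_2^2$ and the first hypothesis one gets $\phi|_{pMp}\le\tau|_{pMp}$. The almost-commutation $\|y\xi_n-\xi_n y\|_2\to 0$ for $y\in P$, applied via Cauchy--Schwarz to $\langle(yT-Ty)\xi_n,\xi_n\rangle$ uniformly over $T$ in the unit ball of $\langle M,e_Q\rangle$, forces $\phi$ to be $P$-central. The second hypothesis yields $\phi(p)=\limsup_n\|\xi_n\|_2^2>0$, so $\phi$ is a nonzero $P$-central positive functional on $p\langle M,e_Q\rangle p$ dominated by $\tau$ on $pMp$.

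Finally, I would extract $p'$ as follows. Write $\phi|_{pMp}(x)=\tau(ax)$ via Radon--Nikodym, with $a\in(pMp)_+$ and $a\le p$; $P$-centrality of $\phi$ on $pMp$ forces $a\in P'\cap pMp$. A standard renormalization, using spectral projections of $a$ together with the bimodule averaging trick of Remark~\ref{embedamen}(1)--(2), then produces a nonzero central projection $p'\in\mathcal Z(P'\cap pMp)$ and a $Pp'$-central state on $p'\langle M,e_Q\rangle p'$ whose restriction to $p'Mp'$ equals $\tau|_{p'Mp'}$; by the central-state characterization of relative amenability (Theorem~2.1 of \cite{OP07}), this gives $Pp'$ amenable relative to $Q$ inside $M$. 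The main technical obstacle is precisely this last step: since $a$ need not lie in $\mathcal Z(P'\cap pMp)$ nor be bounded away from zero, identifying the correct $p'$ and rescaling $\phi$ into an honest relative-amenability witness requires some care.
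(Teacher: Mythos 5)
Your proposal follows essentially the same route as the paper's proof: represent $p\langle M,e_Q\rangle p$ normally on the first leg of $pL^2(M)\otimes_Q\mathcal K$, form the vector functionals $T\mapsto\langle T\xi_n,\xi_n\rangle$, pass to a limit to obtain a nonzero $P$-central positive functional whose restriction to $pMp$ is dominated by $\tau$ (hence normal), and then extract $p'\in\mathcal Z(P'\cap pMp)$ by the argument of \cite[Corollary 2.3 and Theorem 2.1]{OP07} -- exactly what the paper does, except that it first normalizes by $\|\xi_n\|_2^{-2}$ so as to work with a state $\psi$ satisfying $\psi(p)=1$. Two points in your write-up need repair. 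First, the identification $pL^2(M)\otimes_Q\mathcal K\cong pL^2(\langle M,e_Q\rangle)\otimes_M\mathcal K$ does not parse: $\mathcal K$ is only a $Q$-$M$ bimodule, so $L^2(M)\otimes_M\mathcal K$ is undefined. What you actually need, and what the paper states directly, is that $T\mapsto T\otimes_Q 1$ is a normal representation of $\langle M,e_Q\rangle$ on $L^2(M)\otimes_Q\mathcal K$; this holds because $\langle M,e_Q\rangle$ is the commutant of the right $Q$-action on $L^2(M)$, so its elements pass through the Connes tensor product over $Q$. Second, and more substantively, an arbitrary weak-$*$ cluster point $\phi$ of $(\phi_n)$ does \emph{not} satisfy $\phi(p)=\limsup_n\|\xi_n\|_2^2$: if $\liminf_n\|\xi_n\|_2=0$, a cluster point may well be $0$, and then nothing follows. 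You must first pass to a subnet along which $\|\xi_n\|_2$ converges to the limsup (the other two hypotheses survive passage to subnets), or re-index the net as the paper does in the opening paragraph of its proof -- that paragraph exists precisely to arrange $\liminf\|\xi_n\|_2>0$ before taking the limit. With these two repairs, and granting the final extraction of $p'$ via \cite{OP07} (a step the paper likewise only cites, and which your Radon--Nikodym/spectral-projection sketch, combined with Remark \ref{embedamen}, outlines correctly), your argument coincides with the paper's.
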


\begin{proof} Let us first argue that  we may additionally assume that $\liminf_{n}\|\xi_n\|_2>0$. To see this,  suppose that the net $\xi_n$ is indexed by a directed set $I$ and denote
$\delta=\limsup_{n}\|\xi_n\|_2$. Let $J$ be set of triples $j=(X,Y,\varepsilon)$, where $X\subset pMp,Y\subset P$ are finite sets and $\varepsilon>0$. We make $J$ a directed set by putting $(X,Y,\varepsilon)\leqslant (X',Y',\varepsilon')$ if $X\subset X'$, $Y\subset Y'$ and $\varepsilon'\leqslant\varepsilon$.

Fix $j=(X,Y,\varepsilon)\in J$. By the hypothesis  we can find $n\in I$ such that $\|x\xi_m\|_2\leqslant \|x\|_2+\varepsilon$ and $\|y\xi_m-\xi_m y\|_2\leqslant\varepsilon,$ for all $x\in X$, $y\in Y$ and every $m\geqslant n$. Since $\sup_{m\geqslant n}\|\xi_m\|_2\geqslant\limsup_{n}\|\xi_n\|_2$, we can find $m\geqslant n$ such that $\|\xi_m\|_2>\frac{\delta}{2}$.
Define  $\eta_j=\xi_m$. Then  the net $(\eta_j)_{j\in J}$ clearly satisfies $\limsup_{j}\|x\eta_j\|_2\leqslant \|x\|_2$, for all $x\in pMp$, $\liminf_{j}\|\eta_j\|_2>0$, and $\|y\eta_j-\eta_j y\|_2\rightarrow 0,$ for all $y\in P$.

Now,  choose a state, denoted $\lim_j$, on $\ell^{\infty}(J)$ extending the usual limit. Note that $\pi:\langle M,e_Q\rangle\rightarrow \mathbb B(L^2(M)\bar{\otimes}_Q\mathcal K)$ given by $\pi(T)(\xi\otimes_Q\eta)=T(\xi)\otimes_Q\eta$ is a normal $*$-homomorphism. Define $\psi:\langle M,e_Q\rangle\rightarrow\mathbb C$ by letting $$\psi(T)=\lim_j\|\eta_j\|_2^{-2}\langle \pi(T)\eta_j,\eta_j\rangle.$$

Then $\psi$ is a state on $\langle M,e_Q\rangle$ such that $\psi(p)=1$, $\psi$ is $P$-central and $\psi_{|pMp}$ is normal. By choosing, as in the proof of \cite[Corollary 2.3]{OP07}, the minimal projection $p'\in\mathcal Z(P'\cap pMp)$ such that $\psi(p')=1$ and applying \cite[Theorem 2.1]{OP07}, the conclusion follows.
\end{proof}

\begin{lemma}\label{24}
Let $(M,\tau)$ be a tracial von Neumann algebra and $Q\subset M$ be a von Neumann subalgebra. Let $P\subset pMp$ be a von Neumann subalgebra, for some projection $p\in M$.  Let $\omega$ be a free ultrafilter on $\mathbb N$. 

Suppose that $P\prec_{M^{\omega}}Q^{\omega}$.
More generally, assume that there exists a non-zero projection $p_0\in P'\cap (pMp)^{\omega}$ such that $Pp_0$ is amenable relative to $Q^{\omega}$ inside $M^{\omega}$. 

Then $Pp'$ is amenable relative to $Q$ inside $M$, for some non-zero projection $p'\in\mathcal Z(P'\cap pMp)$.
\end{lemma}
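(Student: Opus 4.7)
The plan is to reduce to Lemma \ref{op} applied at the level of $M$. The state characterization of relative amenability, together with the refinement extracted at the start of the proof of Lemma \ref{op}, converts the hypothesis into a net $\xi_i \in L^2(p_0\langle M^\omega, e_{Q^\omega}\rangle p_0)$ which is asymptotically $Pp_0$-central, satisfies $\limsup_i \|x\xi_i\|_2 \leq \|x\|_2$ for every $x \in p_0 M^\omega p_0$, and has $\liminf_i \|\xi_i\|_2 > 0$.

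The key algebraic input is to take $\mathcal K := L^2(M^\omega)$, viewed as a $Q$-$M$ bimodule via the inclusions $Q \subset Q^\omega \subset M^\omega$ on the left and $M \subset M^\omega$ on the right, and to use the $M$-$M$-bimodular map
$$\iota: L^2(M)\otimes_Q L^2(M^\omega) \longrightarrow L^2(M^\omega)\otimes_{Q^\omega}L^2(M^\omega),\qquad \hat a\otimes_Q\eta \mapsto \hat a\otimes_{Q^\omega}\eta.$$
I would first verify that $\iota$ is an isometry. For left-$Q$-bounded $\eta$ with associated $T_\omega\in Q^\omega_+$ satisfying $\langle q\eta,\eta\rangle=\tau^\omega(qT_\omega)$, the element $T:=E_Q(T_\omega)\in Q$ satisfies $\langle q\eta,\eta\rangle=\tau(qT)$ for $q\in Q$; then the identity $E_{Q^\omega}|_M=E_Q$ (which follows from $Q\subset M$ being $\tau$-preserving and extended diagonally to the ultrapower) gives $\tau^\omega(aT_\omega a^*)=\tau^\omega(E_Q(a^*a)T_\omega)=\tau(E_Q(a^*a)T)=\tau(aTa^*)$, so the two Connes tensor norms agree on simple tensors and extend to finite sums by the same computation.

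Fix a representing sequence $p_0=(p_0^{(n)})_n$ of projections $p_0^{(n)}\in pMp$. For each $i$, I would approximate $\xi_i$ in $L^2$-norm by a finite sum of simple tensors $\sum_k a_k\otimes_{Q^\omega} b_k$ with bounded $a_k,b_k\in M^\omega$, compress on the left by $p_0^{(n)}$ (so that the left factor can be truncated to $p_0^{(n)}a_k\in pMp$ along a generic sequence representative), and pull the resulting vector back through $\iota$. A diagonal/ultrafilter choice $n=n(i)$ then produces $\zeta_i\in p_0^{(n(i))}L^2(M)\otimes_Q L^2(M^\omega)\subset pL^2(M)\otimes_Q\mathcal K$ verifying the three hypotheses of Lemma \ref{op}: the bound $\|x\zeta_i\|_2\leq\|x\|_2+o(1)$ for $x\in pMp$ transfers from the analogous estimate on $\xi_i$ thanks to the isometry of $\iota$; approximate $P$-centrality comes from asymptotic $Pp_0$-centrality of $\xi_i$ combined with $\|p_0^{(n(i))}-p_0\|_2\to 0$ along $\omega$ and the fact that $P$ commutes with $p_0$; and $\liminf_i\|\zeta_i\|_2>0$ follows from isometry of $\iota$ and the corresponding lower bound on $\xi_i$. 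Lemma \ref{op} then yields a non-zero $p'\in\mathcal Z(P'\cap pMp)$ such that $Pp'$ is amenable relative to $Q$ inside $M$.

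The main obstacle I expect is the ultrafilter bookkeeping in the last step: choosing the indices $n(i)$ and the finite-tensor approximants so that the norm bound, the approximate $P$-centrality, and the strict lower bound on $\|\zeta_i\|_2$ are all preserved simultaneously after compression by $p_0^{(n(i))}$. Naive compression can easily degrade either the approximate commutation with $P$ or the $L^2$-norm, so one has to pick $n(i)$ carefully along $\omega$ relative to finite sets of elements of $P$ and of $pMp$ that one wants to test against, and use that $\tau^\omega(p_0)=\lim_{n\to\omega}\tau(p_0^{(n)})$ is bounded below.
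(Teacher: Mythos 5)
There is a genuine gap, and it sits exactly where you locate your ``main obstacle.'' Your isometry $\iota\colon L^2(M)\otimes_Q L^2(M^{\omega})\to L^2(M^{\omega})\otimes_{Q^{\omega}}L^2(M^{\omega})$ is correct (the computation via $E_{Q^{\omega}}|_M=E_Q$ does show the two Connes tensor norms agree), but it points in the unhelpful direction: the vectors $\xi_i$ produced by relative amenability live in $L^2(p_0\langle M^{\omega},e_{Q^{\omega}}\rangle p_0)$ with left legs genuinely in $M^{\omega}$, so they are not in the image of $\iota$, and the isometry of $\iota$ gives no comparison at all between $\xi_i$ and the truncated vectors $\zeta_i$ you want to feed into Lemma \ref{op}. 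The transfer you invoke rests on a false principle: entries of a representing sequence do not $\|\cdot\|_2$-approximate the ultrapower element. In particular, $\lim_{n\rightarrow\omega}\|p_0^{(n)}-p_0\|_2=0$ (norm computed in $M^{\omega}$) would force $p_0\in pMp$, which fails in general since $p_0$ is only assumed to lie in $P'\cap (pMp)^{\omega}$; likewise $a_{k,n}$ need not be $\|\cdot\|_2$-close to $a_k$ for $\omega$-many $n$. Moreover, because you replace only the left legs by entries while keeping the right legs in $M^{\omega}$, the test quantities for $\zeta_i$ become iterated ultrafilter limits $\lim_{n\rightarrow\omega}\lim_{m\rightarrow\omega}(\cdots)$, whereas the hypotheses on $\xi_i$ control only diagonal limits at equal indices; iterated and diagonal $\omega$-limits genuinely differ, so no choice of $n(i)$ is guaranteed to preserve the subtracial bound, the approximate $P$-centrality, and the lower bound on $\|\zeta_i\|_2$ simultaneously. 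Note also that one cannot dodge this by choosing approximants with left legs already in $M$, since $\mathrm{span}(Me_{Q^{\omega}}M^{\omega})$ is not dense in $L^2(\langle M^{\omega},e_{Q^{\omega}}\rangle)$.

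The paper's proof avoids all of this by truncating \emph{both} legs at the same index: approximate $\xi$ by a finite sum $\sum_i a_ie_{Q^{\omega}}b_i$ with $a_i,b_i\in M^{\omega}$, set $\xi_n=\sum_i a_{i,n}e_Qb_{i,n}\in\langle M,e_Q\rangle$, and use that $E_{Q^{\omega}}$ and $\tau_{\omega}$ are computed entrywise, so that for every fixed $z\in M$ one has $\|z\xi\|_2=\lim_{n\rightarrow\omega}\|z\xi_n\|_2$ and $\|\xi z\|_2=\lim_{n\rightarrow\omega}\|\xi_n z\|_2$. Since only finitely many test elements $x\in X\subset pMp$ and $y\in Y\subset P$ are in play, a single good index $n$ exists, yielding a vector of $L^2(\langle M,e_Q\rangle)$ (so effectively $\mathcal K=L^2(M)$, no auxiliary bimodule needed) which is almost $P$-central, subtracial on $X$, and of $\|\cdot\|_2$-norm bounded below; one then concludes as in the proof of Lemma \ref{op}. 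If you wish to keep your bimodule-theoretic framing, the statement you actually need is that $L^2(\langle M^{\omega},e_{Q^{\omega}}\rangle)$, viewed as an $M$-$M$ bimodule, is weakly contained in $L^2(\langle M,e_Q\rangle)$, and the entrywise computation just described is precisely the proof of that on the dense subspace spanned by $M^{\omega}e_{Q^{\omega}}M^{\omega}$; your map $\iota$ instead exhibits a copy of $L^2(M)\otimes_QL^2(M^{\omega})$ inside the big space, which is not what is required.
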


\begin{proof} 
Let $X\subset pMp$, $Y\subset P$ be finite subsets and $\varepsilon>0$. Since $Pp_0$ is amenable relative to $Q^{\omega}$, we can find a vector $\xi\in L^2(p_0\langle M^{\omega},e_{Q^{\omega}}\rangle p_0)$ such that 
\begin{equation}\label{ecu1} \|x\xi\|_2\leqslant\|x\|_2\;\;\;\text{for all}\;\;\; x\in X,\;\;\;\;\; \|\xi\|_2>{\frac{\|p_0\|_2}{2}},\;\;\; \text{and}\end{equation} 
\begin{equation}\label{ecu2} \|y\xi-\xi y\|_2<\varepsilon\;\;\;\text{for all}\;\;\; y\in Y.\end{equation}

By approximating $\xi$ in $\|.\|_2$, we may assume that $\xi$ is in  the linear span of $\{ae_{Q^{\omega}}b|a,b\in M^{\omega}\}$.
Write $\xi=\sum_{i=1}^k a_ie_{Q^{\omega}}b_i,$ where $a_i,b_i\in M^{\omega}$. For every $i\in \{1,...,k\}$, represent $a_i=(a_{i,n})_n$ and $b_i=(b_{i,n})_n$, where $a_{i,n},b_{i,n}\in M$. For every $n$, define $\xi_n=\sum_{i=1}a_{i,n}e_Qb_{i,n}\in\langle M,e_Q\rangle.$

Then  for all $z\in M$, we have that $\|z\xi\|_2=\lim_{n\rightarrow\omega}\|z\xi_n\|_2$ and $\|\xi z\|_2=\lim_{n\rightarrow\omega}\|\xi_nz\|_2$.
Using \ref{ecu1} and \ref{ecu2} it follows that we can find $n$ such that $\eta=\xi_n\in \langle M,e_Q\rangle$ satisfies $\|x\eta\|_2<\|x\|_2$, for all $x\in X$, $\|\eta\|_2>\frac{\|p_0\|_2}{2}$, and $\|y\xi-\xi y\|_2<\varepsilon$, for all $y\in Y$. Continuing as in the proof of Lemma \ref{op} gives the conclusion.
\end{proof}

\subsection{Property $\Gamma$} A II$_1$ factor $M$ has {\it property $\Gamma$} of Murray and von Neumann \cite{MvN43} if  there exists a sequence of unitaries $u_n\in M$ with $\tau(u_n)=0$ such that $\|xu_n-u_nx\|_2\rightarrow 0$, for all $x\in M$. If $\omega$ is a free ultrafilter on $\mathbb N$, then property $\Gamma$ is  equivalent to  $M'\cap M^{\omega}\not=\mathbb C1$. 
By a well-known result of A. Connes \cite[Theorem 2.1]{Co76} property $\Gamma$ is also equivalent to the existence of a net of unit vectors $\xi_n\in L^2(M)\ominus\mathbb C1$ such that $\|x\xi_n-\xi_n x\|_2\rightarrow 0$, for all $x\in M$.

The following theorem is a joint result with S. Vaes (see the Appendix).

 It shows in particular that if an inclusion $P\subset M$  satisfies $P'\cap M^{\omega}=\mathbb C1$, then it also satisfies an, a priori, stronger spectral gap property. We will use this fact later on to prove Theorem \ref{relamen}.

\begin{theorem}\label{spgap}
Let $(M,\tau)$ be a von Neumann algebra with a faithful normal tracial state. Let $P \subset M$ be a von Neumann subalgebra. The following two conditions are equivalent.
\begin{enumerate}
\item The inclusion $P \subset M$ does not have $w$-spectral gap: there exists a net $u_i \in (M)_1$ in the unit ball of $M$ satisfying $\lim_i \|x u_i - u_i x\|_2 = 0$ for all $x \in P$ and satisfying \linebreak $\liminf_i \|u_i - E_{P' \cap M}(u_i)\|_2 > 0$.
\item There exist a Hilbert space $H$ and a net of vectors $\xi_i \in L^2(M) \ot H$ satisfying the following properties:
\begin{itemize}
\item $\lim_i \|(x \ot 1) \xi_i - \xi_i(x \ot 1)\|_2 = 0$ for all $x \in P$,
\item $\liminf_i \|\xi_i - p_{L^2(P' \cap M) \ot H}(\xi_i)\|_2 > 0$,
\item $\limsup_i \|(a \ot 1)\xi_i\|_2 \leq \|a\|_2$ and $\limsup_i \|\xi_i(a \ot 1)\|_2 \leq \|a\|_2$ for all $a \in M$.
\end{itemize}
\end{enumerate}
\end{theorem}

\begin{remark}
In the initial version of this paper, it was falsely claimed that an inclusion $P\subset M$ satisfies $P'\cap M^{\omega}=\mathbb C1$ if and only if it has spectral gap, i.e. every net $\xi_i \in L^2(M)\ominus\mathbb C1$ of unit vectors that satisfy $\lim_i \|x \xi_i - \xi_i x\|_2 = 0$, for all $x \in P$, must verify $\lim_i \|\xi_i\|_2 = 0$. For a discussion of the difference between these two spectral gap properties, see the Appendix.
\end{remark}

Next, we prove that the maximal central projection $e$ of $P'\cap M^{\omega}$ such that $(P'\cap M^{\omega})e$ is diffuse, belongs to $M$. More precisely, we have:

\begin{lemma}\label{gammadec} Let $(M,\tau)$ be a tracial von Neumann algebra and $P\subset pMp$ a von Neumann subalgebra, for a projection $p\in M$.
Let $\omega$ be a free ultrafilter on $\mathbb N$ and denote $P_{\omega}=P'\cap (pMp)^{\omega}$.

Then we can find a projection $e\in\mathcal Z(P'\cap pMp)\cap\mathcal Z(P_{\omega})$ such that

\begin{enumerate}
\item $P_{\omega}e$ is completely atomic and $P_{\omega}e=(P'\cap pMp)e$.
\item $P_{\omega}(p-e)$ is diffuse.
\end{enumerate}
\end{lemma}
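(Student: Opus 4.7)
The plan is to apply the standard type decomposition to the finite tracial von Neumann algebra $P_\omega=P'\cap(pMp)^\omega$. There is a unique maximal central projection $e\in\mathcal{Z}(P_\omega)$ for which $P_\omega e$ is completely atomic, hence decomposes as a countable direct sum $\bigoplus_i M_{n_i}(\mathbb{C})f_i$ over minimal central projections $f_i\in P_\omega$ summing to $e$, while $P_\omega(p-e)$ is diffuse (has no minimal projections). This immediately gives (2) and reduces the lemma to showing $e\in P'\cap pMp$, $e\in\mathcal{Z}(P'\cap pMp)$, and $P_\omega e=(P'\cap pMp)e$.

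All three remaining conclusions reduce to the following key claim: for every minimal central projection $f\in\mathcal{Z}(P_\omega)$ with $P_\omega f\cong M_n(\mathbb{C})\cdot f$ (necessarily with $n<\infty$ since $P_\omega$ is finite), one has $f\in P'\cap pMp$ and $M_n(\mathbb{C})f\subset P'\cap pMp$. Granting this, $e=\sum_i f_i\in P'\cap pMp$ and $P_\omega e\subset P'\cap pMp$, which upgrades the tautological inclusion $(P'\cap pMp)e\subset P_\omega e$ to an equality; centrality of $e$ in $P'\cap pMp$ is inherited from its centrality in $P_\omega\supset P'\cap pMp$.

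To prove the key claim, pick a minimal projection $q\in P_\omega f$, so that $qP_\omega q=\mathbb{C}q$. Let $E\colon(pMp)^\omega\to pMp$ denote the canonical $\tau$-preserving $pMp$-bimodular conditional expectation; since $q$ commutes with $P$ and $E$ is $M$-bimodular, $E(q)$ commutes with $P$, so $E(q)\in P'\cap pMp\subset P_\omega$. I would aim to show that $E(q)$ is a projection equal to $q$, which in particular forces $q\in P'\cap pMp$. The key tool is the following rigidity produced by atomicity: for any spectral projection $r=\mathbf{1}_A(E(q))\in P'\cap pMp$, minimality gives $qrq=\mu q$ with $\mu=\tau(rE(q))/\tau(q)$, and when $\mu\in(0,1)$ the element $v:=rq/\sqrt{\mu}\in P_\omega$ is a partial isometry witnessing $q\sim p_0:=rqr/\mu$, a minimal projection of $P_\omega$ with $p_0\le r$; similarly $p_0':=(1-r)q(1-r)/(1-\mu)$ is a minimal projection of $P_\omega$ with $p_0'\le 1-r$ and equivalent to $q$. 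Iterating this splitting on a maximal pairwise orthogonal family of spectral projections of $E(q)$ with positive eigenvalue produces pairwise orthogonal minimal projections of $P_\omega$ all equivalent to $q$, hence lying inside $P_\omega f\cong M_n(\mathbb{C})$; Murray--von Neumann rank bounds together with the trace identity $\sum_j\lambda_j\tau(r_j)=\tau(q)$ force the spectrum of $E(q)$ to be finite and ultimately contained in $\{0,1\}$. Consequently $E(q)$ is itself a projection of trace $\tau(q)$, and a Cauchy--Schwarz equality in $L^2((pMp)^\omega)$ yields $q=E(q)\in P'\cap pMp$. Applying the same argument to a system of matrix units of $P_\omega f\cong M_n(\mathbb{C})$ extends this to $M_n(\mathbb{C})f\subset P'\cap pMp$, completing the proof. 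The main obstacle, and the most delicate step, is refining the rank-based finiteness of the spectrum of $E(q)$ into the sharp statement $\mathrm{spec}(E(q))\subset\{0,1\}$; this requires careful tracking of the central-support classes of the minimal projections produced by the splittings, and in particular of how they occupy the $n$ available Murray--von Neumann ranks inside $P_\omega f$.
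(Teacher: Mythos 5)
Your reduction is fine: taking $e$ to be the maximal central projection with $P_{\omega}e$ completely atomic gives (2) for free, and everything does come down to showing that the atomic part of $P_{\omega}$ sits inside $pMp$. The mechanics of your splitting argument are also correct ($E(q)\in P'\cap pMp\subset P_{\omega}$, the partial isometries $rq/\sqrt{\mu}$, the rank bound forcing $E(q)$ to have at most $n$ spectral values in $(0,1]$). But the step you yourself flag as "the most delicate" is a genuine gap, and I do not believe it can be closed by "tracking central-support classes." The constraints you actually derive --- $E(q)=\sum_{j=1}^{k}\lambda_j r_j$ with $k\leqslant n$, $\sum_j\lambda_j\tau(r_j)=\tau_{\omega}(q)$, $\tau(r_j)\geqslant\tau_{\omega}(q)$, and $q\leqslant\sum_j r_j$ --- are all satisfied by the configuration $E(q)=\frac{1}{2}r$ with $r$ a single projection of trace $2\tau_{\omega}(q)$ dominating $q$ (then $k=1$, $\mu_1=1$, $p_1=q$, and every identity you write down checks out). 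So minimality of $q$ plus bimodularity of $E$ plus the rank bound simply do not force $\mathrm{spec}(E(q))\subset\{0,1\}$; some input beyond the existence of the conditional expectation is required, and your argument never uses that $(pMp)^{\omega}$ is an ultrapower rather than an abstract extension of $pMp$.

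That ultrapower structure is exactly what the paper's proof exploits, and it is the missing idea. Writing $e=(e_n)_n$ and letting $a$ be the weak limit of $e_n$ along $\omega$, one re-indexes the representing sequence (a diagonal argument over a $\|\cdot\|_2$-dense sequence of $(P)_1$) to manufacture projections $f_m=(e_{k_n})_n\in P_{\omega}$ with prescribed correlations $\tau_{\omega}(ef_m)=\tau(a^2)$ and $\tau_{\omega}(ef_jf_m)=\tau(a^3)$ for $j<m$; the projections $p_m=ef_m$ then live in the completely atomic algebra $P_{\omega}e$, whose unit ball is $\|\cdot\|_2$-compact, and extracting a convergent subsequence forces $\tau(a^2)=\tau(a^3)$, hence $a$ is a projection and $e=a\in pMp$. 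The final identity $P_{\omega}e=(P'\cap pMp)e$ is then obtained by running the proof of Connes' Lemma 2.6 for the inclusion $Pe\subset eMe$, not by promoting individual matrix units. If you want to salvage your outline, the place to inject the new idea is precisely at $E(q)$: you would need a re-indexed copy $q'$ of $q$ in $P_{\omega}$ with $\tau_{\omega}(qq')=\tau(E(q)^2)$, and then the compactness of the atomic corner to compare $\tau(E(q)^2)$ with $\tau(E(q)^3)$ --- at which point you have essentially reproduced the paper's argument.
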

\begin{proof}  Let $e\in\mathcal Z(P_{\omega})$ be the maximal projection such that $P_{\omega}e$ is completely atomic.

 Let us prove that $e\in\mathcal Z(P'\cap pMp)$.
To this end, write $e=(e_{n})_n$, where $e_n\in pMp$ is a projection, and let $a$ be the weak limit of $e_n$, as $n\rightarrow\omega$. We have the following:

{\bf Claim}. Let $f_1,f_2,...,f_m\in M^{\omega}$.
Then we can find a subsequence $\{k_n\}_{n\geqslant 1}$ of $\mathbb N$ such that the projection  $f=(e_{k_n})_n\in (pMp)^{\omega}$ satisfies $f\in P_{\omega}$ and  $$\tau_{\omega}(ef)=\tau(a^2),\;\;\tau_{\omega}(efa)=\tau(a^3)\;\;\text{and}\;\;\tau_{\omega}(ef_jf)=\tau_{\omega}(ef_ja),\;\;\text{for all}\;\;j\in\{1,2,...,m\}.$$

 {\it Proof of the claim}. Let $\{x_i\}_{i\geqslant 1}$ be a  $\|.\|_2$ dense sequence of $(P)_1$ and write $f_j=(f_{j,n})_n$, for $j\in\{1,2,...,m\}$. Recall that $\|x_ie_n-e_nx_i\|_2\rightarrow 0$, for all $i$, and that $e_n\rightarrow a$, weakly, as $n\rightarrow\omega$.  Therefore, for every $n\geqslant 1$ we can find $k_n\geqslant 1$ such that $$\|x_ie_{k_n}-e_{k_n}x_i\|_2\leqslant\frac{1}{n},\;\;\text{for all $i\in\{1,2,..,n\}$},\;\; |\tau(e_ne_{k_n})-\tau(e_na)|\leqslant\frac{1}{n},$$ $$|\tau(e_ne_{k_n}a)-\tau(e_na^2)|\leqslant\frac{1}{n}\;\;\text{and}\;\;|\tau(e_nf_{j,n}e_{k_n})-\tau(e_nf_{j,n}a)|\leqslant\frac{1}{n},\;\;\text{for all}\;\;j\in\{1,2,...,m\}.$$
This inequalities clearly imply that $f=(e_{k_n})_n$ satisfies the claim.
\hfill$\square$

Now, using the claim we can inductively construct a sequence of projections $\{f_m\}_{m\geqslant 1}\in P_{\omega}$ such that $\tau_{\omega}(ef_m)=\tau(a^2)$, $\tau_{\omega}(ef_ma)=\tau(a^3)$ and $\tau_{\omega}(ef_jf_m)=\tau_{\omega}(ef_ja),$ for all $j\in\{1,2,...m-1\}$ and $m\geqslant 1$. But then it follows that $\tau(ef_jf_m)=\tau(a^3)$, for all $1\leqslant j<m$.

Next, for $m\geqslant 1$, let $p_m=ef_m$. Since $e$ belongs to the center of $P_{\omega}$, we deduce that $\{p_m\}_{m\geqslant 1}\in P_{\omega}e$ are projections such that $\tau_{\omega}(p_m)=\tau(a^2)$ and $\tau_{\omega}(p_jp_m)=\tau(a^3)$, for all $1\leqslant j<m$.

Finally, since $P_{\omega}e$ is completely atomic, its unit ball is compact in $\|.\|_2$. Thus we can find a subsequence $\{p_{m_l}\}_{l\geqslant 1}$ of $\{p_m\}_{m\geqslant 1}$ which is convergent in $\|.\|_2$. In particular, we have that $|\tau_{\omega}(p_{m_l}p_{m_k})-\tau_{\omega}(p_{m_l})|\leqslant \|p_{m_l}-p_{m_k}\|_{2,\omega}\rightarrow 0$, as $l,k\rightarrow\infty$. This implies that $\tau(a^2)=\tau(a^3)$. Since $0\leqslant a\leqslant 1$, $a$ must be a projection. Thus we have that $\|e_n-a\|_2^2=\tau(e_n)+\tau(a)-2\tau(e_na)\rightarrow 0$, as $n\rightarrow\omega$. Hence $e=(e_n)_n=a\in pMp$ and so $e\in P'\cap pMp$. Since $P_{\omega}'\cap pMp\subset (P'\cap pMp)'\cap pMp$,  it follows that $e\in\mathcal Z(P'\cap pMp)$.

Let $P_0=Pe$. Since $e\in M$, we have that $P_0$ is a subalgebra of $eMe$ and $P_0'\cap (eMe)^{\omega}=P_{\omega}e$ is completely atomic. The proof of \cite[Lemma 2.6]{Co76} then gives that $P_0'\cap (eMe)^{\omega}\subset eMe$. Thus $P_{\omega}e\subset eMe$ and hence $P_{\omega}e=(P'\cap pMp)e$. This proves that $e$ satisfies the first assertion. The second assertion is immediate by the maximality of $e$.
\end{proof}

\subsection{Normalizers in crossed products  by free groups}
Very recently, S. Popa and S. Vaes have established the following remarkable dichotomy.

\begin{theorem}\cite{PV11}\label{pv} Let $\mathbb F_n\curvearrowright (N,\tau)$ be a trace preserving action of a  free group on a tracial von Neumann algebra $(N,\tau)$. Denote $M=N\rtimes\mathbb F_n$ and let $A\subset pMp$ be a von Neumann subalgebra that is amenable relative to $N$, for some projection $p\in M$.

Then either $A\prec_{M}N$ or $\mathcal N_{pMp}(A)''$ is amenable relative to $N$ inside $M$.

\end{theorem}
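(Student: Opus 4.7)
The plan is to exploit the geometric structure of $\mathbb{F}_n$: specifically its weak amenability with Cowling--Haagerup constant $1$ and the amenability of its action on the boundary $\partial T$ of the Cayley tree $T$. Together, these yield a relative Akemann--Ostrand property for $M = N \rtimes \mathbb{F}_n$: one builds an $M$-$M$ bimodule $\mathcal{H}$ that is weakly contained, as an $M$-$M$ bimodule, in a multiple of the coarse relative bimodule $L^2(M) \otimes_N L^2(M)$, yet admits a net of approximately $\mathbb{F}_n$-invariant unit vectors $\zeta_k$. Such a bimodule is produced from the $\mathbb{F}_n$-equivariant action on $\ell^2(E(T))$, using the fact that differences of projections associated to adjacent edges of $T$ are of trace class, so that the tree geometry translates, after tensoring over $N$, into an $M$-bimodule with the required weak containment.

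\textbf{Main argument.} Assume $A \not\prec_M N$; we must show that $P := \mathcal{N}_M(A)''$ is amenable relative to $N$. Theorem \ref{corner} yields unitaries $u_n \in \mathcal{U}(A)$ with $\|E_N(x u_n y)\|_2 \to 0$ for all $x, y \in M$. Relative amenability of $A$ over $N$ supplies a net $\xi_i \in L^2(p\langle M, e_N\rangle p)$ asymptotically $A$-central with $\langle x \xi_i, \xi_i \rangle \to \tau(x)$ for $x \in pMp$. Form tensor-product vectors $\eta_{i,k} = \xi_i \otimes_N \zeta_k$ inside the $M$-$M$ bimodule $L^2(M) \otimes_N \mathcal{H}$, which is weakly contained in a multiple of $L^2(\langle M, e_N\rangle) \otimes_N L^2(M)$ by the weak containment property of $\mathcal{H}$. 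For each $v \in \mathcal{N}_M(A)$, expand $v = \sum_g v_g u_g$ in its $N$-valued Fourier series and estimate $\|v \eta_{i,k} - \eta_{i,k} v\|_2$: the diagonal contributions vanish because $v_g \in N$ and $u_g$ almost commutes with $\zeta_k$, while the off-diagonal contributions are controlled by the asymptotic vanishing of $\|E_N(x u_n y)\|_2$, which forces the projection of $\xi_i$ onto $u_g N u_h^*$ to become small for $g \neq h$ after using the $A$-centrality of $\xi_i$ to convert $v$-terms into $u_n$-terms. This yields asymptotic $P$-centrality of $\eta_{i,k}$; applying Lemma \ref{op} produces a non-zero central projection $q \in \mathcal{Z}(P' \cap pMp)$ such that $Pq$ is amenable relative to $N$. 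A standard covering argument using Remark \ref{embedamen}(2) then upgrades this to the full $P$.

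\textbf{Main obstacle.} The core difficulty is the construction of the bimodule $\mathcal{H}$ with the required simultaneous properties. Weak containment in the coarse $N$-bimodule alone is easy to achieve, and the existence of almost-invariant vectors alone would follow from amenability of the boundary action; the delicate point is combining them into a single bimodule where both hold compatibly. This is precisely where the weak amenability of $\mathbb{F}_n$ enters: it allows cutting down by finite-rank radial multipliers without losing the coarse structure. A secondary obstacle is the transfer of asymptotic centrality from $A$ to $P = \mathcal{N}_M(A)''$, which crucially uses the intertwining hypothesis via the vanishing $\|E_N(x u_n y)\|_2 \to 0$; without it, the off-diagonal Fourier terms would prevent the final estimate, which is exactly why the first alternative of the dichotomy cannot in general be avoided.
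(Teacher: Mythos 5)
You should first note that the paper does not prove this statement at all: Theorem \ref{pv} is quoted verbatim from \cite{PV11} (it is a special case of \cite[Theorem 1.6]{PV11}), so the only meaningful comparison is with the Popa--Vaes proof. Their argument is not the boundary/Akemann--Ostrand route you sketch; it combines the proper cocycle of $\mathbb F_n$ into a multiple of the regular representation (via the Gaussian/malleable deformation it generates) with a genuinely new use of weak amenability: finitely supported completely bounded multipliers are used to manufacture, out of the $A$-central vectors witnessing relative amenability, a state that is central under the \emph{normalizer} $\mathcal N_M(A)$, and the dichotomy is then read off from that state.

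This is exactly where your proposal has a genuine gap. In your main argument the vectors $\eta_{i,k}=\xi_i\otimes_N\zeta_k$ are only asymptotically $A$-central, and you claim $P$-centrality by expanding $v\in\mathcal N_M(A)$ in its $N$-valued Fourier series and ``using the $A$-centrality of $\xi_i$ to convert $v$-terms into $u_n$-terms,'' with the off-diagonal terms killed by $\|E_N(xu_ny)\|_2\to 0$. This does not work: conjugation by $v$ carries an $A$-central vector to another $A$-central vector (for $vAv^*=A$), but nothing forces it to be close to the original one, and the intertwining condition $A\nprec_M N$ constrains unitaries of $A$, not the position of the vectors $\xi_i$ inside $L^2(\langle M,e_N\rangle)$; no concentration of $\xi_i$ on the ``diagonal'' follows. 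Passing from $A$-centrality to $\mathcal N_M(A)$-centrality is precisely the heart of the theorem, and in all known proofs it requires an extra ingredient at this point (weak compactness of the action of $\mathcal N_M(A)$ on $A$ in \cite{OP07}, or the weak-amenability-based averaging of \cite{PV11}); your main argument, as written, never actually invokes weak amenability, which is only mentioned afterwards as an ``obstacle.'' If the estimate you describe were valid, it would yield the dichotomy (hence strong solidity in the case $N=\mathbb C$) from a relative AO-type property alone, without weak amenability or weak compactness, which is well beyond what is known. Secondarily, the construction of the single bimodule $\mathcal H$ that is simultaneously weakly contained in a multiple of $L^2(M)\otimes_N L^2(M)$ and carries almost $\mathbb F_n$-invariant unit vectors is asserted rather than carried out, and the two requirements pull in opposite directions (exact invariant vectors in such a bimodule would force amenability-type conclusions); making this precise is nontrivial, but it is the centrality-transfer step that is the fatal gap.
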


More generally, it is proven in \cite[Theorem 1.6]{PV11} that the same holds when $\mathbb F_n$ is replaced by a weakly amenable group $\Gamma$ that admits a proper cocycle into an orthogonal representation that is weakly contained in the regular representation.

\subsection{Deformations of AFP algebras}\label{ipp1}

Let $(M_1,\tau_1)$ and $(M_2,\tau_2)$ be two tracial von Neumann algebras with a common von Neumann subalgebra $B$ such that ${\tau_1}_{|B}={\tau_2}_{|B}$. Denote by $M=M_1*_BM_2$ the amalgamated free product algebra (abbreviated, {\bf AFP algebra}) and by $\tau$ its trace extending $\tau_1$ and $\tau_2$.  To present the canonical decomposition of $L^2(M)$, let us fix some notations:

\begin{notations}\label{H_n} Let $n\geqslant 1$
 
\begin{itemize}
\item We denote by $S_n=\{(1,2,1,...),(2,1,2,...)\}$ the set  consisting of the two alternating sequences of $1$'s and $2$'s of length $n$.

\item For $\mathcal I=(i_1,i_2,...,i_n)\in S_n$, we denote $\mathcal H_{\mathcal I}=L^2(M_{i_1}\ominus B)\otimes_{B}...\otimes_BL^2(M_{i_n}\ominus B)$.

\item We also let $\mathcal H_n=\bigoplus_{\mathcal I\in S_n}\mathcal H_{\mathcal I}$ and $\mathcal H_0=L^2(B)$.  
\end{itemize}
\end{notations}

With these notations, we have  $L^2(M)=\oplus_{n=0}^{\infty}\mathcal H_n$. This decomposition easily implies the following lemma that will be useful in the sequel:

\begin{lemma}\label{BM} Let $(M_1,\tau_1)$, $(M_2,\tau_2)$, ($M_3,\tau_3)$ be tracial von Neumann algebras with a common von Neumann subalgebra $B$ such that ${\tau_1}_{|B}={\tau_2}_{|B}={\tau_3}_{|B}$. Then

\begin{enumerate}
\item We can find a $B$-$M_1$ bimodule $\mathcal H$ and a $M_1$-$B$ bimodule $\mathcal K$ such that, as $M_1$-$M_1$ bimodules,  we have  $L^2(M_1*_{B}M_2)\ominus L^2(M_1)\cong L^2(M_1){\otimes}_B\mathcal H\cong\mathcal K{\otimes}_B L^2(M_1)$.
\item We can find a $B$-$B$ bimodule $\mathcal L$ such that $L^2(M_1*_{B}M_2*_{B}M_3)\cong L^2(M_1){\otimes}_B\mathcal L{\otimes}_B L^2(M_2)$, as $M_1$-$M_2$ bimodules.
\end{enumerate}

\end{lemma}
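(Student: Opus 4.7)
The plan is to reduce both claims to bookkeeping on the canonical Hilbert-space decomposition
\[L^2(M_1*_BM_2)=L^2(B)\oplus\bigoplus_{n\geq 1}\bigoplus_{\mathcal I\in S_n}\mathcal H_{\mathcal I}\]
from Notations \ref{H_n}, combined with the elementary splitting $L^2(M_i)=L^2(B)\oplus L^2(M_i\ominus B)$. The common strategy will be to exhibit a ``core'' sub-bimodule whose Connes tensor with $L^2(M_1)$ (and possibly $L^2(M_2)$) over $B$ redistributes the $\mathcal H_{\mathcal I}$ via prepending or appending a letter.

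For part (1), I would take $\mathcal H$ to be the direct sum of all $\mathcal H_{\mathcal I}$ with $\mathcal I=(i_1,\ldots,i_n)$ a nontrivial alternating word in $\{1,2\}$ satisfying $i_1=2$. The first step is to verify that the natural left $B$-action and right $M_1$-action (inherited from $L^2(M)$) preserve $\mathcal H$, making it a $B$-$M_1$ sub-bimodule: right multiplication by $m\in M_1$ on a word ending in $M_2\ominus B$ extends the word, while on one ending in $M_1\ominus B$ it merges the last letter with $m$ after decomposing $M_1=B\oplus(M_1\ominus B)$; in either case the resulting word still starts with $2$. Next, using $L^2(M_1)=L^2(B)\oplus L^2(M_1\ominus B)$, each summand $L^2(M_1)\otimes_B\mathcal H_{\mathcal I}$ (with $i_1=2$) canonically identifies with $\mathcal H_{\mathcal I}\oplus\mathcal H_{(1,\mathcal I)}$; summing over such $\mathcal I$ recovers precisely all $\mathcal H_{\mathcal J}$ for $\mathcal J$ nontrivial with $\mathcal J\neq(1)$, i.e.\ $L^2(M)\ominus L^2(M_1)$. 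The $M_1$-$M_1$ bimodular unitary is then induced by the multiplication map $\xi\otimes_B\eta\mapsto\xi\eta$. The $M_1$-$B$ bimodule $\mathcal K$ is constructed symmetrically, indexed by words ending in $2$.

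For part (2), I will extend Notations \ref{H_n} to alternating words $\mathcal I$ in $\{1,2,3\}$ and set $\mathcal L=L^2(B)\oplus\bigoplus_{\mathcal I}\mathcal H_{\mathcal I}$, with the sum over nontrivial alternating words satisfying $i_1\neq 1$ and $i_n\neq 2$; this is visibly a $B$-$B$ sub-bimodule of $L^2(M_1*_BM_2*_BM_3)$. Expanding $L^2(M_1)\otimes_B\mathcal L\otimes_BL^2(M_2)$ via the splitting $L^2(M_i)=L^2(B)\oplus L^2(M_i\ominus B)$, each summand of $\mathcal L$ produces four pieces indexed by whether one prepends $1$ and/or appends $2$. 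A routine bookkeeping check shows every alternating word $\mathcal J=(j_1,\ldots,j_m)$ in $\{1,2,3\}$ arises exactly once, the unique core word producing it being obtained by dropping $j_1$ iff $j_1=1$ and dropping $j_m$ iff $j_m=2$. The multiplication map then provides the required $M_1$-$M_2$ bimodular unitary.

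The only real obstacle is verifying that these natural multiplication maps are \emph{unitary} bimodule isomorphisms rather than merely surjective morphisms of bimodules. This reduces to the standard identifications $L^2(B)\otimes_B\mathcal K\cong\mathcal K\cong\mathcal K\otimes_BL^2(B)$ for any $B$-$B$ bimodule $\mathcal K$, together with $\mathcal H_{\mathcal I}\otimes_B\mathcal H_{\mathcal J}\cong\mathcal H_{\mathcal I\cdot\mathcal J}$ whenever the concatenation is still alternating; both follow immediately from the definition of the Connes tensor product and the trace-preserving inclusions $B\subset M_i$.
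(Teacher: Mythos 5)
Your proposal is correct and is essentially the argument the paper has in mind: the paper offers no written proof, stating only that the decomposition $L^2(M)=\bigoplus_n\mathcal H_n$ of Notations \ref{H_n} "easily implies" the lemma, and your regrouping of the alternating-word subspaces (words beginning with $2$ for $\mathcal H$, ending with $2$ for $\mathcal K$, and the stripped "core" words for $\mathcal L$), together with the standard identifications $L^2(B)\otimes_B\mathcal K\cong\mathcal K$ and $\mathcal H_{\mathcal I}\otimes_B\mathcal H_{\mathcal J}\cong\mathcal H_{\mathcal I\cdot\mathcal J}$, is exactly that intended filling-in. The bookkeeping (each word arising exactly once, bimodularity of the multiplication map) checks out.
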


Let us  recall from \cite[Section 2.2]{IPP05} the construction of the {\it free malleable deformation}  of $M=M_1*_{B}M_2$.
Define $\tilde M=M*_{B}(B\bar{\otimes}L(\mathbb F_2))$. 
 Denote  $u_1=u_{a_1}$, $u_2=u_{a_2}$, where $a_1$, $a_2$ are generators of $\mathbb F_2$.  Note that we can decompose $\tilde M=\tilde M_1*_{B}\tilde M_2$, where $\tilde M_1={M_1}*_B(B\bar{\otimes}L(\mathbb Z))$ and $\tilde M_2={M_2}*_B(B\bar{\otimes}L(\mathbb Z))$, and the two copies of $\mathbb Z$ are the cyclic groups generated by $a_1$ and $a_2$, respectively.

Consider the unique function $f:\mathbb T\rightarrow (-\pi,\pi]$ satisfying $f(\exp(it))=t$, for all $t\in (-\pi,\pi]$.
Then $\alpha_1=f(u_1)$ and $\alpha_2=f(u_2)$ are hermitian operators such that $u_1=\exp(i\alpha_1)$ and $u_2=\exp(i\alpha_2)$.
 For $t\in\mathbb R$, define the unitary elements $u_{1}^t=\exp(it\alpha_1)$ and $u_{2}^t=\exp(it\alpha_2)$.

Since  the restrictions of the automorphisms Ad$(u_1^t)$ and Ad$(u_2^t)$  of $\tilde M_1$ and $\tilde M_2$  to $B$ are equal (to id$_B$), the formulae $$\theta_t(x)=u_1^tx{u_1^t}^*,\;\;\text{for $x\in\tilde M_1,$}\;\;\text{and}\; \;\theta_t(y)=u_2^ty{u_2^t}^*,\;\;\text{for  $y\in \tilde M_2$}, $$ define a 1-parameter group $\{\theta_t\}_{t\in\mathbb R}$ automorphisms of $\tilde M$.

The following is the main technical result of \cite{IPP05}.

\begin{theorem}\cite{IPP05}\label{ipp}
Let $A\subset pMp$ be a von Neumann subalgebra, for a projection $p\in M$. Assume that there exist $c>0$ and $t>0$ such that $\tau(\theta_t(u)u^*)\geqslant c$, for all $u\in\mathcal U(A)$. 

Then either $A\prec_{M}B$, or
$\mathcal N_{pMp}(A)''\prec_{M}M_i$, for some $i\in\{1,2\}$.

\end{theorem}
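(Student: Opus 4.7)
The plan is to apply the standard Popa averaging/intertwining machinery inside the larger ambient algebra $\tilde M$, where the deformation $\theta_t$ naturally lives, and then to transfer the conclusion back down to $M$ by exploiting the free-product decomposition $\tilde M = M *_B (B\,\bar\otimes\,L(\mathbb F_2))$.

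\emph{Step 1: Extract an intertwiner in $\tilde M$.} From the hypothesis $\tau(\theta_t(u)u^*)\geqslant c$ for all $u\in\mathcal U(A)$, I would argue, via the standard weak-compactness argument in Hilbert space, that the element of minimal $\|\cdot\|_2$-norm in the $\|\cdot\|_2$-closure of $\mathrm{conv}\{\theta_t(u)u^*:u\in\mathcal U(A)\}\subseteq L^2(\tilde M)$ is a non-zero vector $\xi_0$. The map $\eta\mapsto \theta_t(u)\eta u^*$ preserves this convex set, so by uniqueness of the minimizer, $\theta_t(u)\xi_0=\xi_0 u$ for every $u\in\mathcal U(A)$. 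Moreover, $\langle\xi_0,1\rangle\geqslant c$, so $\xi_0\ne 0$. Standard $L^2$-polar decomposition / boundedness arguments (cf.\ the proof of \cite[Theorem 2.1]{Po03}) upgrade $\xi_0$ to a non-zero partial isometry $v\in p\,\tilde M$ satisfying $\theta_t(u)v=vu$ for all $u\in A$, which realises an intertwining of $A$ into $\theta_t(A)$ inside $\tilde M$.

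\emph{Step 2: Fourier expansion and descent to $M$.} Using the canonical $B$-bimodule decomposition of $L^2(\tilde M)$ arising from Notations \ref{H_n}, I would expand $v$ along reduced words in the alternating letters from $M\ominus B$ and $L(\mathbb F_2)\ominus B$. Because $\theta_t$ is implemented by $u_1^t$ on $M_1$, by $u_2^t$ on $M_2$, and acts trivially on $L(\mathbb F_2)$, the identity $\theta_t(u)v=vu$ imposes rigid constraints on the Fourier coefficients of $v$. A dichotomy then presents itself based on whether the "purely $M$-part" of $v$ (the component of $v$ in $L^2(M)\subset L^2(\tilde M)$) is non-zero or whether $v$ is concentrated on words containing at least one letter from $L(\mathbb F_2)\ominus B$.

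\emph{Step 3: Conclude the dichotomy.} In the first case, the non-zero component $E_M(v)$ provides a non-zero element of $M$ compatible with the intertwining of $A$ through the identity automorphism on $B$, and from this one extracts $A\prec_M B$ via Theorem \ref{corner}. In the second case, $v$ is supported on non-trivial reduced words involving $u_1$ or $u_2$; pairing the intertwining identity for $v$ with the normalizer action (each $n\in\mathcal N_M(A)$ acts on both sides of $\theta_t(u)v=vu$ and produces, via $n^* v \theta_t(n)$, another intertwiner in $\tilde M$), one builds, using $L^2$-moment estimates in the amalgamated free product $\tilde M$, non-zero partial isometries in $M$ intertwining corners of $\mathcal N_M(A)''$ into $M_1$ or $M_2$, which gives $\mathcal N_M(A)''\prec_M M_i$ for some $i\in\{1,2\}$.

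The main obstacle is Step 3. It requires a careful computation in the basic construction $\langle\tilde M,e_M\rangle$ to verify that the partial isometries one builds from $v$ and the normalizer really live in $M$, not merely in $\tilde M$. This uses in an essential way the explicit algebraic form $u_i^t=\exp(it\alpha_i)$ of the deformation together with the $B$-bimodule orthogonality between $\mathcal H_{\mathcal I}$'s of different word-lengths, which ensures that mass supported on long words in $L(\mathbb F_2)\ominus B$ is incompatible with the normalizer action and must therefore be absent in the second alternative.
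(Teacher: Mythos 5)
The paper does not actually prove Theorem \ref{ipp}: it is quoted from \cite[Theorem 3.1]{IPP05}, in the formulation of \cite[Section 5]{Ho07} and \cite[Theorem 5.4]{PV09}, so your attempt must be measured against that argument. Your Step 1 is correct and is exactly how the known proof begins: the element of minimal $\|\cdot\|_2$-norm in $\overline{\mathrm{co}}^{\,\|\cdot\|_2}\{\theta_t(u)u^*\,|\,u\in\mathcal U(A)\}$ has inner product at least $c$ with $1$, hence is non-zero, and its polar decomposition gives a non-zero $v\in\tilde M$ with $\theta_t(u)v=vu$ for all $u\in\mathcal U(A)$; this is the ``standard averaging argument'' the paper itself invokes in the proof of Theorem \ref{afpgamma}.

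The gap lies in Steps 2 and 3, and it is twofold. First, the dichotomy you propose is wrong: $E_M(v)\neq 0$ does not yield $A\prec_M B$. Take $B=\mathbb C1$ and $A=M_1$ diffuse; by \ref{theta_t} the hypothesis holds with $c=(\tfrac{\sin\pi t}{\pi t})^2$, and $v=u_1^t$ is a unitary intertwiner (since $\theta_t(x)u_1^t=u_1^tx$ for $x\in M_1$) with $E_M(u_1^t)=\tfrac{\sin\pi t}{\pi t}\,1\neq 0$, yet $M_1\nprec_M\mathbb C1$; the correct alternative here is the normalizer one. Second, and more fundamentally, at a fractional parameter $t\in(0,1)$ the unitaries $u_i^t$ have full Fourier support on the cyclic groups $\langle a_i\rangle$ (see \ref{mu}), so for $x\in M_i\ominus B$ the element $\theta_t(x)$ is an infinite combination of words $u_{a_i^m}x u_{a_i^{-n}}$, and the relation $\theta_t(u)v=vu$ imposes no exploitable constraint on the word expansion of $v$. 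This is precisely why the actual proof does not analyze the intertwiner at parameter $t$: after reducing to $t=2^{-k}$ (allowed, since by \ref{theta_t} the quantity $\tau(\theta_s(u)u^*)$ only increases when $s$ decreases), it uses the period-two automorphism $\beta$ of $\tilde M$ with $\beta|_M=\mathrm{id}$ and $\beta\theta_t\beta=\theta_{-t}$ (the one appearing before \ref{malleable}) to double the deformation parameter repeatedly, either producing $A\prec_M B$ along the way or reaching a non-zero intertwiner at $s=1$, where $\theta_1(M_i)=u_iM_iu_i^*$ with $u_1,u_2$ genuine group unitaries and a word-length computation in $\tilde M$ becomes available and gives $A\prec_M M_1$ or $A\prec_M M_2$. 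Finally, the normalizer statement is not obtained by pairing $v$ with normalizing unitaries inside $\tilde M$, but by the separate control-of-quasi-normalizers theorem \cite[Theorem 1.1]{IPP05} (in the spirit of Lemma \ref{commutant}), applied exactly as in the paper's proof of Corollary \ref{211}: if $A\prec_M M_i$, then either $A\prec_M B$ or $\mathcal N_M(A)''\prec_M M_i$. These two ingredients---the $\beta$-doubling to $s=1$ and the quasi-normalizer control---are the heart of the proof and are absent from your outline, so Steps 2--3 as written would not go through.
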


Theorem \ref{ipp} is formulated in a different way and proved under an additional assumption in \cite[Theorem 3.1]{IPP05}. For the formulation given here, see \cite[Section 5]{Ho07} and  \cite[Theorem 5.4]{PV09}.

Note that since $\tau(u_1^t)=\tau(u_2^t)=\frac{\sin(\pi t)}{\pi t}$, we have that $E_M(\theta_t(x))=(\frac{\sin(\pi t)}{\pi t})^{2n}x$, for all $x\in\mathcal H_n$. 
Thus, if we write $x\in M$ as $x=\sum_{n\geqslant 0}x_n$, where $x_n\in\mathcal H_n$, then we have \begin{equation}\label{theta_t}\tau(\theta_t(x)x^*)=\tau(E_M(\theta_t(x))x^*)=\sum_{n\geqslant 0}(\frac{\sin(\pi t)}{\pi t})^{2n}\|x_n\|_2^2.\end{equation}

We derive next a consequence of Theorem \ref{ipp} that we will need in the proof of Theorem \ref{afpgamma}.
\begin{corollary}\label{211}
Let $A\subset pMp$ be a von Neumann subalgebra, for some projection $p\in M$. 

If $A$ is amenable relative to $M_1$, then either $A$ is amenable relative to $B$ or $\mathcal N_{pMp}(A)''\prec_{M}M_1$.

\end{corollary}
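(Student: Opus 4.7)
The plan is to apply Theorem~\ref{ipp} to the free malleable deformation $\{\theta_t\}$ of $\tilde M = M *_{B} (B \bar{\otimes} L(\mathbb{F}_2))$, after establishing a suitable deformation estimate. Concretely, the target is to show there exist $t, c > 0$ with $\tau(\theta_t(u) u^*) \geqslant c$ for all $u \in \mathcal{U}(A)$, using the amenability of $A$ relative to $M_1$.

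The crucial feature to exploit is that $\theta_t|_{M_1} = \mathrm{Ad}(u_1^t)|_{M_1}$ and $\|u_1^t - 1\|_\infty \to 0$ as $t \to 0$; hence $\|\theta_t(x) - x\|_\infty \to 0$ uniformly on the unit ball of $M_1$. Consequently, the unit vector $\hat 1 \in L^2(\tilde M)$, viewed in the $M$-$M$ bimodule structure with left action twisted by $\theta_t$, is approximately $M_1$-central (for small $t$) and exactly trace-compatible. Amenability of $A$ relative to $M_1$ provides, via Lemma~\ref{op}, a net $(\xi_n) \subset p\,L^2(M) \otimes_{M_1} L^2(M)\,p$ of asymptotically $A$-central, trace-compatible vectors. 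I would funnel these through the approximately $M$-$M$ bimodular map $\Psi_t : L^2(M) \otimes_{M_1} L^2(M) \to L^2(\tilde M)$ defined on bounded vectors by $\Psi_t(\xi \otimes_{M_1} \eta) = \theta_t(\xi)\eta$, whose defect over the $M_1$-balance is $O(\|u_1^t - 1\|_\infty)$. The images $\Psi_t(\xi_n)$ then give asymptotically $A$-central vectors in the $\theta_t$-twisted bimodule $L^2(\tilde M)$, and a direct inner-product calculation yields $\tau(\theta_t(u) u^*) \geqslant c$ uniformly on $\mathcal{U}(A)$, once $t$ is chosen small enough relative to the approximation scale of $\xi_n$.

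With the deformation estimate in hand, Theorem~\ref{ipp} yields that either $A \prec_M B$, or $\mathcal{N}_M(A)'' \prec_M M_i$ for some $i \in \{1, 2\}$. If $A \prec_M B$, combining this with the hypothesis that $A$ is amenable relative to $M_1$, and the fact that $B \subset M_1$, I would upgrade to full amenability of $A$ relative to $B$ using the central-support machinery of Remark~\ref{embedamen}. If $\mathcal{N}_M(A)'' \prec_M M_i$, the target $\mathcal{N}_{pMp}(A)'' \prec_M M_1$ follows by (i) ruling out $i = 2$ via a further intertwining analysis (a corner of $A \subset \mathcal{N}_M(A)''$ embedded in $M_2$, combined with amenability of $A$ relative to $M_1$, should force the intertwiner to land in $M_1 \cap M_2 = B$, reducing to the first case), and (ii) descending from $\mathcal{N}_M(A)''$ to $\mathcal{N}_{pMp}(A)''$ by a standard cut-down.

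The main obstacle is the deformation estimate (Step 1): making $\Psi_t$ precise on bounded vectors, controlling its defect, and verifying that $\Psi_t(\xi_n)$ retains bounded non-vanishing norm and asymptotic $A$-centrality in the $\theta_t$-twisted bimodule. The limits $n \to \infty$ and $t \to 0$ must be interleaved carefully so that the $O(\|u_1^t - 1\|_\infty)$ defect of $\Psi_t$ is dominated by the $A$-centrality approximation of $\xi_n$, uniformly over $\mathcal{U}(A)$. A secondary technical issue is the rigorous execution of ruling out the $M_2$ alternative in Step 3.
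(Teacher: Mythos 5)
Your Step 1 is not just technically delicate; it is false. Amenability of $A$ relative to $M_1$ does not imply a uniform deformation estimate $\tau(\theta_t(u)u^*)\geqslant c$ for all $u\in\mathcal U(A)$ and some $t,c>0$. Take $M_1,M_2$ diffuse, $B=\mathbb C$, let $v_1\in\mathcal U(M_1)$, $v_2\in\mathcal U(M_2)$ be Haar unitaries and put $A=\{v_1v_2\}''$. Then $A$ is diffuse abelian, hence amenable, hence amenable relative to $M_1$ (restrict an $A$-central state on $\mathbb B(L^2(M))$ with normal trace-restriction to $\langle M,e_{M_1}\rangle$). Yet $(v_1v_2)^k$ is an alternating word of length $2k$ in $M_1\ominus B$ and $M_2\ominus B$, so by equation \ref{theta_t} we get $\tau\bigl(\theta_t((v_1v_2)^k)(v_1v_2)^{-k}\bigr)=(\tfrac{\sin(\pi t)}{\pi t})^{4k}\rightarrow 0$ as $k\rightarrow\infty$, for every fixed $t\in(0,1)$; since $(v_1v_2)^k\in\mathcal U(A)$, no $t,c$ can work. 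So Theorem \ref{ipp} cannot be fed unconditionally from the hypothesis, and the dichotomy in the statement is precisely the reflection of this obstruction. (Relatedly, your map $\Psi_t$ is only densely defined: on elementary tensors the $M_1$-balancing defect is $O(\|u_1^t-1\|)$, but a general vector of $L^2(\langle M,e_{M_1}\rangle)$ is an $L^2$-limit of sums $\sum_i x_ie_{M_1}y_i$, and $\sum_i x_ie_{M_1}y_i\mapsto\sum_i\theta_t(x_i)y_i$ admits no norm bound, so the asymptotic centrality of $\Psi_t(\xi_n)$ cannot be extracted this way.)

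The argument has to be conditional, which is what the paper's proof does. It uses the one-sided deformation $\alpha_t$ (identity on $\tilde M_1$, $\mathrm{Ad}(u_2^t)$ on $\tilde M_2$) applied to vectors $\xi_n\in L^2(p\langle M,e_{M_1}\rangle p)$ witnessing amenability of $A$ relative to $M_1$, and the projection $e$ onto $\mathcal H=\overline{\mathrm{span}}\,Me_{M_1}\tilde M$. If for some $t$ the vectors $\alpha_t(\xi_n)$ keep a definite fraction of their mass in $\mathcal H$, one deduces a lower bound on $\|E_M(\alpha_t(x))\|_2$, hence on $\tau(\theta_t(x)x^*)$, over $\mathcal U(A)$, and only then applies Theorem \ref{ipp}; in the complementary case the excess $\alpha_t(\xi_n)-e(\alpha_t(\xi_n))$ lives in an $M$-$M$ bimodule of the form $L^2(M)\otimes_B(\,\cdot\,)$ (Lemma \ref{BM}), and Lemma \ref{op} yields a corner of $A$ amenable relative to $B$. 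Two further points your sketch glosses over: first, when Theorem \ref{ipp} leads to $A\prec_M M_2$, this cannot be "ruled out" by pushing an intertwiner into $B$ (it genuinely occurs, e.g. for $A\subset M_2$); the paper instead combines amenability of a corner of $A$ relative to $M_2$ with the hypothesis of amenability relative to $M_1$, via the bimodule identification $L^2(\langle M,e_{M_1}\rangle)\otimes_ML^2(\langle M,e_{M_2}\rangle)\cong L^2(M)\otimes_B\mathcal L\otimes_BL^2(M)$ and \cite[Proposition 2.7]{PV11}, to obtain amenability relative to $B$. Second, all these steps produce only corners $Ap'$ amenable relative to $B$; passing to the stated dichotomy requires the final maximality argument over projections in $\mathcal Z(A'\cap pMp)$ (noting the maximal such projection is normalizer-invariant), which Remark \ref{embedamen} alone does not provide.
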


\begin{proof}  Assume that $A$ is amenable relative to $M_1$. In the first part of the proof we show that either $Ap'$ is amenable relative to $B$, for a non-zero projection $p'\in\mathcal Z(A'\cap pMp)$, or $\mathcal N_{pMp}(A)''\prec_{M}M_1$. 
To do this, we follow closely the strategy of proof of \cite[Theorem 4.9]{OP07}. 

Since $A$ is amenable relative to $M_1$ we can find a net $\{\xi_n\}_{n\in I}\in L^2(p\langle M,e_{M_1}\rangle p)$ such that 
\begin{equation}\label{centr}
\|x\xi_n-\xi_n x\|_2\rightarrow 0,\;\;\text{for all}\;\; x\in A,\;\;\text{and}\end{equation}
\begin{equation}\label{trac}
\;\;\langle y\xi_n,\xi_n\rangle\rightarrow \tau(y),\;\;\text{for all}\;\; y\in pMp.
\end{equation}
Moreover, the proof of \cite[Theorem 2.1]{OP07} shows that $\xi_n$ can be chosen such that $\xi_n=\zeta_n^{\frac{1}{2}}$, for some $\zeta_n\in L^1(\langle M,e_{M_1}\rangle)_{+}$. Thus, $\langle \xi_n y,\xi_n\rangle=Tr(\zeta_n y)=\langle y\xi_n,\xi_n\rangle\rightarrow\tau(y)$, for all $y\in pMp$.

Next, for $t\in\mathbb R$, we consider the automorphism $\alpha_t$ of $\tilde M$ given by $\alpha_t(x)=x$, for all $x\in \tilde M_1$, and $\alpha_t(y)=u_2^ty{u_2^t}^*$, for all $ y\in\tilde M_2$.  Since $\alpha_t$ is an automorphism of $\tilde M$ that leaves $M_1$ invariant we can extend it to a trace preserving automorphism of $\langle\tilde M,e_{M_1}\rangle$ by letting $\alpha_t(e_{M_1})=e_{M_1}$. 

We also let $\mathcal H$ be the $\|.\|_2$ closure of the span of $Me_{M_1}\tilde M=\{xe_{M_1}y|x\in M,y\in\tilde M\}$ and denote by $e$ the orthogonal projection  from $L^2(\langle\tilde M,e_{M_1}\rangle)$ onto $\mathcal H$. 

{\bf Claim.} Let $x\in A, y\in\tilde M$ and $t\in\mathbb R$. Then we have

\begin{enumerate}
\item $\lim_n\|y\alpha_t(\xi_n)\|_2^2=\tau(y^*y\alpha_t(p))\leqslant \|y\|_2^2$ and $\lim_n\|\alpha_t(\xi_n)y\|_2^2=\tau(yy^*\alpha_t(p))\leqslant \|y\|_2^2$.
\item$\limsup_{n}\|ye(\alpha_t(\xi_n))\|_2\leqslant \|y\|_2$.
\item $\limsup_n \|x\alpha_t(\xi_n)-\alpha_t(\xi_n)x\|_2\leqslant 2\|\alpha_t(x)-x\|_2$.
\end{enumerate}

{\it Proof of the claim.} (1) Since $\xi_n\in p\mathcal H$, by using \ref{trac} we get that $$\|y\alpha_t(\xi_n)\|_2^2=\langle \alpha_t^{-1}(y^*y)\xi_n,\xi_n\rangle=\langle E_M(\alpha_t^{-1}(y^*y))\xi_n,\xi_n\rangle=$$ $$\langle pE_M(\alpha_t^{-1}(y^*y))p\xi_n,\xi_n\rangle\longrightarrow\tau(pE_M(\alpha_t^{-1}(y^*y))p)=\tau(y^*y\alpha_t(p)).$$

The second inequality follows similarly using the fact that $\langle \xi_n y,\xi_n\rangle\rightarrow\tau(y)$, for all $y\in pMp$.

(2) Since $(\tilde M\ominus M)\mathcal H\perp\mathcal H$  and  $\mathcal H$ is a left $M$-module,  we derive that $$\|ye(\alpha_t(\xi_n))\|_2^2=\langle y^*ye(\alpha_t(\xi_n)),e(\alpha_t(\xi_n))=\langle E_M(y^*y)e(\alpha_t(\xi_n),e(\alpha_t(\xi_n))\rangle=$$ $$\|e(E_M(y^*y)^{\frac{1}{2}}\alpha_t(\xi_n))\|_2^2\leqslant \|E_M(y^*y)^{\frac{1}{2}}\alpha_t(\xi_n)\|_2^2.$$
On the other hand, by (1) we have that  $\|E_M(y^*y)^{\frac{1}{2}}\alpha_t(\xi_n)\|_2\leqslant \|E_M(y^*y)^{\frac{1}{2}}\|_2=\|y\|_2$.

(3) Since $\|x\alpha_t(\xi_n)-\alpha_t(\xi_n)x\|_2\leqslant \|(x-\alpha_t(x))\alpha_t(\xi_n)\|_2+\|\alpha_t(\xi_n)(x-\alpha_t(x))\|_2+\|x\xi_n-\xi_n x\|_2$, the inequality folows by combining (1) and \ref{centr}.
\hfill$\square$

Let   $J=(0,\infty)\times I$. Given $(t,n)\in J$, we denote $\eta_{t,n}=\alpha_t(\xi_n)-e(\alpha_t(\xi_n))$ and $\delta_{t,n}=\|\eta_{t,n}\|_2$. For the rest of the proof we treat two separate cases.

{\bf Case 1.} We can find $t>0$ such that $\limsup_{n}\delta_{t,n}<\frac{\|p\|_2}{2}$.

{\bf Case 2.} For all $t>0$ we have that $\limsup_{n}\delta_{t,n}\geqslant\frac{\|p\|_2}{2}$.

In {\bf Case 1}, fix $x\in\mathcal U(A)$. Since $\mathcal H$ is a left $M$-module and $(\tilde M\ominus M)\mathcal H\perp\mathcal H$  we get that \begin{equation}\label{una}\|E_M(\alpha_t(x))\alpha_t(\xi_n)\|_2\geqslant \|e(E_M(\alpha_t(x))\alpha_t(\xi_n))\|_2=\|e(\alpha_t(x)e(\alpha_t(\xi_n)))\|_2\geqslant\end{equation} $$\|e(\alpha_t(x)\alpha_t(\xi_n))\|_2-\delta_{t,n}\geqslant \|e(\alpha_t(\xi_n)\alpha_t(x))\|_2-\|x\xi_n-\xi_n x\|_2-\delta_{t,n}$$
On the other hand, since $\mathcal H$ is a right $\tilde M$-module we deduce that \begin{equation}\label{doua}\|e(\alpha_t(\xi_n)\alpha_t(x))\|_2=\|e(\alpha_t(\xi_n))\alpha_t(x)\|_2\geqslant \|\alpha_t(\xi_n)\alpha_t(x)\|_2-\delta_{t,n}=\|\xi_n x\|_2-\delta_{t,n}\end{equation}

By combining part (1) of the Claim with equations \ref{una}, \ref{doua}, \ref{centr} and \ref{trac} we derive that \begin{equation}\label{alpha} \|E_M(\alpha_t(x))\|_2\geqslant\lim_n\|E_M(\alpha_t(x))\alpha_t(\xi_n)\|_2\geqslant\end{equation} $$\liminf_n(\|\xi_nx\|_2-\|x\xi_n-\xi_n x\|_2-2\delta_{t,n})=$$ $$\|x\|_2-2\limsup_n\delta_{t,n}=\|p\|_2-2\limsup_n\delta_{t,n}>0,\;\;\text{for all}\;\; x\in\mathcal U(A).$$

Now, recall from notations \ref{H_n} that $L^2(M)=\mathcal H_0\bigoplus_{m\geqslant 1}(\oplus_{\mathcal I\in S_m}\mathcal H_{\mathcal I})$. Thus, we can write $x=x_0+\sum_{\substack{m\geqslant 1\\ \mathcal I\in S_m}}x_{\mathcal I}$, where $x_{\mathcal I}\in\mathcal H_{\mathcal I}$.
It is easy to see that if $c_{\mathcal I}$ denotes the number of times 2 appears in $\mathcal I$, then $E_M(\alpha_t(x_{\mathcal I}))=(\frac{\sin(\pi t)}{\pi t})^{2c_{\mathcal I}}x_{\mathcal I}$.  Therefore,  $\|E_M(\alpha_t(x))\|_2^2=\|x_0\|_2^2+\sum_{\substack{m\geqslant 1\\ \mathcal I\in S_m}}(\frac{\sin(\pi t)}{\pi t})^{4c_{\mathcal I}}\|x_{\mathcal I}\|_2^2$. On the other hand, by \ref{theta_t} we have $\tau(\theta_t(x)x^*)=\|x_0\|_2^2+\sum_{\substack{m\geqslant 1\\ \mathcal I\in S_m}}(\frac{\sin(\pi t)}{\pi t})^{2m}\|x_{\mathcal I}\|_2^2.$ Since every $\mathcal I\in S_m$ is an alternating sequence of 1's and 2's, we have that $2c_{\mathcal I}\geqslant m-1.$ 

By combining the last three facts, we conclude that $\tau(\theta_t(x)x^*)\geqslant (\frac{\sin(\pi t)}{\pi t})^{2}\|E_M(\alpha_t(x))\|_2^2$, for every $x\in M$. Together with \ref{alpha} this implies that $\inf_{x\in\;\mathcal U(A)}\tau(\theta_t(x)x^*)>0$. 

Thus, by Theorem \ref{ipp} we get that either  $A\prec_{M}M_1$ or $A\prec_{M}M_2$.  If $A\prec_{M}M_1$, then  \cite[Theorem 1.1]{IPP05} gives that either  $A\prec_{M}B$ or $\mathcal N_{M}(A)''\prec_{M}M_1$. Since by Remark \ref{embedamen}, having $A\prec_{M}B$ implies that there exists a non-zero projection $p'\in\mathcal Z(A'\cap pMp)$ such that $Ap'$ is amenable relative to $B$, the conclusion follows in this case. 

Therefore, in order to finish the proof of {\bf Case 1} we only need to analyze the case when $A\prec_{M}M_2$. By Remark \ref{embedamen} we can find a non-zero projection $p'\in\mathcal Z(A'\cap pMp)$ such that $Ap'$ is amenable relative to $M_2$. By the hypothesis we have that $A$ and thus $Ap'$ is amenable relative to $M_1$. 

We claim that $Ap'$ is amenable relative to $B$.
To this end, denote $\mathcal K=L^2(\langle M,e_{M_1}\rangle){\otimes}_{M}L^2(\langle M,e_{M_2}\rangle)$.  Lemma \ref{BM} provides a $B$-$B$ bimodule $\mathcal L$ such that $L^2(M)\cong L^2(M_1){\otimes}_B\mathcal L{\otimes}_BL^2(M_2)$, as $M_1$-$M_2$ bimodules. Thus, we have the following isomorphisms of $M$-$M$ bimodules $$\mathcal K\cong(L^2(M){\otimes}_{M_1}L^2(M)){\otimes}_{M}(L^2(M){\otimes}_{M_2}L^2(M))\cong L^2(M){\otimes}_{M_1}L^2(M){\otimes}_{M_2}L^2(M)\cong $$ $$  L^2(M){\otimes}_{M_1}(L^2(M_1){\otimes}_B\mathcal L{\otimes}_BL^2(M_2)){\otimes}_{M_2}L^2(M)\cong$$ $$ L^2(M){\otimes}_{B}\mathcal L{\otimes}_{B}L^2(M).$$

Since $Ap'$ is amenable relative to both $M_1$ and $M_2$,  the first part of the proof of \cite[Proposition 2.7]{PV11} implies that the  $p'Mp'$-$Ap'$  bimodule $L^2(p'Mp')$ is weakly contained in the $p'Mp'$-$Ap'$ bimodule $p'\mathcal Kp'$. Thus the $p'Mp'$-$Ap'$ bimodule $p'L^2(M){\otimes}_{B}\mathcal L{\otimes}_{B}L^2(M)p'$ weakly contains the $p'Mp'$-$Ap'$ bimodule $L^2(p'Mp')$. By Lemma \ref{op} it follows that $Ap'$ is amenable relative to $B$. This completes the proof of {\bf Case 1}.

In {\bf Case 2}, we claim that there exists a net $(\eta_k)$ in $\mathcal H^{\perp}$ such that $\|x\eta_k-\eta_k x\|_2\rightarrow 0$, for all $x\in A$, $\limsup_{k}\|y\eta_k\|_2\leqslant 2\|y\|_2$, for all $y\in pMp$, and $\limsup_k\|p\eta_k\|_2>0$.
  
Towards this, let $k=(X,Y,\varepsilon)$ be a triple such that
$X\subset A$, $Y\subset pMp$ are finite sets and $\varepsilon>0$. Then we can find $t>0$ such that \begin{equation}\label{alpha_t}\|\alpha_t(x)-x\|_2<\frac{\varepsilon}{2},\;\;\text{for all}\;\; x\in X,\;\;\text{and}\;\; \|\alpha_t(p)-p\|_2<\frac{\|p\|_2}{10}.\end{equation}

Let $x\in X$ and $y\in Y$.
Firstly, since $\eta_{t,n}=(1-e)(\alpha_t(\xi_n))$ and $x\in M$ we get that $\|x\eta_{t,n}-\eta_{t,n}x\|_2\leqslant \|x\alpha_{t}(\xi_n)-\alpha_t(\xi_n)x\|_2$. This inequality together with part (3) of the  Claim and \ref{alpha_t} implies that $\limsup_n\|x\eta_{t,n}-\eta_{t,n}x\|_2\leqslant 2\|\alpha_t(x)-x\|_2<\varepsilon$.
 
 Secondly, by combining parts (1) and (2) of the Claim we get that $\limsup_n\|y\eta_{t,n}\|_2\leqslant  2\|y\|_2$.
 
Thirdly, part (1) of the Claim gives that $\limsup_n\|p\eta_{t,n}\|_2\geqslant \limsup_n(\|p\alpha_t(\xi_n)\|_2-\|e(\alpha_t(\xi_n))\|_2)=\|p\alpha_t(p)\|_2-\liminf_n\|e(\alpha_t(\xi_n))\|_2.$ Also, since $\|\xi_n\|_2\rightarrow \|p\|_2$ we have that $\liminf_n\|e(\alpha_t(\xi_n))\|_2=\sqrt{\|p\|_2^2-\limsup_{n}\|\eta_{t,n}\|_2^2}\leqslant\frac{\sqrt{3}}{2}\|p\|_2.$ Since   \ref{alpha_t} implies that $\|p\alpha_t(p)\|_2>\frac{9}{10}\|p\|_2$, we altogether deduce that $\limsup_n\|p\eta_{t,n}\|_2> (\frac{9}{10}-\frac{\sqrt{3}}{2})\|p\|_2$.

The last three paragraphs imply that for some $n\in I$,  $\eta_k=\eta_{t,n}$ satisfies $\|x\eta_k-\eta_k x\|_2<\varepsilon$, for all $x\in X$, $\|y\eta_k\|_2\leqslant 2\|y\|_2+\varepsilon$, for all $y\in Y$, and $\|p\eta_k\|_2>(\frac{9}{10}-\frac{\sqrt{3}}{2})\|p\|_2$. It is now clear that the net $(\eta_k)$ has the desired properties.

Finally, by the definition of $\mathcal H$, the $M$-$M$ bimodule $L^2(\langle\tilde M,e_{M_1}\rangle)\ominus\mathcal H$ is isomorphic to the $M$-$M$ bimodule  $(L^2(\tilde M)\ominus L^2(M)){\otimes}_{M_1}L^2(\tilde M)$. Since $\tilde M=M*_{B}(B\bar{\otimes}L(\mathbb F_2))$, Lemma \ref{BM} (1) provides a $B$-$M$ bimodule $\mathcal K$ such that $L^2(\tilde M)\ominus L^2(M)\cong L^2(M){\otimes}_{B}\mathcal K$. Thus, we have the following isomorphism of $M$-$M$ bimodules
$$L^2(\langle\tilde M,e_{M_1}\rangle)\ominus\mathcal H\cong L^2(M){\otimes}_B(\mathcal K{\otimes}_{M_1}L^2(\tilde M)).$$

Since $\eta_k\in L^2(\langle\tilde M,e_{M_1}\rangle)\ominus\mathcal H$, for all $k$, by Lemma \ref{op} there is a non-zero projection $p'\in\mathcal Z(A'\cap pMp)$ such that $Ap'$ is amenable relative to $B$. This finishes the proof of {\bf Case 2}.

Now, to get the conclusion,  let $p_0\in\mathcal Z(A'\cap pMp)$ be the maximal projection such that $Ap_0$ is amenable relative to $B$. It is easy to see that $p_0\in\mathcal N_{pMp}(A)'\cap pMp$.

 Let $p_1=p-p_0$. 
If $p_1=0$, then $A$ is amenable relative to $B$. If $p_1\not=0$, then $Ap_1$ is amenable relative to $M_1$. By the first part of the proof either $Ap'$ is amenable relative to $B$, for some non-zero projection $p'\in\mathcal Z(A'\cap pMp)p_1$, or $\mathcal N_{p_1Mp_1}(Ap_1)''\prec_{M}M_1$. By the maximality of $p_0$, the former is impossible; since $\mathcal N_{pMp}(A)p_1\subset \mathcal N_{p_1Mp_1}(Ap_1),$ the latter implies that $\mathcal N_{pMp}(A)''\prec_{M}M_1$.
\end{proof}

\subsection{Random walks on countable groups} We end this section with some facts from the theory of random walks on countable groups that we will need in Section \ref{conjugacy}. Let $\mu$ and $\nu$ be probability measures on a countable group $\Gamma$. The {\it support} of $\mu$ is the set of $g\in\Gamma$ with $\mu(g)\not=0$. The convolution of $\mu$ and $\nu$ is the probability measure on $\Gamma$ given by $(\mu*\nu)(g)=\sum_{h\in\Gamma}\mu(gh^{-1})\nu(h)$.
For $n\geqslant 1$, we denote $\mu^{*n}=\underbrace{\mu*\mu*...*\mu}_{\text{$n$ times}}$. 

The next lemma is well-known (see for instance \cite[Theorems 2.2 and 2.28]{Fu02}). For the reader's convenience, we include a proof.
 
\begin{lemma}\label{random} Let $\Gamma$ be a finitely generated group and denote by $\ell_S:\Gamma\rightarrow\mathbb N$ the word length  with respect to a finite set of generators $S$.  Let $\mu$ be a probability measure on $\Gamma$ whose support generates a non-amenable subgroup and contains the identity element.

\begin{enumerate}
\item Then $\mu^{*n}(g)\rightarrow 0$, for all $g\in\Gamma$.

 \item Assume that  $\sum_{g\in\Gamma}\ell_S(g)^p\mu(g)<+\infty$, for some $p\in (0,1]$. 
If $\Sigma<\Gamma$ is a finitely generated nilpotent (e.g. cyclic) subgroup, then $\mu^{*n}(h\Sigma k)\rightarrow 0$, for all $h,k\in\Gamma$.
\end{enumerate}

\end{lemma}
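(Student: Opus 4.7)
For (1), I would reduce to Kesten's spectral radius criterion via symmetrization. Set $\check\mu(g)=\mu(g^{-1})$ and $\tilde\mu=\check\mu*\mu$, so that the convolution operator $\lambda(\tilde\mu)=\lambda(\mu)^{*}\lambda(\mu)$ on $\ell^{2}(\Gamma)$ is positive with norm $\|\lambda(\mu)\|^{2}$. Because $e\in\mathrm{supp}(\mu)$, we have $\mathrm{supp}(\mu)\subset\mathrm{supp}(\tilde\mu)$, so the symmetric measure $\tilde\mu$ is still supported in the non-amenable subgroup generated by $\mathrm{supp}(\mu)$. Kesten's theorem then forces $\|\lambda(\tilde\mu)\|<1$, and hence $\rho:=\|\lambda(\mu)\|<1$. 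Consequently, for every $g\in\Gamma$,
\[
\mu^{*n}(g)=\langle \lambda(\mu)^{n}\delta_{e},\delta_{g}\rangle\leq\rho^{n}\longrightarrow 0
\]
uniformly in $g$.

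For (2), my approach would exploit that a finitely generated nilpotent group is amenable, combined with continuity of induction. By Hulanicki's theorem, $1_{\Sigma}\prec\lambda_{\Sigma}$, and Fell's theorem on induction then yields $\lambda_{\Gamma/\Sigma}=\mathrm{Ind}_{\Sigma}^{\Gamma}1_{\Sigma}\prec\mathrm{Ind}_{\Sigma}^{\Gamma}\lambda_{\Sigma}\cong\lambda_{\Gamma}$. In particular $\|\lambda_{\Gamma/\Sigma}(\mu)\|\leq\|\lambda(\mu)\|=\rho<1$. Setting $\nu_{n}=\delta_{h^{-1}}*\mu^{*n}*\delta_{k^{-1}}$, one has $\mu^{*n}(h\Sigma k)=\nu_{n}(\Sigma)$, and expressing this as a matrix coefficient of the quasi-regular representation on $\Gamma/\Sigma$ (using that the $\delta_{\Sigma}$-coefficient of $\lambda_{\Gamma/\Sigma}(\nu)\delta_{\Sigma}$ equals $\nu(\Sigma)$) gives
\[
\mu^{*n}(h\Sigma k)=\langle \lambda_{\Gamma/\Sigma}(\mu)^{n}\delta_{k^{-1}\Sigma},\delta_{h\Sigma}\rangle\leq\|\lambda_{\Gamma/\Sigma}(\mu)\|^{n}\leq\rho^{n}\longrightarrow 0.
\]

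I expect the substantive inputs to be Kesten's theorem for (1) and Fell's continuity of induction for (2); granting these, both conclusions are essentially immediate. A more pedestrian alternative for (2), which does use the moment hypothesis, would split $\mu^{*n}(h\Sigma k)$ into the mass inside $B_{S}(R)$ (bounded via Cauchy--Schwarz in $\ell^{2}$ by $\rho^{n}\,|h\Sigma k\cap B_{S}(R)|^{1/2}$) and the mass outside (bounded by Markov's inequality using $\mathbb{E}[\ell_{S}(X_{n})^{p}]\leq n\sum_{g}\ell_{S}(g)^{p}\mu(g)$, which follows from subadditivity of $t\mapsto t^{p}$ for $p\in(0,1]$), and then choose $R=n^{2/p}$. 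The main obstacle in that approach is to bound $|\Sigma\cap B_{S}(R)|$ polynomially in $R$, since a finitely generated nilpotent subgroup of $\Gamma$ can a priori be exponentially distorted (as happens in, e.g., $BS(1,2)$, which is amenable and thus excluded here, but whose analogue inside a non-amenable ambient group is not immediately ruled out); the representation-theoretic argument above bypasses this issue entirely.
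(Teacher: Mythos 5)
Your proposal is correct, and for part (2) it takes a genuinely different route from the paper. For part (1) you and the paper do the same thing — bound $\mu^{*n}(g)=\langle\lambda(\mu)^n\delta_e,\delta_g\rangle$ by $\|\lambda(\mu)\|^n$ — but the paper simply asserts $\|\lambda(\mu)\|<1$ from non-amenability of the generated subgroup, whereas you justify it by symmetrizing, $\tilde\mu=\check\mu*\mu$, using $e\in\mathrm{supp}(\mu)$ to see that $\mathrm{supp}(\tilde\mu)$ still generates a non-amenable group, and invoking Kesten; this is exactly where the identity-in-the-support hypothesis enters (for $\mu=\tfrac12(\delta_a+\delta_b)$ on $\mathbb F_2$ one has $\|\lambda(\mu)\|=1$), so your write-up is in fact more complete than the paper's on this point.

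For part (2) the paper argues probabilistically: it applies Kingman's subadditive ergodic theorem to $S_n=\ell_S(X_n)^p$ (this is where the moment hypothesis and $p\in(0,1]$ are used) to get that $\ell_S(X_n)$ is at most polynomial in $n$ with high probability, and then combines the pointwise bound $\mu^{*n}(g)\leqslant\rho^n$ from (1) with a polynomial count of the elements of $h\Sigma k$ in ambient $\ell_S$-balls, the latter being attributed to polynomial growth of the nilpotent group $\Sigma$. Your main argument instead uses only amenability of $\Sigma$: Hulanicki gives $1_\Sigma\prec\lambda_\Sigma$, Fell's continuity of induction and induction in stages give $\lambda_{\Gamma/\Sigma}\prec\lambda_\Gamma$, and the identity $\mu^{*n}(h\Sigma k)=\langle\lambda_{\Gamma/\Sigma}(\mu)^n\delta_{k^{-1}\Sigma},\delta_{h\Sigma}\rangle$ (which I checked) yields $\mu^{*n}(h\Sigma k)\leqslant\|\lambda_\Gamma(\mu)\|^n=\rho^n$. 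This is shorter, gives exponential decay, needs neither the moment condition nor nilpotency nor finite generation of $\Sigma$ or $\Gamma$, and — as you rightly point out — sidesteps the distortion issue: the paper's step bounding $|\{g\in\Sigma:\ell_S(g)\leqslant n\}|$ polynomially uses the intrinsic growth of $\Sigma$, which does not automatically control the count in ambient balls when $\Sigma$ is badly distorted in $\Gamma$ (harmless in the paper's actual application, where $\Sigma$ is cyclic in $\mathbb F_2$, but a real caveat for the lemma as stated). Your "pedestrian alternative" is essentially the paper's proof, so you have correctly located both the paper's mechanism and its delicate point.
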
 
\begin{proof} (1) Let $\lambda:\Gamma\rightarrow\mathcal U(\ell^2(\Gamma))$ be the left regular representation of $\Gamma$.
Define the operator $T:\ell^2(\Gamma)\rightarrow\ell^2(\Gamma)$ by $T=\sum_{g\in\Gamma}\mu(g)\lambda(g)$. Since the support of $\mu$ generates 
a non-amenable group, by Kesten's characterization of amenability (see e.g. \cite[Appendix G.4]{BdHV08}) we have that $\|T\|<\sum_{g\in\Gamma}\mu(g)=1$.

 Denote by $\{\delta_g\}_{g\in\Gamma}$ the canonical orthonormal basis of $\ell^2(\Gamma)$. Then for $n\geqslant 1$ and $g\in\Gamma$ we have  $$\mu^{*n}(g)=\sum_{\substack{g_1,g_2,..,g_n\in\Gamma\\ g_1g_2...g_n=g}}\mu(g_1)\mu(g_2)...\mu(g_n)=\langle T^n(\delta_e),\delta_g\rangle.$$

This implies that $\mu^{*n}(g)\leqslant \|T\|^n$ and since $\|T\|<1$, we are done.

(2) Define the product probability space $(\Omega,\nu)=(\Gamma^{\mathbb N},\mu^{\mathbb N})$ together with the shift $T:\Omega\rightarrow\Omega$  given by $(T\omega)_n=\omega_{n+1}$, for all $\omega=(\omega_n)_n\in\Omega$. Then $T$ is an ergodic, measure preserving transformation of $(\Omega,\nu)$. For $n\geqslant 1$, define $X_n:\Omega\rightarrow\Gamma$ by letting $X_n(\omega)=\omega_1\omega_2...\omega_n$. Note that  $\mu^{*n}=(X_n)_{*}(\nu).$

Further, let $p\in (0,1]$ as in the hypothesis and define $S_n:\Omega\rightarrow [0,\infty)$ by $S_n(\omega)=l_S(X_n(\omega))^p$. Since $p\in (0,1]$, we have that $(a+b)^p\leqslant a^p+b^p$, for all $a,b\geqslant 0$. Recall that for every $g,h\in\Gamma$ we have that $\ell_S(gh)\leqslant \ell_S(g)+\ell_S(h)$. Also we have that $X_{n+m}(\omega)=X_n(\omega)X_m(T^n(\omega))$, for all $n,m\geqslant 1$ and $\omega\in\Omega$. By combining these three facts we deduce that \begin{equation}\label{king} S_{n+m}(\omega)\leqslant S_n(\omega)+S_m(T^n(\omega)),\;\text{for all\; $\omega\in\Omega$\; and \;$n,m\geqslant 1$}\end{equation}

Additionally, by using the hypothesis we get that \begin{equation}\label{l^1}\int_{\Omega}S_1(\omega)d\nu(\omega)=\int_{\Omega}\ell_S(X_1(\omega))^pd\nu(\omega)=\int_{\Gamma}\ell_S(\omega_1)^p d\mu(\omega_1)<+\infty\end{equation}

Since $T$ is ergodic, equations \ref{king} and \ref{l^1} guarantee that we can apply Kingman's subadditive ergodic theorem. Thus, we can find a constant $\alpha\in [0,\infty)$ such that $\frac{1}{n}S_n(\omega)\rightarrow \alpha$, for $\nu$-almost every $\omega\in\Omega$. It follows that $\nu(\{\omega\in\Omega|S_n(\omega)>(\alpha+1)n\})\rightarrow 0$, as $n\rightarrow\infty$.

Hence,  if we let $f(n)=((\alpha+1)n)^{\frac{1}{p}}$, then $\nu(\{\omega\in\Omega|\;\ell_S(X_n(\omega))>f(n)\})\rightarrow 0,$ as $n\rightarrow\infty$.
Since $(X_n)_*(\nu)=\mu^{*n}$, we deduce that \begin{equation}\label{bound}\varepsilon_n:=\mu^{*n}(\{g\in\Gamma|\;\ell_S(g)>f(n)\})\rightarrow 0,\;\text{as $n\rightarrow\infty$}\end{equation}

Now, since $\Sigma$ is a finitely generated  nilpotent group, it has polynomial growth. Thus, we can find $a,b>0$ such that $|\{g\in\Sigma|\; \ell_S(g)\leqslant n\}|\leqslant an^b$, for all $n$. Denoting $c=\ell_S(h)+\ell_S(k)$, we get that \begin{equation}\label{growth}|\{g\in h\Sigma k|\;\ell_S(g)\leqslant n\}|\leqslant a(n+c)^b,\;\text{for all $n$}\end{equation}

Recall from the proof of part (1) that $\mu^{*n}(g)\leqslant \|T\|^n$, for all $g\in\Gamma$ and $n\geqslant 1$. Combining this fact with \ref{bound} and \ref{growth} yields that $$\mu^{*n}(h\Sigma k)\leqslant \varepsilon_n+\mu^{*n}(\{g\in h\Sigma k| \; \ell_S(g)\leqslant f(n)\})\leqslant$$
 $$\varepsilon_n+a\|T\|^n(f(n)+c)^b,\;\text{for all $n\geqslant 1$}.
$$

As $\varepsilon_n\rightarrow 0$, $\|T\|<1$ and $f(n)$ grows polynomially in $n$, we conclude that $\mu^{*n}(h\Sigma k)\rightarrow 0$.
\end{proof}

\section{A conjugacy result for subalgebras of AFP algebras}\label{conjugacy}

Let $(M_1,\tau_1)$ and $(M_2,\tau_2)$ be two tracial von Neumann algebras with a common von Neumann subalgebra $B$ such that ${\tau_1}_{|B}={\tau_2}_{|B}$. Denote  $M=M_1*_BM_2$ and let  $\tilde M=M*_{B}(B\overline{\otimes}L(\mathbb F_2))$. For $t\in\mathbb R$, we consider the automorphism $\theta_t:\tilde M\rightarrow\tilde M$   defined in Section \ref{ipp}. We denote by $\{u_g\}_{g\in\mathbb F_2}\subset L(\mathbb F_2)$ the canonical unitaries and  consider the notations from \ref{H_n}.

In this context, we have

\begin{lemma}\label {perp} Let $\mathcal I=(i_1,i_2,..,i_n)\in S_n$ and $\mathcal J=(j_1,j_2,..,j_m)\in S_m$, for some $n,m\geqslant 1$. 
Let $x_1\in M_{i_1}\ominus B,x_2\in M_{i_2}\ominus B,...,x_n\in M_{i_n}\ominus B$ and $y_1\in M_{j_1}\ominus B, y_2\in M_{j_2}\ominus B,..,y_m\in M_{j_m}\ominus B$.\\
Let $g_1,g_2,..,g_{n+1}, h_1,h_2,..,h_{m+1}\in\mathbb F_2$.

Then $$\langle u_{g_1}x_1u_{g_2}x_2...u_{g_{n}}x_nu_{g_{n+1}},u_{h_1}y_1u_{h_2}y_2...u_{h_{m}}y_mu_{h_{m+1}}\rangle=$$
$$
\begin{cases} \langle x_1x_2...x_n,y_1y_2...y_m\rangle,\;\text{if}\; n=m, \mathcal I=\mathcal J,\text{and}\; g_k=h_k,\text{for all $k\in\{1,2,..,n+1\}$},\;\text{and} \\ 0, \;\; \text{otherwise}. \end{cases}
$$
\end{lemma}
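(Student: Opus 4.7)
My plan is to realize $\tilde M$ as a $4$-fold amalgamated free product over $B$ and then invoke the standard $L^2$-decomposition of an AFP. Since $\mathbb F_2 = \langle a_1\rangle * \langle a_2\rangle$, setting $L_j := B\bar{\otimes} L(\langle a_j\rangle)$ for $j=1,2$, the universal property of AFPs gives $B\bar{\otimes}L(\mathbb F_2) = L_1 *_B L_2$, whence
$$\tilde M \;=\; M_1 *_B M_2 *_B L_1 *_B L_2.$$
Every $g\in\mathbb F_2\setminus\{e\}$ has a unique reduced syllable form $g = a_{j_1}^{\epsilon_1}\cdots a_{j_{\ell}}^{\epsilon_{\ell}}$ (with $j_i\neq j_{i+1}$ and $\epsilon_i\neq 0$), so $u_g = \prod_{i=1}^{\ell} u_{a_{j_i}}^{\epsilon_i}$ is a reduced alternating product whose factors lie in $L_{j_i}\ominus B$.

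Substituting these syllable decompositions into $w$ (and similarly into $w'$) produces a reduced alternating word in the 4-fold AFP: the syllable condition $j_i\neq j_{i+1}$ handles internal $L$-$L$ transitions; $L$-$M$ transitions at each boundary between $u_{g_k}$ and $x_k$ are automatic; and when some $g_k=e$ (so $u_{g_k}=1$ drops out), the adjacent $x_{k-1},x_k$ still sit in distinct algebras $M_{i_{k-1}}, M_{i_k}$ because $\mathcal I\in S_n$ alternates. By the $L^2$-decomposition of an AFP, reduced-word tensors are mutually orthogonal across different \emph{types} (i.e.\ sequences of subalgebras in $\{M_1,M_2,L_1,L_2\}$). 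Matching the types of $w$ and $w'$ forces $n=m$, $\mathcal I=\mathcal J$, and, for each $k$, the $L_j$-pattern of the syllable expansion of $g_k$ to agree with that of $h_k$; in particular $\ell(g_k)=\ell(h_k)$ and $g_k=e \iff h_k=e$.

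Assuming matching types, the inner product is computed via the Connes tensor product formula, iteratively contracting the leftmost pair through a $B$-valued inner product. At an $L$-position one gets $E_B(u_{a_j}^{-\epsilon_i^{h}}u_{a_j}^{\epsilon_i^{g}}) = \delta_{\epsilon_i^g,\,\epsilon_i^h}\cdot 1_B$: a mismatch in exponents kills the inner product, while a match contributes $1_B$, which slides through the remaining tensor because $B=B\otimes 1$ commutes with $u_{a_j}^{\epsilon}\in 1\otimes L(\langle a_j\rangle)$ and acts as the identity on the $x$-letters. Hence the inner product vanishes unless $g_k=h_k$ for every $k$, and in that case the $L$-positions contract trivially, leaving exactly the Connes tensor inner product in $\mathcal H_{\mathcal I}$ of $x_1\otimes_B\cdots\otimes_B x_n$ against $y_1\otimes_B\cdots\otimes_B y_n$, which by Notations \ref{H_n} equals $\langle x_1 x_2\cdots x_n, y_1 y_2\cdots y_n\rangle$.

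The main technical point is to verify that the $1_B$ factors produced by contracting the $L$-positions genuinely pass through the subsequent Connes tensor product contractions without modifying them; this is straightforward because $B$ is the amalgamation subalgebra, $1_B$ multiplies trivially on each factor, and the $L$-letters commute with $B$. The rest of the argument is then a bookkeeping exercise tracking which subalgebra each letter occupies.
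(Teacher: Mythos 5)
Your argument is correct, and it reaches the conclusion by a genuinely different decomposition than the paper uses. The paper never breaks the group elements into syllables: it treats the whole set $\{u_g\}_{g\in\mathbb F_2\setminus\{e\}}$ as a single letter class $A_0$ alongside $A_1=M_1\ominus B$ and $A_2=M_2\ominus B$ (i.e.\ it works with the three-fold decomposition $\tilde M=M_1*_BM_2*_B(B\bar\otimes L(\mathbb F_2))$ rather than your four-fold $M_1*_BM_2*_BL_1*_BL_2$) and runs an induction on $\max\{n,m\}$. The quantity to compute is written as $\tau\big(u_{h_{m+1}}^*y_m^*\cdots y_2^*u_{h_2}^*y_1^*\,u_{h_1^{-1}g_1}\,x_1u_{g_2}\cdots x_nu_{g_{n+1}}\big)$; it vanishes as the trace of an alternating product unless $g_1=h_1$ and $i_1=j_1$, and in that case one splits $y_1^*x_1=E_B(y_1^*x_1)+z$ with $z\in M_{i_1}\ominus B$, discards the $z$-term (again an alternating product), slides $E_B(y_1^*x_1)$ past $u_{g_2}$ using $[B,L(\mathbb F_2)]=0$, and invokes the inductive hypothesis. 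The key simplification is that $u_{h_1}^*u_{g_1}=u_{h_1^{-1}g_1}$ is again a \emph{single} canonical unitary, either $1$ or a trace-zero element of $(B\bar\otimes L(\mathbb F_2))\ominus B$, which is precisely what makes your syllable expansion and the type-matching bookkeeping in the four-fold AFP unnecessary. Your route is equally valid: the orthogonality of reduced words of distinct types and the iterated $E_B$-contraction are exactly the facts the paper's induction re-proves on the fly, and your version has the minor advantage of making the orthogonality statements used right after the lemma (the spaces $\mathcal K_{\mathcal I}$ and the ranges of the isometries $V_{g_1,h_1,\dots,g_n,h_n}$) completely transparent. The one point you rightly isolate as needing care --- that the $B$-elements $E_B(y_k^*\,b\,x_k)$ produced at the $M$-positions commute with the $u_{a_j}^{\epsilon}$ and satisfy $E_B(u_{a_j}^{-\epsilon'}b\,u_{a_j}^{\epsilon})=b\,\tau(u_{a_j}^{\epsilon-\epsilon'})$ because $E_B=\text{id}_B\otimes\tau$ on $B\bar\otimes L(\langle a_j\rangle)$ --- does hold for exactly the reason you give, and with it the accumulated expression collapses to the iterated contraction computing $\tau(y_n^*\cdots y_1^*x_1\cdots x_n)=\langle x_1\cdots x_n,y_1\cdots y_n\rangle$ inside $M$.
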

\begin{proof} Denote $A_0=\{u_g\}_{g\in\mathbb F_2\setminus\{e\}}$, $A_1=M_1\ominus B$ and $A_2=M_2\ominus B$. We say that $z=z_1z_2...z_n$ is an {\it alternating product} if for all $i$ we have that $z_i\in A_j$, for some $j\in\{0,1,2\}$ and that $z_i$ and $z_{i+1}$ belong to different ${A_j}$'s. It is clear that $\tau(z)=0$, for any alternating product $z$.

We proceed by induction on $\max\{n,m\}$.
Denote by $\alpha$ the quantity that we want to compute. We have that $$\alpha=
\tau(u_{h_{m+1}}^*y_m^*...y_2^*u_{h_2}^*y_1^*u_{h_1^{-1}g_1}x_1u_{g_2}x_2...x_nu_{g_{n+1}})$$

Assuming that $\alpha\not=0$, let us prove that the first alternative holds.

Firstly, we must have that $g_1=h_1$ and $i_1=j_1$, otherwise $\alpha$ would be the trace of an alternating product. Hence $x_1,y_1\in M_{i_1}\ominus B$ and $\alpha=\tau(u_{h_{m+1}}^*y_m^*...y_2^*u_{h_2}^*(y_1^*x_1)u_{g_2}x_2...x_nu_{g_{n+1}})$. Write $y_1^*x_1=b+z$, where $b\in B$ and $z\in M_{i_1}\ominus B$. Since $u_{h_{m+1}}^*y_m^*...y_2^*u_{h_2}^*zu_{g_2}x_2...x_nu_{g_{n+1}}$ is an alternating product and $b$ commutes with $\mathbb F_2$ we deduce that $$\alpha=\tau(u_{h_{m+1}}^*y_m^*...y_2^*u_{h_2}^*bu_{g_2}x_2...x_nu_{g_{n+1}})=\langle u_{g_2}(bx_2)u_{g_3}...x_nu_{g_{n+1}},u_{h_2}y_2u_{h_3}...y_{m}u_{h_{m+1}}\rangle$$

By induction we get that $n=m$, $i_2=j_2,...,i_n=j_n$ and that $g_2=h_2....g_n=h_n$. It also follows that $\alpha=\langle bx_2x_3...x_n,y_2y_3...y_n\rangle$. Since the latter is equal to $\langle x_1x_2...x_n,y_1y_2...y_n\rangle$, we are done.
\end{proof}

 Next, we present a crossed product decomposition of $\tilde M$ (see \cite[Remark 4.5]{Io06}).  Let $N$ be the subalgebra of $\tilde M$ generated by $\{u_gMu_g^*|g\in\mathbb F_2\}$. Then $N$ 
is normalized by $\mathbb F_2=\{u_g\}_{g\in\mathbb F_2}$. Since $\tilde M$ is generated by $N$ and $\mathbb F_2$, and $E_N(u_g)=0$, for all $g\in\mathbb F_2\setminus\{e\}$, we conclude that $\tilde M=N\rtimes\mathbb F_2$, where $\mathbb F_2$ acts on $N$ by conjugation.

 Moreover,  if $\Sigma<\mathbb F_2$ is a subgroup, then for all $g_1,g_2,...,g_{n+1}\in\mathbb F_2$ and every $x_1,...,x_n\in M$, we have that
\begin{equation}\label{E_N}E_{N\rtimes\Sigma}(u_{g_1}x_1u_{g_2}x_2...u_{g_n}x_nu_{g_{n+1}})=\begin{cases} u_{g_1}x_1u_{g_2}x_2...u_{g_n}x_nu_{g_{n+1}},\;\text{if}\;\; g_1g_2...g_{n+1}\in\Sigma, \;\text{and}  \\ 0, \;\text{if}\; g_1g_2...g_ng_{n+1}\not\in\Sigma .\end{cases}\end{equation}

 Note that the subalgebras $\{u_gMu_g^*\}_{g\in\mathbb F_2}$ of $\tilde M$ are freely independent over $B$. Therefore, $N$ is isomorphic to the infinite amalgamated free product algebra $M*_B*M*_B...$. If we index the copies of $M$ by $\mathbb F_2$, then the action of $\mathbb F_2$ on $N\cong M*_B*M*_B...$ is the {\it free Bernoulli shift}.

We are now ready to state the main result of this section.

\begin{theorem}\label{inter} Let $A\subset pMp$ be a von Neumann subalgebra, for some projection $p\in M$.

Let $t\in (0,1)$. 
Assume that $\theta_t(A)\prec_{\tilde M}N$. More generally, assume that $\theta_t(A)\prec_{\tilde M}N\rtimes\Sigma$, where $\Sigma=\langle a\rangle$ is a cyclic subgroup of $\mathbb F_2$.

Then either $A\prec_{M}B$ or $\mathcal N_{M}(A)''\prec_{M}M_i$, for some $i\in\{1,2\}$.

\end{theorem}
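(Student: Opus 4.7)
The plan is to prove the contrapositive: assuming $A \not\prec_{M} M_i$ for both $i \in \{1,2\}$, I will show $\theta_t(A) \not\prec_{\tilde M} N \rtimes \Sigma$, contradicting the hypothesis. Once $A \prec_{M} M_i$ is established for some $i$, \cite[Theorem 1.1]{IPP05} immediately yields the dichotomy $A \prec_M B$ or $\mathcal{N}_M(A)'' \prec_M M_i$. To produce a witness to non-intertwining, I first use both non-intertwining hypotheses together with an iterated application of Popa's intertwining criterion, in the spirit of \cite[Theorem 3.1]{IPP05}, to build a sequence $u_k \in \mathcal{U}(A)$ asymptotically supported on arbitrarily long alternating words: writing $u_k = \sum_{n,\mathcal{I}} u_{k,\mathcal{I}}$ with $u_{k,\mathcal{I}} \in \mathcal{H}_{\mathcal{I}}$, we arrange that $\sum_{|\mathcal{I}| \le \ell} \|u_{k,\mathcal{I}}\|_2^2 \to 0$ for every fixed $\ell$, and more generally that $\|P_\ell(x u_k y)\|_2 \to 0$ for every $x, y \in M$, where $P_\ell$ is the projection onto $\bigoplus_{n \le \ell}\mathcal{H}_n$.

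The core computation evaluates $\|E_{N \rtimes \Sigma}(a\,\theta_t(u_k)\,b)\|_2$ for $a, b$ in a convenient dense $*$-subalgebra of $\tilde M$. For a fixed alternating word $m_{\mathcal{I}} = x_1 x_2 \cdots x_n$ with $x_i \in M_{j_i} \ominus B$, expand
\begin{equation*}
\theta_t(m_{\mathcal{I}}) = u_{j_1}^t x_1 u_{j_1}^{-t} u_{j_2}^t x_2 u_{j_2}^{-t} \cdots u_{j_n}^t x_n u_{j_n}^{-t}
\end{equation*}
using the Fourier series $u_j^{\pm t} = \sum_k c_k(\pm t) u_j^k$, where $c_k(t) = \sin(\pi(t-k))/(\pi(t-k))$. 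This produces a sum of terms of the form $u_{g_1} x_1 u_{g_2} x_2 \cdots u_{g_n} x_n u_{g_{n+1}}$ weighted by products $\prod_i c_{k_i}(t) c_{k_i'}(-t)$. By Lemma \ref{perp} these terms are pairwise orthogonal in $L^2(\tilde M)$, and in the crossed product decomposition $\tilde M = N \rtimes \mathbb{F}_2$ each term carries the group-theoretic component $g_1 g_2 \cdots g_{n+1} = a_{j_1}^{k_1+k_1'} a_{j_2}^{k_2+k_2'} \cdots a_{j_n}^{k_n+k_n'}$. Projecting onto $L^2(N \rtimes \Sigma)$ retains only those terms whose group element lies in $\Sigma$, giving
\begin{equation*}
\|E_{N \rtimes \Sigma}(\theta_t(m_{\mathcal{I}}))\|_2^2 = \|m_{\mathcal{I}}\|_2^2 \cdot \mathbb{P}(W_{|\mathcal{I}|} \in \Sigma),
\end{equation*}
where $W_n$ is the random walk on $\mathbb{F}_2$ whose $i$-th step is drawn from $\mu_{j_i}$ on $\mathbb{Z}_{j_i}$, with $\mu_j(a_j^m) = \sum_{k+k'=m} |c_k(t)|^2 |c_{k'}(-t)|^2$. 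Outer multiplication by generic elements $a, b \in \tilde M$ shifts $\Sigma$ to a coset $h \Sigma k$ depending on the $\mathbb{F}_2$-supports of $a$ and $b$.

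The final step invokes Lemma \ref{random}(2). For $t \in (0,1)$, $c_k(t) \ne 0$ for every $k \in \mathbb{Z}$, so the support of $\mu_1 * \mu_2$ is all of $\mathbb{Z}_1 \cdot \mathbb{Z}_2$ and generates the non-amenable group $\mathbb{F}_2$. The decay $|c_k(t)|^2 \asymp 1/k^2$ gives $\sum_m |m|^p \mu_j(a_j^m) < \infty$ for every $p < 1$. Since $\Sigma = \langle a \rangle$ is cyclic, hence finitely generated nilpotent, Lemma \ref{random}(2) yields $\mathbb{P}(W_n \in h \Sigma k) \to 0$ as $n \to \infty$ for all $h, k \in \mathbb{F}_2$. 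Combined with the long-word support of $u_k$ from the first paragraph, this gives $\|E_{N \rtimes \Sigma}(a\,\theta_t(u_k)\,b)\|_2 \to 0$ for all $a, b$ in a dense set, and then for all $a, b \in \tilde M$ by approximation, contradicting $\theta_t(A) \prec_{\tilde M} N \rtimes \Sigma$.

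The main technical obstacle is the bookkeeping in the expansion for arbitrary $a, b \in \tilde M$: one must carefully verify that after outer multiplication the relevant norm is still controlled by a random walk probability for a coset of $\Sigma$, rather than a more complicated object. Reducing to $a, b$ that are finite sums of elementary alternating words of the form $x u_g$ with $x \in M$ and $g \in \mathbb{F}_2$ via density, and using Lemma \ref{perp} to preserve orthogonality in the resulting expansion, should make this estimate rigorous.
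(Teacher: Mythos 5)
Your proposal is correct and follows essentially the same route as the paper: the paper packages your first step as the bound $\inf_{u\in\mathcal U(A)}\tau(\theta_t(u)u^*)>0$ fed into Theorem \ref{ipp} (which is exactly \cite[Theorems 3.1 and 1.1]{IPP05} combined), and your core computation --- the Fourier expansion of $u_j^{\pm t}$, the orthogonality of the resulting terms via Lemma \ref{perp}, the identification of $\|E_{N\rtimes\Sigma}(u_g\theta_t(\cdot)u_h)\|_2^2$ with a convolution power of $\mu_1*\mu_1*\mu_2*\mu_2$ evaluated on a coset of $\Sigma$, and the appeal to Lemma \ref{random}(2) using the $1/k^2$ decay of the coefficients --- is precisely the content of Lemmas \ref{estimate1} and \ref{estimate2}.
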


Theorem \ref{inter} is an immediate consequence of Theorem \ref{ipp} and the next lemma.

\begin{lemma}\label{estimate1}
Let $t\in (0,1)$ and $x_k\in (M)_1$ be a sequence such that $\tau(\theta_t(x_k)x_k^*)\rightarrow 0$.

Then $\|E_N(y\theta_t(x_k)z)\|_2\rightarrow 0$, for  every $y,z\in\tilde M$.

More generally, if $\Sigma$ is a cyclic subgroup of $\mathbb F_2$, then  $\|E_{N\rtimes\Sigma}(y\theta_t(x_k)z)\|_2\rightarrow 0$, for every $y,z\in\tilde M$.
\end{lemma}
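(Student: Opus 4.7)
The plan is to exploit the crossed-product structure $\tilde M=N\rtimes\mathbb F_2$ and reduce the estimate to a random walk computation on $\mathbb F_2$. First, the hypothesis $\tau(\theta_t(x_k)x_k^*)\to 0$ combined with identity (2.7) yields $\sum_{n\ge 0}c^{2n}\|x_{k,n}\|_2^2\to 0$, where $c=\sin(\pi t)/(\pi t)\in(0,1)$ and $x_{k,n}$ denotes the component of $x_k$ in $\mathcal H_n$; in particular $\|x_{k,n}\|_2\to 0$ for each fixed $n$ as $k\to\infty$.

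Next, using $\|\cdot\|_2$-density of finite Fourier sums in $\tilde M=\bigoplus_g Nu_g$ together with the inequality $\|ABC\|_2\le\|A\|_2\|B\|_\infty\|C\|_\infty$ and its symmetric variant, I would approximate $y$ and $z$ and reduce to the case $y=u_h a$, $z=b u_{h'}$ with $a,b\in N$ bounded, $h,h'\in\mathbb F_2$. Writing $\theta_t(x_k)=\sum_g p_g u_g$ in $N\rtimes\mathbb F_2$, an unwinding in the crossed product gives
\[\|E_{N\rtimes\Sigma}(u_h a\,\theta_t(x_k)\,b u_{h'})\|_2^2\;\le\;\|a\|^2\|b\|^2\!\!\sum_{g\in h^{-1}\Sigma{h'}^{-1}}\!\!\|p_g\|_2^2.\]
The key identification is that for $x\in\mathcal H_{\mathcal I}$ with $\mathcal I=(i_1,\ldots,i_n)$, Fourier-expanding $u_i^t=\sum_k\alpha_k u_i^k$ and invoking Lemma~\ref{perp} (orthogonality of the resulting alternating basis words $u_{g_1}x_1\cdots u_{g_{n+1}}$, whose total group part equals $g_{\mathbf m}=a_{i_1}^{m_1}\cdots a_{i_n}^{m_n}$ with $m_j=k_j-l_j$) produces
\[\sum_{g\in S}\|p_g\|_2^2\;=\;\|x\|_2^2\cdot(\nu_{i_1}*\nu_{i_2}*\cdots*\nu_{i_n})(S),\]
where $\nu_i=\mu_i*\check\mu_i$ is the probability measure on $\langle a_i\rangle$ induced from $\mu_i(a_i^k)=|\alpha_k|^2$.

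The final step is the random walk decay. Set $\mu=\nu_1*\nu_2$ on $\mathbb F_2$: its support contains the identity and generates $\mathbb F_2$ (non-amenable), and since $|\alpha_k|=O(1/|k|)$ the measure $\mu$ has finite $p$-th moment for any $p\in(0,1)$. Hence Lemma~\ref{random}(2) gives $\mu^{*m}(h^{-1}\Sigma{h'}^{-1})\to 0$ as $m\to\infty$ for any cyclic $\Sigma$; one additional convolution with $\nu_1$ or $\nu_2$ handles odd $n$, so $\rho_n:=\max_{\mathcal I\in S_n}(\nu_{i_1}*\cdots*\nu_{i_n})(h^{-1}\Sigma{h'}^{-1})\to 0$. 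Using the orthogonality of $\theta_t(\mathcal H_{\mathcal I})$ for distinct $\mathcal I$ to split the sum $\sum_{g\in S}\|p_g\|_2^2$ according to $\mathcal I$, one obtains
\[\|E_{N\rtimes\Sigma}(u_h a\,\theta_t(x_k)\,b u_{h'})\|_2^2\;\le\;\|a\|^2\|b\|^2\sum_{n\ge 0}\rho_n\|x_{k,n}\|_2^2,\]
and splitting at a large $L$ shows this tends to $0$ as $k\to\infty$: the tail $n>L$ is bounded by $(\sup_{n>L}\rho_n)\|x_k\|_2^2$ (small by the random-walk decay), while the head $n\le L$ tends to $0$ by the first observation. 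The main obstacle I anticipate is the orthogonality bookkeeping in the expansion of $\theta_t(x)$: a careful invocation of Lemma~\ref{perp} is needed both to collapse the double sum over $(\mathbf k,\mathbf l)$ into a convolution of the $\nu_i$'s, and to ensure that the contributions to $p_g$ coming from different $\mathcal I$ remain orthogonal in $L^2(N)$; once this is in place, Lemma~\ref{random}(2) furnishes the decay mechanically.
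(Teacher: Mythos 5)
Your proposal is correct and follows essentially the same route as the paper: decompose $x_k$ along the $\mathcal H_n$, Fourier-expand $u_i^t$ over the cyclic subgroups, use Lemma \ref{perp} to turn $\|E_{N\rtimes\Sigma}(\cdot)\|_2^2$ into a convolution of probability measures evaluated on a coset of $\Sigma$, and invoke Lemma \ref{random}(2) for the decay before splitting the sum at a large level. The only (immaterial) differences are your use of $\mu_i*\check\mu_i$ in place of the paper's $\mu_i*\mu_i$ and the slightly different Kaplansky-type reduction of $y,z$.
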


\vskip 0.05in
{\it Proof of Theorem \ref{inter}}.  If $\theta_t(A)\prec_{\tilde M}N\rtimes\Sigma$, then by Theorem \ref{corner} we can find $v\in\tilde M$ such that $\inf_{u\in\mathcal U(A)}\|E_{N\rtimes\Sigma}(v\theta_t(u)v^*)\|_2>0$. Lemma \ref{estimate1} then  implies that   $\inf_{u\in\mathcal U(A)}\tau(\theta_t(u)u^*)>0$. Finally, the conclusion follows from Theorem \ref{ipp}.\hfill$\square$

\vskip 0.05in 
{\it Proof of Lemma \ref{estimate1}}. Since $\tilde M=N\rtimes\mathbb F_2$, by Kaplansky's density theorem  we may assume that $y=u_g$ and $z=u_h$, for some $g,h\in\mathbb F_2$. Thus, our goal is to prove that $\|E_{N\rtimes \Sigma}(u_g\theta_t(x_k)u_h)\|_2\rightarrow 0$.
Let us first show that this  is a consequence of the next lemma whose proof we postpone for now.

\begin{lemma}\label{estimate2} Fix $t\in (0,1)$ and for $n\geqslant 0$, define $c_n=\sup_{x\in\mathcal H_n,\;\|x\|_2\leqslant 1}\|E_{N\rtimes\Sigma}(u_g\theta_t(x)u_h)\|_2$. 

Then $c_n\rightarrow 0$, as $n\rightarrow\infty$.
\end{lemma}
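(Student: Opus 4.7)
The plan is to reduce the estimate to a random walk computation on $\mathbb{F}_2$ via a Fourier expansion of the deformation $\theta_t$, and then invoke Lemma \ref{random} to obtain decay as $n\to\infty$.

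First, since for the two $\mathcal{I}\in S_n$ the resulting words land in distinct alternating patterns with distinct leading group elements $g\cdot a_{i_1}^{k_1}$ (for $i_1\in\{1,2\}$), Lemma \ref{perp} forces orthogonality across $\mathcal{I}$, and it suffices to bound $\|E_{N\rtimes\Sigma}(u_g\theta_t(x)u_h)\|_2$ for $x\in\mathcal{H}_{\mathcal{I}}$ with a single alternating sequence $\mathcal{I}=(i_1,\ldots,i_n)$. Next, I would Fourier-expand $u_i^{\pm t}=\sum_{k\in\mathbb{Z}}\mu_{\pm t}(k)u_i^k$, where the $\mu_{\pm t}(k)$ are the Fourier coefficients of the function $z\mapsto z^{\pm t}$ on $\mathbb{T}$ defined via the principal branch. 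Setting $\rho(k):=|\mu_t(k)|^2$, Parseval gives that $\rho$ is a probability measure on $\mathbb{Z}$ with full support (since $t\in(0,1)$) and finite $p$-th moment for every $p\in(0,1)$ (since $|\mu_t(k)|=O(1/|k|)$). Let $\nu:=\rho*\check\rho$ and transfer it to the measures $\nu_i$ on $\langle a_i\rangle\subset\mathbb{F}_2$ via $\nu_i(a_i^l)=\nu(l)$.

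The core computation substitutes these expansions into $u_g\theta_t(x_1\otimes_B\cdots\otimes_B x_n)u_h$: the product unfolds as a sum of alternating words $u_{g_1}x_1u_{g_2}\cdots x_nu_{g_{n+1}}$ with $g_1=g\cdot a_{i_1}^{k_1}$, $g_j=a_{i_{j-1}}^{-k_{j-1}'}a_{i_j}^{k_j}$ for $2\le j\le n$, and $g_{n+1}=a_{i_n}^{-k_n'}h$. By \eqref{E_N}, $E_{N\rtimes\Sigma}$ retains only the terms for which $g_1g_2\cdots g_{n+1}=g\cdot a_{i_1}^{l_1}\cdots a_{i_n}^{l_n}\cdot h\in\Sigma$, where $l_j:=k_j-k_j'$. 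Lemma \ref{perp} yields that different $\vec g$'s give orthogonal words of $L^2$-norm $\|x\|_2$, and that for each fixed $\vec g$ the map $\Phi_{\vec g}:x_1\otimes_B\cdots\otimes_B x_n\mapsto u_{g_1}x_1\cdots x_nu_{g_{n+1}}$ is well-defined on the relative tensor product $\mathcal{H}_{\mathcal{I}}$ (because each $u_{g_j}$ commutes with $B$ inside $\tilde M$) and isometric. Combining these ingredients via Parseval gives the identity
\[
\|E_{N\rtimes\Sigma}(u_g\theta_t(x)u_h)\|_2^2=\|x\|_2^2\cdot(\nu_{i_1}*\nu_{i_2}*\cdots*\nu_{i_n})(g^{-1}\Sigma h^{-1}).
\]

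To conclude, I would set $\mu:=\nu_1*\nu_2$ on $\mathbb{F}_2$: its support equals $\{a_1^la_2^k:l,k\in\mathbb{Z}\}$, which contains the identity and generates the non-amenable group $\mathbb{F}_2$, and it inherits finite $p$-th moment for $p\in(0,1)$ from $\nu$ (using $\ell_S(a_1^la_2^k)=|l|+|k|$ and subadditivity of $x\mapsto x^p$). For $\Sigma=\{e\}$, Lemma \ref{random}(1) gives $\mu^{*m}(g_0)\to 0$ for every $g_0\in\mathbb{F}_2$; for cyclic $\Sigma$, Lemma \ref{random}(2) gives $\mu^{*m}(h_0\Sigma k_0)\to 0$. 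Since $\nu_{i_1}*\cdots*\nu_{i_n}$ equals $\mu^{*\lfloor n/2\rfloor}$, its reverse, or a one-sided $\nu_i$-twist of these, the convolution probability above tends to $0$ uniformly in $\mathcal{I}\in S_n$, yielding $c_n\to 0$. The main technical obstacle is the careful justification in step three that the alternating-word identity extends from simple tensors to all of $\mathcal{H}_{\mathcal{I}}$; this reduces to verifying well-definedness of $\Phi_{\vec g}$ on the Connes tensor product and applying Lemma \ref{perp} both for matched $\vec g$ (to get isometry on each summand) and mismatched $\vec g$ (to get orthogonality across summands).
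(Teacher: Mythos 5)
Your proposal is correct and follows essentially the same route as the paper: orthogonality across patterns via Lemma \ref{perp}, Fourier expansion of $u_i^{t}$, isometries with mutually orthogonal ranges giving a Parseval identity that expresses $\|E_{N\rtimes\Sigma}(u_g\theta_t(x)u_h)\|_2^2$ as a convolution of probability measures evaluated on $g^{-1}\Sigma h^{-1}$, and then Lemma \ref{random} (full support, non-amenability, finite $p$-th moment for $p<1$) to get the decay. The only differences are cosmetic bookkeeping — you merge adjacent coefficients into the symmetrized measure $\rho*\check\rho$ where the paper keeps $\mu_i*\mu_i$, and your "one-sided twist/reverse" remark plays the role of the paper's observation that $(\nu_1*\mu^{*n}*\nu_2)(g\Sigma h)\to 0$ for arbitrary probability measures $\nu_1,\nu_2$.
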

Assuming Lemma \ref{estimate2}, let us finish the proof of Lemma \ref{estimate1}. Write $x_k=\sum_{n=0}^{\infty}x_{k,n}$, with $x_{k,n}\in\mathcal H_n$.
By equation \ref{theta_t} we have that $\tau(\theta_t(x_k)x_k^*)=\sum_{n=0}^{\infty}(\frac{\sin(\pi t)}{\pi t})^{2n}\|x_{k,n}\|_2^2$. Since $\tau(\theta_t(x_k)x_k^*)\rightarrow 0$ and $\sin(\pi t)>0$, we derive that $\|x_{k,n}\|_2\rightarrow 0$, for all $n\geqslant 0$.

For  $n\geqslant 1$ and $\mathcal I=(i_1,i_2,..,i_n)\in S_n$, we let $\mathcal K_{\mathcal I}\subset L^2(\tilde M)$ be the closure of the linear span of  $$\{u_{h_1}x_1u_{h_2}x_2...u_{h_{n}}x_nu_{h_{n+1}}|h_1,..,h_{n+1}\in\mathbb F_2, x_1\in M_{i_1}\ominus B,x_2\in M_{i_2}\ominus B,...,x_n\in M_{i_n}\ominus B\}.$$

By Lemma \ref{perp} we have that if $\mathcal I\in S_n$ and $\mathcal J\in\mathcal S_m$, then $\mathcal K_{\mathcal I}\perp\mathcal K_{\mathcal J}$, unless $n=m$ and $\mathcal I=\mathcal J$.
Thus, denoting $\mathcal K_n=\oplus_{\mathcal I\in S_n}\mathcal K_{\mathcal I}$, we have that $\mathcal K_n\perp\mathcal K_m$, for all $n\not=m$.

By using the definition of $\theta_t$ and equation \ref{E_N} we derive that $\theta_t(\mathcal H_{\mathcal I})\subset\mathcal K_{\mathcal I}\;\;\text{and}\;\; E_{N\rtimes\Sigma}(\mathcal K_{\mathcal I})\subset \mathcal K_{\mathcal I}$.
Since $\mathcal K_{\mathcal I}$ is an $L(\mathbb F_2)$-$L(\mathbb F_2)$ bimodule, we deduce that $E_{N\rtimes\Sigma}(u_g\theta_t(\mathcal H_{\mathcal I})u_h)\subset\mathcal K_{\mathcal I}$. From this we get that $E_{N\rtimes\Sigma}(u_g\theta_t(\mathcal H_n)u_h)\subset\mathcal K_n$, for all $n\geqslant 1$. 

Since the Hilbert spaces $\{\mathcal K_n\}_{n\geqslant 1}$ are mutually orthogonal,  the vectors $\{E_{N\rtimes\Sigma}(u_g\theta_t(x_{k,n})u_h)\}_{n\geqslant 1}$ are mutually orthogonal, for all $k\geqslant 1$. By using this fact, the inequality $\|\xi+\eta\|_2^2\leqslant 2(\|\xi\|_2^2+\|\eta\|_2^2)$ and the definition of $c_n$, we get that $$\|E_{N\rtimes\Sigma}(u_g\theta_t(x_k)u_h)\|_2^2\leqslant 2\|E_{N\rtimes\Sigma}(u_g\theta_t(x_{k,0})u_h)\|_2^2+2\|\sum_{n=1}^{\infty}E_{N\rtimes\Sigma}(u_g\theta_t(x_{k,n})u_h)\|_2^2=$$  $$2\sum_{n=0}^{\infty}\|E_{N\rtimes\Sigma}(u_g\theta_t(x_{k,n})u_h)\|_2^2\leqslant 2\sum_{n=0}^{\infty}c_n^2\|x_{k,n}\|_2^2.$$

Finally, let $\varepsilon>0$. Since $c_n\rightarrow 0$ by Lemma \ref{estimate2}, we can find $n_0\geqslant 1$ such that $c_n\leqslant\varepsilon$, for all $n\geqslant n_0$. Since $\|x_{k,n}\|_2\rightarrow 0$, for all $n$, we can also find $k_0\geqslant 1$ such that $\|x_{k,i}\|_2\leqslant \frac{\varepsilon}{n_0},$ for all $k\geqslant k_0$ and all $i\in\{1,2,..,n_0-1\}$. Also, note that $c_n\leqslant 1$, for all $n$.

By using the above equation and the inequality $\sum_{n=n_0}^{\infty}\|x_{k,n}\|_2^2\leqslant \|x_k\|_2^2=1$, it follows that $$\|E_{N\rtimes\Sigma}(u_g\theta_t(x_k)u_h)\|_2^2\leqslant 2(n_0(\frac{\varepsilon}{n_0})^2+\varepsilon^2\sum_{n=n_0}^{\infty}\|x_{k,n}\|_2^2)\leqslant 4\varepsilon^2, \;\;\text{for all}\;\;k\geqslant k_0.$$

Since $\varepsilon>0$ was arbitrary, we are done.
\hfill$\square$

\vskip 0.05in
{\it Proof of Lemma \ref{estimate2}}. For  $\mathcal I\in S_n$, let $c_{\mathcal I}=\sup_{x\in\mathcal H_{\mathcal I},\|x\|_2=1}\|E_{N\rtimes\Sigma}(u_g\theta_t(x)u_h)\|_2$.  Recall that $\mathcal H_n=\oplus_{\mathcal I\in S_n}\mathcal H_{\mathcal I}$. Since $u_g\theta_t(\mathcal H_{\mathcal I})u_h\subset\mathcal K_{\mathcal I}$ and the Hilbert spaces $\{\mathcal K_{\mathcal I}\}_{\mathcal I\in S_n}$ are mutually orthogonal by Lemma \ref{perp}, it follows that $c_n=\max_{\mathcal I\in S_n}c_{\mathcal I}$.
 
 In the {\it first part of the proof}, we will find a formula for $c_{\mathcal I}$, for a fixed $\mathcal I=(i_1,i_2,...,i_n)\in S_n$.

Recall that $a_1$ and $a_2$ denote the generators of $\mathbb F_2$. Let $G_1=\langle a_1\rangle$ and $G_2=\langle a_2\rangle$ be the cyclic subgroups generated by $a_1$ and $a_2$. 

Let $g_1,h_1\in G_{i_1}$, $g_2,h_2\in G_{i_2}$,...,$g_n,h_n\in G_{i_n}$. Then by Lemma \ref{perp}, the map given by 
\begin{equation}\label{V}V_{g_1,h_1,g_2,h_2,..,g_n,h_n}(x_1x_2...x_n)=u_{g_1}x_1u_{h_1}^*u_{g_2}x_2u_{h_2}^*...u_{g_n}x_nu_{h_n}^*,\end{equation}
for all  $x_1\in M_{i_1}\ominus B,x_2\in M_{i_2}\ominus B,...,x_n\in M_{i_n}\ominus B$ extends to an isometry $$V_{g_1,h_1,g_2,h_2,..,g_n,h_n}:\mathcal H_{\mathcal I}\rightarrow L^2(\tilde M)$$ Moreover, Lemma \ref{perp} implies that $V_{g_1,h_1,g_2,h_2,..,g_n,h_n}(\mathcal H_{\mathcal I})\perp V_{g_1',h_1',g_2',h_2',..,g_n',h_n'}(\mathcal H_{\mathcal I})$, 
unless we have that $g_1=g_1',h_1^{-1}g_2=h_1'^{-1}g_2',h_2^{-1}g_3=h_2'^{-1}g_3',...,h_{n-1}^{-1}g_n=h_{n-1}'^{-1}g_n',h_n^{-1}=h_n'^{-1}$. Since $G_1\cap G_2=\{e\}$, this implies that
$g_1=g_1',h_1=h_1',...,g_n=g_n',h_n=h_n'$.

Now, let $\beta_1:G_1\rightarrow\mathbb C$ and $\beta_2:G_2\rightarrow\mathbb C$ be given by $\beta_1(g_1)=\tau(u_1^tu_{g_1}^*)$ and $\beta_2(g_2)=\tau(u_2^tu_{g_2}^*)$. Since $u_1^t\in L(G_1)$ and $u_2^t\in L(G_2)$, we can decompose \begin{equation}\label{sum}u_1^t=\sum_{g_1\in G_1}\beta_1(g_1)u_{g_1}\;\;\;\text{and}\;\;\; u_2^t=\sum_{g_2\in G_2}\beta_2(g_2)u_{g_2}\end{equation}
  where the sums converge in $\|.\|_2$. Since $u_1^t$ and $u_2^t$ are unitaries, we have that \begin{equation}\label{l^2}\sum_{g_1\in G_1}|\beta_1(g_1)|^2=\sum_{g_2\in G_2}|\beta_2(g_2)|^2=1\end{equation} 

If $x=x_1x_2...x_n$, for some $x_1\in M_{i_1}\ominus B,x_2\in M_{i_2}\ominus B,...,x_n\in M_{i_n}\ominus B$, then  by \ref{sum} we have

$$u_g\theta_t(x)u_h=u_gu_{i_1}^tx_1{u_{i_1}^t}^*u_{i_2}^tx_2{u_{i_2}^t}^*...u_{i_n}^tx_n{u_{i_n}^t}^*u_h=$$ $$ 
\sum_{g_1,h_1\in G_{i_1} g_2,h_2\in G_{i_2},..,g_nh_n\in G_{i_n}}\beta_{i_1}(g_1)\overline{\beta_{i_1}(h_1)}\beta_{i_2}(g_2)
\overline{\beta_{i_2}(h_2)}...\beta_{i_n}(g_n)\overline{\beta_{i_n}(h_n)}\;\;u_gu_{g_1}x_1u_{h_1}^*u_{g_2}x_2u_{h_2}^*...u_{g_n}x_nu_{h_n}^*u_h
$$
By using equations \ref{E_N} and \ref{V}, we further deduce that

\begin{equation}\label{bigsum}E_{N\rtimes\Sigma}(u_g\theta_t(x)u_h)=\end{equation}
$$
\sum_{\substack{g_1,h_1\in G_{i_1},g_2,h_2\in G_{i_2},..,g_nh_n\in G_{i_n}\\ gg_1h_1g_2h_2...g_nh_nh\in\Sigma}}\beta_{i_1}(g_1)\overline{\beta_{i_1}(h_1)}\beta_{i_2}(g_2)\overline{\beta_{i_2}(h_2)}...\beta_{i_n}(g_n)\overline{\beta_{i_n}(h_n)}\;\;u_gV_{g_1,h_1,g_2,h_2,..,g_n,h_n}(x)u_h.$$
Since the linear span such elements $x$ is dense in $\mathcal H_{\mathcal I}$, this formula holds for every $x\in\mathcal H_{\mathcal I}$.
Since the isometries $V_{g_1,h_1,g_2,h_2,..,g_n,h_n}$ have mutually orthogonal ranges, formula \ref{bigsum} implies that $$\|E_{N\rtimes\Sigma}(u_g\theta_t(x)u_h)\|_2^2=$$ $$\|x\|_2^2\sum_{\substack{g_1,h_1\in G_{i_1},g_2,h_2\in G_{i_2},...,g_nh_n\in G_{i_n}\\ gg_1h_1g_2h_2...g_nh_nh\in\Sigma}}|\beta_{i_1}(g_1)|^2|\beta_{i_1}(h_1)|^2|\beta_{i_2}(g_2)|^2|\beta_{i_2}(h_2)|^2...|\beta_{i_n}(g_n)|^2|\beta_{i_n}(h_n)|^2,$$ for all $x\in\mathcal H_{\mathcal I}.$

Thus, \begin{equation}\label{c_I}c_{\mathcal I}=\sum_{\substack{g_1,h_1\in G_{i_1},g_2,h_2\in G_{i_2},..,g_nh_n\in G_{i_n}\\ gg_1h_1g_2h_2...g_nh_nh\in\Sigma}}|\beta_{i_1}(g_1)|^2|\beta_{i_1}(h_1)|^2|\beta_{i_2}(g_2)|^2|\beta_{i_2}(h_2)|^2..|\beta_{i_n}(g_n)|^2|\beta_{i_n}(h_n)|^2\end{equation}

 In the {\it second part of the proof}, we use this formula for $c_{\mathcal I}$ to conclude that $c_n\rightarrow 0$. By \ref{l^2} we can define probability measures $\mu_1$ and $\mu_2$ on $\mathbb F_2$ by letting \begin{equation}\label{prob}\mu_i(g)=\begin{cases}|\beta_i(g)|^2,\;\text{ if $g\in G_i$,\; and}\\ \text{0,\; if $g\not\in G_i$.}\end{cases}\end{equation}

Denote $\mu=\mu_1*\mu_1*\mu_2*\mu_2$. Then we have 

{\bf Claim.} $\mu^{*n}(g\Sigma h)\rightarrow 0$, for all $g,h\in\mathbb F_2$.

Assuming the claim, let us show that $c_n\rightarrow 0$. Firstly, the claim gives that $(\nu_1*\mu^{*n}*\nu_2)(g\Sigma h)\rightarrow 0$, for any probability measures $\nu_1,\nu_2$ on $\mathbb F_2$ and all $g,h\in\mathbb F_2$.
Secondly, the formula \ref{c_I} rewrites as $$c_{\mathcal I}=(\mu_{i_1}*\mu_{i_1}*\mu_{i_2}*\mu_{i_2}...*\mu_{i_n}*\mu_{i_n)}(g^{-1}\Sigma h^{-1}).$$
Since $i_1\not=i_2,i_2\not=i_3,..,i_{n-1}\not=i_n$, we have that $\mu_{i_1}*\mu_{i_1}*\mu_{i_2}*\mu_{i_2}...*\mu_{i_n}*\mu_{i_n}\in\{\mu^{*[\frac{n}{2}]},\mu^{*[\frac{n}{2}]}*\mu_1*\mu_1,\mu_2*\mu_2*\mu^{*[\frac{n}{2}]},\mu_2*\mu_2*\mu^{*[\frac{n-1}{2}]}*\mu_1*\mu_1\}$. By combining these facts it follows that $c_n\rightarrow 0$, as claimed.

{\it Proof of the claim.}
Firstly, let us prove the claim in the case $\Sigma=\{e\}$. By Lemma \ref{random} (1) it suffices to show that the support of $\mu$ generates a non-amenable group.

 Recall  that $u_{a_1}=\exp(i\alpha_1)$ and $u_1^t=\exp(it\alpha_1)$. Thus  if $n\in\mathbb Z$, then   \begin{equation}\label{mu}\mu_1(a_1^n)=|\tau(u_1^tu_{a_1^n}^*)|^2=|\tau(u_1^{t-n})|^2=(\frac{\sin(\pi(t-n))}{\pi(t-n)})^2=\frac{(\sin(\pi t))^2}{\pi^2(n-t)^2}.\end{equation}
Since $t\in (0,1)$, it follows that $\mu_1(a_1^n)\not=0$ and similarly that $\mu_2(a_2^n)\not=0$, for all $n\in\mathbb Z$. As a consequence the support of $\mu$ contains $a_1$ and $a_2$, and thus generates the whole $\mathbb F_2$.

In general, assume that $\Sigma=\langle a\rangle$, for some $a\in\mathbb F_2$. Let $\ell:\mathbb F_2\rightarrow\mathbb N$ be the word length on $\mathbb F_2$ with respect to the generating set $S=\{a_1,a_1^{-1},a_2,a_2^{-1}\}$.  Note that \ref{mu} also implies that $\mu_1(a_1^n)=\mu_2(a_2^n)\leqslant \frac{C}{|n|^2+1}$, for all $n\in\mathbb Z$, where $C=\frac{2}{t^2(1-t)^2}$.

Let $p\in (0,1)$. Since $|i+j|^p\leqslant |i|^p+|j|^p$, for $i,j\geqslant 0$, we get that $$\sum_{n\in\mathbb Z}|n|^p(\mu_1*\mu_1)(a_1^n)=\sum_{n\in\mathbb Z}|n|^p(\sum_{i+j=n}\mu_1(a_1^i)\mu_1(a_1^j))\leqslant$$ $$C^2\sum_{i,j\in\mathbb Z}\frac{|i|^p+|j|^p}{(|i|^2+1)(|j|^2+1)}=2C^2(\sum_{i\in\mathbb Z}\frac{|i|^p}{|i|^2+1})(\sum_{j\in\mathbb Z}\frac{1}{|j|^2+1})<\infty.$$

 Now,  the support of $\mu$ is  $\{a_1^ma_2^n|m,n\in\mathbb Z\}$ and $\ell(a_1^ma_2^n)=|m|+|n|$, for every $m,n\in\mathbb Z$. By using the last inequality and the analogous one for $\mu_2$ we derive that $$\sum_{g\in\mathbb F_2}\ell(g)^p\mu(g)=\sum_{m,n\in\mathbb Z}(|m|+|n|)^p(\mu_1*\mu_1)(a_1^m)(\mu_2*\mu_2)(a_2^n)\leqslant$$ $$\sum_{m\in\mathbb Z} |m|^p(\mu_1*\mu_1)(a_1^m)+\sum_{n\in\mathbb Z}|n|^p(\mu_2*\mu_2)(a_2^n)<\infty.$$

Since $\Sigma$ is a cyclic group, we can now apply Lemma \ref{random} (2) to get the conclusion of the claim. This finishes the proof of the lemma.
\hfill$\square$

\section{Relative amenability and subalgebras of AFP algebras, I}\label{relative}

Assume the notations from Sections \ref{ipp1} and \ref{conjugacy}. Thus, $(M_1,\tau_1),(M_2,\tau_2)$ are tracial von Neumann algebras, $M=M_1*_{B}M_2$, $\tilde M=M*_B(B\bar{\otimes}L(\mathbb F_2))$ and $N=\{u_gMu_g^*|g\in\mathbb F_2\}''$. 

Our goal in the next two sections is to understand what subalgebras $A\subset M$ have the property that $\theta_t(A)$ is amenable relative to $N$, for some (or  all) $t\in (0,1)$.

We start by considering the case $A=M$.

\begin{theorem}\label{amena} Suppose that $M=M_1*_{B}M_2$ is a factor and let $p\in M$ be a projection.

If $\theta_t(pMp)$ is amenable relative to $N$ inside $\tilde M$, for some $t\in (0,1)$, then either

\begin{enumerate}
\item $M_1p_1$ is amenable relative to $B$ inside $M_1$, for some non-zero projection $p_1\in\mathcal Z(M_1)$, or
\item $M_2p_2$ is amenable relative to $B$ inside $M_2$, for some non-zero projection $p_2\in\mathcal Z(M_2)$.
\end{enumerate}

\end{theorem}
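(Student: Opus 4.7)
The plan is to apply the Popa--Vaes dichotomy (Theorem \ref{pv}) to the subalgebra $\theta_t(pMp)\subset\theta_t(p)\tilde M\theta_t(p)$, exploiting the crossed product decomposition $\tilde M=N\rtimes\mathbb F_2$ recorded at the start of Section \ref{conjugacy}, where the action of $\mathbb F_2$ on $N$ is the free Bernoulli shift. Since by hypothesis $\theta_t(pMp)$ is amenable relative to $N$ inside $\tilde M$, Theorem \ref{pv} yields two cases: either
\[
\theta_t(pMp)\prec_{\tilde M} N,\qquad\text{or}\qquad Q:=\mathcal N_{\tilde M}(\theta_t(pMp))''\ \text{is amenable relative to }N.
\]

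In the intertwining case, I would apply Theorem \ref{inter} with $A=pMp$ and $\Sigma=\{e\}$, yielding either $pMp\prec_M B$ or $\mathcal N_M(pMp)''\prec_M M_i$ for some $i\in\{1,2\}$. Since $M$ is a factor, $pMp$ is a factor, so Remark \ref{embedamen} upgrades the first alternative to ``$pMp$ is amenable relative to $B$ in $M$''. In the second alternative, $\mathcal N_M(pMp)''\supset pMp$ forces $pMp\prec_M M_i$, and Corollary \ref{211} then again produces amenability relative to $B$ on some corner, or an intertwining of the normalizer into $M_i$. In either sub-case, combining the AFP bimodule decomposition of $L^2(M)$ from Notations \ref{H_n} and Lemma \ref{BM} with the weak-containment characterization of relative amenability from Lemma \ref{op}, I would transfer the information about $pMp$ over $B$ (or over $M_i$) into the existence of almost central vectors for $M_i$ in a bimodule of the form $L^2(M_i)\otimes_B\mathcal H$, which via Lemma \ref{op} yields a non-zero central projection $p_i\in\mathcal Z(M_i)$ with $M_ip_i$ amenable relative to $B$ in $M_i$.

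In the amenable-normalizer case, $\theta_t(pMp)\subset Q$ is itself amenable relative to $N$. I would produce a net of unit vectors $\xi_n\in L^2(\theta_t(p)\langle\tilde M,e_N\rangle\theta_t(p))$ that are almost $\theta_t(pMp)$-central and nearly tracial on $\theta_t(p)\tilde M\theta_t(p)$, and transport them via the automorphism $\theta_{-t}$ (extended to $B(L^2(\tilde M))$ by the Koopman unitary) to obtain almost $pMp$-central vectors in $L^2(p\langle\tilde M,e_{\theta_{-t}(N)}\rangle p)$. Using that $\theta_{-t}$ fixes $L(\mathbb F_2)$ and acts by $\mathrm{Ad}(u_i^{-t})$ on $M_i$, one has $\theta_{-t}(N)=\bigvee_{g\in\mathbb F_2}u_g\,\theta_{-t}(M)\,u_g^{*}$, and the $M$-$M$ bimodule $L^2(\langle\tilde M,e_{\theta_{-t}(N)}\rangle)$ decomposes via Lemma \ref{BM} into pieces of the form $L^2(M)\otimes_B \mathcal L\otimes_B L^2(M)$. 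Projecting the almost-central vectors onto the component carrying $M_1$-$M_1$ (resp.\ $M_2$-$M_2$) subbimodules and invoking Lemma \ref{op} then promotes them to relative amenability of $M_ip_i$ over $B$ for some $p_i\in\mathcal Z(M_i)$.

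The main obstacle will be the amenable-normalizer case: producing vectors that genuinely detect amenability of $M_i$ over $B$, rather than of $M$ over $\theta_{-t}(N)$, demands careful bookkeeping of how the deformation $\theta_{-t}$ distorts the canonical free-product decomposition of $L^2(\tilde M)$ from Notations \ref{H_n}. The relevant combinatorics are a cousin of those in Lemma \ref{estimate2}, but now one needs to extract not a mere $\|\cdot\|_2$-smallness estimate but a positive almost-central vector aligned with a single AFP factor $M_i$.
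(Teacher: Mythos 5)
Your opening move—reapplying Theorem \ref{pv} to $\theta_t(pMp)$—is redundant: relative amenability of $\theta_t(pMp)$ over $N$ is already the hypothesis, so the ``amenable normalizer'' branch of the dichotomy just hands you back what you started with, and the intertwining branch is an unneeded detour. The real content of the theorem therefore lives entirely in what you call the amenable-normalizer case, and there your argument has a genuine gap. After transporting the almost-central, almost-tracial net by $\theta_{-t}$ you claim that the resulting basic-construction bimodule ``decomposes via Lemma \ref{BM} into pieces of the form $L^2(M)\otimes_B\mathcal L\otimes_BL^2(M)$,'' and that Lemma \ref{op} then yields the conclusion. This decomposition is false: since $u_gMu_g^*\subset N$ for every $g\in\mathbb F_2$, the space $L^2(\langle\tilde M,e_N\rangle)$ (equivalently its $\theta_{-t}$-twisted version) contains the subspace $\mathcal H=\bigoplus_{g\in\mathbb F_2}L^2(M)u_ge_Nu_g^*$, an infinite multiple of the \emph{trivial} $M$-$M$ bimodule. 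Almost-central vectors concentrating on $\mathcal H$ carry no information whatsoever, and no application of Lemma \ref{op} can extract relative amenability over $B$ from them. Indeed, if the whole bimodule were of the form you assert, the argument would prove that $M$ itself is amenable relative to $B$, which is a strengthening the paper explicitly singles out as open (see the remark after Theorem \ref{amena}). You acknowledge this as ``the main obstacle,'' but pointing to a ``cousin of Lemma \ref{estimate2}'' does not resolve it: those combinatorial estimates concern smallness of expectations onto $N\rtimes\Sigma$, not the production of centralizing vectors orthogonal to the trivial part.

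The paper closes exactly this gap by a different mechanism. Starting from a net $\xi_n$ almost central for $\theta_t(M)$ and nearly tracial, it conjugates \emph{separately} by ${u_1^t}^*$ and ${u_2^t}^*$, producing $\xi_{1,n}$ almost central for $M_1$ and $\xi_{2,n}$ almost central for $M_2$ (one cannot undo the deformation on both sides at once, which is why neither net is almost central for all of $M$). Writing $L^2(\langle\tilde M,e_N\rangle)=\mathcal H\oplus\mathcal H_2$ with $\mathcal H_2\cong L^2(M)\otimes_B\mathcal K$ (Lemma \ref{bim}), if either $\xi_{1,n}$ or $\xi_{2,n}$ keeps mass in $\mathcal H_2$, then viewing $\mathcal H_2\cong L^2(M_i)\otimes_B\mathcal K_i$ via Lemma \ref{BM} and applying Lemma \ref{op} gives conclusion (1) or (2). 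The remaining case—both nets asymptotically supported on $\mathcal H$, i.e.\ $\xi_n$ concentrating simultaneously on $u_1^t\mathcal H{u_1^t}^*$ and $u_2^t\mathcal H{u_2^t}^*$—is excluded by the transversality estimate $\|P_1P_2\|<1$ of Lemma \ref{ortho}, which rests on $\tau({u_1^t}^*u_2^tu_g^*)$ defining a measure whose support generates the non-amenable group $\mathbb F_2$. Your proposal contains no counterpart of this two-sided conjugation or of Lemma \ref{ortho}, and without them the trivial summand $\mathcal H$ cannot be ruled out, so the proof as outlined does not go through.
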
 

In particular, if $B$ is amenable and $M_1,M_2$ have no amenable direct summands, then $\theta_t(pMp)$ is not amenable relative $N$, for any $t\in (0,1)$.
 It would be interesting to determine whether the conclusion of Theorem \ref{amena} can be strengthened to ``$M$ is amenable  relative to $B$".

In preparation for the proof of Theorem \ref{amena}, we establish a useful decomposition of the $M$-$M$ bimodule $L^2(\langle\tilde M,e_N\rangle)$. 
Note that $u_gMu_g^*\subset N$, for all $g\in\mathbb F_2$. Equivalently, $[u_ge_Nu_g^*,M]=0$, for every $g\in\mathbb F_2$. Therefore,
$L^2(\langle\tilde{M},e_N\rangle)$  contains an infinite direct sum of trivial $M$-$M$ bimodules: $$\mathcal H=\bigoplus_{g\in\mathbb F_2}L^2(M)u_ge_Nu_g^*.$$

If we let $\mathcal H_2=L^2(\langle\tilde M,e_N\rangle)\ominus\mathcal H$, then we have the following

\begin{lemma}\label{bim} 
There is a $B$-$M$ bimodule $\mathcal K$ such that $\mathcal H_2\cong L^2(M){\otimes}_B\mathcal K,$ as $M$-$M$ bimodules.
\end{lemma}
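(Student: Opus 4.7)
The plan is to decompose $L^2(\langle\tilde M,e_N\rangle)$ explicitly using the crossed product structure $\tilde M=N\rtimes\mathbb F_2$, and then identify which summands contribute to $\mathcal H$ and which to $\mathcal H_2$. Since $\tilde M=N\rtimes\mathbb F_2$, the basic construction satisfies $\langle\tilde M,e_N\rangle\cong N\bar\otimes\mathbb B(\ell^2(\mathbb F_2))$, with $u_ge_Nu_h^*$ corresponding to the matrix unit $1\otimes E_{g,h}$. Passing to $L^2$, this gives the orthogonal decomposition
\begin{equation*}
L^2(\langle\tilde M,e_N\rangle)=\bigoplus_{g,h\in\mathbb F_2}\mathcal H_{g,h},\qquad \mathcal H_{g,h}:=\overline{L^2(N)\,u_ge_Nu_h^*}.
\end{equation*}

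Next, I compute the $M$-$M$ bimodule structure on each $\mathcal H_{g,h}$. For $y,z\in M\subset N$ and $\xi\in L^2(N)$, using that $u_g z=\sigma_g(z)u_g$ where $\sigma_g=\mathrm{Ad}(u_g)$, one checks that $y\cdot(\xi u_ge_Nu_h^*)\cdot z=y\xi\sigma_{gh^{-1}}(z)\cdot u_ge_Nu_h^*$. Thus $\mathcal H_{g,h}$ is canonically isomorphic, as an $M$-$M$ bimodule, to $L^2(N)$ with left action of $M=M_e\subset N$ and right action of $M\cong M_{gh^{-1}}\subset N$ via $\sigma_{gh^{-1}}$. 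In particular, $L^2(M)\cdot u_ge_Nu_g^*\subset\mathcal H_{g,g}$ corresponds precisely to the trivial $M$-$M$ sub-bimodule $L^2(M)\subset L^2(N)$, so that
\begin{equation*}
\mathcal H_2=\bigoplus_{g\in\mathbb F_2}\bigl[\mathcal H_{g,g}\ominus L^2(M)\cdot u_ge_Nu_g^*\bigr]\;\oplus\;\bigoplus_{g\neq h}\mathcal H_{g,h}.
\end{equation*}

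Finally, I apply Lemma \ref{BM} to the AFP decomposition $N=*_{B}\{u_gMu_g^*\}_{g\in\mathbb F_2}$. For the diagonal terms, writing $N=M_e*_BN'$ where $N'$ is the AFP of the remaining copies of $M$, Lemma \ref{BM}(1) yields a $B$-$M$ bimodule $\mathcal K'$ with $L^2(N)\ominus L^2(M_e)\cong L^2(M)\otimes_B\mathcal K'$ as $M$-$M$ bimodules. For the off-diagonal terms with $k:=gh^{-1}\neq e$, I rearrange the AFP as $N=M_e*_BN_k*_BM_k$, where $N_k$ denotes the AFP of the remaining copies; Lemma \ref{BM}(2) then provides a $B$-$B$ bimodule $\mathcal L_k$ with $L^2(N)\cong L^2(M)\otimes_B\mathcal L_k\otimes_BL^2(M)$ as $M_e$-$M_k$ bimodules, hence $\mathcal H_{g,h}\cong L^2(M)\otimes_B(\mathcal L_k\otimes_BL^2(M))$. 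Summing both types of contributions over $\mathbb F_2$ (and absorbing the countable direct sum into the right tensor factor) yields $\mathcal H_2\cong L^2(M)\otimes_B\mathcal K$ for an appropriate $B$-$M$ bimodule $\mathcal K$.

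The only mildly delicate point is the appeal to Lemma \ref{BM}(2) for an infinite AFP rather than a free product of three algebras, but this is straightforward once one groups the free factors: $N=M_e*_BN_k*_BM_k$ is a genuine three-term AFP over $B$, to which Lemma \ref{BM}(2) applies directly. Everything else is bookkeeping with the matrix-unit decomposition and the twist $\sigma_{gh^{-1}}$ on the right $M$-action.
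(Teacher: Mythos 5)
Your proposal is correct and follows essentially the same route as the paper: the matrix-unit decomposition $L^2(\langle\tilde M,e_N\rangle)=\bigoplus_{g,h}L^2(N)u_ge_Nu_h^*$ coming from $\tilde M=N\rtimes\mathbb F_2$, the identification of each summand as $L^2(N)$ with right $M$-action twisted by $\sigma_{gh^{-1}}$, and the application of Lemma \ref{BM} after regrouping the free factors of $N$ as $M*_BP$ (diagonal terms) and $M*_B\sigma_k(M)*_BP_k$ (off-diagonal terms). The points you flag as delicate (the three-term regrouping of the infinite AFP, and untwisting the right action via $\sigma_k$, which restricts to the identity on $B$) are handled the same way in the paper.
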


\begin{proof}  Since $\tilde M=N\rtimes\mathbb F_2$, we have that $$L^2(\langle\tilde M,e_N\rangle)=\bigoplus_{g,h\in\mathbb F_2}L^2(N)u_ge_Nu_h^*.$$ 
For $g\in\mathbb F_2$, let $\sigma_g$ be the automorphism of $N$ given by $\sigma_g(x)=u_gxu_g^*$, for $x\in N$. 
Then the $N$-$N$ bimodule $L^2(N)u_ge_Nu_h^*$ is isomorphic to   $L^2(N)$ endowed with the $N$-$N$ bimodule structure given by $x\cdot\xi\cdot y=x\xi\sigma_{gh^{-1}}(y)$, for all $x,y\in N$ and $\xi\in L^2(N)$. For simplicity, we denote this bimodule by ${_N}L^2(N)_{\sigma_{gh^{-1}}(N)}.$

Next, we define the $M$-$M$ bimodules $\mathcal L=L^2(N)\ominus L^2(M)$ and $\mathcal L_g=_ML^2(N)_{\sigma_{g}(M)}.$
The first  paragraph implies that  $\mathcal H_2\cong\mathcal\oplus_{i=1}^{\infty}(\mathcal L\oplus\bigoplus_{g\in\mathbb F_2\setminus\{e\}}\mathcal L_g),$ as $M$-$M$ bimodules.

Now, denote $P=(\cup_{k\in\mathbb F_2\setminus\{e\}}u_kMu_k^*)''$ and $P_g=(\cup_{k\in\mathbb F_2\setminus\{e,g\}}u_kMu_k^*)''$, for $g\in\mathbb F_2\setminus\{e\}$. Then $N=M*_{B}P$ and $N=M*_{B}\sigma_g(M)*_BP_g$.
By using Lemma \ref{BM} we can find a $B$-$M$ bimodule $\mathcal L'$ and a $B$-$\sigma_g(M)$ bimodule $\mathcal L_g'$ such that $\mathcal L=L^2(M){\otimes}_B\mathcal L'$ and $\mathcal L_g=L^2(M){\otimes}_B\mathcal L_g'$, for all $g\in\mathbb F_2\setminus\{e\}$. 
In combination with the last paragraph this yields the conclusion.
\end{proof}

In the proof of Theorem \ref{amena} we will also need a technical result showing that for $t\in (0,1)$, the angle between the Hilbert spaces ${u_1^t}\mathcal H{u_1^t}^*$ and ${u_2^t}\mathcal H{u_2^t}^*$ is positive.

\begin{lemma}\label{ortho}  Let $t\in (0,1)$ and $u_1^t,u_2^t\in L(\mathbb F_2)$ be the unitaries defined in Section \ref{ipp1}. For $i\in\{1,2\}$, we denote by $P_i$ the orthogonal projection from $L^2(\langle\tilde M,e_N\rangle)$ onto $\mathcal L_i={u_i^t}\mathcal H{u_i^t}^*$.

Then $\|P_1P_2\|<1$.
\end{lemma}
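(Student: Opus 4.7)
The plan is to reduce $\|P_1P_2\|<1$ to a norm bound on the linear map $T\colon\mathcal H\to\mathcal H$ defined by $T(\eta)=E_{\mathcal H}(w\eta w^*)$, where $w=(u_1^t)^*u_2^t\in L(\mathbb F_2)$ and $E_{\mathcal H}$ is the orthogonal projection onto $\mathcal H$. Since conjugation by $u_i^t$ is a unitary on $L^2(\langle\tilde M,e_N\rangle)$ carrying $\mathcal H$ onto $\mathcal L_i$, one checks that $P_i=V_iE_{\mathcal H}V_i^*$ for $V_i(\xi)=u_i^t\xi(u_i^t)^*$, and consequently $\|P_1P_2\|=\|E_{\mathcal H}V_wE_{\mathcal H}\|=\|T\|$. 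The task is therefore to show $\|T\|<1$.

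Next I would compute $T(\eta)$ in coordinates. Writing $\eta=\sum_g x_gu_ge_Nu_g^*\in\mathcal H$ with $x_g\in L^2(M)$, and $w=\sum_k w_ku_k\in L(\mathbb F_2)$, the elementary identity
\[
u_ge_Nu_g^*u_{l^{-1}}=u_{l^{-1}}u_{lg}e_Nu_{lg}^*
\]
(both sides have range $L^2(N)u_g$) allows me to rewrite $w\eta w^*=\sum_{g'}z_{g'}u_{g'}e_Nu_{g'}^*$ with $z_{g'}=\sum_{k,l}w_k\overline{w_l}\,u_kx_{l^{-1}g'}u_{l^{-1}}\in L^2(\tilde M)$, whence $T(\eta)=\sum_{g'}E_M(z_{g'})u_{g'}e_Nu_{g'}^*$ and $\|T(\eta)\|^2=\sum_{g'}\|E_M(z_{g'})\|_2^2$. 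The heart of the argument is the formula, derived from the AFP structure $\tilde M=M*_B(B\bar{\otimes}L(\mathbb F_2))$: for $k,l\in\mathbb F_2$ and $x\in M$,
\[
E_M(u_kxu_l^*)=\begin{cases}x&\text{if }k=l=e,\\ E_B(x)&\text{if }k=l\ne e,\\ 0&\text{otherwise.}\end{cases}
\]
Indeed, writing $x=E_B(x)+x_0$ with $x_0\in M\ominus B$, for $k,l\ne e$ the element $u_kx_0u_l^*$ is a reduced alternating word of length $3$ in the free product (hence orthogonal to $L^2(M)$), while $u_kE_B(x)u_l^*=E_B(x)u_{kl^{-1}}$ lies in $L^2(M)$ exactly when $kl^{-1}=e$; the mixed cases are handled similarly. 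This alternating-word bookkeeping is the step I expect to require the most care.

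Substituting and decomposing $x_g=b_g+y_g$ with $b_g=E_B(x_g)\in L^2(B)$ and $y_g\in L^2(M\ominus B)$ yields $E_M(z_{g'})=\lambda y_{g'}+\sum_k\mu_kb_{k^{-1}g'}$, where $\lambda=|w_e|^2$ and $\mu_k=|w_k|^2$. The orthogonality of $L^2(B)$ and $L^2(M\ominus B)$ then gives
\[
\|T(\eta)\|^2=\lambda^2\sum_g\|y_g\|_2^2+\|\mu*\tilde b\|^2_{\ell^2(\mathbb F_2)\otimes L^2(B)},
\]
where $\tilde b(g):=b_g$ and $\mu=\sum_k|w_k|^2\delta_k$ is a probability measure on $\mathbb F_2$. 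Two facts now conclude the proof. First, $\lambda=(\sin(\pi t)/(\pi t))^4<1$ for $t\in(0,1)$. Second, since $\beta_i(a_i^n)=\sin(\pi(t-n))/(\pi(t-n))\ne 0$ for every $n\in\mathbb Z$, the support of $\mu$ contains all powers $a_1^n$ and $a_2^n$, hence generates the non-amenable group $\mathbb F_2$; by Kesten's theorem (applied, after symmetrization, to $\mu^**\mu$) one gets $\rho:=\|\pi(\mu)\|_{B(\ell^2(\mathbb F_2))}<1$, and so $\|\mu*\tilde b\|\leqslant\rho\|\tilde b\|$. Combining, $\|T(\eta)\|^2\leqslant\max(\lambda^2,\rho^2)\|\eta\|^2$ with $\max(\lambda^2,\rho^2)<1$, proving $\|P_1P_2\|<1$.
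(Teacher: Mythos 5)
Your proof is correct and follows essentially the same route as the paper: both reduce $\|P_1P_2\|$ to the norm of the convolution operator $\lambda(\mu)$ on $\ell^2(\mathbb F_2)$ attached to the probability measure $\mu(g)=|\tau({u_1^t}^*u_2^tu_g^*)|^2$, and both conclude from non-amenability of the group generated by $\mathrm{supp}(\mu)=\{a_1^ma_2^n\,|\,m,n\in\mathbb Z\}=\mathbb F_2$. The only difference is organizational: the paper conjugates $P_1|_{\mathcal L_2}$ to $1\otimes\lambda(\mu)$ by explicit unitaries $U_i:L^2(M)\bar{\otimes}\ell^2(\mathbb F_2)\rightarrow\mathcal L_i$ via a single free-independence inner-product computation, whereas you compute the compression $E_{\mathcal H}\,\mathrm{Ad}(w)\,E_{\mathcal H}$ through $E_M$ of alternating words and split off the $L^2(B)$-component, landing on the bound $\max(\mu(e),\|\lambda(\mu)\|)$, which is the same quantity and equally suffices.
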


\begin{proof} Let $S={P_1}_{|\mathcal L_2}:\mathcal L_2\rightarrow\mathcal L_1$. Since $\|P_1P_2\|=\|S\|$ it suffices to prove that $\|S\|<1$. We will achieve this by identifying $S$ with the inflation of a certain contraction from $L(\mathbb F_2)$.

Given $g\in\mathbb F_2$, let $\alpha_g=|\tau({u_1^t}^*{u_2^t}u_g^*)|^2$. Note that $\sum_{g\in\mathbb F_2}\alpha_g=1$. If we define the operator $T=\sum_{g\in\mathbb F_2}\alpha_g\lambda(g)\in L(\mathbb F_2)$, then it is clear that $\|T\|\leqslant 1$. 

We claim that  $\|T\|<1$. To see this, recall that $a_1$ and $a_2$ are generators of $\mathbb F_2$. By using  the same calculation as in \ref{mu} we get that $u_1^t=\sum_{n\in\mathbb Z}\frac{\sin(\pi(t-n))}{\pi(t-n)}u_{a_1^n}\;\;\;\text{and}\;\;\;u_2^t=\sum_{n\in\mathbb Z}\frac{\sin(\pi(t-n))}{\pi(t-n)}u_{a_2^n}.$ It follows that $\alpha_g\not=0$ if and only if $g\in\{a_1^ma_2^n|m,n\in\mathbb Z\}$. Thus, the support of $\alpha$ generates the whole $\mathbb F_2$. Since $\mathbb F_2$ is non-amenable and $\alpha_g\geqslant 0$, for all $g\in\mathbb F_2$, we deduce that $\|T\|<\sum_{g\in\mathbb F_2}\alpha_g=1$.

Next, for $i\in\{1,2\}$, we define the unitary operator $U_i:L^2(M)\bar{\otimes}\ell^2(\mathbb F_2)\rightarrow\mathcal L_i$ given by $$U_i(\xi\otimes\delta_g)={u_i^t}u_g\xi e_Nu_g^*{u_i^t}^*,\;\;\text{for}\;\;\xi\in L^2(M)\;\;\text{and}\;\;g\in\mathbb F_2.$$

Let $g,h\in\mathbb F_2$. Since $u_h^*{u_1^t}^*u_2^tu_g\in L(\mathbb F_2)$, we get that $E_N(u_h^*{u_1^t}^*u_2^tu_g)=\tau(u_h^*{u_1^t}^*u_2^tu_g)1$. 
 Thus, for every $\xi,\eta\in L^2(M)$ we get that $$\langle U_1^*SU_2(\xi\otimes\delta_g),\eta\otimes\delta_h\rangle=\langle P_1(u_2^tu_g\xi e_Nu_g^*{u_2^t}^*),u_1^tu_h\eta e_Nu_h^*{u_1^t}^*\rangle=$$ $$ \langle u_2^tu_g\xi e_Nu_g^*{u_2^t}^*,u_1^t u_h\eta e_Nu_h^*{u_1^t}^*\rangle=|\tau(u_h^*{u_1^t}^*u_2^tu_g)|^2\langle \xi,\eta\rangle =$$ $$\alpha_{hg^{-1}}\langle\xi,\eta\rangle=\langle (1\otimes T)(\xi\otimes\delta_g),\eta\otimes\delta_h\rangle.$$ Therefore, $S=U_1(1\otimes T)U_2^*$ and since $\|T\|<1$ we get that $\|S\|<1$.
\end{proof}

{\it Proof of Theorem \ref{amena}.} Assume that $\theta_t(pMp)$ is amenable relative to $N$, for some non-zero projection $p\in M$. Since $M$ is a II$_1$ factor it follows that $\theta_t(M)$ is amenable relative to $N$ (see Remark \ref{embedamen}).
By \cite[Definition 2.2]{OP07} we can find a net of vectors $\xi_n\in L^2(\langle\tilde M,e_N\rangle)$ such that
 $\langle x\xi_n,\xi_n\rangle\rightarrow\tau(x)$, for all $x\in\tilde M$, and $\|y\xi_n-\xi_ny\|_2\rightarrow 0$, for all $y\in\theta_t(M)$.

We denote $\xi_{1,n}={u_1^t}^*\xi_nu_1^t$ and $\xi_{2,n}={u_2^t}^*\xi_nu_2^t$.
Since $\theta_t(y)=u_i^ty{u_i^t}^*$, for all $y\in M_i$ and $i\in\{1,2\}$, we derive that \begin{equation}\label{central}\|y\xi_{1,n}-\xi_{1,n}y\|\rightarrow 0,\;\;\;\text{for all}\;\;y\in M_1,\;\;\;\text{and}\;\;\;  \|y\xi_{2,n}-\xi_{2,n}y\|\rightarrow 0,\;\;\;\text{for all}\;\;y\in M_2.\end{equation}

We also clearly have that \begin{equation}\label{traceal}\langle x\xi_{1,n},\xi_{1,n}\rangle\rightarrow\tau(x)\;\;\;\text{and}\;\;\;\langle x\xi_{2,n},\xi_{2,n}\rangle\rightarrow\tau(x),\;\;\;\text{for all}\;\;\;x\in\tilde M\end{equation}

Denote by $e$ and $f$ the orthogonal projections from $L^2(\langle\tilde M,e_N\rangle)$ onto $\mathcal H_2=L^2(\langle\tilde M,e_N\rangle)\ominus\mathcal H$ and onto  $\mathcal H=\oplus_{g\in\mathbb F_2}L^2(M)u_ge_Nu_g^*$, respectively. Since $e+f=1$, we are in one of the following three cases:

{\bf Case 1.} $\limsup_n\|e(\xi_{1,n})\|_2>0$.

{\bf Case 2}. $\limsup_n\|e\xi_{2,n})\|_2>0$.

{\bf Case 3.} $\|\xi_{1,n}-f(\xi_{1,n})\|_2\rightarrow 0$ and $\|\xi_{2,n}-f(\xi_{2,n})\|_2\rightarrow 0$.

\vskip 0.05in
In {\bf Case 1},   since $\mathcal H_2$ is a $M$-$M$ bimodule, equations \ref{traceal} and \ref{central} imply that $\limsup_n\|xe(\xi_{1,n})\|_2\leqslant \|x\|_2$, for all $x\in\tilde M$, and $\|ye(\xi_{1,n})-e(\xi_{1,n})y\|_2\rightarrow 0$, for all $y\in M_1$. 

We claim that there is a $B$-$M_1$ bimodule $\mathcal K_2$ such that $\mathcal H_2\cong L^2(M_1){\otimes}_B\mathcal K_2$, as $M_1$-$M_1$ bimodules. Assume for now that the claim holds. Then, since $\limsup_n\|e(\xi_{1,n})\|_2>0$, Lemma \ref{op} implies that $M_1p_1$ is amenable relative to $B$ inside $M_1$, for some non-zero projection $p_1\in\mathcal Z(M_1)$.

Now, let us justify the claim. Firstly, Lemma \ref{bim} provides a $B$-$M$ bimodule $\mathcal K$ such that $\mathcal H_2\cong L^2(M){\otimes}_B\mathcal K$, as $M$-$M$ bimodules.  Since $M=M_1*_BM_2$, by Lemma \ref{BM}  we can find a $B$-$M_1$ bimodule $\mathcal K_1$ such that  $L^2(M)\cong L^2(M_1){\otimes}_B\mathcal K_1,$ as $M_1$-$M_1$ bimodules. Finally, it is clear that the $B$-$M_1$ bimodule $\mathcal K_2=\mathcal K_1{\otimes}_B\mathcal K$ satisfies $\mathcal H_2\cong L^2(M_1){\otimes}_B\mathcal K_2$, as $M_1$-$M_1$ bimodules.

Similarly, in {\bf Case 2}, we get that $M_2p_2$ is amenable relative to $B$, for a non-zero projection $p_2\in\mathcal Z(M_2)$.

Finally, let us show that {\bf Case 3} is impossible. Indeed, in this case we would have that $\|\xi_n-u_1^tf(\xi_{1,n}){u_1^t}^*\|_2\rightarrow 0$ and $\|\xi_n-u_2^tf(\xi_{2,n}){u_2^t}^*\|_2\rightarrow 0$. Now, as in Lemma \ref{ortho}, for $i\in\{1,2\}$, we let $P_i$ be the orthogonal projection from $L^2(\langle\tilde M,e_N\rangle)$ onto $\mathcal L_i=u_i^t\mathcal H{u_i^t}^*$. Since $u_i^tf(\xi_{i,n}){u_i^t}^*\in\mathcal L_i$, we deduce that $\|\xi_n-P_1(\xi_n)\|_2\rightarrow 0$ and $\|\xi_n-P_2(\xi_n)\|_2\rightarrow 0$. 

Thus, $\|\xi_n-P_1P_2(\xi_n)\|_2\rightarrow 0$. On the other hand, Lemma \ref{ortho} shows that $\|P_1P_2\|<1$. By combining these two facts we derive that $\|\xi_n\|_2\rightarrow 0$, which is a contradiction.\hfill$\square$

\vskip 0.05in
We end this section by noticing that Theorem \ref{amena} yields a particular case of Theorem \ref{main}:

{\it Proof of Theorem \ref{main} in the case $\Gamma_1$ and $\Gamma_2$ are non-amenable, and $\Lambda$ is amenable}. Therefore, let $\Gamma\curvearrowright (X,\mu)$ be a free ergodic pmp action of $\Gamma=\Gamma_1*_{\Lambda}\Gamma_2$. Recall that  $\cap_{i=1}^ng_i\Lambda g_i^{-1}$ is finite, for some $g_1,g_2,...,g_n\in\Gamma$, and denote $M=L^{\infty}(X)\rtimes\Gamma$.

We claim that any Cartan subalgebra $A$ of $M$ is unitarily conjugate to $L^{\infty}(X)$.
To this end, notice that $M=M_1*_{B}M_2$, where $M_1=L^{\infty}(X)\rtimes\Gamma_1, M_2=L^{\infty}(X)\rtimes\Gamma_2$ and $B=L^{\infty}(X)\rtimes\Lambda$. Let $\tilde M$, $\{\theta_t\}_{t\in\mathbb R}\subset$ Aut$(\tilde M)$ and $N$ be defined as above. 

Let $t\in (0,1)$. 
Since $\tilde M=N\rtimes\mathbb F_2$, by  applying Theorem \ref{pv} to $\theta_t(A)\subset\tilde M$ we have that either
$\theta_t(A)\prec_{\tilde M}N$ or $\theta_t(M)$ is amenable relative to $N$ inside $\tilde M$.

In the first case, Theorem \ref{inter} gives that either $A\prec_{M}B=L^{\infty}(X)\rtimes\Lambda$ or $M\prec_{M}M_i$, for some $i\in\{1,2\}$.   
 If the first condition holds, then since $M$ is a factor, \cite[Proposition 8]{HPV10} implies that $A\prec_{M}L^{\infty}(X)\rtimes(\cap_{i=1}^ng_i\Lambda g_i^{-1})$. Thus, $A\prec_{M}L^{\infty}(X)$ and \cite[Theorem A.1]{Po01} gives that $A$ and $L^{\infty}(X)$ are indeed unitarily conjugate. On the other hand, the second condition cannot hold true. To see this, let $g_1\in\Gamma_1\setminus\Lambda$ and $g_2\in\Gamma_2\setminus\Lambda$. Then the unitary $u=u_{g_1g_2}$ satisfies $\|E_{M_i}(xu^ny)\|_2\rightarrow 0$, for every $x,y\in M$.

In the second case, Theorem \ref{amena} implies that $M_ip_i$ is amenable relative to $B$ for some $p_i\in\mathcal Z(M_i)$ and some $i\in\{1,2\}$. Since $B$ is amenable, this would imply that $M_ip_i$ is amenable. Since $L(\Gamma_i)\subset M_i$ and $\Gamma_i$ is non-amenable, this case is impossible.\hfill$\square$

\section{Relative amenability and subalgebras of AFP algebras, II}\label{relative2}

 Let $(M_1,\tau_1)$ and $(M_2,\tau_2)$ be two tracial von Neumann algebras. Following the notations from Sections \ref{ipp1} and \ref{conjugacy}, we denote $M=M_1*_{B}M_2$, $\tilde M=M*_B(B\bar{\otimes}L(\mathbb F_2))$ and $N=\{u_gMu_g^*|g\in\mathbb F_2\}''$.
 
In this section we prove two structural results for subalgebras $A\subset M$ with the property that $\theta_t(A)$ is amenable relative to $N$, for any $t\in (0,1)$. Firstly, we show:

\begin{theorem}\label{relamen} Let $A\subset pMp$ be a von Neumann subalgebra, for some projection $p\in M$. Let $\omega$ be a free ultrafilter on $\mathbb N$ and suppose  that  $A'\cap (pMp)^{\omega}=\mathbb Cp$.

If $\theta_t(A)$ is amenable relative to $N$ inside $\tilde M$, for any $t\in (0,1)$, then either

\begin{enumerate}
\item $A\prec_{M}M_i$, for some $i\in\{1,2\}$, or
\item $A$ is amenable relative to $B$ inside $M$.
\end{enumerate}

\end{theorem}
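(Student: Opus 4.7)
The plan is to argue by contradiction: assuming $A \not\prec_M M_i$ for both $i \in \{1,2\}$, I will derive that $A$ is amenable relative to $B$. Fix $t \in (0,1)$. By hypothesis, $\theta_t(A)$ is amenable relative to $N$, producing positive vectors $\xi_n \in L^2(\theta_t(p)\langle\tilde M, e_N\rangle\theta_t(p))$ that are asymptotically $\theta_t(A)$-central and two-sidedly $\theta_t(pMp)$-tracial. The analysis will hinge on the $M$-$M$ bimodule decomposition $L^2(\langle\tilde M, e_N\rangle) = \mathcal H \oplus \mathcal H_2$ from the proof of Theorem \ref{amena}, together with the identification $\mathcal H_2 \cong L^2(M) \otimes_B \mathcal K$ from Lemma \ref{bim}; write $e$ and $f$ for the corresponding orthogonal projections.

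The main dichotomy is whether there exists $t \in (0,1)$ for which $\limsup_n \|e(\xi_n)\|_2 > 0$. In this case, $e(\xi_n)$ lies in $\mathcal H_2$, a bimodule of the shape required by Lemma \ref{op}, and the goal is to deduce that $A$ is amenable relative to $B$. The nontrivial step is upgrading the $\theta_t(A)$-almost centrality of $\xi_n$ to $A$-almost centrality of $e(\xi_n)$. One writes
\[
a\xi_n - \xi_n a = (a - \theta_t(a))\xi_n + (\theta_t(a)\xi_n - \xi_n\theta_t(a)) - \xi_n(a - \theta_t(a));
\]
the middle term vanishes by hypothesis, while the outer terms are controlled by combining positivity of $\xi_n$, two-sided traciality on $\theta_t(pMp)$, and the fact that $\|a - \theta_t(a)\|_2 \to 0$ as $t \to 0$. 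With $A$-almost centrality established, Lemma \ref{op} produces a nonzero projection $p' \in \mathcal Z(A' \cap pMp)$ such that $Ap'$ is amenable relative to $B$; the spectral gap $A' \cap (pMp)^\omega = \mathbb Cp$ forces $A' \cap pMp = \mathbb Cp$, hence $p' = p$ and $A$ itself is amenable relative to $B$, yielding conclusion (2).

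In the complementary case where $\|e(\xi_n)\|_2 \to 0$ for every $t \in (0,1)$, the vectors $\xi_n$ concentrate in $\mathcal H$. I plan to mimic the Case 3 argument from the proof of Theorem \ref{amena}: form the conjugates $\xi_{i,n} = u_i^{-t}\xi_n u_i^t$, show that each is asymptotically in $\mathcal H$, and invoke Lemma \ref{ortho} to deduce $\|\xi_n\|_2 \to 0$, contradicting the two-sided traciality. Unlike in Theorem \ref{amena}, the weaker $\theta_t(A)$-centrality does not immediately give $M_i$-centrality of $\xi_{i,n}$, so the hypothesis $A \not\prec_M M_i$ must be invoked---via estimates analogous to Lemma \ref{estimate1}---to control the $e$-components of $\xi_{i,n}$ with acceptable error. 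The principal obstacle of the argument is precisely the upgrade from $\theta_t(A)$-almost centrality to $A$-almost centrality performed in the first case; this is the unique step requiring the spectral gap hypothesis, and the absence of an analogous upgrade for general $B$ is the reason Theorem \ref{general} remains conditional on this hypothesis.
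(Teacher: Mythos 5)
Your first case is broadly in the spirit of the paper, but as written it is internally inconsistent: you fix a single $t\in(0,1)$ with $\limsup_n\|e(\xi_n)\|_2>0$ and then control the error terms $(a-\theta_t(a))\xi_n$ using ``$\|a-\theta_t(a)\|_2\to 0$ as $t\to 0$''. For a fixed $t$ that quantity is not small, so the upgrade from $\theta_t(A)$-almost-centrality to $A$-almost-centrality fails. The paper avoids this by building a single net $\xi_i$ indexed by tuples $(X,Y,\delta,t)$ in which $t\to 0$ is part of the limit, and only afterwards analyzing the components of these vectors; your dichotomy ``there exists $t$ / for all $t$'' cannot be decoupled from the limit $t\to 0$ in the way you propose. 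This part is repairable, but it already shows the case split has to be organized differently.

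The genuine gap is the complementary case. Mimicking Case 3 of Theorem \ref{amena} cannot work here: in that argument the conjugates ${u_i^t}^*\xi_n u_i^t$ are asymptotically $M_i$-central precisely because $\theta_t=\mathrm{Ad}(u_i^t)$ on $M_i$ and the $\xi_n$ there are almost central for all of $\theta_t(M)$; for a general subalgebra $A$, conjugating by ${u_i^t}^*$ produces vectors with no useful invariance, and there is no estimate ``analogous to Lemma \ref{estimate1}'' controlling their $e$-components---that lemma bounds conditional expectations onto $N\rtimes\Sigma$ of $\theta_t(x_k)$ for unitaries escaping $M_1,M_2$, which is a statement about Fourier coefficients, not about projections of almost-central vectors, so Lemma \ref{ortho} never comes into play. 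The actual proof requires machinery absent from your plan: one must split $\mathcal H$ further as $\mathcal H_0\oplus\mathcal H_1$ (scalar versus $L^2(M)\ominus\mathbb C$ coefficients of the $u_ge_Nu_g^*$), use the hypothesis $A'\cap(pMp)^{\omega}=\mathbb Cp$ through the Connes-type spectral gap statement (Theorem \ref{gamma}) to obtain a uniform inequality on $L^2(pMp)\ominus\mathbb Cp$ that kills the $\mathcal H_1$-component---in your proposal the ultrapower hypothesis is used only to force $A'\cap pMp=\mathbb Cp$, which is much weaker and leaves $\mathcal H_1$ untouched---and then, with vectors concentrated in $\mathcal H_0\cong\ell^2(\mathbb F_2)$ almost commuting with $p\theta_t(y)p$ for $y\in\mathcal U(A)$, run a free-group argument (Lemmas \ref{calc} and \ref{gap}) showing the almost-invariant directions are supported on a cyclic subgroup $\Sigma<\mathbb F_2$, and finally contradict this using the fact that $A\nprec_M M_i$ for $i=1,2$ gives $\theta_t(A)\nprec_{\tilde M}N\rtimes\Sigma$ for every cyclic $\Sigma$ (Theorem \ref{inter}). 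None of this second part is present in, or replaceable by, the argument you sketch.
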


It seems to us that this theorem should hold without assuming that $A'\cap (pMp)^{\omega}=\mathbb Cp$, but we were unable to prove this. This assumption is verified for instance if $A=M$ and $M$ is a II$_1$ factor without property $\Gamma$.
By \cite[Corollary 3.2]{CH08} if $B$ is amenable and $M_1$ is a II$_1$ factor without property $\Gamma$, then $M=M_1*_BM_2$ is a II$_1$ factor which does not have property $\Gamma$. In the next section we will see  more situations in which the above assumption holds. 

Nevertheless, the condition $A'\cap (pMp)^{\omega}=\mathbb C$ is not satisfied in other situations to which we would like to apply Theorem \ref{relamen}. For instance, let $\Gamma=\Gamma_1*\Gamma_2$ be a free product group and $\Gamma\curvearrowright (X,\mu)$ be a free ergodic but not strongly ergodic action. Then the amalgamated free product II$_1$ factor $M=L^{\infty}(X)\rtimes\Gamma=(L^{\infty}(X)\rtimes\Gamma_1)*_{L^{\infty}(X)}(L^{\infty}(X)\rtimes\Gamma_2)$ has property $\Gamma$. 

In order to treat such situations, we prove the following variant of Theorem \ref{relamen}:

\begin{theorem}\label{relamen2}  In the above setting, assume that we can decompose $B=P\bar{\otimes}Q_0$, $M_1=P\bar{\otimes}Q_1$ and $M_2=P\bar{\otimes}Q_2$, for some  tracial von Neumann algebras $P,Q_0,Q_1$ and $Q_2$. Note that $M=P\bar{\otimes}Q$, where $Q=Q_1*_{Q_0}Q_2$.

Let $A\subset M$ be a von Neumann subalgebra. Suppose that there exist a subgroup $\mathcal U\subset\mathcal U(P)$ and a homomorphism $\rho:\mathcal U\rightarrow\mathcal U(Q)$ such that 

\begin{itemize}\item $u\otimes\rho(u)\in A$, for all $u\in\mathcal U$, and 
\item the von Neumann subalgebra $A_0\subset Q$ generated by $\{\rho(u)|u\in\mathcal U\}$ satisfies $A_0'\cap Q^{\omega}=\mathbb C$.
\end{itemize}

If $\theta_t(A)$ is amenable relative to $N$ inside $\tilde M$, for any $t\in (0,1)$, then either

\begin{enumerate}
\item $A_0\prec_{Q}Q_i$, for some $i\in\{1,2\}$, or
\item $A_0$ is amenable relative to $Q_0$ inside $Q$.
\end{enumerate}
\end{theorem}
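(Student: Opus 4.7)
The strategy is to reduce to Theorem \ref{relamen} applied to the pair $A_0\subset Q=Q_1*_{Q_0}Q_2$. The key structural observation is that because $P$ is a common tensor factor, the amalgamated free product respects tensoring: $M=P\bar\otimes Q$, and the entire malleable deformation splits as $\tilde M=P\bar\otimes \tilde Q$ where $\tilde Q=Q*_{Q_0}(Q_0\bar\otimes L(\mathbb F_2))$. Since $u_1^t,u_2^t$ lie in the free factor $1_P\otimes L(\mathbb F_2)$ and in particular commute with $P$ (which sits inside $B$), we have $\theta_t=\mathrm{id}_P\otimes\theta_t^Q$, the subalgebra decomposes as $N=P\bar\otimes N_Q$ (with $N_Q$ the analogous algebra inside $\tilde Q$), and the Jones basic construction splits as $\langle\tilde M,e_N\rangle=P\bar\otimes \langle\tilde Q,e_{N_Q}\rangle$ with $e_N=1\otimes e_{N_Q}$. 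The whole argument consists of verifying that $\theta_t^Q(A_0)$ is amenable relative to $N_Q$ inside $\tilde Q$ for every $t\in(0,1)$; the conclusion then follows from Theorem \ref{relamen} applied to $A_0\subset Q$, whose spectral gap hypothesis $A_0'\cap Q^\omega=\mathbb C$ is assumed.

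\textbf{Descent of the relative amenability.} Fix $t\in(0,1)$ and pick, via \cite[Thm.~2.1]{OP07} as in the proof of Lemma \ref{op}, a net $\xi_n=\zeta_n^{1/2}$ with $\zeta_n\in L^1(\langle\tilde M,e_N\rangle)_+$, $\langle x\xi_n,\xi_n\rangle\to\tau_{\tilde M}(x)$ for $x\in\tilde M$ and $\|y\xi_n-\xi_n y\|_2\to 0$ for $y\in\theta_t(A)$. Define a candidate witness in $L^2(\langle\tilde Q,e_{N_Q}\rangle)_+$ by
\[
\eta_n^{2}\,:=\,(\tau_P\otimes\mathrm{id})(\zeta_n),
\]
i.e.\ the partial trace of $\zeta_n$ over the $P$-factor. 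I would then verify three properties:
(a) $\|\eta_n\|_2^2=\mathrm{Tr}_{\tilde Q}(\eta_n^2)=\mathrm{Tr}_{\tilde M}(\zeta_n)=\|\xi_n\|_2^2\to 1$, so the vectors are non-degenerate; (b) for $q\in\tilde Q$, $\langle q\eta_n,\eta_n\rangle=\mathrm{Tr}_{\tilde Q}(q\eta_n^2)=\mathrm{Tr}_{\tilde M}((1\otimes q)\zeta_n)=\langle(1\otimes q)\xi_n,\xi_n\rangle\to\tau_Q(q)$; and (c) for any generator $a=\theta_t^Q(\rho(u))\in\theta_t^Q(A_0)$, we have $\|a\eta_n-\eta_n a\|_2\to 0$. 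Step (c) is the heart of the argument. Since $u\otimes\rho(u)\in A$ and $\theta_t$ is the identity on $P$, one has $\theta_t(u\otimes\rho(u))=u\otimes a\in\theta_t(A)$; applying the standard passage from an $L^2$-centralizing sequence to an $L^1$-centralizing sequence of its positive square,
\[
\|(u\otimes a)\zeta_n-\zeta_n(u\otimes a)\|_1\leqslant 2\|\xi_n\|_2\|(u\otimes a)\xi_n-\xi_n(u\otimes a)\|_2\to 0,
\]
and multiplying on the right by the unitary $(u\otimes a)^*$ gives $\|(u\otimes a)\zeta_n(u\otimes a)^*-\zeta_n\|_1\to 0$. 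The crucial identity is that the partial trace is invariant under conjugation by elements of $P\otimes 1$,
\[
(\tau_P\otimes\mathrm{id})\bigl((u\otimes 1)X(u^*\otimes 1)\bigr)=(\tau_P\otimes\mathrm{id})(X),
\]
a consequence of the trace property of $\tau_P$; combined with $\tilde Q$-bimodularity of $\tau_P\otimes\mathrm{id}$, it yields $(\tau_P\otimes\mathrm{id})((u\otimes a)\zeta_n(u\otimes a)^*)=a\eta_n^2 a^*$. Hence $\|a\eta_n^2 a^*-\eta_n^2\|_1\to 0$, and the Powers-Stormer inequality upgrades this to $\|a\eta_n a^*-\eta_n\|_2\to 0$; unitarity of $a$ then gives $\|a\eta_n-\eta_n a\|_2\to 0$.

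\textbf{Conclusion and main obstacle.} With (a)--(c) in hand the net $\eta_n$ witnesses, in the sense of \cite[Def.~2.2]{OP07}, that $\theta_t^Q(A_0)$ is amenable relative to $N_Q$ inside $\tilde Q$. Since this holds for every $t\in(0,1)$ and $A_0'\cap Q^\omega=\mathbb C$, Theorem \ref{relamen} applied with $(M_1,M_2,B,M)$ replaced by $(Q_1,Q_2,Q_0,Q)$ delivers either $A_0\prec_Q Q_i$ for some $i\in\{1,2\}$ or $A_0$ amenable relative to $Q_0$, which is exactly the desired dichotomy. The main technical subtlety is step (c): it is precisely the $P$-conjugation invariance of the partial trace that allows one to absorb the $P$-component of the twist $u\otimes\rho(u)$ and extract a genuine $A_0$-centralizing net on the $Q$-side, even though $A$ itself need not contain $1\otimes\rho(u)$. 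Everything else is bookkeeping built on the tensor-factor decomposition of the amalgamated free product and its deformation.
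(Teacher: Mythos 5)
Your overall reduction is exactly the paper's: split $\tilde M=P\bar\otimes\tilde Q$ and $N=P\bar\otimes N_0$ with $\tilde Q=Q*_{Q_0}(Q_0\bar\otimes L(\mathbb F_2))$, observe $\theta_t=\mathrm{id}_P\otimes\alpha_t$ for the free malleable deformation $\alpha_t$ of $Q=Q_1*_{Q_0}Q_2$, prove that $\alpha_t(A_0)$ is amenable relative to $N_0$ inside $\tilde Q$ for every $t\in(0,1)$, and then apply Theorem \ref{relamen} to $A_0\subset Q$ using $A_0'\cap Q^{\omega}=\mathbb C$. The only divergence is in how relative amenability is transported across the tensor leg. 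The paper uses the central-state formulation of \cite[Theorem 2.1]{OP07}: given a $\theta_t(A)$-central state $\Phi$ on $\langle\tilde M,e_N\rangle=P\bar\otimes\langle\tilde Q,e_{N_0}\rangle$ with $\Phi_{|\tilde M}=\tau$, it sets $\Psi(T)=\Phi(1\otimes T)$, and the identity $(u\otimes\alpha_t(\rho(u)))(1\otimes T)(u\otimes\alpha_t(\rho(u)))^*=1\otimes\alpha_t(\rho(u))T\alpha_t(\rho(u))^*$ makes the $P$-component of the twisted unitary vanish with no analysis; centrality under the generating unitaries plus $\Psi_{|\tilde Q}=\tau$ then gives $\alpha_t(A_0)$-centrality. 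You instead work with vectors and densities: the partial trace $(\tau_P\otimes\mathrm{id})(\zeta_n)$, its invariance under conjugation by $P\otimes 1$, $L^1$-contractivity and $\tilde Q$-bimodularity of the slice map, and the Powers--St{\o}rmer passage between $L^1$ and $L^2$. This is correct and yields the same conclusion, but it is heavier machinery doing the job of the one-line conjugation identity above. One point you should close explicitly: your step (c) only gives asymptotic commutation of $\eta_n$ with the generating unitaries $\alpha_t(\rho(u))$, while \cite[Definition 2.2]{OP07} requires it for all of $\alpha_t(A_0)$; this is fixed by the standard argument (pass to the state $\Psi(T)=\lim_n\langle T\eta_n,\eta_n\rangle$, whose restriction to $\tilde Q$ is the trace, or use your property (b) and Kaplansky density to extend asymptotic commutation from the $*$-algebra generated by these unitaries to its weak closure) --- the same issue the paper dispatches in one sentence via $\Psi_{|\tilde Q}=\tau$.
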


In the rest of this section, we first prove Theorem \ref{relamen} and then use it to deduce Theorem \ref{relamen2}.

\vskip 0.05in
{\it Proof of Theorem \ref{relamen}}. Suppose by contradiction that conditions (1) and (2) fail. 

We begin by introducing the following notation:

\begin{itemize}
\item $\mathcal H_0=\bigoplus_{g\in\mathbb F_2}\mathbb Cu_ge_Nu_g^*$ and $\mathcal H_1=\bigoplus_{g\in\mathbb F_2}(L^2(M)\ominus\mathbb C)u_ge_Nu_g^*.$
\item $\mathcal H=\mathcal H_0\oplus\mathcal H_1=\bigoplus_{g\in\mathbb F_2}L^2(M)u_ge_Nu_g^*$ and $\mathcal H_2=L^2(\langle\tilde M,e_N\rangle)\ominus\mathcal H$.
\item $\mathcal K_0=\bigoplus_{g\in\mathbb F_2}\mathbb Cp\;u_ge_Nu_g^*$ and $\mathcal K_1=\bigoplus_{g\in\mathbb F_2}(L^2(pMp)\ominus\mathbb Cp)u_ge_Nu_g^*.$
\item $\mathcal K=\mathcal K_0\oplus\mathcal K_1=\bigoplus_{g\in\mathbb F_2}L^2(pMp)u_ge_Nu_g^*$ and $\mathcal K_2=pL^2(\langle\tilde M,e_N\rangle)p\ominus\mathcal K$.
\end{itemize}

Note that $L^2(\langle\tilde M,e_N\rangle)=\mathcal H_0\oplus\mathcal H_1\oplus\mathcal H_2$ and $pL^2(\langle\tilde M,e_N\rangle)p=\mathcal K_0\oplus\mathcal K_1\oplus\mathcal K_2$.
For $j\in\{0,1,2\}$, we denote by $e_j$ the orthogonal projection from $L^2(\langle\tilde M,e_N\rangle)$ onto $\mathcal K_j$. We also denote by $e=e_0+e_1$ the orthogonal projection onto $\mathcal K$.

 We denote by $I$ the set  of $4$-tuples $i=(X,Y,\delta,t)$ where $X\subset \tilde M$ and $Y\subset\mathcal U(A)$ are finite subsets, $\delta\in (0,1)$ and $t\in (0,1)$. We make $I$ a directed set by letting: $(X,Y,\delta,t)\leqslant (X',Y',\delta',t')$ if and only if $X\subset X',Y\subset Y',\delta'\leqslant\delta$ and $t'\leqslant t$.

Let $i=(X,Y,\delta,t)\in I$.
Since $\theta_t(A)$ is amenable relative to $N$ inside $\tilde M$,   by \cite[Definition 2.2]{OP07} we can find a vector $\xi_{i}\in L^2(\langle \tilde M,e_N\rangle)$ such that  $$|\langle x\xi_{i},\xi_{i}\rangle-\tau(x)|\leqslant\delta,\;\;\text{for all $x\in X$,}$$ $$ |\langle (\theta_t(y)-y)^*(\theta_t(y)-y)\xi_i,\xi_i\rangle-\tau((\theta_t(y)-y)^*(\theta_t(y)-y))|\leqslant\delta\;\;\text{and} $$ $$ \|\theta_t(y)\xi_{i}-\xi_{i}\theta_t(y)\|_2\leqslant\delta,\;\;\text{for all $y\in Y$}.$$

Moreover, following the proof of \cite[Theorem 2.1]{OP07} we may assume that 
$\xi_i=\eta_i^{\frac{1}{2}}$, for some $\eta_i\in L^1(\langle\tilde M,e_N\rangle)_{+}$. 
Thus, $\langle x\xi_i,\xi_i\rangle=Tr(x\eta_i)=\langle \xi_i x,\xi_i\rangle$, for all $x\in \tilde M$ and $i\in I$.

The {\it first part of the proof} consists of three claims.

{\bf Claim 1.} We have that $\langle x\xi_i,\xi_i\rangle\rightarrow\tau(x)$, for all $x\in \tilde M$, and $\|y\xi_i-\xi_i y\|_2\rightarrow 0$, for all $y\in\mathcal U(A)$.

{\it Proof of Claim 1.}  The first assertion is clear. To prove the second assertion, let $i=(X,Y,\delta,t)\in I$ and $y\in Y$.
Then we have

$$\|(\theta_t(y)-y)\xi_i\|_2^2=\langle (\theta_t(y)-y)^*(\theta_t(y)-y)\xi_i,\xi_i\rangle\leqslant\delta+\|\theta_t(y)-y\|_2^2.$$

Similarly, we have that $\|\xi_i(\theta_t(y)-y)\|_2^2\leqslant\delta+\|\theta_t(y)-y\|_2^2$. By combining these inequalities we deduce that $$\|y\xi_i-\xi_iy\|_2\leqslant \|\theta_t(y)\xi_i-\xi_i\theta_t(y)\|_2+\|(\theta_t(y)-y)\xi_i\|_2+\|\xi_i(\theta_t(y)-y)\|_2\leqslant$$ $$\delta+2\;\sqrt{\delta+\|\theta_t(y)-y\|_2^2}.$$

Since $\|\theta_t(y)-y\|_2\rightarrow 0$, as $t\rightarrow 0$, it follows that $\|y\xi_i-\xi_iy\|_2\rightarrow 0$.
\hfill$\square$

\vskip 0.05in
For $i\in I$, we denote $\zeta_i=p\xi_i p\in pL^2(\langle\tilde M,e_N\rangle)p$. Note that $e_j(\xi_i)=e_j(\zeta_i)$, for all $j\in\{0,1,2\}$.

{\bf Claim 2.} $\|\zeta_i-e_0(\zeta_i)\|_2\rightarrow 0$.
 
{\it Proof of Claim 2.} Since $e_0(\zeta)+e_1(\zeta)+e_2(\zeta)=\zeta$, for every $\zeta\in pL^2(\langle\tilde M,e_N\rangle)p$, it suffices to show that $\|e_1(\zeta_i)\|_2\rightarrow 0$ and $\|e_2(\zeta_i)\|_2\rightarrow 0$.

Firstly, since $\mathcal K$ is a $pMp$-$pMp$ bimodule,  Claim 1 implies that the vectors $e(\zeta_i)=e(p\xi_ip)\in\mathcal K$ satisfy $\lim\limits_{i}\|xe(\zeta_i)-e(\zeta_i)x\|_2=0$, for all $x\in A$. Also, we get that $\limsup\limits_i\|ye(\zeta_i)\|_2\leqslant \|y\|_2$  for every $y\in M$.
Indeed, if $y\in M$, then for all $i$ we have that

 $$\|ye(\zeta_i)\|_2^2=\langle (py^*yp)e(\zeta_i),e(\zeta_i)\rangle=\|e((py^*yp)^{\frac{1}{2}}p\xi_ip)\|_2^2\leqslant \|(py^*yp)^{\frac{1}{2}}\xi_i\|_2^2=\langle(py^*yp)\xi_i,\xi_i\rangle$$

Since $\lim\limits_{i}\langle(py^*yp)\xi_i,\xi_i\rangle=\tau(py^*yp)\leqslant \|y\|_2^2$, this proves our assertion. Similarly, it follows that $\limsup\limits_i\|e(\zeta_i)y\|_2\leqslant \|y\|_2$, for all $y\in M$.
Note that $\mathcal K\cong L^2(pMp)\otimes\ell^2$, as a Hilbert $pMp$-$pMp$ bimodule. Since $A'\cap (pMp)^{\omega}=\mathbb Cp$,  the inclusion $A\subset pMp$ has $w$-spectral gap, and by applying Theorem \ref{spgap} 
we get that $\lim\limits_{i}\|e(\zeta_i)-e_0(\zeta_i)\|_2=0$. Thus, $\lim\limits_{i}\|e_1(\zeta_i)\|_2=0$.

Secondly, since $\mathcal K_2=p\mathcal H_2p$ is a $pMp$-$pMp$ bimodule, $e_2$ is $pMp$-$pMp$ bimodular and therefore we have that $$\limsup_i\|xe_2(\zeta_i)\|_2=\limsup_i\|xe_2(\xi_i)\|_2=\limsup_i\|e_2(x\xi_i)\|_2\leqslant \limsup_i\|x\xi_i\|_2= $$
$$ \limsup_i\sqrt{\langle x^*x\xi_i,\xi_i\rangle}=\|x\|_2,\;\;\;\text{for all $x\in M$}$$
and that $\|ye_2(\zeta_i)-e_2(\zeta_i)y\|_2=\|e_2(y\xi_i-\xi_i y)\|_2\leqslant \|y\xi_i-\xi_iy\|_2\rightarrow 0,$ for all $y\in\mathcal U(A)$.

Now,  recall that Lemma \ref{bim} shows that $\mathcal H_2\cong L^2(M){\otimes}_B\mathcal K$, for some $B$-$M$ bimodule $\mathcal K$. Thus, if $\limsup_i\|e_2(\zeta_i)\|_2>0$, then by Lemma \ref{op} we could find a non-zero projection $z\in\mathcal Z(A'\cap pMp)$ such that $Az$ is amenable relative to $B$ inside $M$. Since $A'\cap pMp=\mathbb C$, this would imply that $A$ is amenable relative to $B$ inside $M$, leading to a contradiction.
\hfill$\square$

Before proving our third claim, let us state two lemmas whose proofs we postpone for now. Denote by $\lambda:\mathbb F_2\rightarrow\mathcal U(\ell^2(\mathbb F_2))$ the left regular representation of $\mathbb F_2$. Then we have 

\begin{lemma}\label{calc}
Define the unitary operator $U:\mathcal H_0\rightarrow\ell^2(\mathbb F_2)$ given  by $U(u_ge_Nu_g^*)=\delta_g$, for $g\in\mathbb F_2$.

If $\eta\in\mathcal H_0$ and $y\in\tilde M$, then $$\|y\eta-\eta y\|_2^2=\sum_{g\in\mathbb F_2} \|\lambda(g)(U(\eta))-U(\eta)\|^2 \|E_N(yu_g^*)\|_2^2.$$
\end{lemma}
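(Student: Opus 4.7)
The proof should be a direct computation using the semi-finite trace $Tr$ on $\langle\tilde M, e_N\rangle$. Set $p_g := u_g e_N u_g^*$ for $g\in\mathbb F_2$, so that $\eta=\sum_g \alpha_g p_g$ for some $\alpha\in\ell^2(\mathbb F_2)$ with $U(\eta)=\sum_g \alpha_g \delta_g$. The first thing to check is that $\{p_g\}_{g\in\mathbb F_2}$ are mutually orthogonal projections in $\langle\tilde M,e_N\rangle$, which follows from $p_g p_h = u_g E_N(u_g^* u_h)e_N u_h^*$ and the fact that in the crossed-product decomposition $\tilde M = N\rtimes\mathbb F_2$ we have $E_N(u_g^* u_h)=\delta_{g,h}$; this also confirms that $U$ is a well-defined isometry. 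By $\|\cdot\|_2$-continuity in $\eta$, it suffices to establish the formula when the sum defining $\eta$ is finite.

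The plan is to expand
\[
\|y\eta-\eta y\|_2^2=\sum_{g,h}\alpha_g\overline{\alpha_h}\,Tr\bigl((yp_g-p_g y)(p_h y^*-y^* p_h)\bigr),
\]
and evaluate the four resulting types of terms. Using $p_g p_h=\delta_{g,h}p_g$ and the cyclicity of $Tr$ one gets $Tr(yp_g p_h y^*)=Tr(p_g yy^* p_h)=\delta_{g,h}\tau(yy^*)$. The two cross terms are the interesting ones: using $e_N x e_N = E_N(x)e_N$ and cyclicity,
\[
Tr(yp_g y^* p_h)=\|E_N(u_h^* y u_g)\|_2^2,\qquad Tr(p_g y p_h y^*)=\|E_N(u_g^* y u_h)\|_2^2.
\]
Next I would use the crossed-product structure: writing $y=\sum_k b_k u_k$ with $b_k=E_N(yu_k^*)\in N$ and $u_h^* n u_h=\sigma_{h^{-1}}(n)$, one gets $E_N(u_h^* y u_g)=\sigma_{h^{-1}}(E_N(yu_{gh^{-1}}))$, and the $\mathbb F_2$-invariance of $\tau$ on $N$ gives the key identity $\|E_N(u_h^* y u_g)\|_2=\|E_N(yu_{gh^{-1}})\|_2$.

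After substituting these into the double sum and reindexing (setting $k=hg^{-1}$ in one cross term and $k=gh^{-1}$ in the other), the two cross sums take the form $\sum_k \|E_N(yu_k^*)\|_2^2\sum_h \overline{\alpha_h}\alpha_{kh}$ and $\sum_k \|E_N(yu_k)\|_2^2\sum_h \alpha_h\overline{\alpha_{kh}}$. Since $\|y\eta-\eta y\|_2^2$ is manifestly real, taking real parts lets these combine; the change of variable $k\mapsto k^{-1}$ together with $\overline{\alpha_h \overline{\alpha_{kh}}}=\alpha_{kh}\overline{\alpha_h}$ shows the two contributions coincide and sum to $2\sum_g\|E_N(yu_g^*)\|_2^2\,\mathrm{Re}\sum_h \alpha_h\overline{\alpha_{gh}}$. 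Combining this with $\tau(yy^*)=\sum_g\|E_N(yu_g^*)\|_2^2$ (Parseval for $\tilde M=N\rtimes\mathbb F_2$), one obtains
\[
\|y\eta-\eta y\|_2^2=2\|U(\eta)\|^2\|y\|_2^2-2\sum_g \|E_N(yu_g^*)\|_2^2\,\mathrm{Re}\sum_h \alpha_h\overline{\alpha_{gh}}.
\]
The last step is to expand $\|\lambda(g)U(\eta)-U(\eta)\|^2=2\|U(\eta)\|^2-2\,\mathrm{Re}\sum_h \alpha_h\overline{\alpha_{gh}}$ and note that multiplying by $\|E_N(yu_g^*)\|_2^2$ and summing over $g$ produces exactly the right-hand side above.

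The computation is essentially routine; the only step requiring attention is the bookkeeping that matches the two ``cross'' sums produced by $Tr(yp_g y^* p_h)$ and $Tr(p_g y p_h y^*)$. These are not individually real, but their sum is, and verifying that they combine to $2\sum_g\|E_N(yu_g^*)\|_2^2\,\mathrm{Re}\sum_h \alpha_h\overline{\alpha_{gh}}$ needs the symmetry under $g\mapsto g^{-1}$ of the change of variable. Apart from that, every identity used is either the definition of $Tr$, the Pimsner--Popa relation $e_N x e_N=E_N(x)e_N$, or the crossed-product identity $u_h^* n u_h=\sigma_{h^{-1}}(n)$.
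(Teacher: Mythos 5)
Your computation is correct and follows essentially the same route as the paper: a direct evaluation using the semifinite trace $Tr(xe_Ny)=\tau(xy)$, the relation $e_Nxe_N=E_N(x)e_N$, and the Fourier decomposition $y=\sum_k E_N(yu_k^*)u_k$ in $\tilde M=N\rtimes\mathbb F_2$ (the paper simply computes $\langle y\eta,\eta y\rangle$ first and then uses $\|y\eta\|_2=\|\eta y\|_2=\|y\|_2\|\eta\|_2$, which reorganizes the same four families of trace terms). The only cosmetic point is in your bookkeeping of the two cross sums: with consistent indexing they are complex conjugates of one another rather than literally equal, but since only their (real) sum $2\sum_g\|E_N(yu_g^*)\|_2^2\,\mathrm{Re}\sum_h\alpha_h\overline{\alpha_{gh}}$ enters, your final identity is unaffected.
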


\begin{lemma}\label{gap}
 There exists $c>0$ such that if two elements $g,h\in\mathbb F_2$ satisfy  $\|\lambda(g)(\eta)-\eta\|\leqslant c\|\eta\|$ and $\|\lambda(h)(\eta)-\eta\|\leqslant c\|\eta\|$, for some non-zero vector $\eta\in\ell^2(\mathbb F_2)$, then $g$ and $h$ commute.
\end{lemma}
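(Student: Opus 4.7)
The plan is to prove the contrapositive: assuming $g$ and $h$ do not commute, I will produce a lower bound on $\max(\|\lambda(g)\eta-\eta\|,\|\lambda(h)\eta-\eta\|)/\|\eta\|$ that is independent of the pair $(g,h)$. This uniform bound will come from Kesten's spectral radius formula for $\mathbb F_2$, once I recognize that any non-commuting pair in $\mathbb F_2$ automatically generates a copy of $\mathbb F_2$ \emph{with that very pair as a free basis}.

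Step 1 (reduction to a free basis). Suppose $g,h\in\mathbb F_2$ do not commute, and set $\Sigma=\langle g,h\rangle$. By Nielsen--Schreier, every subgroup of a free group is free, so $\Sigma$ is free. Since $\Sigma$ is non-abelian its rank is at least $2$, and since it is $2$-generated its rank is exactly $2$. Applying Hopfianness of $\mathbb F_2$ (free groups of finite rank are residually finite, hence Hopfian) to the surjection $\mathbb F_2\twoheadrightarrow\Sigma$ sending the canonical generators to $g,h$ shows that this map is an isomorphism; hence $\{g,h\}$ is a \emph{free basis} of $\Sigma\cong\mathbb F_2$.

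Step 2 (Kesten bound and transfer). Kesten's theorem gives $\|\lambda_{\Sigma}(g)+\lambda_{\Sigma}(g^{-1})+\lambda_{\Sigma}(h)+\lambda_{\Sigma}(h^{-1})\|_{\ell^2(\Sigma)}=2\sqrt 3$, because $\{g,h\}$ is a free basis of $\Sigma$. The restriction $\lambda|_{\Sigma}$ decomposes as $\bigoplus_{c\in\Sigma\backslash\mathbb F_2}\lambda_\Sigma$ (one copy of the regular representation of $\Sigma$ per right coset, via the identification $\delta_{\sigma c}\leftrightarrow\delta_\sigma$), so the same operator norm equality transfers to $\ell^2(\mathbb F_2)$: writing $T=\lambda(g)+\lambda(g^{-1})+\lambda(h)+\lambda(h^{-1})$, we have $\|T\|_{\ell^2(\mathbb F_2)}=2\sqrt 3$.

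Step 3 (almost invariance contradiction). For any unitary $\lambda(k)$ and vector $\eta$, $\|\lambda(k)\eta-\eta\|^2=2\|\eta\|^2-2\,\mathrm{Re}\,\langle\lambda(k)\eta,\eta\rangle$, and $\|\lambda(k^{-1})\eta-\eta\|=\|\lambda(k)\eta-\eta\|$. Hence under the hypotheses, for each $k\in\{g^{\pm1},h^{\pm1}\}$ one has $\mathrm{Re}\,\langle\lambda(k)\eta,\eta\rangle\geqslant(1-c^2/2)\|\eta\|^2$. Summing the four inequalities yields $\mathrm{Re}\,\langle T\eta,\eta\rangle\geqslant(4-2c^2)\|\eta\|^2$. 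Combined with $\|T\|=2\sqrt 3$ from Step 2, this forces $4-2c^2\leqslant 2\sqrt 3$, i.e.\ $c^2\geqslant 2-\sqrt 3$. Choosing any fixed $c$ with $0<c<\sqrt{2-\sqrt 3}$ (e.g.\ $c=\tfrac12\sqrt{2-\sqrt 3}$) produces a contradiction and proves the lemma.

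There is no real obstacle: the argument is a standard spectral gap computation. The only conceptual point that must not be skipped is the observation in Step 1 that non-commuting $g,h\in\mathbb F_2$ automatically form a free basis of the subgroup they generate, since without this one cannot invoke Kesten with an explicit, \emph{uniform} constant.
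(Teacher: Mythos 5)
Your proof is correct and follows essentially the same route as the paper: both arguments rest on the fact that a non-commuting pair in $\mathbb F_2$ is a free basis of the subgroup $\Sigma$ it generates, together with the decomposition of $\ell^2(\mathbb F_2)$ into copies of $\ell^2(\Sigma)$ over the cosets, and a spectral gap statement for $\mathbb F_2$. The only difference is cosmetic: the paper fixes an unspecified constant $c$ from non-amenability applied to the standard generators and transfers the almost-invariant vector to a single coset block, whereas you transfer the operator norm via Kesten's theorem ($\|T\|=2\sqrt 3$), which has the mild advantage of producing an explicit admissible constant $c<\sqrt{2-\sqrt 3}$.
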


Going back to the proof of Theorem \ref{relamen}, recall that Claim 2 yields that $\|\zeta_i-e_0(\zeta_i)\|_2\rightarrow 0$. Moreover, Claim 1 gives that $\|\zeta_i\|_2\rightarrow \|p\|_2$ and that $\|p\xi_i-\xi_i p\|_2\rightarrow 0$. 

Thus, we can find $i=(X,Y,\delta,t)\in I$ such that for every $i'\geqslant i$ we have that $$\|\zeta_{i'}-e_0(\zeta_{i'})\|_2<\min\{\frac{c\|p\|_2}{128},\frac{\|p\|_2}{4}\},\;\;\;\;\|\zeta_{i'}\|_2\geqslant\frac{\|p\|_2}{2},\;\;\;\;\text{and}\;\;\;\;\|p\xi_{i'}-\xi_{i'} p\|_2\leqslant\frac{c\|p\|_2}{64}.$$

Note that $\|p\theta_t(y)p\|_2\geqslant \|p\|_2-2\|\theta_t(p)-p\|_2$, for all $y\in\mathcal U(p\tilde Mp)$. Since $\lim_{t\rightarrow 0}\|\theta_t(p)-p\|_2=0$, after eventually shrinking $t$, we may also assume that 
\begin{equation}\label{teta}\|p\theta_t(y)p\|_2\geqslant\frac{\|p\|_2}{2},\;\;\;\text{for all}\;\;\;y\in\mathcal U(p\tilde Mp)\end{equation}
 
 \vskip 0.05in
 Let $i'\geqslant i$. Then $\|e_0(\zeta_{i'})\|_2\geqslant\frac{\|p\|_2}{4}$. Since $e_0(\zeta_{i'})\in \mathcal K_0=p\mathcal H_0$, we can write $e_0(\zeta_{i'})=\eta_{i'}p=p\eta_{i'}$, for some $\eta_{i'}\in\mathcal H_0$. Then $\|\eta_{i'}\|_2=\frac{\|e_0(\zeta_{i'})\|_2}{\|p\|_2}$ and therefore $\|\eta_{i'}\|_2\geqslant\frac{1}{4}$.

Also, we have that $\|\zeta_{i'}-\xi_{i'}p\|_2=\|p\xi_{i'}p-\xi_{i'}p\|_2\leqslant \|p\xi_{i'}-\xi_{i'}p\|_2\leqslant\frac{c\|p\|_2}{64}$ and similarly that  $\|\zeta_{i'}-p\xi_{i'}\|_2\leqslant\frac{c\|p\|_2}{64}$. By using these inequalities we derive the following
\vskip 0.05in

\vskip 0.05in
{\bf Claim 3.} Let $c$ be the constant provided by Lemma \ref{gap}. Then for  every finite set $F\subset\mathcal U(A)$ we can find a unit vector $\eta\in\mathcal H_0$ depending on $F$ such that $$\|(p\theta_t(y)p)\eta-\eta(p\theta_t(y)p)\|_2\leqslant\frac{c\|p\|_2}{4},\;\;\text{for all}\;\; y\in F. $$

{\it Proof of Claim 3.}
Let $i'=(X,Y\cup F,t,\min\{\delta,\frac{c\|p\|_2}{64}\})$ and define $\eta:=\frac{\eta_{i'}}{\|\eta_{i'}\|_2}\in\mathcal H_0$. 

Let $y\in F$. By the definition of $\xi_{i'}$ we have that $\|\theta_t(y)\xi_{i'}-\xi_{i'}\theta_t(y)\|_2\leqslant\frac{c\|p\|_2}{64}$. 
Since $i'\geqslant i$, by using the previous inequalities we derive that
\begin{equation}\label{etainv1}\|(p\theta_t(y)p)\eta-\eta(p\theta_t(y)p)\|_2=\frac{1}{\|\eta_{i'}\|_2}\|p\theta_t(y)e_0(\zeta_{i'})-e_0(\zeta_{i'})\theta_t(y)p\|_2\leqslant\end{equation}  $$4\|p\theta_t(y)\zeta_{i'}-\zeta_{i'}\theta_t(y)p\|_2+8\|\zeta_{i'}-e_0(\zeta_{i'})\|_2$$ Additionally, we have that \begin{equation}\label{etainv2}\|p\theta_t(y)\zeta_{i'}-\zeta_{i'}\theta_t(y)p\|_2\leqslant \|p\theta_t(y)\xi_{i'}p-p\xi_{i'}\theta_t(y)p\|_2+\|\zeta_{i'}-\xi_{i'}p\|_2+\|\zeta_{i'}-p\xi_{i'}\|_2\leqslant\end{equation} $$\|\theta_t(y)\xi_{i'}-\xi_{i'}\theta_t(y)\|_2+\frac{c\|p\|_2}{32}\leqslant \frac{3c\|p\|_2}{64}.$$

Since $\|\zeta_{i'}-e_0(\zeta_{i'})\|_2\leqslant\frac{c\|p\|_2}{128}$, by combining equations \ref{etainv1} and \ref{etainv2} the claim follows.\hfill$\square$

\vskip 0.05in

In the {\it second part of the proof} we combine Lemmas \ref{calc}, \ref{gap} and Claim 3  to get a contradiction.
Since  $A\nprec_{M}M_i$, for all $i\in\{1,2\}$, Theorem \ref{inter} implies that $\theta_t(A)\nprec_{\tilde M}N$ and moreover that $\theta_t(A)\nprec_{\tilde M}N\rtimes\Sigma$, for any cyclic subgroup $\Sigma<\mathbb F_2$. 

Thus, we can find $y\in\mathcal U(A)$ such that $\|E_N(p\theta_t(y)p)\|_2\leqslant \frac{\|p\|_2}{4}$. If we write $p\theta_t(y)p=\sum_{g\in\mathbb F_2}y_gu_g$, where $y_g\in N$, then $\|y_e\|_2\leqslant\frac{\|p\|_2}{4}$. By applying Claim 3 to $F=\{y\}$ we can find a unit vector $\eta\in\mathcal H_0$ such that  $\|(p\theta_t(y)p)\eta-\eta(p\theta_t(y)p)\|_2\leqslant \frac{c\|p\|_2}{4}$. 

Let $S_1=\{g\in\mathbb F_2| \|\lambda(g)(U(\eta))-U(\eta)\|>c\}$ and $S_2=\{g\in\mathbb F_2\setminus\{e\}|\|\lambda(g)(U(\eta))-U(\eta)\|\leqslant c\}$. 
By using Lemma \ref{calc} we get that $$\frac{c^2\|p\|_2^2}{16}\geqslant \|(p\theta_t(y)p)\eta-\eta(p\theta_t(y)p)\|_2^2=\sum_{g\in\mathbb F_2}\|\lambda(g)(U(\eta))-U(\eta)\|^2\|y_g\|_2^2\geqslant$$ $$ c^2\sum_{g\in S_1}\|y_g\|_2^2.$$ 
Hence, we derive that  \begin{equation}\label{5}\sum_{g\in S_1\cup\{e\}}\|y_g\|_2^2=\|y_e\|_2^2+\sum_{g\in S_1}\|y_g\|_2^2\leqslant \frac{\|p\|_2^2}{16}+\frac{\|p\|_2^2}{16}=\frac{\|p\|_2^2}{8}.\end{equation}

 Since $\sum_{g\in\mathbb F_2}\|y_g\|_2^2=\|p\theta_t(y)p\|_2^2\geqslant\frac{\|p\|_2^2}{4}$ by equation \ref{teta}, we get that $S_2=\mathbb F_2\setminus (S_1\cup\{e\})\not=\emptyset$. On the other hand, by Lemma \ref{gap}, any two elements $g,h\in S_2$ commute. If follows that we can find $k\in\mathbb F_2\setminus\{e\}$ such that $S_2\subset\Sigma$, where $\Sigma=\{k^n|n\in\mathbb Z\}$. Moreover, we can pick $k$ such that if $k'\in\mathbb F_2$ commutes with $k^m$, for some $m\in\mathbb Z\setminus\{0\}$, then $k'\in\Sigma$.

Further, since $\theta_t(A)\nprec_{\tilde M}N\rtimes\Sigma$, we can find $z\in\mathcal U(A)$ such that $\|E_{N\rtimes\Sigma}(p\theta_t(z)p)\|_2\leqslant\frac{\|p\|_2}{4}$.  Since $y,z\in\mathcal U(A)$,  by applying Claim 3 to $F=\{y,z\}$ we can find a unit vector $\zeta\in\mathcal H_0$ such that $\|(p\theta_t(y)p)\zeta-\zeta(p\theta_t(y)p)\|_2\leqslant \frac{c\|p\|_2}{4}$ and  $\|(p\theta_t(z)p)\zeta-\zeta(p\theta_t(z)p)\|_2\leqslant \frac{c\|p\|_2}{4}$.

Let $T_1=\{g\in\mathbb F_2| \|\lambda(g)(U(\zeta))-U(\zeta)\|>c\}$ and $T_2=\{g\in\mathbb F_2\setminus\{e\}|\|\lambda(g)(U(\zeta))-U(\zeta)\|\leqslant c\}$. Write $p\theta_t(z)p=\sum_{g\in\mathbb F_2}z_gu_g$, where $z_g\in N$. The same calculation as above then shows that \begin{equation}\label{6}\sum_{g\in T_1}\|y_g\|_2^2\leqslant\frac{\|p\|_2^2}{16}\;\;\;\text{and}\;\;\;\sum_{g\in T_1}\|z_g\|_2^2\leqslant\frac{\|p\|_2^2}{16}\end{equation} 

By combining inequalities  \ref{5} and \ref{6} it follows that $\sum_{g\in T_1\cup (S_1\cup\{e\})}\|y_g\|_2^2\leqslant\frac{3\|p\|_2^2}{16}.$ Since we also have that $\sum_{g\in\mathbb F_2}\|y_g\|_2^2=\|p\theta_t(y)p\|_2^2\geqslant\frac{\|p\|_2^2}{4}$, we get that $T_1\cup S_1\cup\{e\}\not=\mathbb F_2$. Hence $S_2\cap T_2\not=\emptyset$.

Fix $k'\in S_2\cap T_2$. If $k''\in T_2$, then  Lemma \ref{gap} implies that $k''$  commutes with $k'$. Since $k'\in S_2\subset\Sigma\setminus\{e\}$, we get that $k''\in\Sigma$ and therefore $T_2\subset\Sigma$.

Thus,  $T_2\cup\{e\}\subset\Sigma$ and so $\sum_{g\in T_2\cup\{e\}}\|z_g\|_2^2\leqslant \|E_{N\rtimes\Sigma}(p\theta_t(z)p)\|_2^2\leqslant\frac{\|p\|_2^2}{16}$. Since $T_1\cup T_2\cup\{e\}=\mathbb F_2$, combining this inequality with \ref{6} yields that $\sum_{g\in\mathbb F_2}\|z_g\|_2^2\leqslant \frac{\|p\|_2^2}{8}.$ This however contradicts the fact that $\|p\theta_t(z)p\|_2\geqslant\frac{\|p\|_2}{2}$ and finishes the proof.
\hfill$\square$

{\it Proof of Lemma \ref{calc}.} Write $\eta=\sum_{g\in\mathbb F_2}\eta_gu_ge_Nu_g^*$, where $\eta_g\in\mathbb C$, and $y=\sum_{k\in\mathbb F_2}y_ku_k$, where $y_k\in N$. Recall that the canonical semi-finite trace on $\langle\tilde M,e_N\rangle$ is given by $Tr(xe_Ny)=\tau(xy)$. If we denote by $(\sigma_g)_{g\in\mathbb F_2}$ the conjugation action of $\mathbb F_2$ on $N$ (i.e., $\sigma_g(x)=u_gxu_g^*$), then we have $$\langle y\eta,\eta y\rangle=\sum_{g,h,k,l\in\mathbb F_2}\langle y_ku_k\eta_gu_ge_Nu_g^*,\eta_hu_he_Nu_h^*y_lu_l\rangle=$$ $$\sum_{g,h,k,l\in\mathbb F_2}\eta_g\overline{\eta_h}Tr(y_ku_ku_g\;e_N\;u_g^*u_l^*y_l^*u_h\;e_N\;u_h^*)=$$ $$\sum_{g,h,k,l\in\mathbb F_2}\eta_g\overline{\eta_h}\tau(E_N(u_h^*y_ku_ku_g)E_N(u_g^*u_l^*y_l^*u_h)).$$ 

If $g$, $k$ are fixed and the expression $\tau(E_N(u_h^*y_ku_ku_g)E_N(u_g^*u_l^*y_l^*u_h))$ is non-zero, then $h=kg$ and $l=k$. Moreover, in this case this expression is equal to $\tau(\sigma_{(kg)^{-1}}(y_k)\sigma_{(kg)^{-1}}(y_k^*))=\|y_k\|_2^2$. Thus, we deduce that

$$\langle y\eta,\eta y\rangle=\sum_{g,k\in\mathbb F_2}\eta_g\overline{\eta_{kg}}\|y_k\|_2^2=\sum_{k\in\mathbb F_2}(\sum_{g\in\mathbb F_2}\eta_{k^{-1}g}\overline{\eta_g})\|y_k\|_2^2=$$ $$\sum_{k\in\mathbb F_2}\langle \lambda(k)(U(\eta)),U(\eta)\rangle\; \|E_N(yu_k^*)\|_2^2.$$

Since we also have that $\|y\eta\|_2=\|\eta y\|_2=\|y\|_2\|\eta\|_2$, the lemma follows. 
\hfill$\square$

{\it Proof of Lemma \ref{gap}.} Let $a$ and $b$ be generators of $\mathbb F_2$. Since $\mathbb F_2$ is non-amenable, there exists $c>0$ such that any non-zero vector $\eta\in\ell^2(\mathbb F_2)$ satisfies  $\|\lambda(a)(\eta)-\eta\|^2+\|\lambda(b)(\eta)-\eta\|^2> 2c^2\|\eta\|^2$.

Now, let $g,h\in\mathbb F_2$ such that $\|\lambda(g)(\eta)-\eta\|\leqslant c\|\eta\|$ and $\|\lambda(h)(\eta)-\eta\|\leqslant c\|\eta\|$, for some non-zero vector $\eta\in\ell^2(\mathbb F_2)$. From this we get that  $\|\lambda(g)(\eta)-\eta\|^2+\|\lambda(h)(\eta)-\eta\|^2\leqslant 2c^2\|\eta\|^2$.

 Let $\Delta<\mathbb F_2$ be the subgroup generated by $g$ and $h$, and  $\gamma:\Delta\rightarrow\mathcal U(\ell^2(\Delta))$ be the its left regular representation. Since   $\mathbb F_2=\sqcup_{g\in S}\Delta g$, for a set $S$ of representatives,  the restriction $\lambda_{|\Delta}$ is a subrepresentation of  $\oplus_{n=1}^{\infty}\gamma:\Delta\rightarrow\mathcal U(\oplus_{n=1}^{\infty}\ell^2(\Delta))$. 
If we write $\eta=(\eta_n)_{n=1}^{\infty}$, where $\eta_n\in\ell^2(\Delta)$, then  we can find $n$ such that $\|\gamma(g)(\eta_n)-\eta_n\|^2+\|\gamma(h)(\eta_n)-\eta_n\|^2\leqslant 2c^2\|\eta_n\|^2$ and $\eta_n\not=0$.

If $g$ and $h$ do not commute, then they generate a copy of $\mathbb F_2$. In other words, there exists an isomorphism $\rho:\Delta\rightarrow\mathbb F_2$ such that $\rho(g)=a$ and $\rho(h)=b$. In combination with the above, this leads to a contradiction. \hfill$\square$

\vskip 0.1in

{\it Proof of Theorem \ref{relamen2}}.  Recall that $B=P\bar{\otimes}Q_0$, $M_1=P\bar{\otimes}Q_1$ and $M_2=P\bar{\otimes}Q_2$.  Therefore, $M=P\bar{\otimes}Q$, where $Q=Q_1*_{Q_0}Q_2$. Also, recall that $\tilde M=M*_{B}(B\bar{\otimes}L(\mathbb F_2))$ and that $N=\{u_ge_Mu_g^*|g\in\mathbb F_2\}''$. We define $\tilde Q=Q*_{Q_0}(Q_0\bar{\otimes}L(\mathbb F_2))$ and $N_0=\{u_gQu_g^*|g\in\mathbb F_2\}''\subset \tilde Q$.  Note that $\tilde M=P\bar{\otimes}\tilde Q$ and that $N=P\bar{\otimes}N_0$.

We denote by $\{\alpha_t\}_{t\in\mathbb R}\subset\text{Aut}(\tilde Q)$ the free malleable deformation associated to the AFP decomposition $Q=Q_1*_{Q_0}Q_2$ (see Section \ref{ipp}). Then for every $x\in P$ and $y\in\tilde Q$ we have that $\theta_t(x\otimes y)=x\otimes\alpha_t(y)$.

Let $t\in (0,1)$. We claim that $\alpha_t(A_0)$ is amenable relative to $N_0$ inside $\tilde Q$. Once this claim is proven the conclusion follows by applying Theorem \ref{relamen} to the inclusion  $A_0\subset Q=Q_1*_{Q_0}Q_2$.

Since $\theta_t(A)$ is amenable relative to $N$ inside $\tilde M$, by \cite[Definition 2.2]{OP07} we can find a $\theta_t(A)$-central state $\Phi:\langle\tilde M,e_N\rangle\rightarrow\mathbb C$ such that  $\Phi_{|\tilde M}=\tau$.

Since $\tilde M=P\bar{\otimes}\tilde Q$ and that $N=P\bar{\otimes}N_0$, we have that
 $\langle\tilde M,e_N\rangle=P\bar{\otimes}\langle\tilde Q,e_{N_0}\rangle$. Define a state $\Psi:\langle\tilde Q,e_{N_0}\rangle\rightarrow\mathbb C$ by $\Psi(T)=\Phi(1\otimes T)$ and let $u\in\mathcal U$. Since $u\otimes\rho(u)\in A$ we have that $u\otimes\alpha_t(\rho(u))=\theta_t(u\otimes\rho(u))\in\theta_t(A).$ Thus for every $T\in\langle\tilde Q,e_{N_0}\rangle$ we have that $$\Psi(\alpha_t(\rho(u))T\alpha_t(\rho(u))^*)=\Phi(1\otimes \alpha_t(\rho(u))T\alpha_t(\rho(u))^*)=$$ $$\Phi((u\otimes\alpha_t(\rho(u))(1\otimes T)(u\otimes\alpha_t(\rho(u))^*)=\Phi(1\otimes T)=\Psi(T).$$
 
 Thus, $\Psi(\alpha_t(\rho(u))T)=\Psi(T\alpha_t(\rho(u))$, for every $u\in\mathcal U$ and $T\in\langle\tilde Q,e_{N_0}\rangle$.
 Since  $\{\alpha_t(\rho(u))|u\in\mathcal U\}$ generates $\alpha_t(A_0)$ and  $\Psi_{|\tilde Q}=\tau$, we get that $\Psi$ is $\alpha_t(A_0)$-central. Thus $\alpha_t(A_0)$ is amenable relative to $N_0$ inside $\tilde Q$. This proves the claim and finishes the proof.
  \hfill$\square$

\section{Property $\Gamma$ for subalgebras of AFP algebras}\label{Gamma}

Let $Q$ be a von Neumann subalgebra of an amalgamated free product algebra $M=M_1*_{B}M_2$. In this section we study the position of the relative commutant  $Q'\cap M^{\omega}$ inside $M^{\omega}$. We start by considering  the case $Q=M$.

\begin{lemma}\label{bar}  Let $(M_1,\tau_1)$ and $(M_2,\tau_2)$ be tracial von Neumann algebras with a common von Neumann subalgebra $B$ such that ${\tau_1}_{|B}={\tau_2}_{|B}$. Denote $M=M_1*_{B}M_2$. Assume that there exist unitary elements $u\in M_1$ and $v,w\in M_2$ such that $E_B(u)=E_B(v)=E_B(w)=E_B(w^*v)=0$. 

If $\omega$ is a free ultrafilter on $\mathbb N$, then $M'\cap M^{\omega}\subset B^{\omega}$. 

\end{lemma}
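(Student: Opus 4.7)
Proof plan for Lemma \ref{bar}. Let $x=(x_n)_n\in M'\cap M^{\omega}$. I want to show $\lim_{n\to\omega}\|x_n-E_B(x_n)\|_2=0$. Using the canonical Hilbert decomposition from Notations \ref{H_n}, namely
$$L^2(M)=L^2(B)\oplus\bigoplus_{k\geq 1,\ \mathcal I\in S_k}\mathcal H_{\mathcal I},$$
I write $x_n=E_B(x_n)+\xi_n$ with $\xi_n=\sum_{k\geq 1,\mathcal I\in S_k}\xi_{n,\mathcal I}$, $\xi_{n,\mathcal I}\in\mathcal H_{\mathcal I}$. The goal becomes $\lim_{n\to\omega}\|\xi_n\|_2=0$.

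The plan is to establish a uniform ``spectral gap'' inequality of the form
$$\|[u,\eta]\|_2^2+\|[v,\eta]\|_2^2+\|[w,\eta]\|_2^2\ \geq\ c\,\|\eta\|_2^2,\qquad\eta\in L^2(M)\ominus L^2(B),\eqno(\ast)$$
for some absolute $c>0$, and then apply it with $\eta=\xi_n$. The subtle point is that centrality of $x_n$ only yields $\|[a,x_n]\|_2\to 0$, not $\|[a,\xi_n]\|_2\to 0$. To bridge this, I would exploit that $E_B(x_n)\in B\subset M_1\cap M_2$, so $[u,E_B(x_n)]\in M_1$ and $[v,E_B(x_n)],[w,E_B(x_n)]\in M_2$; in particular these error terms sit in the ``length $\leq 1$'' part of the decomposition. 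Thus, after projecting $[u,x_n]$, $[v,x_n]$, $[w,x_n]$ onto the length-$\geq 2$ piece, the correction from $E_B(x_n)$ vanishes, and $(\ast)$ applied to $\xi_n$ (restricted to the length-$\geq 2$ tail) forces that tail to $0$. The remaining length-$1$ piece (i.e. the $\mathcal H_{(1)}$ and $\mathcal H_{(2)}$ components of $\xi_n$) is then handled by a slightly refined version of $(\ast)$ on $L^2(M_1)\ominus L^2(B)$ and $L^2(M_2)\ominus L^2(B)$, using that $E_{M_1}([v,x_n])=[v,E_{M_1}(x_n)]$ etc.

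To prove $(\ast)$ I would argue degree by degree. Fix $\eta=\eta_{\mathcal I}\in\mathcal H_{\mathcal I}$ with $\mathcal I=(i_1,\ldots,i_k)$. When $i_1=i_k=2$, the hypothesis $E_B(u)=0$ ensures that $u\eta u^*\in\mathcal H_{(1,i_1,\ldots,i_k,1)}$ is a reduced word of length $k+2$, hence orthogonal to $\eta$, yielding $\|[u,\eta]\|_2^2=2\|\eta\|_2^2$. When $i_1=i_k=1$, the same with $v$ or $w$ (using $E_B(v)=E_B(w)=0$) gives the analogous estimate. The remaining ``mixed'' cases (one end in $M_1$, the other in $M_2$) produce, after conjugation, a component of length $k\pm 1$; here the two pure unitaries $v,w\in M_2\ominus B$ give two a priori independent contributions to the neighboring $\mathcal H_{\mathcal I'}$, and the orthogonality $E_B(w^*v)=0$ ensures these contributions are orthogonal in $L^2(B)$, preventing the destructive cancellation that would otherwise defeat a single-unitary argument. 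Summing over $\mathcal I$ and bounding cross-terms between neighboring $\mathcal H_{\mathcal I}$, $\mathcal H_{\mathcal I'}$ in Cauchy--Schwarz form gives $(\ast)$.

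The main obstacle is exactly this last bookkeeping step: the maps $\mathrm{Ad}(u),\mathrm{Ad}(v),\mathrm{Ad}(w)$ do not preserve the word-length grading, so $\|[a,\eta]\|_2^2$ mixes the $\mathcal H_{\mathcal I}$-components of $\eta$ across several lengths, and one must check that summing over $a\in\{u,v,w\}$ the off-diagonal mixings are dominated by the diagonal lower bound. The orthogonality condition $E_B(w^*v)=0$ is what makes the two length-lowering contributions linearly independent and hence makes the diagonal bound survive; without it (e.g.\ with only $u$ and $v$) the ``length-$1$ boundary'' components in $M_2\ominus B$ could a priori be invariant under conjugation by all available unitaries.
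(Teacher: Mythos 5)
Your proposal has the right ingredients — free-product orthogonality of reduced words and the observation that $E_B(w^*v)=0$ makes the two conjugates $v(\cdot)v^*$ and $w(\cdot)w^*$ land in orthogonal subspaces — but as written it is not a proof: the central inequality $(\ast)$ is only asserted, and the step you yourself flag as "the main obstacle" (controlling the cross-terms that arise because $\mathrm{Ad}(u),\mathrm{Ad}(v),\mathrm{Ad}(w)$ do not preserve the word-length grading) is exactly the part that is left undone. Moreover, your bridge for the boundary terms contains a false identity: $E_{M_1}([v,x_n])=[v,E_{M_1}(x_n)]$ fails, since $E_{M_1}$ is not an $M_2$-bimodule map (for $x_1\in M_1\ominus B$ one has $E_{M_1}(vx_1)=0$ while $vE_{M_1}(x_1)=vx_1\neq 0$). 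So the length-one piece is not handled correctly, and the length-$\geq 2$ cascade (a $B$-component of $ux_1$ can merge with $x_2$, then again with $x_3$, etc.) is not controlled.

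The paper (following Barnett) avoids all of this by using a much coarser decomposition than the length grading: $L^2(M)=L^2(B)\oplus\mathcal H_1\oplus\mathcal H_2$, where $\mathcal H_i$ is spanned by the alternating words whose \emph{first} letter lies in $M_i\ominus B$. With respect to this decomposition the conjugation maps are exactly subspace-preserving: $u\mathcal H_2u^*\subset\mathcal H_1$, and $v\mathcal H_1v^*,\,w\mathcal H_1w^*\subset\mathcal H_2$ with $v\mathcal H_1v^*\perp w\mathcal H_1w^*$ (the latter because $(w^*v)\mathcal H_1(w^*v)^*\subset\mathcal H_2\perp\mathcal H_1$). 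Writing $P_i$ for the projection onto $\mathcal H_i$ and using $\|P_{a\mathcal K a^*}(\xi)\|_2=\|P_{\mathcal K}(a^*\xi a)\|_2$, one gets $\|P_2(u^*\xi u)\|_2\leq\|P_1(\xi)\|_2$ and $\|P_1(v^*\xi v)\|_2^2+\|P_1(w^*\xi w)\|_2^2\leq\|P_2(\xi)\|_2^2$ for every $\xi$, with no error terms whatsoever. Applied to a central sequence $(x_n)_n$ this yields $\lim_{n\to\omega}\|P_2(x_n)\|_2\leq\lim_{n\to\omega}\|P_1(x_n)\|_2$ and $\sqrt{2}\,\lim_{n\to\omega}\|P_1(x_n)\|_2\leq\lim_{n\to\omega}\|P_2(x_n)\|_2$, forcing both limits to vanish; since $\mathcal H_1\oplus\mathcal H_2=L^2(M)\ominus L^2(B)$, this gives $x\in B^{\omega}$ directly, with no need to separate off $E_B(x_n)$ or to prove a pointwise spectral-gap inequality. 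If you want to salvage your approach, replace the length grading by this first-letter grading; the degree-by-degree bookkeeping then disappears.
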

In the case $B=\mathbb C1$ this result was proved in \cite[Theorem 11]{Ba95}. The proof of Theorem \ref{bar} is a straightforward adaptation of the proof of \cite[Theorem 11]{Ba95} to the case when $B$ is arbitrary.
\begin{proof}
 We denote by $S_1\subset M$  the set of alternating words in $M_1\ominus B$ and $M_2\ominus B$ that begin in $M_1\ominus B$. Concretely,  $x\in S_1$ if we can write  $x=x_1x_2...x_n$, for some $x_1\in M_1\ominus B,x_2\in M_2\ominus B,x_3\in M_1\ominus B...$. Similarly, we denote by $S_2\subset M$ the set of alternating words in $M_1\ominus B$ and $M_2\ominus B$ that begin in $M_2\ominus B$. For $i\in\{1,2\}$, we denote by $\mathcal H_i\subset L^2(M)$ the $\|.\|_2$ closure of the linear span of $S_i$ and by $P_i$ the orthogonal projection onto $\mathcal H_i$. 
 
 Note that if $x\in M_1\ominus B$ and $y\in M_2\ominus B$, then $x\mathcal H_2x^*\subset\mathcal H_1$ and $y\mathcal H_1 y^*\subset\mathcal H_2$.
 The hypothesis therefore implies that \begin{equation}\label{subsp} u\mathcal H_2u^*\subset\mathcal H_1,\;\; v\mathcal H_1v^*\subset\mathcal H_2,\;\;w\mathcal H_1w^*\subset\mathcal H_2\;\;\text{and}\;\;v\mathcal H_1v^*\perp w\mathcal H_1w^* \end{equation}
 The last fact holds because $(w^*v)\mathcal H_1(w^*v)^*\subset\mathcal H_2$ and hence $(w^*v)\mathcal H_1(w^*v)^*\perp\mathcal H_1$.

Now, let $\xi\in L^2(M)$. Notice that if $P_{\mathcal K}$ is the orthogonal projection onto a closed subspace $\mathcal K\subset L^2(M)$ and $u\in \mathcal U(M)$, then $P_{u\mathcal K u^*}(\xi)=uP_{\mathcal K}(u^*\xi u)u^*$ and therefore  $\|P_{u\mathcal Ku^*}(\xi)\|_2=\|P_{\mathcal K}(u^*\xi u)\|_2$. By combining this fact with equation \ref{subsp} we get that \begin{equation}\label{orthog}\|P_2(u^*\xi u)\|_2\leqslant \|P_1(\xi)\|_2\;\;\text{and}\;\; \|P_1(v^*\xi v)\|_2^2+\|P_1(w^*\xi w)\|_2^2\leqslant \|P_2(\xi)\|_2^2\end{equation}

Let $x=(x_n)_n\in M'\cap M^{\omega}$. Then $\|u^*x_nu-x_n\|_2,\|v^*x_nv-x_n\|_2,\|w^*x_nw-x_n\|_2\rightarrow 0,$ as $n\rightarrow\omega$. Using this fact and applying \ref{orthog} to $\xi=x_n$ we get that $\lim_{n\rightarrow\omega}\|P_2(x_n)\|_2\leqslant \lim_{n\rightarrow\omega}\|P_1(x_n)\|_2$ and $\sqrt{2}\lim_{n\rightarrow\omega}\|P_1(x_n)\|_2\leqslant\lim_{n\rightarrow\omega} \|P_2(x_n)\|_2.$ Therefore, we have that $\|P_1(x_n)\|_2\rightarrow 0$ and $\|P_2(x_n)\|_2\rightarrow 0$, as $n\rightarrow\omega$.

Since $L^2(M)=L^2(B)\oplus\mathcal H_1\oplus\mathcal H_2$, it follows that $\lim_{n\rightarrow\omega}\|x_n-E_B(x_n)\|_2=0$ and thus $x\in B^{\omega}$.
\end{proof}

Lemma \ref{bar} implies that a large class of AFP groups give rise to II$_1$ factors without property $\Gamma$.

\begin{corollary}\label{inner}
Let $\Gamma=\Gamma_1*_{\Lambda}\Gamma_2$ be an amalgamated free product group such that $[\Gamma_1:\Lambda]\geqslant 2$ and $[\Gamma_2:\Lambda]\geqslant 3$. Assume that there exist $g_1,g_2,...,g_m\in\Gamma$ such that $\cap_{i=1}^mg_i\Lambda g_i^{-1}=\{e\}$. 

Then $L(\Gamma)$ is a II$_1$ factor without property $\Gamma$.

Moreover, $\Gamma$ is not  inner amenable, i.e. the unitary representation $\pi:\Gamma\rightarrow\mathcal U(\ell^2(\Gamma\setminus\{e\}))$ given by $\pi(g)(\delta_h)=\delta_{ghg^{-1}},$ for $g\in\Gamma$ and $h\in\Gamma\setminus\{e\}$, does not have almost invariant vectors.
\end{corollary}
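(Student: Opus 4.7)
The plan is to realize $M := L(\Gamma)$ as the amalgamated free product $L(\Gamma_1) *_{L(\Lambda)} L(\Gamma_2)$ and feed this into Lemma~\ref{bar}. To produce the required unitaries, I would use $[\Gamma_1:\Lambda]\geqslant 2$ to pick $g \in \Gamma_1 \setminus \Lambda$ and set $u := u_g$; using $[\Gamma_2:\Lambda]\geqslant 3$, choose $h_1, h_2 \in \Gamma_2$ representing three distinct cosets $\Lambda, h_1\Lambda, h_2\Lambda$ of $\Gamma_2/\Lambda$ and set $v := u_{h_1}$, $w := u_{h_2}$. Since $E_{L(\Lambda)}(u_k)$ equals $u_k$ when $k \in \Lambda$ and $0$ otherwise, each of $u, v, w$, and $w^*v = u_{h_2^{-1}h_1}$ has vanishing conditional expectation onto $L(\Lambda)$, and Lemma~\ref{bar} yields $M' \cap M^\omega \subset L(\Lambda)^\omega$.

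Next I would upgrade this to $M' \cap M^\omega = \mathbb{C}1$ using the almost-malnormality hypothesis $\bigcap_{i=1}^m g_i \Lambda g_i^{-1} = \{e\}$. Let $x = (x_n)_n \in M' \cap M^\omega$; after a $\|\cdot\|_2$-negligible modification we may assume $x_n \in L(\Lambda)$ and expand $x_n = \sum_{h \in \Lambda} c_{n,h}\, u_h$. Commutation of $x$ with each $u_{g_i}$ forces $\|u_{g_i} x_n u_{g_i}^{-1} - x_n\|_2 \to 0$ as $n \to \omega$; comparing the Fourier supports $\Lambda$ and $g_i \Lambda g_i^{-1}$ yields
\[
\|u_{g_i} x_n u_{g_i}^{-1} - x_n\|_2^2 \;\geqslant\; \sum_{h \in \Lambda \setminus g_i \Lambda g_i^{-1}} |c_{n,h}|^2.
\]
Taking the union over $i=1,\ldots,m$ and invoking the hypothesis gives $\lim_{n \to \omega} \sum_{h \in \Lambda \setminus \{e\}} |c_{n,h}|^2 = 0$, so $x = \tau_\omega(x)\cdot 1 \in \mathbb{C}1$. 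In particular $M' \cap M = \mathbb{C}1$, so $M$ is a factor; it is infinite-dimensional since $\Gamma$ is infinite (both coset-indices are $\geqslant 2$); hence $M$ is a II$_1$ factor, and $M' \cap M^\omega = \mathbb{C}1$ is precisely the negation of property $\Gamma$.

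For the moreover clause, suppose toward a contradiction that $\pi$ admits almost invariant unit vectors $\xi_n \in \ell^2(\Gamma \setminus \{e\})$. Identifying $\ell^2(\Gamma \setminus \{e\})$ with $L^2(M) \ominus \mathbb{C}1$, asymptotic $\pi$-invariance becomes $\|u_g \xi_n - \xi_n u_g\|_2 \to 0$ for every $g \in \Gamma$; since $\{u_g\}_{g\in\Gamma}$ generates $M$, I would promote this to $\|x\xi_n - \xi_n x\|_2 \to 0$ for every $x \in M$ and then invoke Theorem~\ref{gamma} to contradict $M'\cap M^\omega = \mathbb{C}1$. The main obstacle is exactly this promotion: a naive Kaplansky approximation of $x$ by $y \in \mathbb{C}[\Gamma]$ does not control $\|(x-y)\xi_n\|_2$ uniformly in $n$. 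I would handle it by running Connes's averaging device from the Remark following Theorem~\ref{gamma}: for every finite $F \subset \Gamma$ and $\delta > 0$, extract from a sufficiently invariant $\xi_n$ a projection $e_{F,\delta} \in M$ with $\tau(e_{F,\delta}) = \tfrac{1}{2}$ and $\|u_g e_{F,\delta} u_g^* - e_{F,\delta}\|_2 < \delta$ for $g \in F$, and assemble these along $\omega$ into a non-scalar element of $M' \cap M^\omega$, yielding the desired contradiction.
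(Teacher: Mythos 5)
Your proof of factoriality and of the absence of property $\Gamma$ is correct and is essentially the paper's argument: Lemma \ref{bar} applied to the decomposition $L(\Gamma)=L(\Gamma_1)*_{L(\Lambda)}L(\Gamma_2)$ with the group unitaries you chose (the three-coset condition from $[\Gamma_2:\Lambda]\geqslant 3$ is exactly what makes $E_{L(\Lambda)}(w^*v)=0$), followed by the almost-malnormality step. Your Fourier-coefficient inequality $\|u_{g_i}x_nu_{g_i}^{-1}-x_n\|_2^2\geqslant\sum_{h\in\Lambda\setminus g_i\Lambda g_i^{-1}}|c_{n,h}|^2$ is just an unpacking of the paper's iterated conditional expectations $E_1E_2\cdots E_m(x)=\tau(x)1$; both are fine.

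The "moreover" clause, however, has a genuine gap, and your proposed repair cannot work. You want to promote $\|u_g\xi_n-\xi_nu_g\|_2\rightarrow 0$ (for $g\in\Gamma$) to $\|x\xi_n-\xi_nx\|_2\rightarrow 0$ for all $x\in M$, and then feed the result into Theorem \ref{gamma}; equivalently, you are trying to prove that inner amenability of $\Gamma$ implies property $\Gamma$ of $L(\Gamma)$. That implication is false in general: Vaes has constructed an icc inner amenable group whose group von Neumann algebra does not have property $\Gamma$ (resolving a question of Effros in the negative). Concretely, the Connes averaging device you invoke requires, as input, vectors that are almost central for \emph{arbitrary} finite sets of unitaries of $M$ (the Remark after Theorem \ref{gamma} even needs to enlarge the finite set $S$ inside $\mathcal U(P)$ before the implication (c)$\Rightarrow$(b) applies); almost invariance under the group unitaries alone is precisely the hypothesis you do not get to upgrade. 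The correct modification — the one the paper has in mind when it "leaves it to the reader" — avoids any promotion: the proof of Lemma \ref{bar} consists entirely of Hilbert-space inequalities of the form $\|P_2(u^*\xi u)\|_2\leqslant\|P_1(\xi)\|_2$ and $\|P_1(v^*\xi v)\|_2^2+\|P_1(w^*\xi w)\|_2^2\leqslant\|P_2(\xi)\|_2^2$ valid for every $\xi\in L^2(M)$, and it only uses almost invariance of $\xi$ under conjugation by the three group unitaries $u,v,w$. Applying those inequalities directly to the almost invariant unit vectors $\xi_n\in\ell^2(\Gamma\setminus\{e\})\cong L^2(M)\ominus\mathbb C1$ forces $\xi_n$ to concentrate asymptotically on $\ell^2(\Lambda)$, and your own coefficient argument (also purely a statement about Fourier supports in $\ell^2(\Gamma)$) then forces concentration on $\mathbb C\delta_e$; since $\xi_n\perp\delta_e$, this gives $\|\xi_n\|\rightarrow 0$, a contradiction. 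You should replace the last paragraph of your proposal by this direct argument.
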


\begin{proof}
Let $x=(x_n)_n\in L(\Gamma)'\cap L(\Gamma)^{\omega}$.
Firstly, by Lemma \ref{bar} we get that $x\in L(\Lambda)^{\omega}$.

Secondly, for $i\in\{1,2,...,m\}$, denote by $E_i$ the conditional expectation onto $L(g_i\Lambda g_i^{-1})$.  Then $E_i(x)=u_{g_i}E_{L(\Lambda)}(u_{g_i}^*xu_{g_i})u_{g_i}^*$, for every $x\in L(\Gamma)$. Since $(x_n)_n\in L(\Gamma)'\cap L(\Lambda)^{\omega}$ it follows that $\|E_i(x_n)-x_n\|_2\rightarrow 0$, as $n\rightarrow\omega$, for every $i\in\{1,2,...,m\}$. 

On the other hand, since $\cap_{i=1}^mg_i\Lambda g_i^{-1}=\{e\}$, we derive that $E_1E_2...E_m(x)=\tau(x)1$, for all $x\in L(\Gamma)$. Altogether, it follows that $\|\tau(x_n)1-x_n\|_2\rightarrow 0$, as $n\rightarrow\omega$, i.e. $(x_n)_n\in\mathbb C1$.

We leave it the reader to modify the above proof to show that $\Gamma$ is indeed non-inner amenable.
\end{proof}

Next, we show that if a von Neumann subalgebra $Q\subset M=M_1*_{B}M_2$ is ``large" (i.e. if conditions (2) and (3) below are not satisfied) then a corner of $Q'\cap M^{\omega}$ embeds into $B^{\omega}$. Thus, the phenomenon from Theorem \ref{bar} extends in some sense to arbitrary subalgebras $Q\subset M$.

\begin{theorem}\label{afpgamma} Let $(M_1,\tau_1)$ and $(M_2,\tau_2)$ be tracial von Neumann algebras with a common von Neumann subalgebra $B$ such that ${\tau_1}_{|B}={\tau_2}_{|B}$.
Let $M=M_1*_{B}M_2$ and $Q\subset pMp$ be a von Neumann subalgebra, for some projection $p\in M$.
 Let $\omega$ be a free ultrafilter on $\mathbb N$. Denote by $P$ the von Neumann subalgebra of $M^{\omega}$ generated by $M$ and $B^{\omega}$.

Then one of the following conditions holds true:

\begin{enumerate}
\item $Q'\cap (pMp)^{\omega}\subset  P$ and $Q'\cap (pMp)^{\omega}\prec_{P}B^{\omega}$. 
\item $\mathcal N_{pMp}(Q)''\prec_{M}M_i$, for some $i\in\{1,2\}$.
\item $Qp'$ is amenable relative to $B$, for some non-zero projection $p'\in\mathcal Z(Q'\cap pMp)$.
\end{enumerate}
\end{theorem}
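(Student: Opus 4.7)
The plan is to assume that alternatives (2) and (3) both fail and derive (1). Set $Q_\omega := Q'\cap (pMp)^\omega$ and work inside the free malleable extension $\tilde M = M *_B (B\bar\otimes L(\mathbb F_2)) = N \rtimes \mathbb F_2$, where $\theta_t$ extends diagonally to an automorphism $\tilde\theta_t$ of $\tilde M^\omega$. First, since (3) fails, Remark \ref{embedamen}(3) gives $Q\not\prec_M B$; combined with the failure of (2), the contrapositive of Theorem \ref{ipp} forces $\inf_{u\in\mathcal U(Q)}\tau(\theta_t(u)u^*) = 0$ for every $t\in(0,1)$, and further, by Theorem \ref{inter}, $\theta_t(Q)\not\prec_{\tilde M} N\rtimes\Sigma$ for any cyclic subgroup $\Sigma<\mathbb F_2$.

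The key step is to analyze $\tilde\theta_t$ on $Q_\omega$. For $x\in Q_\omega$ and $u\in\mathcal U(Q)$, the relation $ux=xu$ in $M^\omega$ yields $\theta_t(u)\tilde\theta_t(x) = \tilde\theta_t(x)\theta_t(u)$, so $\tilde\theta_t(x)\in\theta_t(Q)'\cap\tilde M^\omega$. I would decompose $L^2(\tilde M^\omega) = L^2(M^\omega)\oplus \bigl(L^2(\tilde M^\omega)\ominus L^2(M^\omega)\bigr)$ and study the projection of $\tilde\theta_t(x)$ onto the complement. By Lemma \ref{BM}(1), the $M$-$M$ bimodule $L^2(\tilde M)\ominus L^2(M)$ is of the form $L^2(M)\otimes_B \mathcal H$ for a $B$-$M$ bimodule $\mathcal H$. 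If for some $x\in Q_\omega$ and some $t\in(0,1)$ the orthogonal component of $\tilde\theta_t(x)$ is $L^2$-significant, then Lemma \ref{op} applied in the ultrapower via Lemma \ref{24} would produce a non-zero projection $p'\in\mathcal Z(Q'\cap pMp)$ with $Qp'$ amenable relative to $B$, contradicting the failure of (3). Hence $\|\tilde\theta_t(x) - E_{M^\omega}(\tilde\theta_t(x))\|_2 \to 0$ as $t\to 0$, uniformly for $x$ in the unit ball of $Q_\omega$.

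To extract (1), I would exploit the decomposition $L^2(M) = \bigoplus_n \mathcal H_n$ and the spectral formula $E_M(\theta_t(\xi_n)) = \bigl(\frac{\sin(\pi t)}{\pi t}\bigr)^{2n}\xi_n$ for $\xi_n\in\mathcal H_n$ (see \eqref{theta_t}). Applied to representatives of $x\in Q_\omega$, the conclusion of the previous paragraph, combined with the fact that $\bigl(\frac{\sin(\pi t)}{\pi t}\bigr)^{2n}\to 0$ as $n\to\infty$ for fixed $t$, should force the $\mathcal H_n$-content ($n\geqslant 1$) of the representative to come, asymptotically along $\omega$, from the AFP algebra $M$ itself, while the $\mathcal H_0 = L^2(B)$-content can lie freely in $B^\omega$; this exactly means $x\in P$, so $Q_\omega\subset P$. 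For the intertwining $Q_\omega\prec_P B^\omega$, if it failed then by Theorem \ref{corner} there would exist a net of unitaries $v_k\in Q_\omega$ with $\|E_{B^\omega}(av_kb)\|_2\to 0$ for all $a,b\in P$; transporting this via $\tilde\theta_t$ and applying Lemma \ref{estimate1} to representatives of $v_k$, together with Lemma \ref{24}, should again contradict (2) or (3). The main obstacle will be precisely this final translation from ``$\tilde\theta_t(Q_\omega)$ stays in $M^\omega$ modulo $L^2$-small error'' to the structural statement $Q_\omega\subset P$ with the intertwining clause, since $P$ mixes the AFP structure of $M$ with the diffuse algebra $B^\omega$; this step likely requires a careful relative variant of Lemma \ref{bar} in the ultrapower, exploiting that the pushforwards of central sequences of $B$ into $\mathcal H_n$ for $n\geqslant 1$ must vanish.
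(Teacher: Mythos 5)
Your reduction to showing $\sup_{x\in (Q'\cap (pMp)^{\omega})_1}\|\tilde\theta_t(x)-E_{M^{\omega}}(\tilde\theta_t(x))\|_2\to 0$ is sound and is exactly the paper's Lemma \ref{deltat} (proved, as you suggest, from the failure of (3) via Lemma \ref{BM} and Lemma \ref{op}). The gap is in the next step. The spectral decay $E_M(\theta_t(\xi))=(\tfrac{\sin\pi t}{\pi t})^{2n}\xi$ for $\xi\in\mathcal H_n$ cannot "force the $\mathcal H_n$-content of the representative to come from $M$ itself": by part (1) of Lemma \ref{P}, $M^{\omega}=M_1^{\omega}*_{B^{\omega}}M_2^{\omega}$, and $\theta_t$ acts on the graded pieces of $L^2(M^{\omega})$ with exactly the same multipliers as on those of $L^2(M)$. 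So, for example, any $x=(x_k)_k$ with $x_k\in M_1\ominus B$ of norm one satisfies $\|\theta_t(x)-x\|_2\to 0$ as $t\to 0$, yet generically lies far from $P=\{M,B^{\omega}\}''$. The deformation simply does not see $P$ through its spectral multipliers. What is actually needed is an orthogonality statement, namely part (3) of Lemma \ref{P}: $(\tilde M\ominus M)(M^{\omega}\ominus P)\perp M^{\omega}(\tilde M\ominus M)$. With $y=x-E_P(x)$ and $z\in (Q)_1$, one has $\delta_t(z)y\perp y\delta_t(z)$ while $\|\delta_t(z)y-y\delta_t(z)\|_2\leqslant 2\|\theta_t(x)-x\|_2$, whence $\|\delta_t(z)y\|_2$ is small; an averaging argument then produces $0\neq v\in\tilde M$ with $\theta_t(z)v=vz$ for all $z\in Q$, so $Q\prec_M M_i$ by \cite[Theorem 3.1]{IPP05}, contradicting the failure of (2) and (3). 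This orthogonality is the missing idea, and your closing remark about needing "a relative variant of Lemma \ref{bar} in the ultrapower" points in the right direction but the mechanism you propose (spectral decay) will not supply it.

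The intertwining clause $Q'\cap (pMp)^{\omega}\prec_P B^{\omega}$ also does not follow from your plan. If it fails you get unitaries $v_k\in Q'\cap(pMp)^{\omega}$ with $\|E_{B^{\omega}}(av_kb)\|_2\to 0$, but the hypotheses (failure of (2) and (3)) constrain $Q$ and $\mathcal N_{pMp}(Q)''$, not the relative commutant; pushing the $v_k$ through $\theta_t$ and invoking Lemma \ref{estimate1} gives information about $\theta_t(v_k)$ relative to $N$, which does not feed back into alternatives (2) or (3) about $Q$. The paper instead observes that $P=P_1*_{B^{\omega}}P_2$ with $P_i=\{M_i,B^{\omega}\}''$ is itself an amalgamated free product over $B^{\omega}$, applies the trichotomy of Theorem \ref{ch} to the inclusion $Q\subset P$ (in its non-separable form), and then rules out the two unwanted alternatives by pulling relative amenability over $M_i^{\omega}$ or $B^{\omega}$ back down to $M$ via Lemma \ref{24} and Corollary \ref{211}. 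You would need some such second application of the AFP dichotomy at the level of $P$ to obtain the intertwining into $B^{\omega}$.
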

To prove Theorem \ref{afpgamma} we will need the following result.

\begin{theorem}\cite{CH08}\label{ch} Let $(M_1,\tau_1)$ and $(M_2,\tau_2)$ be tracial von Neumann algebras with a common von Neumann subalgebra $B$ such that ${\tau_1}_{|B}={\tau_2}_{|B}$.
Let $M=M_1*_{B}M_2$ and $Q\subset pMp$ be a von Neumann subalgebra, for some projection $p\in M$. 

Then one of the following conditions holds:

\begin{enumerate}
\item $Q'\cap pMp\prec_{M}B$.
\item $\mathcal N_{pMp}(Q)''\prec_{M}M_i$, for some $i\in\{1,2\}$.
\item $Qp'$ is amenable relative to $B$, for some non-zero projection $p'\in\mathcal Z(Q'\cap pMp)$.
\end{enumerate}

\end{theorem}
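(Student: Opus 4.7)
I would run the free malleable deformation $\{\theta_t\}$ of Section~\ref{ipp1} on $\tilde M = M *_B (B\bar\otimes L(\mathbb F_2))$ and apply Theorem~\ref{ipp} to the \emph{relative commutant} $A := Q' \cap pMp$ rather than to $Q$ itself. This switch is legitimate because $\mathcal N_{pMp}(Q) \subseteq \mathcal N_{pMp}(A)$: any $u$ normalizing $Q$ satisfies $uAu^* = (uQu^*)' \cap pMp = A$. By monotonicity of intertwining under subalgebras (a witnessing sequence in a smaller algebra is a fortiori a witnessing sequence in the larger one), a conclusion of the form $\mathcal N_{pMp}(A)'' \prec_M M_i$ passes down to $\mathcal N_{pMp}(Q)'' \prec_M M_i$, which is conclusion (2).

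\textbf{The convergent case.} If $\inf_{u \in \mathcal U(A)} \tau(\theta_t(u)u^*) \geq c > 0$ for some $t, c > 0$, then Theorem~\ref{ipp} directly gives either $A \prec_M B$, which is conclusion~(1), or $\mathcal N_{pMp}(A)'' \prec_M M_i$, which by the above is conclusion~(2), and we are done.

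\textbf{The escape case.} Otherwise, fix any $t \in (0,1)$ and extract unitaries $u_n \in \mathcal U(A)$ with $\tau(\theta_t(u_n)u_n^*) \to 0$. The Pythagoras identity \ref{theta_t} forces the $\|\cdot\|_2$-mass of $u_n$ to escape to ever-longer reduced words in the AFP decomposition, whence $\|E_M(\theta_t(u_n))\|_2 \to 0$ as well, and the vectors
\[
\eta_n := \theta_t(u_n) - E_M(\theta_t(u_n)) \in pL^2(\tilde M)p \ominus pL^2(M)p
\]
satisfy $\|\eta_n\|_2 \to \|p\|_2$, together with the left bound $\|x\eta_n\|_2 \leq 2\|x\|_2$ on $pMp$ (from Kadison--Schwarz applied to the unital completely positive map $E_M \circ \theta_t$). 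For $q \in Q$, the commutation $[u_n, q] = 0$ gives $[\theta_t(u_n), q] = \theta_t([u_n, \theta_{-t}(q) - q])$, hence $\|[\eta_n, q]\|_2 \leq 2\|\theta_{-t}(q) - q\|_2$, which tends to $0$ with $t$. A diagonal argument in the pair $(t, u_n)$ now yields a uniformly left-bounded, asymptotically $Q$-central net in $pL^2(\tilde M)p \ominus pL^2(M)p$ with positive limiting mass. By Lemma~\ref{BM}(1), this space is a corner of $L^2(M) \otimes_B \mathcal H$ for some $B$-$M$ bimodule $\mathcal H$, so Lemma~\ref{op} (with the ``$P$'' of that lemma being $Q$ and the ``$Q$'' being $B$) delivers a nonzero projection $p' \in \mathcal Z(Q' \cap pMp)$ such that $Qp'$ is amenable relative to $B$, which is conclusion~(3).

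\textbf{Main obstacle.} The subtle point is the diagonalization in the escape case: the mass bound $\|\eta_n\|_2 \to \|p\|_2$ is obtained for \emph{fixed} $t \in (0,1)$, whereas asymptotic $Q$-centrality forces $t \to 0$. Producing a single net enjoying both properties with enough slack to feed into Lemma~\ref{op} requires a careful simultaneous choice, most cleanly executed either by reindexing the $u_n$ along an ultrafilter over the directed set of pairs $(F, t)$, with $F \Subset Q$ finite and $t$ small, or by invoking a transversality-type estimate in the style of Popa to quantitatively convert the failure of the hypothesis of Theorem~\ref{ipp} into a Følner-like sequence for $Q$ inside $L^2(\langle M, e_B\rangle)$.
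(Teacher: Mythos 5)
Your proposal is correct and follows essentially the same route as the paper: the paper likewise applies $\theta_t$ to $A=Q'\cap pMp$ and invokes Theorem~\ref{ipp} together with $\mathcal N_{pMp}(Q)\subset\mathcal N_{pMp}(Q'\cap pMp)$ in the convergent case, and in the escape case uses the same spectral gap commutator trick, Lemma~\ref{BM}(1) and Lemma~\ref{op} to obtain conclusion~(3). The only (immaterial) organizational difference is that the paper packages the escape case as Lemma~\ref{deltat} (uniform vanishing of $\delta_t$ on $(Q'\cap M^{\omega})_1$ when (3) fails) and then passes through the transversality inequality~\ref{malleable}, whereas you dichotomize directly on $\tau(\theta_t(u)u^*)$ and resolve the $(t,n)$-diagonalization you flag exactly as the paper does, by a net indexed over pairs of finite sets and tolerances.
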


In the case when $B$ is amenable and $Q$ has no amenable direct summand this result was proved by I. Chifan and C. Houdayer  \cite[Theorem 1.1]{CH08}. 
The argument that we include below follows closely their proof. 

Note that part (1) of Theorem \ref{afpgamma} implies part (1) of of Theorem \ref{ch}.  Indeed, if $Q'\cap (pMp)^{\omega}\prec_{P}B^{\omega}$, then $(Q'\cap pMp)^{\omega}\prec_{M^{\omega}}B^{\omega}$. This readily implies that $Q'\cap pMp\prec_{M}B$.
Therefore Theorem \ref{afpgamma} is stronger than Theorem \ref{ch}.

Before proceeding to the proofs of Theorems \ref{afpgamma} and \ref{ch}, let us fix some notations.
 Let $\tilde M=M*(B\bar{\otimes}L(\mathbb F_2))$ and $\{\theta_t\}_{t\in\mathbb R}$ be the automorphisms of $\tilde M$ defined in Section \ref{ipp}. We extend $\theta_t$ to an automorphism of ${\tilde M}^{\omega}$ by putting $\theta_t((x_n)_n)=(\theta_t(x_n))_n$.
 For  $x\in\tilde M^{\omega}$, we denote $$\delta_t(x)=\theta_t(x)-E_{M^{\omega}}(\theta_t(x))\in\tilde M^{\omega}\ominus M^{\omega}.$$ Note that if $x\in\tilde M$, then $\delta_t(x)\in\tilde M\ominus M$.

Let $\beta$ be the  automorphism of $\tilde M$ satisfying $\beta(x)=x$ if $x\in M$, $\beta(u_{a_1})=u_{a_1}^*$ and $\beta(u_{a_2})=u_{a_2}^*$, where $a_1,a_2$ are the generators of $\mathbb F_2$ chosen in Section \ref{ipp}. We still denote by $\beta$ the extension of $\beta$ to $\tilde M^{\omega}$. It is easy to check that $\beta^2=id_{\tilde M^{\omega}}$ and $\beta\theta_t\beta=\theta_{-t}$, for all $t\in\mathbb R$.

By \cite[Lemma 2.1]{Po06a}, the existence of  $\beta$ implies that \begin{equation}\label{malleable}\|\theta_{2t}(x)-x\|_2\leqslant 2\|\delta_t(x)\|_2,\;\;\text{for all}\;\; x\in M\;\;\text{and every}\;\;t\in\mathbb R.\end{equation}

In the proofs of Theorems \ref{afpgamma} and \ref{ch} we assume for simplicity that $p=1$, the general case being treated similarly. We continue with the following lemma which is key in both proofs.

\begin{lemma}\label{deltat} Let $(M_1,\tau_1)$ and $(M_2,\tau_2)$ be tracial von Neumann algebras with a common von Neumann subalgebra $B$ such that ${\tau_1}_{|B}={\tau_2}_{|B}$.
Let $Q\subset M=M_1*_{B}M_2$ be a von Neumann subalgebra such that $Qp'$ is not amenable relative to $B$, for any non-zero projection $p'\in\mathcal Z(Q'\cap M)$.

 Then we have that $\;\;\sup_{x\in (Q'\cap M^{\omega})_1}\|\delta_t(x)\|_2\rightarrow 0$, as $t\rightarrow 0$.
\end{lemma}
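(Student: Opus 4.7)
The plan is to argue by contradiction: suppose there is some $c>0$ and sequences $t_n \to 0$ and $x_n \in (Q'\cap M^\omega)_1$ with $\|\delta_{t_n}(x_n)\|_2 \geqslant c$, and derive that $Qp'$ is amenable relative to $B$ for some non-zero $p' \in \mathcal Z(Q'\cap M)$, contradicting the hypothesis. The first step will be to exploit that $x_n$ commutes with $Q$. Since $y x_n = x_n y$ for $y\in Q$, applying $\theta_{t_n}$ we get $[\theta_{t_n}(y),\theta_{t_n}(x_n)]=0$, and hence
\[
[y,\theta_{t_n}(x_n)] = (y-\theta_{t_n}(y))\theta_{t_n}(x_n) - \theta_{t_n}(x_n)(y-\theta_{t_n}(y)).
\]
Since $\|\theta_{t_n}(x_n)\|_\infty \leqslant 1$, this yields $\|[y,\theta_{t_n}(x_n)]\|_2 \leqslant 2\|y-\theta_{t_n}(y)\|_2$. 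Applying $E_{M^\omega}$ (which is $M$-bimodular and $\|\cdot\|_2$-contractive) gives the same bound for $[y,E_{M^\omega}(\theta_{t_n}(x_n))]$, and by subtraction
\[
\|[y,\delta_{t_n}(x_n)]\|_2 \leqslant 4\|y-\theta_{t_n}(y)\|_2 \xrightarrow[n\to\infty]{} 0, \qquad y\in Q.
\]

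Next I would pass from the ultrapower back to $L^2(\tilde M) \ominus L^2(M)$. Choose representatives $x_n=(x_{n,k})_k$ with $\|x_{n,k}\|_\infty\leqslant 1$ and set $\zeta_{n,k}=\theta_{t_n}(x_{n,k})-E_M(\theta_{t_n}(x_{n,k}))\in\tilde M\ominus M$. Then $\|\zeta_{n,k}\|_\infty\leqslant 2$ and $\lim_{k\to\omega}\|\zeta_{n,k}\|_2=\|\delta_{t_n}(x_n)\|_2\geqslant c$, while for each $y\in Q$, $\lim_{k\to\omega}\|[y,\zeta_{n,k}]\|_2=\|[y,\delta_{t_n}(x_n)]\|_2\leqslant 4\|y-\theta_{t_n}(y)\|_2$. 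A standard diagonal extraction, using a countable $\|\cdot\|_2$-dense subset of $(Q)_1$, produces a sequence $\eta_m \in L^2(\tilde M)\ominus L^2(M)$ with $\|\eta_m\|_\infty\leqslant 2$, $\|\eta_m\|_2\geqslant c/2$, and $\|y\eta_m-\eta_m y\|_2\to 0$ for every $y\in Q$.

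Finally, I would invoke the bimodule decomposition of Lemma \ref{BM}(1): since $\tilde M = M*_B(B\bar\otimes L(\mathbb F_2))$, there exists a $B$-$M$ bimodule $\mathcal K$ with $L^2(\tilde M)\ominus L^2(M) \cong L^2(M)\otimes_B\mathcal K$ as $M$-$M$ bimodules. The rescaled vectors $\tfrac12\eta_m \in L^2(M)\otimes_B\mathcal K$ satisfy $\|x\cdot\tfrac12\eta_m\|_2 \leqslant \|x\|_2$ for all $x\in M$ (using $\|\eta_m\|_\infty\leqslant 2$), $\|\tfrac12\eta_m\|_2\geqslant c/4>0$, and are asymptotically $Q$-central. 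Lemma \ref{op} then provides a non-zero projection $p' \in \mathcal Z(Q'\cap M)$ such that $Qp'$ is amenable relative to $B$ inside $M$, contradicting the hypothesis and completing the proof.

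The main technical obstacle I anticipate is in the diagonal extraction step: the vectors $\delta_{t_n}(x_n)$ live in different ``slices'' of $\tilde M^\omega$ as $n$ varies, and one has to be careful that after representing them by $\|\cdot\|_\infty$-bounded sequences in $\tilde M\ominus M$, the extraction can be done simultaneously for a countable dense subset of test elements $y\in Q$ while preserving the lower bound $\|\eta_m\|_2\geqslant c/2$. This is routine once the commutator estimate above is in hand, and the $\|\cdot\|_\infty$-boundedness ensures the bound $\|x\eta_m\|_2\leqslant 2\|x\|_2$ needed (after rescaling) to apply Lemma \ref{op}.
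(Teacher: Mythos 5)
Your argument is correct and follows essentially the same route as the paper's proof: the Popa spectral-gap commutator estimate $\|[y,\theta_t(x)]\|_2\leqslant 2\|y-\theta_t(y)\|_2$ exploiting $[\theta_t(y),\theta_t(x)]=0$, descent from the ultrapower to a bounded asymptotically $Q$-central net in $\tilde M\ominus M$, the identification $L^2(\tilde M)\ominus L^2(M)\cong L^2(M)\otimes_B\mathcal K$ from Lemma \ref{BM}(1), and Lemma \ref{op} to contradict the non-amenability hypothesis. The only differences are cosmetic (a constant $4$ instead of $2$, the rescaling by $\tfrac12$, and not invoking the monotonicity of $t\mapsto\|\delta_t(x)\|_2$), none of which affects the validity.
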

\begin{proof}It is easy to see that the map $\mathbb R\ni t\rightarrow \|\delta_t(x)\|_2\in [0,\infty)$ is even on $\mathbb R$, and decreasing on $[0,\infty)$, for every $x\in\tilde M^{\omega}$. Thus, if the lemma is false, then there exists $c>0$ such that $\sup_{x\in (Q'\cap M^{\omega})_1}\|\delta_t(x)\|_2>c$, for every $t\in\mathbb R\setminus\{0\}$.

For $m\geqslant 1$, put $t_m=2^{-m}$. Let $x_m\in (Q'\cap M^{\omega})_1$ such that $\xi_m=\delta_{t_m}(x_m)$ satisfies $\|\xi_m\|_2>c$. 

Fix  $y\in M$ and $z\in (Q)_1$. Then we have that $$\|y\xi_m\|_2=\|(1-E_{M^{\omega}})(y\theta_{t_m}(x_n))\|_2\leqslant \|y\theta_{t_m}(x_m)\|_2\leqslant \|y\|_2.$$ Also, since $zx_m=x_mz$, by using S. Popa's spectral gap argument \cite{Po06b} we get that $$\|z\xi_m-\xi_m z\|_2=\|(1-E_M)(z\theta_{t_m}(x_m)-\theta_{t_m}(x_m)z)\|_2\leqslant \|z\theta_{t_m}(x_m)-\theta_{t_m}(x_m)z\|_2= $$ $$\|\theta_{-t_m}(z)x_m-x_m\theta_{-t_m}(z)\|_2\leqslant 2\|\theta_{-t_m}(z)-z\|_2\longrightarrow 0.$$ 

By writing $\xi_m=(\xi_{m,n})_n$, where $\xi_{m,n}\in \tilde M\ominus M$,  we find a net $\eta_k\in\tilde M\ominus M$ such that $\|\eta_k\|_2>c$, $\limsup_k\|y\eta_k\|_2\leqslant \|y\|_2$, for every $y\in M$, and $\|z\eta_k-\eta_k z\|_2\rightarrow 0$, for every $z\in Q$.

Now, since $\tilde M=M*(B\bar{\otimes}L(\mathbb F_2))$, by Lemma \ref{BM} we have that $L^2(\tilde M)\ominus L^2(M)\cong L^2(M){\otimes}_B\mathcal K$, for some $B$-$M$ bimodule $\mathcal K$. 
We may  therefore apply Lemma \ref{op} to conclude that $Qp'$ is amenable relative to $B$, for a non-zero projection $p'\in\mathcal Z(Q'\cap M)$, which gives a contradiction.
\end{proof}

{\it Proof of Theorem \ref{ch}}. Assuming that condition (3) is false, we prove that either (1) or (2) holds. 

Since $Q'\cap M\subset Q'\cap M^{\omega}$,  Lemma \ref{deltat} implies that $\sup_{x\in (Q'\cap M)_1}\|\delta_t(x)\|_2\rightarrow 0$, as $t\rightarrow 0$. Together with  inequality \ref{malleable} this yields $t>0$ such that that $||\theta_{t}(x)-x||_2\leqslant\frac{1}{2}$, for all $x\in (Q'\cap M)_1$. 

Thus, $\tau(\theta_t(u)u^*)\geqslant\frac{1}{2}$, for every $u\in\mathcal U(Q'\cap M)$.
Applying Theorem \ref{ipp} gives that either $Q'\cap M\prec_{M}B$ or $\mathcal N_{M}(Q'\cap M)''\prec_{M}M_i$, for some $i\in\{1,2\}$. Since $\mathcal N_{M}(Q)\subset\mathcal N_{M}(Q'\cap M)$, this finishes the  proof.
\hfill$\square$

In the proof of Theorem \ref{afpgamma} we will also use the following technical result:

\begin{lemma}\label{P}
  Let $\tilde P$ be the von Neumann subalgebra of $\tilde M^{\omega}$ generated by $\tilde M$ and $B^{\omega}$.
  
Then we have
\begin{enumerate}
\item $M_1^{\omega}$ and $M_2^{\omega}$ are freely independent over $B^{\omega}$, 
\item $M^{\omega}\perp (\tilde P\ominus P)$ and
 \item $(\tilde M\ominus M)(M^{\omega}\ominus P)\perp M^{\omega}(\tilde M\ominus M)$.
 \end{enumerate}
\end{lemma}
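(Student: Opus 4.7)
Part (1) amounts to lifting freeness to the ultraproduct via componentwise representatives. Given alternating $x_i \in M_{j_i}^\omega$ with $E_{B^\omega}(x_i) = 0$ and $j_1 \neq j_2 \neq \cdots$, pick $x_i = (x_{i,m})_m$ with $x_{i,m} \in M_{j_i}$; since $E_{B^\omega}$ restricted to $M^\omega$ acts componentwise as $E_B$, subtracting $E_B(x_{i,m})$ does not alter $x_i$, so I may assume $E_B(x_{i,m}) = 0$ for all $m$. The freeness of $M_1, M_2$ over $B$ in $M$ then gives $E_B(x_{1,m} \cdots x_{n,m}) = 0$ for every $m$, which passes to $E_{B^\omega}(x_1 \cdots x_n) = 0$ in the ultraproduct. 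The identical argument applied to the triple decomposition $\tilde M = M_1 *_B M_2 *_B N_0$ (with $N_0 = B \bar\otimes L(\mathbb F_2)$) shows in addition that $M^\omega$ and $N_0$ are freely independent over $B^\omega$ in $\tilde M^\omega$. For (2), note that $L(\mathbb F_2) \subset N_0$ commutes with $B$ via the tensor structure, hence with $B^\omega$ in $\tilde M^\omega$, so $\tilde P = \langle \tilde M, B^\omega \rangle = \langle P, L(\mathbb F_2) \rangle$ and $\langle B^\omega, L(\mathbb F_2) \rangle = B^\omega \bar\otimes L(\mathbb F_2)$. The above free independence upgrades to $P$ and $B^\omega \bar\otimes L(\mathbb F_2)$ being free over $B^\omega$, identifying $\tilde P$ with the amalgamated free product $P *_{B^\omega} (B^\omega \bar\otimes L(\mathbb F_2))$. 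Consequently $L^2(\tilde P) \ominus L^2(P)$ decomposes as a direct sum of reduced alternating word bimodules, each containing at least one factor from $L(\mathbb F_2) \ominus \mathbb C \subset N_0 \ominus B$; by the freeness of $M^\omega$ and $N_0$ over $B^\omega$, each such reduced word is orthogonal to $M^\omega$, yielding (2).

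For (3), use cyclicity to rewrite $\langle xy, x'y' \rangle = \tau_\omega(y \cdot y'^* x'^* x) = \tau_\omega\bigl(y \cdot E_{M^\omega}(y'^* x'^* x)\bigr)$, the second equality using $y \in M^\omega$. It therefore suffices to establish the key claim $E_{M^\omega}\bigl((\tilde M \ominus M) \cdot M^\omega \cdot (\tilde M \ominus M)\bigr) \subset P$, since $y \perp P$ then forces the vanishing. For the claim, by Kaplansky density reduce to $x$ and $y'$ reduced alternating words in $M \ominus B$ and $N_0 \ominus B$, and represent $x' = (x'_m)_m$, so $E_{M^\omega}(y'^* x'^* x) = (E_M(y'^* x'^*_m x))_m$. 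Repeated use of the $M$-bimodularity of $E_M$ extracts the outer $M$-letters of $y'^*$ and $x$ adjacent to $x'^*_m$, reducing the computation to $E_M(a \, w \, b)$ with $a, b$ beginning/ending in $N_0 \ominus B$ and $w \in M$. The formula $E_M(n_1 \, c \, n_2) = E_B(n_1 \, E_B(c) \, n_2)$ for $n_i \in N_0 \ominus B$ and $c \in M$ (which holds because $n_1(c - E_B(c)) n_2$ is a reduced alternating word in $\tilde M$ orthogonal to $M$), iterated across intermediate letters, expresses $E_M(y'^* x'^*_m x)$ as a finite $M$-linear combination of $E_B$-contractions involving $x'^*_m$. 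In the ultraproduct, each $E_B$-piece becomes $B^\omega$-valued, so $E_{M^\omega}(y'^* x'^* x)$ lies in $M \cdot B^\omega \cdot M \cdots \subset P$.

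The main obstacle is the combinatorial bookkeeping for part (3): one must verify, for reduced words $x, y'$ of arbitrary length, that every contribution to $E_M(y'^* x'^*_m x)$ not arising from $E_B$-contractions of the $N_0 \ominus B$ letters on the two sides through intermediate $M$-segments must vanish. This follows from the $M$-$M$ bimodule decomposition $L^2(\tilde M) = L^2(M) \oplus \bigoplus_{\mathcal J} \mathcal H_{\mathcal J}$ of the AFP: any reduced alternating word in $\tilde M = M *_B N_0$ containing an unpaired $N_0 \ominus B$ factor lies in $L^2(\tilde M) \ominus L^2(M)$ and is annihilated by $E_M$. Thus after all bimodularity-driven reductions only the fully $B$-contracted contributions survive, giving $E_{M^\omega}(y'^* x'^* x) \in P$ and hence (3).
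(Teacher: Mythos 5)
Your parts (1) and (2) follow essentially the paper's own route: freeness is lifted to the ultraproduct componentwise, and the triple decomposition $\tilde M=M_1*_BM_2*_B(B\bar{\otimes}L(\mathbb F_2))$ places $\tilde P\ominus P$ inside the portion of $M_1^{\omega}*_{B^{\omega}}M_2^{\omega}*_{B^{\omega}}(B\bar{\otimes}L(\mathbb F_2))^{\omega}$ containing a letter from $(B\bar{\otimes}L(\mathbb F_2))^{\omega}\ominus B^{\omega}$, which is orthogonal to $M^{\omega}$. (Minor point: what you need in (2) and (3) is freeness of $M^{\omega}$ and $(B\bar{\otimes}L(\mathbb F_2))^{\omega}$ over $B^{\omega}$, not merely of $M^{\omega}$ and $B\bar{\otimes}L(\mathbb F_2)$; your componentwise argument does deliver that.) Part (3) is where you genuinely diverge. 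The paper invokes Lemma \ref{BM} to identify $L^2(\tilde M)\ominus L^2(M)$ with $\mathcal K\otimes_BL^2(M)$, writes $z_i=\xi_i\otimes_B\eta_i$, and obtains $\langle z_1y_{1,n},y_{2,n}z_2\rangle=\langle y_{2,n}^*\xi_1E_B(\eta_1y_{1,n}\eta_2^*),\xi_2\rangle$, so everything is controlled by $\|E_B(\eta_1y_{1,n}\eta_2^*)\|_2$, which vanishes along $\omega$ precisely because $y_1\perp\eta_1^*B^{\omega}\eta_2\subset P$. You instead condition onto $M^{\omega}$ and prove the stronger claim $E_{M^{\omega}}\bigl((\tilde M\ominus M)\,M^{\omega}\,(\tilde M\ominus M)\bigr)\subset L^2(P)$ by reducing to alternating words: unpaired letters from $(B\bar{\otimes}L(\mathbb F_2))\ominus B$ are annihilated by $E_{M^{\omega}}$ thanks to the freeness over $B^{\omega}$, and the fully $E_{B^{\omega}}$-contracted terms land in $MB^{\omega}MB^{\omega}\cdots M\subset P$. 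Both arguments ultimately rest on the same fact, that $MB^{\omega}M\subset P$ while $y_1\perp P$; the paper's Connes-tensor computation compresses the combinatorics into a single bimodule identification and is a few lines, whereas your route requires the bookkeeping you acknowledge but yields the sharper information about where the conditional expectation lives. Your reductions are legitimate: $E_{M^{\omega}}$ acts componentwise on bounded representing sequences, the inner product is $\|\cdot\|_2$-continuous in the $\tilde M\ominus M$ entries for fixed bounded $y_1,y_2$, and $\tau_{\omega}(y_1\xi)=0$ for $\xi\in L^2(P)$ when $y_1\perp P$.
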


\begin{proof} Let $x_1\in M_{i_1}^{\omega}\ominus B^{\omega},x_2\in M_{i_2}^{\omega}\ominus B^{\omega}$,..., $x_m\in M_{i_m}^{\omega}\ominus B^{\omega}$, for some indices $i_1,i_2,...,i_m\in\{1,2\}$ such that $i_k\not= i_{k+1}$, for all $1\leqslant k\leqslant m-1$. Then we can represent $x_k=(x_{k,n})_n$, where $x_{k,n}\in M_{i_k}\ominus B$, for all $n$ 
and every $1\leqslant k\leqslant m$. Since $E_{B^{\omega}}(x_1x_2...x_m)=\lim_{n\rightarrow\omega}E_B(x_{1,n}x_{2,n}...x_{m,n})=0$, the first assertion follows.

Towards the second assertion, define $P_1=\{M_1,B^{\omega}\}'',P_2=\{M_2,B^{\omega}\}''$ and $P_3=\{B\bar{\otimes}L(\mathbb F_2),B^{\omega}\}''$. All of these algebras contain $B^{\omega}$ and we have that $P_1\subset M_1^{\omega}$, $P_2\subset M_2^{\omega}$ and $P_3\subset (B\bar{\otimes}L(\mathbb F_2))^{\omega}$.
 Now, the first assertion implies that $M_1^{\omega}$, $M_2^{\omega}$ and $(B\bar{\otimes}L(\mathbb F_2))^{\omega}$ are freely independent over $B^{\omega}$. Since $P=\{P_1,P_2\}''$ and $\tilde P=\{P_1,P_2,P_3\}''$, we deduce that $\tilde P=P*_{B^{\omega}}P_3$.
 
 This implies that $\tilde P\ominus P$ is contained in the $\|.\|_2$-closure of the linear span of elements of the form $x=v_0w_1v_1...v_{m-1}w_mv_m$, where $v_0,v_m\in P_3,v_1,...,v_{m-1}\in P_3\ominus B^{\omega},$ and $w_1,...,w_m\in  P\ominus B^{\omega}$, for some $m\geqslant 1$. Since $P\ominus B^{\omega}\subset M^{\omega}\ominus B^{\omega}$ and 
 $P_3\ominus B^{\omega}\subset (B\bar{\otimes}L(\mathbb F_2))^{\omega}\ominus B^{\omega}$, we can represent $v_i=(v_{i,n})_n$ and $w_i=(w_{i,n})_n$, where 
 $v_{0,n},v_{m,n}\in B\bar{\otimes}L(\mathbb F_2)$, $v_{1,n},...,v_{m-1,n}\in (B\bar{\otimes}L(\mathbb F_2))\ominus B$, and $w_{1,n},...,w_{m,n}\in M\ominus B$, for all $n$.
It is now clear that $x=(v_{0,n}w_{1,n}v_{1,n}...v_{{m-1},n}w_{m,n}v_{m,n})_n$ belongs to $\tilde M^{\omega}\ominus M^{\omega}$.
This shows that $\tilde P\ominus P\subset\tilde M^{\omega}\ominus M^{\omega}$, thereby proving (2).
 
 Finally, let $z_1,z_2\in \tilde M\ominus M$, $y_1\in M^{\omega}\ominus P$ and $y_2\in M^{\omega}$ such that $\|y_1\|,\|y_2\|\leqslant 1$. Write $y_1=(y_{1,n})_n, y_2=(y_{2,n})_n$, where $y_{1,n},y_{2,n}\in (M)_1$. 
 Our goal is to prove that $\langle z_1y_1,y_2z_2\rangle=0$ or, equivalently, that $\lim_{n\rightarrow\omega}\langle z_1y_{1,n},y_{2,n}z_2\rangle=0$.
 
  Since $\tilde M=M*_{B}(B\bar{\otimes}L(\mathbb F_2))$, by Lemma \ref{BM} we can find a $M$-$B$ bimodule $\mathcal K$ such that $L^2(\tilde M)\ominus L^2(M)=\mathcal K{\otimes}_BL^2(M)$. Viewing $z_1,z_2$ as vectors in $L^2(\tilde M)\ominus L^2(M)$ and using approximations in $\|.\|_2$, we may assume that $z_1=\xi_1\otimes_B\eta_1$, $z_2=\xi_2\otimes_B\eta_2$, where $\xi_1,\xi_2\in\mathcal K$ and  $\eta_1,\eta_2\in M$. Moreover, we may take $\xi_1$ to be right bounded, i.e. such that $\|\xi_1 y\|_2\leqslant C\|y\|_2$, for all $y\in M$, for some constant $C>0$. By using the definition of Connes' tensor product we get that $$|\langle z_1y_{1,n},y_{2,n}z_2\rangle|=|\langle y_{2,n}^*\xi_1\otimes_B\eta_1y_{1,n},\xi_2\otimes_B\eta_2\rangle|=|\langle y_{2,n}^*\xi_1E_B(\eta_1y_{1,n}\eta_2^*),\xi_2\rangle|\leqslant$$ $$ C\|E_B(\eta_1y_{1,n}\eta_2^*)\|_2\|\xi_2\|_2. $$
  
  Since $y_1\perp P$ and $\eta_1^*B^{\omega}\eta_2\subset P$, we get  that $y_1\perp \eta_1^*B^{\omega}\eta_2$. Hence,  $\lim_{n\rightarrow\omega}\|E_B(\eta_1y_{1,n}\eta_2^*)\|_2=\|E_{B^{\omega}}(\eta_1y_1\eta_2^*)\|_2=0$, which proves the last assertion.
 \end{proof}

To prove Theorem \ref{afpgamma} we adapt the proof of \cite[Lemma 3.3]{Io10} (see also the proof of \cite[Theorem 3.8]{Bo12}) to the case of AFP algebras.  In the proof of Theorem \ref{afpgamma} we apply Theorem 6.4 and \cite[Theorems 1.1 and 3.1]{IPP05} to {\it non-separable} tracial von Neumann algebras. While these results are only stated for separable algebras, their proofs can be easily modified to handle non-separable algebras. We leave the details to the reader.

{\it Proof of Theorem \ref{afpgamma}}. For simplicity, we assume that $p=1$.
Assuming that (2) and (3) are false, we will deduce that (1) holds. 
The proof is divided between two claims, each proving one assertion from (1).

{\bf Claim 1}. $Q'\cap M^{\omega}\subset P$.

{\it Proof of Claim 1.} Assume by contradiction that there exists $x\in Q'\cap M^{\omega}$ such that $x\not\in P$ and put $y=x-E_P(x)\not=0$. Fix $z\in (Q)_1$ and $t\in\mathbb R$.

 Since $E_{M^{\omega}}(\theta_t(z))=(E_{M^{\omega}}\circ E_{\tilde M})(\theta_t(z))=E_M(\theta_t(z))$ and $y\in M^{\omega}$ we get that

\begin{equation}\label{unu} \|\delta_t(z)y-y\delta_t(z)\|_2=\|(1-E_M)(\theta_t(z))y-y(1-E_M)(\theta_t(z))\|_2=\end{equation}
$$\|(1-E_{M^{\omega}})(\theta_t(z)y-y\theta_t(z))\|_2\leqslant \|\theta_t(z)y-y\theta_t(z)\|_2$$

Since $zx=xz$ and $z\in M\subset P$, we get that $zy=yz$. Thus, we derive that 

\begin{equation}\label{doi}\|\theta_t(z)y-y\theta_t(z)\|_2=\|z\theta_{-t}(y)-\theta_{-t}(y)z\|_2\leqslant 2\|\theta_{-t}(y)-y\|_2=2\|\theta_t(y)-y\|_2\end{equation}

 On the other hand, since $x\in M^{\omega}$, Lemma \ref{P} (2) gives that $E_{\tilde P}(x)=E_P(x)$. Since $\theta_t$ leaves $\tilde P$ globally invariant we conclude that $\theta_t(E_P(x))=\theta_t(E_{\tilde P}(x))=E_{\tilde P}(\theta_t(x))$. As a consequence, we have \begin{equation}\label{trei} \|\theta_t(y)-y\|_2=\|(1-E_{\tilde P})(\theta_t(x)-x)\|_2\leqslant \|\theta_t(x)-x\|_2\end{equation}

By combining  \ref{unu}, \ref{doi} and \ref{trei} we get that $\|\delta_t(z)y-y\delta_t(z)\|_2\leqslant 2\|\theta_t(x)-x\|_2$.

Since $\delta_t(z)\in\tilde M\ominus M$ and $y\in M^{\omega}\ominus P$, Lemma \ref{P} (3) implies that $\delta_t(z)y\perp y\delta_t(z)$.  Therefore we derive  that $\|\delta_t(z)y\|_2\leqslant 2\|\theta_t(x)-x\|_2$. Since  $$\|\delta_t(z)y-\delta_t(zy)\|_2\leqslant \|\theta_t(z)y-\theta_t(zy)\|_2\leqslant \|\theta_t(y)-y\|_2,$$ we altogether deduce that $\|\delta_t(zy)\|_2\leqslant 3\|\theta_t(x)-x\|_2$, for every $z\in (Q)_1$ and $t\in\mathbb R$. 

By using this inequality together with \ref{malleable} and \ref{trei} we derive that \begin{equation}\label{patru}||\theta_t(z)y-zy||_2\leqslant \|\theta_t(zy)-zy\|_2+\|\theta_t(y)-y\|_2\leqslant \end{equation} $$2\|\delta_{\frac{t}{2}}(zy)\|_2+\|\theta_t(y)-y\|_2\leqslant 6\|\theta_{\frac{t}{2}}(x)-x\|_2+\|\theta_t(x)-x\|_2\leqslant$$ $$ 12\|\delta_{\frac{t}{4}}(x)\|_2+2\|\delta_{\frac{t}{2}}(x)\|_2,\;\;\text{for all}\;\;z\in (Q)_1\;\;\text{and}\;\;t\in\mathbb R.$$

Now, since (3) is assumed false, Lemma \ref{deltat} implies that 
 $\sup_{x\in (Q'\cap M^{\omega})_1}\|\delta_t(x)\|_2\rightarrow 0$, as $t\rightarrow 0$.
In combination with \ref{patru} it follows that we can find $t>0$ such that $\|\theta_t(z)y-zy\|_2\leqslant\frac{\|y\|_2}{2},$ for all $z\in (Q)_1$. Thus, if we let $w=E_{\tilde M}(yy^*)$, then   $$\Re\;\tau(\theta_t(z)wz^*)=\Re\;\tau(\theta_t(z)yy^*z^*)\geqslant\frac{
\|y\|_2^2}{2},\;\;\text{for all}\;\; z\in\mathcal U(Q).$$
 By using a standard averaging argument we can find $0\not=v\in \tilde M$ such that $\theta_t(z)v=vz,$ for all $z\in Q$. By \cite[Theorem 3.1]{IPP05} we would conclude that $Q\prec_{M}M_i$, for some $i\in\{1,2\}$.
 
 If we denote $\mathcal N=\mathcal N_{M}(Q)''$, then  \cite[Theorem 1.1]{IPP05} would imply that either $\mathcal N\prec_{M}M_1$, $\mathcal N\prec_{M}M_2$ or  $Q\prec_{M}B$. Since the last condition implies that there is a non-zero projection $p'\in\mathcal Z(Q'\cap M)$ such that $Qp'$ is amenable relative to $B$, we altogether get a contradiction.\hfill$\square$
 
To end the proof we are left with showing:

{\bf Claim 2}. $Q'\cap M^{\omega}\prec_{P}B^{\omega}$.

{\it Proof of Claim 2.} Recall from the proof of Lemma \ref{P} that $P_1=\{M_1,B^{\omega}\}''$ and $P_2=\{M,B^{\omega}\}''$ are freely independent over $B^{\omega}$, and that $P=P_1*_{B^{\omega}}P_2$.

 By applying Theorem \ref{ch} to the inclusion $Q\subset P$ it follows that we are in one of the following three cases: (a) $Q'\cap P\prec_{P}B^{\omega}$, (b) $\mathcal N_{P}(Q)''\prec_{P}P_i$, for some $i\in\{1,2\}$, or  (c) $Qz$ is amenable relative to $B^{\omega}$ inside $P$, for some non-zero projection $z\in\mathcal Z(Q'\cap P)$.
 
 In case (a), Claim 1 implies that $Q'\cap M^{\omega}=Q'\cap P\prec_{P}B^{\omega}$ and thus (1) is satisfied. 
 Let us  show that cases (b) and (c) contradict our assumption that conditions (2) and (3) are false. 
 
 Firstly, since $\mathcal N=\mathcal N_{M}(Q)''\subset\mathcal N_P(Q)''$, $P_i\subset M_i^{\omega}$ and $P\subset M^{\omega}$, case (b) implies that $\mathcal N\prec_{M^{\omega}}M_i^{\omega}$. By Remark \ref{embedamen} it follows that $\mathcal Np_0$ is amenable relative to $M_i^{\omega}$ inside $M^{\omega}$, for some non-zero projection $p_0\in\mathcal N'\cap M^{\omega}$. Lemma \ref{24} further implies that $\mathcal Np'$ is amenable relative to $M_i$ inside $M$, for some non-zero projection $p'\in\mathcal N'\cap M$.
 By Corollary \ref{211} we get that either (b$_1$) $\mathcal N p'$ is amenable relative to $B$ inside $M$ or (b$_2$) $\mathcal N\prec_{M}M_i$.
 In the case (b$_1$) we get in particular that $Qp''$ is amenable relative to $B$ inside $M$, contradicting the assumption that (3) is false. In turn, case (b$_2$) contradicts the assumption that (2) does not hold.

Finally, in case (c), Lemma \ref{24} implies that $Qp'$ is amenable relative to $B$, for some non-zero projection $p'\in\mathcal Z(Q'\cap M)$. In other words, (3) holds, a contradiction.
\hfill$\square$

\section{Uniqueness of Cartan subalgebras for II$_1$ factors \\ arising from  actions of AFP groups}\label{proofs}

The main goal of this section is to prove Theorem \ref{main} and derive several consequences.

\subsection{Uniqueness of Cartan subalgebras}  
Towards proving Theorem \ref{main}  we first establish a general technical result.

\begin{theorem} \label{maintech} Let $\Gamma_1$ and $\Gamma_2$ be two countable groups with a common subgroup $\Lambda$ such that $[\Gamma_1:\Lambda]\geqslant 2$ and  $[\Gamma_2:\Lambda]\geqslant 3$.  Denote $\Gamma=\Gamma_1*_{\Lambda}\Gamma_2$ and suppose that there exist $g_1,g_2,...,g_n\in\Gamma$ such that $\cap_{i=1}^n g_i\Lambda g_i^{-1}$ is finite.  

Let $\Gamma\curvearrowright (D,\tau)$ be any trace preserving action of $\Gamma$ on a tracial von Neumann algebra $(D,\tau)$. Denote $M=D\rtimes\Gamma$ and suppose that $M$ is a factor.

If $A$ is a regular amenable von Neumann subalgebra of $M$, then $A\prec_{M}D$.

\end{theorem}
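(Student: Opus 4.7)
The plan is to combine the AFP decomposition $M = M_1 *_B M_2$ (with $M_i = D \rtimes \Gamma_i$ and $B = D \rtimes \Lambda$) with the free malleable deformation $\{\theta_t\}_{t \in \mathbb R}$ of $\tilde M := M *_B (B \bar{\otimes} L(\mathbb F_2))$ and its crossed-product presentation $\tilde M = N \rtimes \mathbb F_2$, where $N = \{u_g M u_g^* : g \in \mathbb F_2\}''$, as in Section \ref{ipp1}. Because $A$ is amenable, $\theta_t(A)$ is amenable and hence amenable relative to $N$ inside $\tilde M$ for every $t$; because $A$ is regular in $M$, the inclusion $\theta_t(M) \subset \mathcal N_{\tilde M}(\theta_t(A))''$ holds for every $t$. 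The Popa--Vaes dichotomy (Theorem \ref{pv}) applied to $\theta_t(A) \subset \tilde M = N \rtimes \mathbb F_2$ therefore yields, for each $t \in (0,1)$, either (i) $\theta_t(A) \prec_{\tilde M} N$, or (ii) $\theta_t(M)$ is amenable relative to $N$ inside $\tilde M$.

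In case (i) for some $t \in (0,1)$, I invoke Theorem \ref{inter} to conclude that either $A \prec_M B$ or $M = \mathcal N_M(A)'' \prec_M M_i$ for some $i \in \{1,2\}$. The latter is impossible: fixing $g_1 \in \Gamma_1 \setminus \Lambda$ and $g_2 \in \Gamma_2 \setminus \Lambda$ (available since both indices are at least $2$), the reduced-word structure of $\Gamma$ implies $\|E_{M_i}(x u_{(g_1 g_2)^n} y)\|_2 \to 0$ for all $x, y \in M$. Hence $A \prec_M B = D \rtimes \Lambda$. The weak malnormality assumption that $F := \bigcap_{i=1}^n g_i \Lambda g_i^{-1}$ is finite, combined with the regularity of $A$, permits an appeal to \cite[Proposition 8]{HPV10}, which upgrades this to $A \prec_M D \rtimes F$. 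Because $F$ is finite, the standard finite-index descent exploiting $E_{D \rtimes F}(x) = \sum_{f \in F} E_D(x u_{f^{-1}}) u_f$ then gives $A \prec_M D$, as desired.

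The main obstacle is the scenario where case (ii) holds for every $t \in (0,1)$. Then Theorem \ref{amena} forces a nonzero central projection $p_i \in \mathcal Z(M_i)$ with $M_i p_i$ amenable relative to $B$ inside $M_i$, and my plan is to rule this out by combining the non-amenability of the Bass--Serre tree (guaranteed by $[\Gamma_2:\Lambda] \geq 3$) with the non-inner-amenability of $\Gamma$ from Corollary \ref{inner}, together with a further application of Theorem \ref{relamen} or of the property-$\Gamma$ analysis of Section \ref{Gamma}. The delicate point is to execute this reduction intrinsically at the operator-algebraic level, without assuming that $D$ is abelian or that $\Lambda$ is amenable, and it is here that the hypothesis $\bigcap_{i=1}^n g_i \Lambda g_i^{-1}$ finite is essential and where the principal new technical input of the paper will be required.
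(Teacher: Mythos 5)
Your handling of case (i) matches the paper's: Theorem \ref{pv}, Theorem \ref{inter}, the exclusion of $M\prec_M M_i$ via the unitary $u_{g_1g_2}$, the appeal to \cite[Proposition 8]{HPV10}, and the finite-index descent from $D\rtimes(\cap_i g_i\Lambda g_i^{-1})$ to $D$ are all correct. The genuine gap is in case (ii), and it is not a technical loose end but the main difficulty of the theorem. Theorem \ref{amena} applied to your decomposition $M=M_1*_BM_2$ with $B=D\rtimes\Lambda$ only tells you that some $M_ip_i$ is amenable relative to $D\rtimes\Lambda$ inside $M_i$, and in the generality of the statement this is simply \emph{true}, not contradictory: take $\Gamma_1,\Gamma_2$ amenable and $\Lambda=\{e\}$ (e.g.\ $\Gamma=\mathbb Z/2*\mathbb Z/3$), and $M_i=D\rtimes\Gamma_i$ is amenable relative to $B\supset D$. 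So there is nothing at that level to ``rule out'' with the Bass--Serre tree or non-inner-amenability. Likewise Theorem \ref{relamen} applied to $P=M$ requires $M'\cap M^{\omega}=\mathbb C1$, which fails whenever the action is, say, a non-strongly-ergodic action on $L^{\infty}(X)$ -- precisely the situation the theorem must cover, as the paper itself points out before stating Theorem \ref{relamen2}.

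The missing idea is that the whole deformation argument must be run not on $M$ but on $\mathcal M=M\bar\otimes L(\Gamma')$, where $\Gamma'=\Gamma/\Sigma$ with $\Sigma=\cap_{g\in\Gamma}g\Lambda g^{-1}$ finite and normal, via the comultiplication $\Delta(au_g)=au_g\otimes u_{\rho(g)}$. One decomposes $\mathcal M=(M\bar\otimes L(\Gamma_1'))*_{M\bar\otimes L(\Lambda')}(M\bar\otimes L(\Gamma_2'))$ and applies Theorem \ref{pv} to $\theta_t(\Delta(A))$. In the first branch, Theorem \ref{inter} plus Lemma \ref{delta} return you to $A\prec_M D\rtimes\Lambda$ and your endgame applies. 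In the second branch, the tensor structure lets one invoke Theorem \ref{relamen2} with $P=M$, $Q_i=L(\Gamma_i')$, $\mathcal U=\{u_g\}_{g\in\Gamma}$ and $A_0=L(\Gamma')$: the hypothesis $A_0'\cap Q^{\omega}=\mathbb C1$ is now available because $L(\Gamma')$ has no property $\Gamma$ by Corollary \ref{inner} (this is where the weak malnormality of $\Lambda$ is really used), and the output is either $L(\Gamma')\prec L(\Gamma_i')$ (excluded as in your argument) or co-amenability of $\Lambda'$ in $\Gamma'$, which is excluded by a paradoxicality argument on $\Gamma'/\Lambda'$ using $[\Gamma_1':\Lambda']\geqslant 2$ and $[\Gamma_2':\Lambda']\geqslant 3$. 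Without this comultiplication step your case (ii) cannot be closed.
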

Before proceeding to the proof of Theorem \ref{maintech}, let us introduce some notations that will essentially allow us to reduce to the case when $\cap_{i=1}^ng_i\Lambda g_i^{-1}$ is trivial and not only finite. 

Since $\cap_{i=1}^{n}g_i\Lambda g_i^{-1}$ is finite, $\Sigma=\cap_{g\in\Gamma}g\Lambda g^{-1}$ is a finite  group  and there exist $h_1,h_2,...,h_m\in\Gamma$ such that $\Sigma=\cap_{j=1}^mh_j\Lambda h_j^{-1}$. Since $\Sigma<\Lambda$ is a normal subgroup of $\Gamma$, we can define the following groups $\Gamma'=\Gamma/\Sigma$, $\Gamma_1'=\Gamma_1/\Sigma$, $\Gamma_2'=\Gamma_2/\Sigma$ and $\Lambda'=\Lambda/\Sigma$. Note that $\Gamma'=\Gamma_1'*_{\Lambda'}\Gamma_2'$ and let $\rho:\Gamma\rightarrow\Gamma'$ be the quotient homomorphism. Note also that $\cap_{j=1}^mk_j\Lambda'k_j^{-1}=\{e\}$, where $k_j=\rho(h_j)$.

Denote $\mathcal M=M\bar{\otimes}L(\Gamma')$ and let  $\Delta:M\rightarrow\mathcal M$ be the {\it comultiplication} \cite{PV09} defined by $$\Delta(au_g)=au_g\otimes u_{\rho(g)},\;\;\;\text{for every}\;\;\;a\in D\;\;\;\text{and all}\;\;\;g\in\Gamma.$$ We next record a property of $\Delta$ that will be of later use.

\begin{lemma}\label{delta} Let $Q\subset M$ be a von Neumann subalgebra and $\Gamma_0<\Gamma$ be a subgroup.

If $\Delta(Q)\prec_{\mathcal M}M\bar{\otimes}L(\rho(\Gamma_0))$, then $Q\prec_{M}D\rtimes\Gamma_0$.

\end{lemma}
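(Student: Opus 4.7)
The plan is to argue contrapositively, using the intertwining criterion of Theorem \ref{corner}. I would assume $Q \not\prec_M D \rtimes \Gamma_0$, which produces a sequence $u_n \in \mathcal U(Q)$ satisfying $\|E_{D \rtimes \Gamma_0}(a u_n b)\|_2 \to 0$ for every $a,b \in M$, and aim to show that $\|E_{M \bar{\otimes} L(\rho(\Gamma_0))}(x \Delta(u_n) y)\|_2 \to 0$ for every $x,y \in \mathcal M$. By the same theorem, this contradicts the hypothesis $\Delta(Q) \prec_{\mathcal M} M \bar{\otimes} L(\rho(\Gamma_0))$.

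I would then write $u_n = \sum_{k \in \Gamma} a_{n,k} u_k$ with $a_{n,k} \in D$ and rephrase the hypothesis (specializing to $a = u_g$, $b = u_h$) as $\sum_{k \in g^{-1}\Gamma_0 h^{-1}} \|a_{n,k}\|_2^2 \to 0$ for all $g,h \in \Gamma$. Since $\Delta(u_n) = \sum_k a_{n,k} u_k \otimes u_{\rho(k)}$ and $\|\Delta(u_n)\| = 1$, a standard Kaplansky-type approximation reduces the target statement to the case of elementary tensors $x = a \otimes u_{g'}$, $y = b \otimes u_{h'}$ with $a,b \in M$ and $g', h' \in \Gamma'$.

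For such elementary $x,y$, picking preimages $g, h \in \Gamma$ of $g', h'$, I would compute directly
\[
x\,\Delta(u_n)\,y \;=\; \sum_{k \in \Gamma} (a a_{n,k} u_k b) \otimes u_{g' \rho(k) h'},
\]
and observe that $E_{M \bar{\otimes} L(\rho(\Gamma_0))}$ retains exactly the terms with $gkh \in \Gamma_0 \Sigma$. Grouping the surviving terms by the value $s = g'\rho(k)h' \in \rho(\Gamma_0)$ (each fibre being a coset of the finite kernel $\Sigma = \ker\rho$), the triangle inequality together with the standard bound $\|a a_{n,k} u_k b\|_2 \leq \|a\|\|b\|\|a_{n,k}\|_2$ gives
\[
\|E_{M \bar{\otimes} L(\rho(\Gamma_0))}(x\,\Delta(u_n)\,y)\|_2^2 \;\leq\; |\Sigma|\,\|a\|^2\|b\|^2 \sum_{k:\, gkh \in \Gamma_0 \Sigma} \|a_{n,k}\|_2^2.
\]

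The final step is to bound the right-hand side using the finite decomposition $\Gamma_0 \Sigma = \bigcup_{\sigma \in \Sigma} \Gamma_0 \sigma$, which yields
\[
\sum_{k:\, gkh \in \Gamma_0 \Sigma} \|a_{n,k}\|_2^2 \;\leq\; \sum_{\sigma \in \Sigma} \|E_{D \rtimes \Gamma_0}(u_g\, u_n\, u_{h\sigma^{-1}})\|_2^2,
\]
a finite sum whose every term tends to $0$ by the defining property of the $u_n$. This completes the contradiction. The proof is essentially a direct bookkeeping computation; no real obstacle is expected. The only place where the global assumptions of the section enter is the finiteness of $\Sigma = \ker \rho$, which is needed precisely to keep the two sums above finite.
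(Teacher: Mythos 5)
Your argument is correct and follows essentially the same route as the paper's proof: contrapositive via the intertwining criterion, Fourier expansion of $u_n$ over $\Gamma$, Kaplansky reduction to elementary tensors, explicit computation of the conditional expectation, and the finiteness of $\Sigma=\ker\rho$ to control the resulting sums. The only cosmetic difference is that you keep general $a,b\in M$ in the tensors (paying a harmless $|\Sigma|$ Cauchy--Schwarz factor), whereas the paper reduces to $v=1\otimes u_{\rho(h)}$, $w=1\otimes u_{\rho(k)}$ and gets an exact equality of norms.
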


{\it Proof of Lemma \ref{delta}}. Assume by contradiction that $Q\nprec_{M}D\rtimes\Gamma_0$. Then we can find a sequence of unitaries $u_n\in Q$ such that $\|E_{D\rtimes\Gamma_0}(xu_ny)\|_2\rightarrow 0$, for all $x,y\in M$. We claim that $\|E_{M\bar{\otimes}L(\rho(\Gamma_0))}(v\Delta(u_n)w)\|_2\rightarrow 0$, for all $v,w\in\mathcal M$. This will provide the desired contradiction.

To prove the claim, by Kaplansky's density theorem, we may assume that $v=1\otimes u_{\rho(h)}$ and $w=1\otimes u_{\rho(k)}$, for some $h,k\in\Gamma$. For every $n$, write $u_n=\sum_{g\in\Gamma}x_{n,g}u_g$, where $x_{n,g}\in D$. Then $\Delta(u_n)=\sum_{g\in\Gamma}x_{n,g}u_g\otimes u_{\rho(g)}$. Since $\ker(\rho)=\Sigma$, it follows that $$E_{M\bar{\otimes}L(\rho(\Gamma_0))}(v\Delta(u_n)w)=\sum_{g\in\Gamma}x_{n,g}u_g\otimes E_{L(\rho(\Gamma_0))}(u_{\rho(hgk)})=\sum_{g\in h^{-1}\Gamma_0\Sigma k^{-1}}x_{n,g}u_g\otimes u_{\rho(hgk)}.$$ 

Further, since $\Sigma$ is finite we deduce that $$\|E_{M\bar{\otimes}L(\rho(\Gamma_0))}(v\Delta(u_n)w)\|_2^2=\sum_{g\in h^{-1}\Gamma_0\Sigma k^{-1}}\|x_{n,g}\|_2^2\leqslant\sum_{l\in\Sigma}\|E_{D\rtimes\Gamma_0}(u_hu_nu_{kl})\|_2^2.$$

Since $\|E_{D\rtimes\Gamma_0}(u_hu_nu_{kl})\|_2\rightarrow 0$, as $n\rightarrow\infty$, the lemma is proven.
\hfill$\square$

\vskip 0.1in

{\it Proof of Theorem \ref{maintech}}. Define $\mathcal M_1=M\bar{\otimes}L(\Gamma_1')$, $\mathcal M_2=M\bar{\otimes}L(\Gamma_2')$ and $B=M\bar{\otimes}L(\Lambda')$. Then we have that $\mathcal M={\mathcal M_1}*_{B}\mathcal M_2$.

Define $\tilde{\mathcal M}=\mathcal M*_B(B\bar{\otimes}L(\mathbb F_2))$ and let $\{\theta_t\}_{t\in\mathbb R}\subset$ Aut$(\tilde{\mathcal M})$ be the  deformation defined in Section \ref{ipp}. Also, let $N$ be the von Neumann subalgebra of $\tilde{\mathcal M}$ generated by $\{u_g\mathcal Mu_g^*|g\in\mathbb F_2\}$.
Recall from Section \ref{conjugacy} that $\tilde{\mathcal M}=N\rtimes\mathbb F_2$, where $\mathbb F_2=\{u_g\}_{g\in\mathbb F_2}$ acts on $N$ by conjugation.

Let $t\in (0,1)$ and consider the amenable von Neumann subalgebra $\theta_t(\Delta(A))\subset \tilde{\mathcal M}$. By S. Popa and S. Vaes' dichotomy (Theorem \ref{pv})
we get that either $\theta_t(\Delta(A))\prec_{\tilde{\mathcal M}}N$ or $\mathcal N_{\tilde{\mathcal M}}(\theta_t(\Delta(A)))''$ is amenable relative to $N$ inside $\tilde{\mathcal M}$. 

Since $A$ is regular in $M$, we have that $\theta_t(\Delta(M))\subset\mathcal N_{\tilde{\mathcal M}}(\theta_t(\Delta(A))''.$ 
Therefore, we are in one of the following two cases:
\vskip 0.05in
{\bf Case 1}. There exists $t\in (0,1)$ such that $\theta_t(\Delta(A))\prec_{\tilde{\mathcal M}}N$.

{\bf Case 2}. For every $t\in (0,1)$ we have that $\theta_t(\Delta(M))$ is amenable relative to $N$ inside $\tilde{\mathcal M}$.

\vskip 0.05in
In {\bf Case 1},  Theorem \ref{inter} gives that either $\Delta(A)\prec_{\mathcal M}B$ or $\mathcal N_{\mathcal M}(\Delta(A))''\prec_{\mathcal M}\mathcal M_i$, for some $i\in\{1,2\}$. Since $A$ is regular in $M$, the latter condition implies that $\Delta(M)\prec_{\mathcal M}\mathcal M_i$.

By using Lemma \ref{delta} we derive that either $A\prec_{M}D\rtimes\Lambda$ or $M\prec_{M}D\rtimes\Gamma_i$, for some $i\in\{1,2\}$.
If $A\prec_{M}D\rtimes\Lambda$, then as $M$ is a factor, \cite[Proposition 8]{HPV10}  implies that $A\prec_{M}D\rtimes (\cap_{i=1}^ng_i\Lambda g_i^{-1})$.
Since $\cap_{i=1}^ng_i\Lambda g_i^{-1}$ is finite, we conclude that $A\prec_{M}D$, as claimed.

 Now, since $[\Gamma_1:\Lambda]\geqslant 2$ and $[\Gamma_2:\Lambda]\geqslant 2$, we can find $g_1\in\Gamma_1\setminus\Lambda$ and $g_2\in\Gamma_2\setminus\Lambda$. Let $u=u_{g_1g_2}\in\mathcal U(L(\Gamma))$. Then we have that $\|E_{D\rtimes\Gamma_i}(xu^ny)\|_2\rightarrow 0$, for every $x,y\in M$ and $i\in\{1,2\}$. Thus, $L(\Gamma)\nprec_{M}D\rtimes\Gamma_i$ and hence  $M\nprec_{M}D\rtimes\Gamma_i$. This shows that the second alternative is impossible and  finishes the proof of {\bf Case 1}.

In {\bf Case 2}, since  $[\Gamma_1':\Lambda']\geqslant 2$, $[\Gamma_2':\Lambda']\geqslant 3$ and $\cap_{j=1}^mk_j\Lambda'k_j^{-1}=\{e\}$, Corollary \ref{inner} implies that  $L(\Gamma')'\cap L(\Gamma')^{\omega}=\mathbb C1$.

Note that $u_g\otimes u_{\rho(g)}\in\Delta(M)$, for every $g\in\Gamma$. Moreover, the von Neumann algebra $A_0$ generated by $\{u_{\rho(g)}\}_{g\in\Gamma}$ is equal to $L(\Gamma')$ and satisfies $A_0'\cap L(\Gamma')^{\omega}=\mathbb C1$. Since $\theta_t(\Delta(M))$ is amenable relative to $N$, for any $t\in (0,1)$, by Theorem \ref{relamen2} we deduce that either $L(\Gamma')\prec_{L(\Gamma')}L(\Gamma_i')$, for some $i\in\{1,2\}$, or $L(\Gamma')$ is amenable relative $L(\Lambda')$ inside $L(\Gamma')$.

Since $[\Gamma_1':\Lambda']\geqslant 2$ and $[\Gamma_2':\Lambda']\geqslant 2$, we can choose $g_1\in\Gamma_1'\setminus\Lambda'$ and $g_2\in\Gamma_2'\setminus\Lambda'$. Then $u=u_{g_1g_2}\in L(\Gamma')$ satisfies $\|E_{L(\Gamma_1')}(xu^ny)\|_2\rightarrow 0$ and $\|E_{L(\Gamma_2')}(xu^ny)\|_2\rightarrow 0$, for all $x,y\in L(\Gamma')$, showing that the first alternative is impossible.

Finally, if $L(\Gamma')$ is amenable relative to $L(\Lambda')$ inside $L(\Gamma')$, then $\Lambda'$ is {\it co-amenable} in $\Gamma'$, i.e. there exists 
a $\Gamma'$-invariant state $\Phi:\ell^{\infty}(\Gamma'/\Lambda')\rightarrow\mathbb C$ (see \cite[Proposition 3.5]{AD95}). Let us show that is impossible as well. 

Let $g_1\in\Gamma_1'\setminus\Lambda'$ and $g_2,g_3\in\Gamma_2'\setminus\Lambda'$ such that $g_3^{-1}g_2\not\in\Lambda'$.
Let $S_1$ and $S_2$ be the set of words in $\Gamma_1'\setminus\Lambda'$ and $\Gamma_2'\setminus\Lambda'$ beginning in $\Gamma_1'\setminus\Lambda'$ and in $\Gamma_2'\setminus\Lambda'$, respectively. Then $\Gamma'=S_1\sqcup S_2\sqcup\Lambda'$ and we have
$\Lambda'\subset g_1S_1,g_1S_2\subset S_1,g_2S_1\subset S_2,g_3S_1\subset S_2.$

Now, let $q:\Gamma'\rightarrow\Gamma'/\Lambda'$ be quotient map and define $T_1=q(S_1), T_2=q(S_2)$. Then we have $\Gamma'/\Lambda'=T_1\sqcup T_2\sqcup\{e\Lambda'\}$ and $e\Lambda'\in g_1T_1,g_1T_2\subset T_1,g_2T_1\subset T_2,g_3T_1\subset T_2$.
Moreover, since $g_3^{-1}g_2T_1\subset T_2$, we get that $g_2T_1\cap g_3T_1=\emptyset$. Hence,  $g_2T_1\sqcup g_3T_1\subset T_2$.

For a subset $T\subset\Gamma'/\Lambda'$, let $m(T)=\Phi(1_T)\in [0,1]$. Then $m$ is a finitely additive $\Gamma'$-invariant probability measure on $\Gamma'/\Lambda'$. The relations from the last paragraph therefore imply that $m(e\Lambda')\leqslant m(T_1),m(T_2)\leqslant m(T_1)$ and $2m(T_1)\leqslant m(T_2)$. This would  imply that $m(e\Lambda')=m(T_1)=m(T_2)=0$, contradicting the fact that $m(e\Lambda')+m(T_1)+m(T_2)=m(\Gamma'/\Lambda')=1$.\hfill$\square$

\vskip 0.1in
{\it Proof of Theorem \ref{main}}. Assume that $\Gamma=\Gamma_1\times\Gamma_2\times...\times\Gamma_n$, where   $\Gamma_i=\Gamma_{i,1}*_{\Lambda_i}\Gamma_{i,2}$ is an amalgamated free product group satisfying the hypothesis of Theorem \ref{main}, for every $i\in\{1,2,...,n\}$. We denote by $G_i<\Gamma$ the product of all $\Gamma_j$ with $j\in\{1,2,...,n\}\setminus\{i\}$.

Let $\Gamma\curvearrowright (X,\mu)$ be a free ergodic pmp action. Let $A$ be a Cartan subalgebra of $M=L^{\infty}(X)\rtimes\Gamma$.
For a subset $S\subset\Gamma$, we denote by $e_S$ the orthogonal projection from $L^2(M)$ onto the $\|.\|_2$ closed linear span of $\{L^{\infty}(X)u_g|g\in S\}$.

For $i\in\{1,2,...,n\}$, we  decompose  $M=(L^{\infty}(X)\rtimes G_i)\rtimes\Gamma_i.$ 
By applying Theorem \ref{maintech} we deduce that $A\prec_{M}L^{\infty}(X)\rtimes G_i$. 
Since $A\subset M$ is maximal abelian, it follows that we can find a non-zero projection $p\in A$ and $v\in M$ such that $Ap\subset v(L^{\infty}(X)\rtimes G_i)v^*$. 
By possibly shrinking $p$, we may assume that $\tau(p)=\frac{1}{m}$, for some $m\geqslant 1$. Since $A$ is a Cartan subalgebra we can find unitaries 
$u_1,u_2,..,u_m\in\mathcal N_{M}(A)$ such that $\sum_{j=1}^m u_jpu_j^*=1$. Thus, we get that $A\subset \sum_{j=1}^mu_j(Ap)u_j^*\subset
\sum_{j=1}^mu_jv(L^{\infty}(X)\rtimes G_i)v^*u_j^*$.
By using $\|.\|_2$-approximations, we conclude that for every $\varepsilon>0$ we can find a finite set 
$S\subset\Gamma$ such that $\|x-e_{SG_iS}(x)\|_2\leqslant\varepsilon$, for all $x\in (A)_1$.

Thus, we can find finite sets $S_1,S_2,...,S_n\subset\Gamma$ such that $$\|x-e_{S_iG_iS_i}(x)\|_2\leqslant\frac{1}{n+1},\;\;\text{for all}\;\; x\in (A)_1\;\;\text{and every}\;\; i\in\{1,2,...,n\}.$$

Let $S=\cap_{i=1}^nS_iG_iS_i$. Then $S$ is a finite subset of $\Gamma$ and $\|x-e_S(x)\|_2\leqslant\frac{n}{n+1}$, for every $x\in (A)_1$. Thus, $\|e_S(u)\|_2\geqslant\frac{1}{n+1}$, for every $u\in\mathcal U(A)$. Since $\|e_S(u)\|_2^2=\sum_{g\in S}\|E_{L^{\infty}(X)}(uu_g^*)\|_2^2$, Theorem \ref{corner} gives that $A\prec_{M}L^{\infty}(X)$. Since $A$ and $L^{\infty}(X)$ are  Cartan subalgebras, \cite[Theorem A.1]{Po01} implies that they are unitarily conjugate.\hfill$\square$

\subsection{Applications to W$^*$-superrigidity}
Next, we combine   Theorem \ref{main}  with S. Popa's cocycle superrigidity \cite{Po06a}   to provide a new class of W$^*$-superrigid actions. In particular, we will deduce Corollary \ref{super}.

 A free ergodic pmp action $\Gamma\curvearrowright (X,\mu)$ is called {\it W$^*$-superrgid} if whenever $L^{\infty}(X)\rtimes\Gamma\cong L^{\infty}(Y)\rtimes\Lambda$, for a free ergodic pmp action $\Lambda\curvearrowright (Y,\nu)$, the groups $\Gamma$ and $\Lambda$ are isomorphic and their actions are conjugate. This means that we can find a group isomorphism $\delta:\Gamma\rightarrow\Lambda$ and a measure space isomorphism $\theta:X\rightarrow Y$ such that $\theta(g\cdot x)=\delta(g)\cdot\theta(x)$, for all $g\in\Gamma$ and $\mu$-almost every $x\in X$.

Recall that any orthogonal representation $\pi:\Gamma\rightarrow\mathcal O(\mathcal H_{\mathbb R})$ onto a real Hilbert space $\mathcal H_{\mathbb R}$ gives rise to a pmp action $\Gamma\curvearrowright (X_{\pi},\mu_{\pi})$, called the {\it Gaussian action} associated to $\pi$ (see for instance \cite[Section 2.g]{Fu06}).

\begin{theorem}\label{generalsuper} Let $\Gamma=\Gamma_1*_{\Lambda}\Gamma_{2}$ and $\Gamma'=\Gamma_1'*_{\Lambda'}\Gamma_2'$ be amalgamated free product groups such that $[\Gamma_{1}:\Lambda]\geqslant 2$, $[\Gamma_{2}:\Lambda]\geqslant 3$, $[\Gamma_1':\Lambda']\geqslant 2$ and $[\Gamma_2':\Lambda']\geqslant 3$. Suppose that there exist $g_1,g_2,...,g_n\in\Gamma$ and $g_1',g_2',...,g_n'\in\Gamma'$ such that $\cap_{i=1}^ng_i\Lambda g_i^{-1}=\{e\}$ and $\cap_{i=1}^ng_i'\Lambda'{g_i'}^{-1}=\{e\}$. 

Let $G=\Gamma\times\Gamma'$ and $\pi:G\rightarrow\mathcal O(\mathcal H_{\mathbb R})$ be an orthogonal representation such that

\begin{itemize}
\item the representation $\pi_{|\Gamma}$ has stable spectral gap, i.e. ${\pi_{|\Gamma}}\otimes{\bar{\pi}_{|\Gamma}}$ has spectral gap, and
\item the representation $\pi_{|\Gamma'}$ is  weakly mixing, i.e. ${\pi_{|\Gamma'}}\otimes{\bar{\pi}_{|\Gamma'}}$ has no invariant vectors. 
\end{itemize}
Then any free ergodic pmp action $G\curvearrowright (X,\mu)$ which can be realized as a quotient of the Gaussian action $G\curvearrowright (X_{\pi},\mu_{\pi})$, is W$^*$-superrigid.
\end{theorem}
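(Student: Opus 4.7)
The plan is to combine the Cartan-uniqueness statement of Theorem \ref{main} with S. Popa's cocycle superrigidity theorem \cite{Po06a}, following the standard template for deducing W$^*$-superrigidity in the deformation/rigidity framework. I will start from a $*$-isomorphism $\theta : L^\infty(X) \rtimes G \to L^\infty(Y) \rtimes \Lambda$ arising from an arbitrary free ergodic pmp action $\Lambda \curvearrowright (Y,\nu)$, and aim to produce a group isomorphism $\delta : G \to \Lambda$ together with a measure-space isomorphism conjugating the two actions via $\delta$.

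The first step is a reduction to orbit equivalence. Since $G = \Gamma \times \Gamma'$ is a direct product of two AFP groups each satisfying the hypotheses of Theorem \ref{main}, that theorem gives that the II$_1$ factor $M = L^\infty(X) \rtimes G$ has $L^\infty(X)$ as its unique Cartan subalgebra, up to unitary conjugacy. Applying this to $L^\infty(X)$ and to $\theta^{-1}(L^\infty(Y))$ yields a unitary $u \in \mathcal U(M)$ such that $u\,\theta^{-1}(L^\infty(Y))\,u^* = L^\infty(X)$; after replacing $\theta$ by $\theta \circ \mathrm{Ad}(u^*)$ I may therefore assume $\theta(L^\infty(X)) = L^\infty(Y)$, and then by \cite{FM77} the actions $G \curvearrowright X$ and $\Lambda \curvearrowright Y$ are orbit equivalent. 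This OE in turn produces a Zimmer cocycle $w : G \times X \to \Lambda$.

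The second step is to apply Popa's cocycle superrigidity. Because $G \curvearrowright X$ is a quotient of the Gaussian action $G \curvearrowright X_\pi$, which is s-malleable, and because $\pi_{|\Gamma}$ has stable spectral gap while $\pi_{|\Gamma'}$ is weakly mixing, the product-group version of Popa's cocycle superrigidity theorem \cite{Po06a} applies: $w$ is cohomologous to a group homomorphism $\delta : G \to \Lambda$, meaning there is a measurable $\phi : X \to \Lambda$ with $w(g,x) = \phi(g \cdot x)^{-1}\delta(g)\phi(x)$ for every $g \in G$ and a.e.\ $x$. Absorbing $\phi$ into one further inner modification of $\theta$ by a unitary in $L^\infty(Y) \rtimes \Lambda$ that normalizes $L^\infty(Y)$, the cocycle can be arranged to equal $\delta$ identically; freeness of both actions then forces $\delta$ to be a bijection and identifies the two actions via $\delta$, which is the desired W$^*$-superrigidity conclusion.

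The substantive analytic work has been absorbed into Theorem \ref{main}. The only point requiring mild care is the verification that the stable-spectral-gap plus weakly-mixing splitting on $\pi$ is precisely the hypothesis under which cocycle superrigidity for quotients of Gaussian actions of product groups applies; this transfer from the s-malleable Gaussian lift to its quotient is by now standard in the deformation/rigidity literature.
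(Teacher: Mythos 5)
Your proposal is correct and follows essentially the same route as the paper: Theorem \ref{main} gives uniqueness of the Cartan subalgebra, which reduces the problem to orbit equivalence, and Popa's cocycle superrigidity theorem \cite[Theorem 1.3]{Po06a} (applied via the Zimmer cocycle of the OE) then upgrades orbit equivalence to conjugacy. The only details the paper checks that you leave implicit are that $G$ has no non-trivial finite normal subgroups (via Corollary \ref{inner}) and that the stable spectral gap and weak mixing hypotheses on $\pi$ pass to the Koopman representation on $L^2(X_{\pi})\ominus\mathbb C1$ through its decomposition into tensor powers of $\pi$; both are routine.
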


S. Popa and S. Vaes have very recently proven that the same holds when $\Gamma$ and $\Gamma'$ are icc weakly amenable groups that admit a proper 1-cocycle into a representation with stable spectral gap \cite[Theorem 12.2]{PV11}.

\begin{proof} Denote $M=L^{\infty}(X)\rtimes G$ and let $\Lambda\curvearrowright (Y,\nu)$ be a free ergodic pmp action such that we have an isomorphism $\theta:L^{\infty}(Y)\rtimes\Lambda\rightarrow M$. Then $\theta(L^{\infty}(Y))$ is a Cartan subalgebra of $M$. Thus, by Theorem \ref{main} we can find a unitary $u\in M$ such that $\theta(L^{\infty}(Y))=uL^{\infty}(X)u^*$.

This implies that the actions $G\curvearrowright (X,\mu)$ and $\Lambda\curvearrowright (Y,\nu)$ are orbit equivalent. Therefore, in order to show that the actions are actually conjugate, it suffices to argue that $G\curvearrowright (X,\mu)$ is orbit equivalent superrigid.

Let us show that we can apply \cite[Theorem 1.3]{Po06a} to $G\curvearrowright X$.
Firstly, by Corollary \ref{inner}, $\Gamma$ and $\Gamma'$ have no finite normal subgroup. Thus, $G$ has no finite normal subgroups. Secondly, by \cite[Theorem 1.2]{Fu06} the action $G\curvearrowright X$ is s-malleable. 

Thirdly, consider the unitary representation $\rho:G\curvearrowright L^2(X_{\pi})\ominus\mathbb C1.$ Then $\rho$ is a subrepresentation of $\pi\otimes\sigma$, where $\sigma=\oplus_{n\geqslant 0}\;\pi^{\otimes_n}$. Since $\pi_{|\Gamma}$ has stable spectral gap and $\pi_{|\Gamma'}$ is weakly mixing, the same properties hold for $\rho_{|\Gamma}$ and $\rho_{|\Gamma'}$. Thus, the action $\Gamma\curvearrowright X_{\pi}$ has stable spectral gap and the action $\Gamma'\curvearrowright X_{\pi}$ is weakly mixing.

Thus, we can apply \cite[Theorem 1.3]{Po06a} to deduce that the action $G\curvearrowright X$ is OE superrigid.
\end{proof}

{\it Proof of Corollary \ref{super}}. Note that the Bernoulli action $G\curvearrowright [0,1]^G$ can be identified with the Gaussian action associated to the left regular representation $\lambda:G\rightarrow\mathcal U(\ell^2(G))$. Since $\Gamma$ and $\Gamma'$ are non-amenable, the corollary follows from Theorem  \ref{generalsuper}.

\vskip 0.05in
\begin{remark} 
In \cite[Theorem 1.1]{Ki10}, Y. Kida proved the following: let Mod$^*(S)$ be the extended mapping class group of a surface of genus $g$ with $p$ boundary components. Suppose that $3g+p\geqslant 5$ and $(g,p)\not=(1,2),(2,0)$. Let $\Delta<\text{Mod}^*(S)$ be a finite index subgroup and $A<\Delta$ be an infinite, almost malnormal subgroup (i.e. $hAh^{-1}\cap A$ is finite, for all $h\in\Delta\setminus A$) and denote $\Gamma=\Delta*_{A}\Delta$.
Then any free ergodic pmp action $\Gamma\curvearrowright (X,\mu)$ whose restriction to $A$ is aperiodic is OE-superrigid.

Since $A<\Gamma$ is weakly malnormal, Theorem \ref{main} implies that all such actions of $\Gamma$ are moreover W$^*$-superrigid.
\end{remark}

\subsection{An application to W$^*$-rigidity} In combination with the orbit equivalence rigidity results of N. Monod and Y. Shalom, Theorem \ref{main}  implies the following.

\begin{theorem}\label{ms} Let $\Gamma_1,\Gamma_2,\Gamma_3$ and $\Gamma_4$ be any non-trivial  torsion-free  countable groups and define $\Gamma=(\Gamma_1*\Gamma_2)\times(\Gamma_3*\Gamma_4)$. Let $\Gamma\curvearrowright (X,\mu)$ be a free ergodic pmp action whose restrictions to $\Gamma_1*\Gamma_2$, $\Gamma_3*\Gamma_4$ and any finite index subgroup $\Gamma'<\Gamma$ are also ergodic.

Let $\Lambda\curvearrowright (Y,\nu)$ be an arbitrary free mildly mixing pmp action.

If $L^{\infty}(X)\rtimes\Gamma\cong L^{\infty}(Y)\rtimes\Lambda$, then $\Gamma\cong\Lambda$ and the actions $\Gamma\curvearrowright X$ and $\Lambda\curvearrowright Y$ are conjugate.

\end{theorem}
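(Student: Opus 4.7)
\textbf{Proof proposal for Theorem \ref{ms}.} The plan is to combine Theorem \ref{main} with the orbit equivalence superrigidity theorem of Monod and Shalom for products of torsion-free groups in their class $\mathcal{C}_{\text{reg}}$. The overall strategy proceeds in three stages: first apply Theorem \ref{main} to deduce uniqueness of the Cartan subalgebra in $L^{\infty}(X)\rtimes\Gamma$; then convert the given $W^*$-isomorphism into an orbit equivalence of the two actions; finally apply Monod--Shalom's OE superrigidity to promote the orbit equivalence to a conjugacy of actions implemented by a group isomorphism.

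For the first stage, I would verify that each of the free products $\Gamma_1*\Gamma_2$ and $\Gamma_3*\Gamma_4$ satisfies the hypotheses of Theorem \ref{main} with amalgamated subgroup $\Lambda=\{e\}$. Since each $\Gamma_i$ is nontrivial and torsion-free, it is infinite, so in particular $[\Gamma_1:\{e\}]\geqslant 2$ and $[\Gamma_2:\{e\}]\geqslant 3$ are automatic, and taking $n=1$, $g_1=e$ one has $\bigcap_i g_i\{e\}g_i^{-1}=\{e\}$, which is finite. The ``moreover'' clause of Theorem \ref{main} then applies to the direct product $\Gamma=(\Gamma_1*\Gamma_2)\times(\Gamma_3*\Gamma_4)$ and shows that $M:=L^{\infty}(X)\rtimes\Gamma$ has a unique Cartan subalgebra up to unitary conjugacy.

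For the second stage, let $\theta:L^{\infty}(X)\rtimes\Gamma\to L^{\infty}(Y)\rtimes\Lambda$ be the given isomorphism and set $N:=L^{\infty}(Y)\rtimes\Lambda\cong M$. Both $\theta(L^{\infty}(X))$ and $L^{\infty}(Y)$ are Cartan subalgebras of $N$, and $N$ inherits the unique-Cartan-subalgebra property from $M$ via $\theta$. Thus the two Cartan subalgebras are unitarily conjugate in $N$, and by the Singer--Feldman--Moore correspondence \cite{Si55,FM77} this is equivalent to saying that the actions $\Gamma\curvearrowright(X,\mu)$ and $\Lambda\curvearrowright(Y,\nu)$ are (stably) orbit equivalent; since the acting groups are infinite and the actions are free ergodic probability measure preserving, the ``stable'' scalar is necessarily $1$ and the two actions are orbit equivalent.

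For the third stage, I would invoke Monod and Shalom's OE superrigidity theorem for products. Each free product $\Gamma_i*\Gamma_{i+1}$ of two nontrivial torsion-free countable groups acts non-elementarily on its Bass--Serre tree with trivial edge stabilizers, which (together with the torsion-freeness) places it in the Monod--Shalom class $\mathcal{C}_{\text{reg}}$ of groups admitting a mixing cocycle into the regular representation. Hence $\Gamma$ is a direct product of two torsion-free groups in $\mathcal{C}_{\text{reg}}$. The ergodicity of $\Gamma\curvearrowright X$ on restriction to each of the two factors and to every finite-index subgroup, together with the assumption that $\Lambda\curvearrowright Y$ is free and mildly mixing, match exactly the hypotheses of the Monod--Shalom OE superrigidity theorem for products. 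Its conclusion yields a group isomorphism $\delta:\Gamma\to\Lambda$ and a measure space isomorphism $\psi:X\to Y$ conjugating the two actions.

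The main obstacle in this plan is essentially bibliographic rather than analytic: the deformation/rigidity input is entirely subsumed in Theorem \ref{main}, and the only nontrivial check is to align our hypotheses precisely with the Monod--Shalom statement (ensuring that free products of nontrivial torsion-free countable groups belong to $\mathcal{C}_{\text{reg}}$, and that the three ergodicity conditions imposed here are exactly what is needed to upgrade orbit equivalence to conjugacy in the product setting). Granting these standard verifications, the theorem follows.
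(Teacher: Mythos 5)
Your proposal is correct and follows essentially the same route as the paper's (very terse) proof: Theorem \ref{main} applies with trivial amalgam since non-trivial torsion-free groups are infinite, the resulting uniqueness of the Cartan subalgebra converts the W$^*$-isomorphism into an orbit equivalence, and then \cite[Theorems 1.3 and 1.10]{MS02} upgrade this to a conjugacy. The one small inaccuracy is your gloss of $\mathcal C_{\text{reg}}$ (Monod--Shalom define it by non-vanishing of the second bounded cohomology with coefficients in the regular representation, not via mixing cocycles), but the membership of free products of non-trivial torsion-free groups in $\mathcal C_{\text{reg}}$ is exactly what their Theorem 1.3 supplies, so this does not affect the argument.
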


Following \cite[Definition 1.8]{MS02}, a measure preserving action $\Lambda\curvearrowright (Y,\nu)$ is  called {\it mildly mixing} if for any measurable set $A\subset Y$  and any sequence $\lambda_n\in\Lambda$ with $\lambda_n\rightarrow\infty$, one has $\nu(\lambda_nA\;\Delta\; A)\rightarrow 0$ if and only if $\nu(A)\in\{0,1\}$. 

{\it Proof of Theorem \ref{ms}}. By \cite[Theorem 1.3]{MS02}  the groups $\Gamma_1*\Gamma_2$ and $\Gamma_3*\Gamma_4$ belong to the class $\mathcal C_{\text{reg}}$. Applying \cite[Theorem 1.10]{MS02} then gives the conclusion.\hfill$\square$

\subsection {W$^*$ Bass-Serre rigidity}  We next combine Theorem \ref{main}   with results of A. Alvarez and D. Gaboriau \cite{AG08} to generalize part of \cite[Theorem 7.7]{IPP05} and \cite[Theorem 6.6]{CH08}. 

\begin{theorem}
Let $m,n\geqslant 2$ be integers and $\Gamma_1,\Gamma_2,...,\Gamma_m,\Lambda_1,\Lambda_2,...,\Lambda_n$ be non-amenable groups with vanishing first $\ell^2$-Betti numbers. Define $\Gamma=\Gamma_1*\Gamma_2*...*\Gamma_m$ and $\Lambda=\Lambda_1*\Lambda_2*...*\Lambda_n$. Let 
$\Gamma\curvearrowright (X,\mu)$ and $\Lambda\curvearrowright (Y,\nu)$ be free  pmp actions such that the restrictions $\Gamma_i\curvearrowright X$ and $\Lambda_j\curvearrowright Y$ are ergodic, for every $i\in\{1,2,...,m\}$ and $j\in\{1,2,...,n\}$. 

Let $\theta:L^{\infty}(X)\rtimes\Gamma\rightarrow (L^{\infty}(Y)\rtimes\Lambda)^t$ be an isomorphism, for some $t>0$.

Then $t=1$, $m=n$ and there exists a permutation $\alpha$ of $\{1,2,...,m\}$ such that the actions $\Gamma_i\curvearrowright X$ and $\Lambda_{\alpha(i)}\curvearrowright Y$ are orbit equivalent, for every $i\in\{1,2,...,m\}$.

Moreover, for every $i\in\{1,2,...,m\}$, there exists a unitary element $u_i\in L^{\infty}(Y)\rtimes\Lambda$ such that $\theta(L^{\infty}(X))=u_iL^{\infty}(Y)u_i^*$ and $\theta(L^{\infty}(X)\rtimes\Gamma_i)=u_i(L^{\infty}(Y)\rtimes\Lambda_{\alpha(i)})u_i^*$.
\end{theorem}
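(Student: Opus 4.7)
The plan is to combine the Cartan uniqueness result of Theorem \ref{main} with the Bass-Serre rigidity for free product decompositions of equivalence relations due to Alvarez-Gaboriau \cite{AG08}, and Gaboriau's $\ell^2$-Betti number calculations to pin down the amplification parameter.

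First, I would observe that $\Gamma = \Gamma_1 * (\Gamma_2 * \cdots * \Gamma_m)$ is an amalgamated free product over the trivial subgroup with $[\Gamma_1 : e] \geqslant 2$ and $[\Gamma_2 * \cdots * \Gamma_m : e] \geqslant 3$ (each $\Gamma_i$ is non-amenable, hence infinite, and $m \geqslant 2$), so the hypotheses of Theorem \ref{main} are satisfied. Hence $L^\infty(X)$ is the unique Cartan subalgebra of $L^\infty(X) \rtimes \Gamma$ up to unitary conjugacy, and the same argument applied to $\Lambda$ shows that $L^\infty(Y)^t$ is the unique Cartan subalgebra of $(L^\infty(Y) \rtimes \Lambda)^t$ (Cartan uniqueness is stable under amplification). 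Both $\theta(L^\infty(X))$ and $L^\infty(Y)^t$ are therefore Cartan subalgebras of $(L^\infty(Y)\rtimes\Lambda)^t$, so there exists $v \in \mathcal{U}((L^\infty(Y)\rtimes\Lambda)^t)$ with $v\,\theta(L^\infty(X))\,v^* = L^\infty(Y)^t$. After replacing $\theta$ by $\mathrm{Ad}(v)\circ\theta$, we may assume $\theta(L^\infty(X)) = L^\infty(Y)^t$, and by Singer/Feldman-Moore \cite{Si55,FM77} the isomorphism $\theta$ implements an isomorphism of equivalence relations $\mathcal{R}(\Gamma\curvearrowright X)\cong\mathcal{R}(\Lambda\curvearrowright Y)^t$.

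Next, I would view the two Cartan subalgebras of the ambient factor as witnessing two free product decompositions of the same equivalence relation, namely $\mathcal{R}(\Gamma \curvearrowright X) = \mathcal{R}(\Gamma_1 \curvearrowright X) * \cdots * \mathcal{R}(\Gamma_m \curvearrowright X)$ and similarly for $\Lambda$ on $Y$, each factor being ergodic by hypothesis. Since every $\Gamma_i$ and $\Lambda_j$ is non-amenable with $\beta_1^{(2)}=0$, the equivalence relations they generate are measurably freely indecomposable in the sense of \cite{AG08}. Alvarez-Gaboriau's Bass-Serre rigidity theorem then yields $m=n$, a permutation $\alpha$ of $\{1,\dots,m\}$, and inner conjugations (i.e.\ automorphisms implemented by unitaries in the normalizer of $L^\infty(Y)^t$ in $(L^\infty(Y)\rtimes\Lambda)^t$) sending $\theta(L^\infty(X)\rtimes\Gamma_i)$ onto $(L^\infty(Y)\rtimes\Lambda_{\alpha(i)})^t$. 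To pin down $t$, I would invoke Gaboriau's formula $\beta_1^{(2)}(\Gamma) = m-1$ for a free product of infinite groups with vanishing first $\ell^2$-Betti number, together with the OE invariance and scaling rule $\beta_1^{(2)}(\mathcal{R}^t)=\beta_1^{(2)}(\mathcal{R})/t$; combined with $m=n$ this forces $t=1$.

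Finally, composing the inner-conjugation unitaries $w_i$ produced by \cite{AG08} with the initial Cartan-conjugation unitary $v$, one sets $u_i = w_i v^*$ and verifies $\theta(L^\infty(X)) = u_i L^\infty(Y) u_i^*$ and $\theta(L^\infty(X)\rtimes\Gamma_i) = u_i (L^\infty(Y)\rtimes\Lambda_{\alpha(i)}) u_i^*$, which gives the moreover statement and, via Singer/Feldman-Moore applied piece by piece, the orbit equivalence of $\Gamma_i \curvearrowright X$ with $\Lambda_{\alpha(i)}\curvearrowright Y$. The main obstacle is the very first step: passing from W$^*$-equivalence to orbit equivalence, where Theorem \ref{main} must be applied on both sides and under amplification; after this reduction, the measured-group-theoretic rigidity of \cite{AG08} and the $\ell^2$-Betti number machinery deliver the remaining assertions rather directly. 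A secondary point worth care is verifying that the ergodicity assumption on each restricted action $\Gamma_i\curvearrowright X$ and $\Lambda_j\curvearrowright Y$ is exactly what is needed to treat each $\mathcal{R}(\Gamma_i\curvearrowright X)$ as an ergodic factor in the Bass-Serre decomposition invoked above.
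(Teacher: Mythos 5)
Your proposal is correct and follows essentially the same route as the paper: apply Theorem \ref{main} (viewing $\Gamma$ as a free product amalgamated over the trivial group) to conjugate the two Cartan subalgebras and reduce to a stable isomorphism of the orbit equivalence relations, then invoke Gaboriau's $\ell^2$-Betti number computations to force $t=1$ and the Alvarez--Gaboriau measurable free indecomposability and Bass--Serre rigidity theorems to match up the free factors. The only difference is cosmetic (you derive $m=n$ before $t=1$, and you apply Cartan uniqueness on both sides where one side suffices), so no further comment is needed.
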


{\it Proof.} By Theorem \ref{main}, the II$_1$ factor $L^{\infty}(X)\rtimes\Gamma$ has a unique Cartan subalgebra, up to unitary conjugacy. Thus, we can find a unitary $u\in (L^{\infty}(Y)\rtimes\Lambda)^t$ such that $\theta(L^{\infty}(X))=u(L^{\infty}(Y))^tu^*$. Denoting by $\mathcal R(\Gamma\curvearrowright X)$ the equivalence relation induced by the action $\Gamma\curvearrowright X$, it follows that $\mathcal R(\Gamma\curvearrowright X)\cong \mathcal R(\Lambda\curvearrowright Y)^t$. By using \cite{Ga01} to calculate the first $\ell^2$-Betti number of both sides of this equation (see  the end of the proof of \cite[Theorem 7.7]{IPP05}) we deduce that $t=1$.
Now,  by \cite[Corollary 4.20]{AG08},  non-amenable groups with vanishing first $\ell^2$-Betti number are measurably freely indecomposable. Since $\mathcal R(\Gamma\curvearrowright X)=*_{i=1}^m\mathcal R(\Gamma_i\curvearrowright X)$ and $\mathcal R(\Lambda\curvearrowright Y)=*_{j=1}^n\mathcal R(\Lambda_j\curvearrowright Y)$, by applying \cite[Theorem 5.1]{AG08}, the conclusion follows.\hfill$\square$

\subsection{II$_1$ factors with trivial fundamental group} Theorem \ref{general} also leads to a new class of groups whose actions give rise to II$_1$ factors with trivial fundamental groups.

\begin{theorem}\label{fund}
Let $\Gamma_1$, $\Gamma_2$ be  two finitely generated, countable groups with $|\Gamma_1|\geqslant 2$ and $|\Gamma_2|\geqslant 3$. Denote $\Gamma=\Gamma_1*\Gamma_2$ and let $\Gamma\curvearrowright (X,\mu)$ be any free ergodic pmp action. 

Then the II$_1$ factor $M=L^{\infty}(X)\rtimes\Gamma$ has trivial fundamental group, $\mathcal F(M)=\{1\}$.

\end{theorem}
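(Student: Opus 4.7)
The plan is to combine Theorem \ref{main} with Gaboriau's theory of $\ell^2$-Betti numbers for orbit equivalence relations. Observe first that the hypotheses of Theorem \ref{main} are satisfied for the amalgam $\Lambda=\{e\}$: we have $[\Gamma_i:\{e\}]=|\Gamma_i|$, so $[\Gamma_1:\Lambda]\geqslant 2$ and $[\Gamma_2:\Lambda]\geqslant 3$, and the intersection $\cap_{i}g_i\{e\}g_i^{-1}=\{e\}$ is trivially finite. Hence $L^{\infty}(X)$ is the unique Cartan subalgebra of $M$, up to unitary conjugacy.

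The first step is to reduce the computation of $\mathcal F(M)$ to a computation of the fundamental group of the orbit equivalence relation $\mathcal R=\mathcal R(\Gamma\curvearrowright X)$. Fix $t\in\mathcal F(M)$ and an isomorphism $\theta:M\to M^t$. The algebra $M^t$ contains the amplified Cartan subalgebra $L^{\infty}(X)^t$, and it also contains the Cartan $\theta(L^{\infty}(X))$. Since Theorem \ref{main} applies equally well to the amplification $M^t$ (which is still a crossed product by $\Gamma$ coming from a free ergodic pmp action up to the usual amplification formalism), both Cartans must be unitarily conjugate inside $M^t$. Via the Feldman–Moore correspondence between Cartan-containing II$_1$ factors and pmp equivalence relations, this yields $\mathcal R\cong\mathcal R^{t}$, so that $\mathcal F(M)\subseteq\mathcal F(\mathcal R)$.

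The second step is to show $0<\beta_1^{(2)}(\Gamma)<\infty$. Since $\Gamma_1,\Gamma_2$ are finitely generated, so is $\Gamma$, and hence $\beta_1^{(2)}(\Gamma)<\infty$. For positivity, note that $\Gamma$ is infinite (already $\mathbb Z/2\ast\mathbb Z/3$ is), so $\beta_0^{(2)}(\Gamma)=0$. Gaboriau's formula for free products then gives
\[
\beta_1^{(2)}(\Gamma)=\beta_1^{(2)}(\Gamma_1)+\beta_1^{(2)}(\Gamma_2)+1-\tfrac{1}{|\Gamma_1|}-\tfrac{1}{|\Gamma_2|}\geqslant 1-\tfrac{1}{2}-\tfrac{1}{3}=\tfrac{1}{6}>0.
\]

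For the final step, Gaboriau's theorem $\beta_1^{(2)}(\mathcal R)=\beta_1^{(2)}(\Gamma)$ (for free pmp actions) together with the scaling rule $\beta_1^{(2)}(\mathcal R^{t})=t^{-1}\beta_1^{(2)}(\mathcal R)$ immediately forces any $t\in\mathcal F(\mathcal R)$ to equal $1$, since $\beta_1^{(2)}(\mathcal R)\in(0,\infty)$. Combined with Step 1 this yields $\mathcal F(M)=\{1\}$. The main obstacle is really just Step 1: one must verify that Theorem \ref{main} genuinely applies to amplifications $M^t$, so that uniqueness of the Cartan subalgebra persists after amplification and the fundamental group of $M$ is indeed dominated by that of $\mathcal R$. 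This is standard but relies on the observation that an amplification of a free ergodic pmp action of $\Gamma$ is still a free ergodic pmp action of $\Gamma$ on a (different) standard probability space, to which Theorem \ref{main} still applies.
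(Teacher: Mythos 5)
Your proof follows the same route as the paper's: uniqueness of the Cartan subalgebra (Theorem \ref{main} with $\Lambda=\{e\}$) reduces $\mathcal F(M)$ to $\mathcal F(\mathcal R(\Gamma\curvearrowright X))$, and Gaboriau's results on $\ell^2$-Betti numbers together with $\beta_1^{(2)}(\Gamma)\in(0,\infty)$ (your free product computation is correct) force the latter to be trivial. The only point where your justification is not correct as stated is the claim that the amplified relation $\mathcal R^t$ again arises from a free ergodic pmp action of $\Gamma$: in general it does not (for example, cost and $\ell^2$-Betti numbers scale by $1/t$ under amplification, so for $t\neq 1$ the relation $\mathcal R^t$ cannot be generated by a free action of the same group $\Gamma$ once $\beta_1^{(2)}(\Gamma)>0$). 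What you actually need is that uniqueness of the Cartan subalgebra up to unitary conjugacy passes from $M$ to its corners $pMp$ and matrix amplifications, which is a standard general fact about II$_1$ factors and does not require re-presenting $M^t$ as a crossed product; with that substitution your argument is complete and coincides with the paper's.
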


{\it Proof.} By Theorem \ref{general}, $L^{\infty}(X)\rtimes\Gamma$ has a unique Cartan subalgebra, up to unitary conjugacy. Therefore, we have that $\mathcal F(M)=\mathcal F(\mathcal R(\Gamma\curvearrowright X))$.
Since $\beta_1^{(2)}(\Gamma)\in (0,\infty)$, a well-known result of D. Gaboriau \cite{Ga01} implies that $\mathcal F(\mathcal R(\Gamma\curvearrowright X))=\{1\}.$
\hfill$\square$

\begin{remark}
Theorem \ref{fund} generalizes \cite[Theorem 1.2]{PV08}. Thus, it was shown in \cite{PV08} that the conclusion of Theorem \ref{fund} holds, for instance, if   $\Gamma_1$ is an icc property (T) group and $\Gamma_2$ is an infinite group.
Note that Theorem \ref{fund} fails if the groups involved are not finitely generated. Indeed, by \cite[Theorem 1.1]{PV08} if $\Lambda_1$ is a non-trivial group and $\Lambda_2$ is an infinite amenable group, then $\Gamma=\Lambda_1^{*\infty}*\Lambda_2$ does not satisfy the conclusion of Theorem \ref{fund}. In fact, as shown in \cite{PV08}, there are free ergodic pmp actions $\Gamma\curvearrowright X$ such that $\mathcal F(L^{\infty}(X)\rtimes\Gamma)$ is uncountable.
\end{remark}

\subsection{Absence of Cartan subalgebras} Finally, Theorem \ref{maintech}  allows us to provide a new class of II$_1$ factors without Cartan subalgebras:
\begin{corollary}\label{cormain}
Let $\Gamma=\Gamma_1*_{\Lambda}\Gamma_2$ be an amalgamated free product group such that $[\Gamma_1:\Lambda]\geqslant 2$ and  $[\Gamma_2:\Lambda]\geqslant 3$.  Assume that there exist $g_1,g_2,...,g_n\in\Gamma$ such that $\cap_{i=1}^ng_i\Lambda g_i^{-1}=\{e\}$.

Then $N\bar{\otimes}L(\Gamma)$ does not have a Cartan subalgebra, for any II$_1$ factor $N$.
\end{corollary}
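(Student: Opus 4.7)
The plan is to apply Theorem \ref{maintech} to the trivial action of $\Gamma$ on $N$, in which case the crossed product $M := N \rtimes \Gamma$ coincides with $N \bar{\otimes} L(\Gamma)$, and then to derive a contradiction from the resulting embedding by exploiting that $L(\Gamma)$ is a diffuse II$_1$ factor commuting with $N$ inside $M$. Note first that $M$ is a II$_1$ factor: by Corollary \ref{inner}, $L(\Gamma)$ itself is a II$_1$ factor, hence its tensor product with the II$_1$ factor $N$ is again a II$_1$ factor.

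Suppose, for contradiction, that $A \subset M$ is a Cartan subalgebra. Being abelian, $A$ is amenable, and by definition it is regular in $M$, so Theorem \ref{maintech} with $D = N$ yields $A \prec_{M} N$. By Theorem \ref{corner} I would then extract non-zero projections $p \in A$ and $q \in N$, a normal $*$-homomorphism $\phi : pAp \to qNq$, and a non-zero partial isometry $v \in qMp$ satisfying $\phi(x)v = vx$ for every $x \in pAp$. Maximal abelianness of $A$ forces $v^{*}v \in (pAp)'\cap pMp = pAp$, so after shrinking $p$ to $v^{*}v$ I may assume $v^{*}v = p$; setting $q' := vv^{*}$, conjugation by $v$ is a $*$-isomorphism $pMp \to q'Mq'$ that carries the MASA $pAp$ onto the MASA $B := \phi(pAp)\, q' \subset q'Mq'$. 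Taking adjoints in the intertwining relation shows that $q'$ commutes with $\phi(pAp)$, so in particular $q' \in \phi(pAp)' \cap qMq$.

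The contradiction would then come from computing $B' \cap q'Mq'$ in two ways. Since $q \in N$, the corner factors as $qMq = qNq \,\bar{\otimes}\, L(\Gamma)$, and the standard commutation theorem for tensor products gives $\phi(pAp)' \cap qMq = R \,\bar{\otimes}\, L(\Gamma)$ where $R := \phi(pAp)' \cap qNq$; compressing by $q'$ yields $B' \cap q'Mq' = q'(R \,\bar{\otimes}\, L(\Gamma)) q'$. On one hand, maximal abelianness of $B$ in $q'Mq'$ forces this corner of $R \,\bar{\otimes}\, L(\Gamma)$ to be abelian. On the other hand, $R$ sits inside the II$_1$ factor $qNq$ so is finite, and $L(\Gamma)$ is a type II$_1$ factor, whence $R \,\bar{\otimes}\, L(\Gamma)$ is of type II$_1$; but every non-zero corner of a type II$_1$ von Neumann algebra is again of type II$_1$, hence non-abelian, a contradiction.

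The substantive input is Theorem \ref{maintech}, already established earlier in the section; the rest is routine manipulation of Popa's intertwining-by-bimodules together with standard tensor-product commutant arguments. The only technical point worth noting is that the projection $q' = vv^{*}$ need not a priori lie in $N$, but this is harmlessly bypassed by computing the commutant inside $qMq$ rather than attempting to modify $v$ so that $q' \in N$.
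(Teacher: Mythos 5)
Your proof is correct, and its first (and only substantive) step coincides with the paper's: apply Theorem \ref{maintech} to the trivial action $\Gamma\curvearrowright N$, so that $M=N\rtimes\Gamma=N\bar{\otimes}L(\Gamma)$ and a Cartan subalgebra $A$ satisfies $A\prec_M N$. Where you diverge is the endgame. The paper finishes in one line by ``taking relative commutants'' (citing \cite[Lemma 3.5]{Va07}): from $A\prec_M N$ one gets $N'\cap M=L(\Gamma)\prec_M A'\cap M=A$, which is absurd because $L(\Gamma)$ is non-amenable while $A$ is abelian. You instead unpack the intertwining data by hand: using maximal abelianness you arrange $v^*v=p\in A$, transport $pAp$ via $\mathrm{Ad}(v)$ to the MASA $B=\phi(pAp)q'\subset q'Mq'$, and then compute $B'\cap q'Mq'=q'\bigl((\phi(pAp)'\cap qNq)\bar{\otimes}L(\Gamma)\bigr)q'$ via the tensor-product commutation theorem; this corner is of type II$_1$, contradicting abelianness. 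Both arguments are sound. The paper's route is shorter but leans on the external intertwining-commutant lemma; yours is self-contained modulo standard facts (the commutation theorem and preservation of type II$_1$ under compression), at the cost of the bookkeeping with $p$, $q'$ and $\phi$. Your closing remark that $q'=vv^*$ need not lie in $N$ is exactly the right point to flag, and computing the commutant inside $qMq$ does bypass it correctly.
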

 {\it Proof of Corollary \ref{cormain}}.
Let $N$ be a II$_1$ factor and denote $M=N\bar{\otimes}L(\Gamma)$. Assume by contradiction that $M$ has a Cartan subalgebra $A$. Since $M=N\rtimes\Gamma$, where $\Gamma$ acts trivially on $N$,  Theorem \ref{maintech} implies $A\prec_{M}N$. By taking relative commutants (see \cite[Lemma 3.5]{Va07}) we get that $L(\Gamma)\prec_{M}A'\cap M=A$. Since $A$ is abelian, while $\Gamma$ is non-amenable, we derive a contradiction.\hfill$\square$

\section{Cartan subalgebras of AFP algebras and classification of II$_1$ factors  \\ arising from  free product equivalence relations}

In this section we prove Theorem \ref{cartan} and  Corollary \ref{equirel}.

\subsection{Proof of Theorem \ref{cartan}} Let $A$ be a Cartan subalgebra of $M=M_1*_{B}M_2$. Recall that $B$ is amenable, $pM_1p\not=pBp\not=pM_2p$, for any non-zero projection $p\in B$, and that either

\begin{enumerate}
\item $M_1$ and $M_2$ have no amenable direct summands, or
\item $M$ does not have property $\Gamma$.
\end{enumerate}

We claim that  $M\nprec_{M}M_i$, for any $i\in\{1,2\}$.   Assume by contradiction that $M\prec_{M}M_i$, for some $i\in\{1,2\}$. By Theorem \ref{corner} we can find projections $p\in M,q\in M_i$, a non-zero partial isometry $v\in qMp$ such that $v^*v=p$, and a $*$-homomorphism $\phi:pMp\rightarrow qM_iq$ such that $\phi(x)v=vx$, for all $x\in pMp$.  Since $M$ is a non-amenable factor and $B$ is amenable, we have that $M\nprec_{M}B$.
Thus, by \cite[Remark 3.8]{Va07} we can moreover assume that $\phi(pMp)\nprec_{M_i}B$. 

Then \cite[Theorem 1.1]{IPP05} implies that $\phi(pMp)'\cap qMq\subset qM_iq$. In particular, $q_0:=vv^*\in qM_iq$. From this we get that $q_0Mq_0=q_0M_iq_0$. Let  $j\in\{1,2\}\setminus\{i\}$ and $x\in M_j\ominus B$.  Then  the orthogonal projection of $q_0xq_0$ onto $(L^2(M_i)\ominus L^2(B))\otimes_B (L^2(M_j)\ominus L^2(B))\otimes_B (L^2(M_i)\ominus L^2(B))$ is equal to $(q_0-E_B(q_0))x(q_0-E_B(q_0))$. Since $q_0xq_0\in M_i$, we deduce that $q_0-E_B(q_0)=0$.
Thus,  $q_0\in B$ and  $q_0M_jq_0\subset q_0M_iq_0\cap q_0M_jq_0=q_0Bq_0$. This contradicts our assumption that $q_0M_jq_0\not=q_0Bq_0$.

Next, consider $\tilde M=M*_B(B\bar{\otimes}L(\mathbb F_2))$ and the free malleable deformation $\{\theta_t\}_{t\in\mathbb R}\subset$ Aut$(\tilde M)$. Let $N=\{u_gMu_g^*|g\in\mathbb F_2\}''$.
 Since $\tilde M=N\rtimes\mathbb F_2$, by applying Theorem \ref{pv} we have two cases:

{\bf Case a.} $\theta_t(A)\prec_{\tilde M}N$, for some $t\in (0,1)$.

{\bf Case b.}  $\theta_t(M)$ is amenable relative to $N$ inside $\tilde M$, for any $t\in (0,1)$.

In {\bf Case a}, Theorem \ref{inter} gives that either $A\prec_{M}B$ or $M\prec_{M}M_i$, for some $i\in\{1,2\}$. Since the latter is impossible by the above, the conclusion holds in this case.

To finish the proof it is enough to argue that  {\bf Case b} contradicts  each of the above assumptions (1) and (2).
 Indeed, by applying Theorem \ref{amena} we get that  $M_ip_i$ is amenable relative to $B$, for some non-zero projection $p_i\in\mathcal Z(M_i)$ and some $i\in\{1,2\}$. Since $B$ is amenable, this would imply that either $M_1$ or $M_2$ has an amenable direct summand, contradicting assumption (1). 

Also, by applying Theorem \ref{relamen} we would get that either $M$ has property $\Gamma$, $M\prec_{M}M_i$, for some $i\in\{1,2\}$, or $M$ is amenable relative to $B$ (hence $M$ is amenable and therefore isomorphic to the hyperfinite II$_1$ factor). Since the hyperfinite II$_1$ factor has property $\Gamma$, this contradicts assumption (2).

\begin{remark} Theorem \ref{cartan} requires that $M=M_1*_{B}M_2$ is a factor. Note that when $B$ is a type I von Neumann algebra,  \cite[Theorem 5.8]{HV12} and \cite[Theorem 4.3]{Ue12} provide general conditions which guarantee that $M$ is a factor.
\end{remark}

\vskip 0.1in
\subsection{Proof of Corollary \ref{equirel}} 
Denote $M=L(\mathcal R)$, $M_1=L(\mathcal R_1)$, $M_2=L(\mathcal R_2)$ and $B=L^{\infty}(X)$. Then $M=M_1*_{B}M_2$. Since the restrictions of $\mathcal R_1$ and $\mathcal R_2$ to any set of positive measure have infinite orbits, we get that $pM_1p\not=pBp\not=pM_2p$, for any non-zero projection $p\in B$.

Now, if the restrictions of $\mathcal R_1$ and $\mathcal R_2$ to any set of positive measure are non-hyperfinite, then $M_1$ and $M_2$ have no amenable direct summand \cite{CFW81}. 

Next, let us show that if $\mathcal R$ is strongly ergodic, then $M$ does not have property $\Gamma$. Since the restrictions of $\mathcal R_1$ and $\mathcal R_2$ to any set of positive measure have infinite orbits, \cite[Lemma 2.6]{IKT08} provides $\theta_1\in [\mathcal R_1]$ and $\theta_2,\theta_3\in [\mathcal R_2]$ such that $\theta_1(x)\not=x,\theta_2(x)\not=x,\theta_3(x)\not=x$ and $\theta_2(x)\not=\theta_3(x)$, for $\mu$-almost every $x\in X$. Thus the unitaries $u=u_{\theta_1}\in M_1$, $v=u_{\theta_2}\in M_2$ and $w=u_{\theta_3}\in M_2$ satisfy $E_B(u)=E_B(v)=E_B(w)=E_B(w^*v)=0$. By Lemma \ref{bar} we get that $M'\cap M^{\omega}\subset B^{\omega}$.

Since $\mathcal R$ is strongly ergodic, we have that $M'\cap B^{\omega}=\mathbb C$, which  shows that $M$ does not have property $\Gamma$.

Altogether by applying Theorem \ref{cartan} we deduce that if $A$ is a Cartan subalgebra of $M$, then $A\prec_{M}B$. Hence, by \cite[Theorem A.1]{Po01} it follows that $A$ and $B$ are unitarily conjugate.

Finally, let $\mathcal S$ be a countable measure preserving equivalence relation on a probability space $(Z,\nu)$ and  $\theta:L(\mathcal S)\rightarrow M$ be an isomorphism. Then $\theta(L^{\infty}(Z))$ is a Cartan subalgebra of $M$ and so it must be conjugate to $B$. This shows that the inclusions $L^{\infty}(X)\subset L(\mathcal R)$ and $L^{\infty}(Z)\subset L(\mathcal S)$ are isomorphic, hence $\mathcal R\cong\mathcal S$.
\hfill$\square$

Note that, as one of the referees pointed out, one can alternatively use \cite[Theorem 4.8]{Ue12} to deduce that $M=L(\mathcal R)$ does not have property $\Gamma$.

\begin{remark} 
This proof moreover shows that if $v\in$ H$^2(\mathcal R,\mathbb T)$ is any 2-cocycle, then $L^{\infty}(X)$ is the unique Cartan subalgebra of the II$_1$ factor $L(\mathcal R,v)$, up to unitary conjugacy. Thus, if  $L(\mathcal R,w)\cong L(\mathcal S,v)$, for any ergodic countable measure preserving equivalence relation $\mathcal S$  on a standard probability space $(Y,\nu)$ and any 2-cocycle $w\in$ H$^2(\mathcal S,\mathbb T)$, then $\mathcal R\cong\mathcal S$ and the cocycles $v$ and $w$ are cohomologous. More precisely, there exists an isomorphism of probability spaces $\theta:X\rightarrow Y$ such that $(\theta\times\theta)(\mathcal R)=\mathcal S$ and $[v\circ(\theta\times\theta\times\theta)]=[w]$ in H$^2(\mathcal R,\mathbb T)$ (see \cite{FM77}).  \end{remark}

\section{Normalizers of amenable subalgebras of AFP algebras}
In the first part of this section we prove Theorem \ref{general} and Corollary \ref{corgeneral}, and then deduce Corollary \ref{free}.

\subsection{Proof of Theorem \ref{general}}
For simplicity of notation, we assume that $p=1$, and leave the details of the general case to the reader.
  Let $A\subset M=M_1*_{B}M_2$ be a von Neumann subalgebra that is amenable relative to $B$. Suppose that $P=\mathcal N_{M}(A)''$ satisfies $P'\cap M^{\omega}=\mathbb C1$.

Let $\tilde M=M*_{B}(B\bar{\otimes}L(\mathbb F_2))$ and $\{\theta_t\}_{t\in\mathbb R}\subset$ Aut$(\tilde M)$  the associated free malleable deformation. Let $N=\{u_gMu_g^*|g\in\mathbb F_2\}''$ and recall that $\tilde M=N\rtimes\mathbb F_2$. 
Since $A$ is amenable relative to $B$ and $\theta_t(B)=B\subset N$, we deduce that $\theta_t(A)$ is amenable relative to $N$, for any $t\in\mathbb R$.

By Theorem \ref{pv}  either there exists $t\in (0,1)$ such that $\theta_t(A)\prec_{\tilde M}N$ or else $\theta_t(P)$ is amenable relative to $N$ inside $\tilde M$, for every $t\in (0,1)$.

In the first case, Theorem \ref{inter} gives that either $A\prec_{M}B$ or $P\prec_{M}M_i$, for some $i\in\{1,2\}$.  
 In the second case, Theorem \ref{relamen} implies that either $P\prec_{M}M_i$, for some $i\in\{1,2\}$, or $P$ is amenable relative to $B$ inside $M$. Altogether, the conclusion follows.
\hfill$\square$

\vskip 0.1in
\subsection{Proof of Corollary \ref{corgeneral}} We establish the following more precise version of Corollary \ref{corgeneral}. If
$P\subset pMp$ and $Q\subset M$ are von Neumann subalgebras then we write $P\prec^{s}_{M}Q$ if $Pp'\prec_{M}Q$, for any non-zero projection $p'\in P'\cap pMp$.

\begin{corollary}\label{corgeneral2} Let $(M_1,\tau_1)$, $(M_2,\tau_2)$ be two tracial von Neumann algebras. Let $M=M_1*M_2$ and $A\subset M$ be a diffuse amenable von Neumann subalgebra. Denote  $P=\mathcal N_{M}(A)''$.

Then we can find projections $p_1,p_2,p_3\in \mathcal Z(P)$ satisfying $p_1+p_2+p_3=1$ and 
\begin{enumerate}
\item $Pp_1\prec^{s}_{M}M_1$,
\item $Pp_2\prec^{s}_{M}M_2$, and
\item $Pp_3$ is amenable.
\end{enumerate}

Moreover, if $M_1$ and $M_2$ are factors, then we can find unitary elements $u_1,u_2\in M$ such that $u_1Pp_1u_1^*\subset M_1$ and $u_2Pp_2u_2^*\subset M_2$.
\end{corollary}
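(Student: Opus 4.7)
The plan is to derive the central decomposition of $P$ from the trichotomy of Corollary \ref{corgeneral} via a Zorn-style maximality argument, and then upgrade the strong intertwinings to unitary conjugacy using the factoriality of the $M_i$'s.

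First, I would define $p_3 \in \mathcal{Z}(P)$ to be the maximal central projection of $P$ such that $Pp_3$ is amenable; this exists since amenability passes to suprema of upward-directed families of projections. Setting $e = 1 - p_3$, I would then define $p_1 \in \mathcal{Z}(P)e$ to be the maximal central projection such that $Pp_1 \prec^s_M M_1$, and put $p_2 = e - p_1$. The task reduces to showing $Pp_2 \prec^s_M M_2$.

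I would argue this by contradiction: suppose $Pp_2 \nprec^s_M M_2$, so that there exists a non-zero projection $p' \in P' \cap p_2Mp_2$ with $Pp' \nprec_M M_2$. Letting $q$ denote the central support of $p'$ in $P$, one has $q \in \mathcal{Z}(P)$ with $0 \neq q \leq p_2$, and the maximality of $p_1$ and $p_3$ forces both $Pq \nprec_M M_1$ and $Pq$ non-amenable (otherwise a non-zero sub-projection of $q$ could be added to $p_1$ or $p_3$). I would then apply the machinery underlying Corollary \ref{corgeneral} -- namely Theorem \ref{general} combined with Theorem \ref{afpgamma} at $B = \mathbb{C}$ -- in a localized form: since the free malleable deformation $\{\theta_t\}$ and the Popa--Vaes dichotomy (Theorem \ref{pv}) operate on subalgebras of $M$ directly rather than on $qMq$ (which is not itself a free product), the trichotomy still applies to $Aq \subset M$ with normalizer containing $Pq$, and yields a non-zero central sub-projection $q' \leq q$ where one of the three alternatives holds strongly. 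Any of the three outcomes contradicts one of the maximal choices: $Pq' \prec^s_M M_1$ contradicts maximality of $p_1$, $Pq' \prec^s_M M_2$ contradicts $Pp'q' \nprec_M M_2$, and $Pq'$ amenable contradicts maximality of $p_3$. The main obstacle here is verifying that the localization step really produces a central refinement rather than merely a corner: this requires carefully tracking the central supports when invoking Remark \ref{embedamen} and the ultrapower analysis of Lemma \ref{gammadec} used in Theorem \ref{afpgamma}.

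For the moreover statement, assuming $M_1$ and $M_2$ are factors, I would upgrade $Pp_1 \prec^s_M M_1$ to a unitary conjugation by a standard Popa-type argument. The strong intertwining provides, via Theorem \ref{corner}, compatible partial isometries realizing the embedding on every non-zero sub-corner of $p_1 M p_1$; factoriality of $M_1$ together with factoriality of $M$ (which follows from $M_i$ being factors in a free product, together with the structure of $\mathcal{Z}(P)$) then allows these local isometries to be glued into a single unitary $u_1 \in M$ with $u_1 Pp_1 u_1^* \subset M_1$, using that any two projections of equal trace in the factor $M_1$ are unitarily equivalent inside $M_1$. An analogous construction applied to $p_2$ and $M_2$ yields $u_2$.
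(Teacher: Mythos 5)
Your exhaustion skeleton is sound and essentially matches the paper's reduction: it suffices to show that if $q\in\mathcal Z(P)$ is non-zero and $Pq$ has no amenable direct summand, then $Pq\prec_{M}M_i$ for some $i$, and the bookkeeping you worry about at the end is the easy part, handled by Remark \ref{embedamen} together with the observation that $Pq\prec_{M}M_i$ always upgrades to $Pq'\prec^{s}_{M}M_i$ on some non-zero central subprojection $q'\leqslant q$. The "moreover" part is also acceptable in outline (the paper simply invokes the argument of \cite[Theorem 5.1 (2)]{IPP05}).

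The genuine gap is in the sentence claiming that "the trichotomy still applies to $Aq\subset M$ \dots and yields a non-zero central sub-projection $q'\leqslant q$ where one of the three alternatives holds strongly." Theorem \ref{general} carries the hypothesis $P'\cap M^{\omega}=\mathbb C1$, which $Pq$ has no reason to satisfy, and nothing in your argument explains how that hypothesis is to be met; this, rather than the corner-versus-central-refinement issue you flag, is the real obstacle, and it is exactly the content of the proof. The paper resolves it with Lemma \ref{gammadec}: one splits $q=e+f$ with $e,f\in\mathcal Z((Pq)'\cap qMq)$ so that $(Pq)'\cap (qMq)^{\omega}$ is completely atomic and contained in $M$ on $e$, and diffuse on $f$. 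If $e\neq 0$, a minimal projection $e_0$ of $((Pq)'\cap (qMq)^{\omega})e$ lies in $M$ and satisfies $(Pe_0)'\cap (e_0Me_0)^{\omega}=\mathbb Ce_0$, so Theorem \ref{general} (with $B=\mathbb C$) applies to $Pe_0$ and, amenability being excluded, gives $Pe_0\prec_{M}M_i$. If $f\neq 0$, one instead applies Theorem \ref{afpgamma} (with $B=\mathbb C$) to $Pf$: its first alternative would force the diffuse algebra $(Pf)'\cap (fMf)^{\omega}$ to embed into $B^{\omega}=\mathbb C$, which is absurd, and its third alternative is again excluded, leaving $Pf\prec_{M}M_i$. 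Without this case division --- in particular without the observation that diffuseness of the ultrapower relative commutant is precisely what kills alternative (1) of Theorem \ref{afpgamma} when $B=\mathbb C$ --- the appeal to "the machinery underlying Corollary \ref{corgeneral}" is circular, since Corollary \ref{corgeneral} is itself proved by this very argument.
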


\begin{proof} If a non-zero projection $p\in \mathcal Z(P)=P'\cap M$ satisfies $Pp\prec_{M}M_i$, for some $i\in\{1,2\}$, then there exists a non-zero projection $p'\in\mathcal Z(P)p$ such that $Pp'\prec^{s}_{M}M_i$. Thus, in order to get the first part of the conclusion, it suffices to argue that if $p\in \mathcal Z(P)$ is a non-zero projection such that $Pp$ has no amenable direct summand, then either $Pp\prec_{M}M_1$ or $Pp\prec_{M}M_2$.

By Theorem \ref{gammadec} we can find projections $e,f\in\mathcal Z((Pp)'\cap pMp)\cap\mathcal Z((Pp)'\cap (pMp)^{\omega})$ such that 
\begin{itemize}
\item $e+f=p$.
\item $((Pp)'\cap (pMp)^{\omega})e$ is completely atomic and $((Pp)'\cap (pMp)^{\omega})e=((Pp)'\cap (pMp))e$.
\item $((Pp)'\cap (pMp)^{\omega})f$ is diffuse.
\end{itemize}
Since $p\not=0$, we have that either $e\not=0$ or $f\not=0$.

In the first case, let $e_0\in ((Pp)'\cap (pMp)^{\omega})e$ be a minimal non-zero projection.
Then we have that $e_0\in p(P'\cap M^{\omega})p\cap p(P'\cap M)p$ and $e_0(P'\cap M^{\omega})e_0=\mathbb Ce_0$.
Therefore, $Pe_0$ is a von Neumann subalgebra of $e_0Me_0$ such that $(Pe_0)'\cap (e_0Me_0)^{\omega}=\mathbb Ce_0$.

Note that $Pe_0\subset\mathcal N_{e_0Me_0}(Ae_0)''$. Also, we have that $A$ and hence $Ae_0$ is diffuse. By  applying Theorem \ref{general} (in the case $B=\mathbb C$) we deduce that either $Pe_0\prec_{M}M_i$, for some $i\in\{1,2\}$, or $Pe_0$ is amenable. Since $e_0\leqslant p$,  $Pe_0$ cannot be amenable. Thus, we must have that $Pe_0\prec_{M}M_i$ and hence that $Pp\prec_{M}M_i$, for some $i\in\{1,2\}$.

In the second case, we have that $f\in  p(P'\cap M^{\omega})p\cap p(P'\cap M)p$ and that $f(P'\cap M^{\omega})f$ is diffuse. Thus, $Pf$ is a von Neumann subalgebra of $fMf$ such that $(Pf)'\cap (fMf)^{\omega}$ is diffuse.

By applying Theorem \ref{afpgamma}  (with $B=\mathbb C$) we deduce that either $Pf\prec_{M}M_i$, for some $i\in\{1,2\}$, or $Pf_0$ is amenable, for some non-zero projection $f_0\in\mathcal Z((Pf)'\cap fMf)$. Since $f_0\leqslant p$, the latter is impossible. Thus we conclude that $Pp\prec_{M}M_i$, for some $i\in\{1,2\}$, in this case as well.

The moreover part now follows by repeating the proof of \cite[Theorem 5.1 (2)]{IPP05}.
\end{proof}
 
 \subsection{Proof of Corollary \ref{free}} 
 Assume by contradiction that $M=M_1*M_2$ has a Cartan subalgebra $A$. Since $M_1\not=\mathbb C\not=M_2$ and $\text{dim}(M_1)+\text{dim}(M_2)\geqslant 5$, by \cite[Theorem 4.1]{Ue10} there exists a non-zero central projection $z\in M$ such that $Mz$ is a II$_1$ factor without property $\Gamma$, while $M(1-z)$ is completely atomic. In particular, $M$ is not amenable.
 
 To derive a contradiction we treat separately two cases
 
 {\bf Case 1.} $M_1$ and $M_2$ are completely atomic. 
 
 {\bf Case 2.} Either $M_1$ or $M_2$ has a diffuse direct summand.
 
 In the first case, since $\mathcal N_{M}(A)''=M$,  Corollary \ref{corgeneral2} yields projections $p_1,p_2,p_3\in\mathcal Z(M)$ such that $p_1+p_2+p_3=1$, $Mp_1\prec^{s}_{M}M_1$, $Mp_2\prec^{s}_{M}M_2$ and $Mp_3$ is amenable. Since $M_1,M_2$ are completely atomic, it follows that $Mp_1,Mp_2$ are completely atomic. Altogether, we derive that $M$ is amenable, a contradiction.
  
In the second case,  we may assume for instance that $M_1$ has a diffuse direct summand. Hence, there exists a non-zero projection $p\in\mathcal Z(M_1)$ such that $M_1p$ is diffuse. Since $M(1-z)$ is completely atomic, we must have that $p\leqslant z$.

Define $N=(\mathbb Cp+M_1(1-p))\vee M_2$. Then by \cite[Lemma 2.2]{Ue10} we have that $M_1p$ and $pNp$ are free and together generate $pMp$, i.e. $pMp=M_1p*pNp$. We also have that $pNp\not=\mathbb Cp$. Indeed, since $M_2\not=\mathbb C$, there exists a projection $q\in M_2$ with $q\not=0,1$. Then $pqp\in pNp$ and  $pqp=\tau(q)p+p(q-\tau(q))p$. This clearly implies that $pqp\notin\mathbb Cp$.

Now, note that $Az$ is a Cartan subalgebra of $Mz$. Since $Mz$ is a factor and $p\in Mz$, it follows that $pMp$ also has a Cartan subalgebra. Since $Mz$ does not have property $\Gamma$, it follows that $pMp$ does not have property $\Gamma$ as well. On the other hand, since $pMp=M_1p*pNp$ and $M_1p\not=\mathbb Cp\not=pNp$, by applying Theorem \ref{cartan} (2) in the case $B=\mathbb Cp$, we conclude that $pMp$ does not have a Cartan subalgebra. This leads to the desired contradiction.
 \hfill$\square$

\vskip 0.1in

\subsection{Strongly solid von Neumann algebras}\label{sstrongly} Our final aim is to prove Theorem \ref{strongly}. We begin by introducing some terminology motivated by the proof of \cite[Theorem 3.1]{Po03}. 

\begin{definition}\cite{Po03}\label{mixdef} Let $(M,\tau)$ be a tracial von Neumann algebra and $B\subset M$ be a von Neumann subalgebra. We say that the inclusion $B\subset M$ is {\it mixing} if for every $x,y\in M\ominus B$ and any sequence $b_n\in (B)_1$ such that $b_n\rightarrow 0$ weakly we have that $\|E_B(xb_ny)\|_2\rightarrow 0$.
\end{definition}

This notion has been considered in \cite{JS06} and \cite{CJM10}, where several examples of mixing inclusions of von Neuman algebras were exhibited.

\begin{remark} Let  $B\subset M$ be tracial von Neumann algebras.
\begin{enumerate}
\item It is easy to see that the inclusion $B\subset M$ is mixing if and only if the $B$-$B$ bimodule $L^2(M)\ominus L^2(B)$ is mixing in the sense of \cite[Definition 2.3]{PS09}. 
\item In particular, the inclusion $B\subset M$ is mixing whenever the $B$-$B$ bimodule $L^2(M)\ominus L^2(B)$ is isomorphic to a sub-bimodule of $\oplus_{i=1}^{\infty}(L^2(B)\otimes L^2(B))$. This is the case, for instance, if we can decompose $M=B*C$, for some von Neumann subalgebra $C\subset M$ (see the proof of \cite[Lemma 2.2]{Po06b}).
\item Let $\Lambda<\Gamma$ be an inclusion of countable groups. Then the inclusion of group von Neumann algebras $L(\Lambda)\subset L(\Gamma)$ is mixing if and only if $g\Lambda g^{-1}\cap\Lambda$ is finite, for every $g\in\Gamma\setminus\Lambda$ (see \cite[Theorem 3.5]{JS06} and the proof of Corollary \ref{strsolid}).
\item Let $(D,\tau)$ be a tracial von Neumann algebra and $\Gamma\curvearrowright D$ be a mixing trace preserving action. Then the inclusion $L(\Gamma)\subset D\rtimes\Gamma$ is mixing  (see the proof of \cite[Lemma 3.4]{Po03}).
\end{enumerate}
\end{remark}

In order to prove Theorem \ref{strongly} we need two technical lemmas. 

\begin{lemma}\cite{Po03}\label{commutant} Let $(M,\tau)$ be a tracial von Neumann algebra and $B\subset M$ be a von Neumann subalgebra. Assume that the inclusion $B\subset M$ is mixing. Let $A\subset pMp$ be a diffuse von Neumann subalgebra, for some projection $p\in M$, and denote $P=\mathcal N_{pMp}(A)''$. Then we have
\begin{enumerate}
\item If $A\subset B$, then $P\subset B$.
\item If $A\prec_{M}B$, then $P\prec_{M}B$.
\end{enumerate}
\end{lemma}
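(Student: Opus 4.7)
I would prove (1) directly by combining the mixing hypothesis with a Fourier-type decomposition of a normalizing unitary, and then derive (2) by applying Popa's intertwining-by-bimodules theorem (Theorem~\ref{corner}) and reducing to (1). Since $A$ is diffuse and contained in $B$, its unit $p$ lies in $B$, so the conditional expectation $E_B$ restricts sensibly to $pMp \to pBp$.

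\textbf{For part (1),} fix $u \in \mathcal N_{pMp}(A)$ and decompose $u = E_B(u) + x$, where $x := u - E_B(u)$ satisfies $E_B(x) = 0$. The normalizing relation $uau^* \in A \subset B$ for $a \in A$ gives the intertwining identity $ua = \alpha(a)\,u$ with $\alpha(a) := uau^* \in A$. Applying $E_B$ and using $B$-bimodularity together with $E_B(x)=0$ yields $E_B(u)\,a = \alpha(a)\,E_B(u)$, and subtracting from the original relation gives $x\,a = \alpha(a)\,x$ for every $a \in A$. Since $A$ is diffuse, I would choose $a_n \in \mathcal U(A)$ with $a_n \to 0$ weakly; because the trace-preserving automorphism $\alpha$ of $A$ extends to an $L^2$-isometry, $\alpha(a_n) \to 0$ weakly as well. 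Left-multiplying $x\,a_n = \alpha(a_n)\,x$ by $x^*$ and applying $E_B$ gives
\[
E_B(x^*x)\,a_n \;=\; E_B\bigl(x^*\,\alpha(a_n)\,x\bigr),
\]
and the mixing hypothesis (applied with $x^*, x \in M \ominus B$ and $\alpha(a_n) \in (B)_1$ weakly null) forces the right-hand side to $0$ in $\|\cdot\|_2$. But $a_n$ is unitary in $pBp$ and $E_B(x^*x) \in pBp$, so the left-hand side has constant norm $\|E_B(x^*x)\|_2$. Hence $E_B(x^*x) = 0$, which forces $x=0$ and $u = E_B(u) \in B$.

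\textbf{For part (2),} Theorem~\ref{corner} applied to $A \prec_M B$ produces non-zero projections $q \in A$, $f \in B$, an injective $*$-homomorphism $\phi: qAq \to fBf$ (injectivity arranged by cutting $q$ by the central support of $\ker\phi$), and a non-zero partial isometry $v \in qMf$ satisfying $v\,a = \phi(a)\,v$ for $a \in qAq$. The image $A_0 := \phi(qAq)$ is a diffuse subalgebra of $fBf$, and $vv^* \in A_0' \cap fMf$ belongs to $\mathcal N_{fMf}(A_0)''$. Since the corner inclusion $fBf \subset fMf$ inherits the mixing property, part (1) applied to $A_0 \subset fMf$ forces $vv^* \in fBf \subset B$. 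Thus I obtain a partial isometry $v$ with $v^*v \in A$, $vv^* \in B$, and $v\,(v^*v)A(v^*v)\,v^* \subset B$, realizing a corner of $A$ as unitarily embedded into $B$.

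\textbf{Main obstacle.} The remaining step---propagating this partial embedding from the corner $qAq$ to the full normalizer algebra $P = \mathcal N_{pMp}(A)''$---is the critical point. My plan is to use the dual characterization of $Q \prec_M B$ as existence of a non-zero finite-$\operatorname{Tr}$ element $\xi \in (Q' \cap \langle M, e_B\rangle)_+$. Given such a $\xi$ for $Q = A$ (supplied by $A \prec_M B$), I would show that mixing combined with diffuseness of $A$ forces $u\xi u^* = \xi$ for every $u \in \mathcal N_{pMp}(A)$: the defect $u\xi u^* - \xi$ again lies in $A' \cap \langle M, e_B\rangle$ and has finite $\operatorname{Tr}$-norm, and a variant of the mixing argument of (1), carried out in the Hilbert--Schmidt bimodule $L^2(\langle M,e_B\rangle) \cong L^2(M) \otimes_B L^2(M)$ and applied to a weakly null sequence in $\mathcal U(A)$ tested against the decomposition of the defect, forces it to vanish. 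The resulting $P$-central finite-trace element of $\langle M, e_B\rangle_+$ then yields $P \prec_M B$ via Theorem~\ref{corner}; the partial isometry $v$ constructed above enters as the concrete witness ensuring that $\xi$ can be chosen non-degenerate enough for this argument to close.
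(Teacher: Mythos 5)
Your part (1) is correct and is a clean, self-contained argument: for $u\in\mathcal N_{pMp}(A)$ and $x=u-E_B(u)$, the identity $xa=\alpha(a)x$ with $\alpha=\mathrm{Ad}(u)|_A$, tested against a weakly null sequence of unitaries $a_n\in\mathcal U(A)$ and combined with the mixing condition applied to $E_B(x^*\alpha(a_n)x)$, does force $E_B(x^*x)=0$ and hence $x=0$. The paper instead invokes the quasi-normalizer statement from \cite{Po03} (the proof of Theorem 3.1 there gives $q\mathcal N_{rMr}(Q)\subset rBr$ for any diffuse $Q\subset rBr$). That is a strictly stronger fact than what your computation yields, and the difference matters for part (2): the paper's proof of (2) runs through quasi-normalizers, not normalizers.

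For part (2) there is a genuine gap, exactly at the step you flag as the main obstacle. The claim that mixing forces $u\xi u^*=\xi$ for every $u\in\mathcal N_{pMp}(A)$ is unjustified and is false for a general $A$-central finite-trace element $\xi\in A'\cap\langle M,e_B\rangle$. The observation that the defect $u\xi u^*-\xi$ again lies in $A'\cap\langle M,e_B\rangle$ with finite trace gives no contradiction: non-zero elements of precisely this kind are what $A\prec_M B$ supplies, so no mixing computation in $L^2(\langle M,e_B\rangle)$ can force the defect to vanish. Concretely, if $A=A_1\oplus A_2$ with the two summands embedding into $B$ via inequivalent $A$-$B$ bimodules and some normalizing unitary permutes the summands, then a $\xi$ supported under the central projection of $A_1$ is moved entirely off itself. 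Repairing this by averaging $\{u\xi u^*\}_{u\in\mathcal N_{pMp}(A)}$ requires showing the resulting weak limit of convex combinations is non-zero, which is a separate and non-trivial argument that your sketch does not supply. The paper's route avoids the issue altogether: following \cite[Lemma 3.5]{Po03}, it takes the intertwiner $\phi:qAq\to rBr$ and $v\in rMq$, shows by a direct computation with partial isometries $v_j\in A$, $v_j^*v_j\leqslant q$, $\sum_jv_jv_j^*=z\in\mathcal Z(A)$, that $qzuqz$ lies in the quasi-normalizer of $qAq$ for every $u\in\mathcal N_{pMp}(A)$, pushes this forward by $v$ into $q\mathcal N_{rMr}(\phi(qAq))$, and uses mixing (in its quasi-normalizer form) to place the result in $rBr$; letting $z$ increase to the central support of $q$ gives $vuv^*\in rBr$ for all such $u$, hence $vPv^*\subset rBr$ and $P\prec_M B$. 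Your preliminary observation that $vv^*\in B$ is correct but does not by itself advance the argument; as written, part (2) is not established.
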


\begin{proof} For the reader's convenience let us briefly indicate how the lemma follows from \cite{Po03}.

Recall that the {\it quasi-normalizer} of a von Neumann subalgebra $Q\subset M$, denoted $q\mathcal N_{M}(Q)$, consists of those elements $x\in M$ for which we can find $x_1,...,x_n\in M$ such that $xQ\subset\sum_{i=1}^nQx_i$ and $Qx\subset\sum_{i=1}^nx_iQ$ (see \cite[Section 1.4.2]{Po01}). Note that $\mathcal N_{M}(Q)\subset q\mathcal N_{M}(Q)$.

Let $Q\subset rBr$ be a diffuse von Neumann subalgebra, for some projection $r\in B$.
Since the inclusion $B\subset M$ is mixing, the proof of \cite[Theorem 3.1]{Po03} shows that the  quasi-normalizer of $Q$ in $rMr$ is contained in $rBr$ (see also the proof of \cite[Theorem 1.1]{IPP05}).  This fact implies (1).
 
 To prove (2), assume that $A\prec_{M}B$. Then we can find projections $q\in A$, $r\in B$, a non-zero partial isometry $v\in rMq$ and a $*$-homomorphism $\phi:qAq\rightarrow rBr$ such that $\phi(x)v=vx$, for all $x\in qAq$. Since $\phi(qAq)\subset rBr$ is diffuse, the previous paragraph gives that $q\mathcal N_{rMr}(\phi(qAq))\subset rBr$.

Next, let $u\in\mathcal N_{pMp}(A)$. Following the proof of  \cite[Lemma 3.5]{Po03}, let  $z\in A$ be a central projection  
such that $z=\sum_{j=1}^mv_jv_j^*$, for some partial isometries $\{v_j\}_{j=1}^m$ in $pMp$ satisfying $v_j^*v_j\leqslant q$.
We claim that $qzuqz\in qMq$ belongs to the quasi-normalizer of $qAq$. Indeed, we have $$qzuqz(qAq)\subset qzuA=qzAu=qAzu\subset \sum_{j=1}^m(qAv_j)v_j^*u\subset \sum_{j=1}^m(qAq)v_j^*u$$ and similarly $(qAq)qzuqz\subset\sum_{j=1}^muv_j(qAq)$.

Now, it is clear that if $x\in q\mathcal N_{qMq}(qAq)$, then $vxv^*\in q\mathcal N_{rMr}(\phi(qAq))$. By combining the last two paragraphs we derive that $vqzuqzv^*\in rBr$. Since the central projections $z$ of the desired form approximate arbitrarily well the central support of $q$, we deduce that $vquqv^*\in rBr$. 
Thus, $vuv^*\in rBr$, for all $u\in\mathcal N_{pMp}(A)$. Hence $vPv^*\subset rBr$ and so we conclude that $P\prec_{M}B$.
\end{proof}

\begin{lemma}\label{ultrapower} Let $(M,\tau)$ be a tracial von Neumann algebra and $B\subset M$ be a von Neumann subalgebra. Assume that the inclusion $B\subset M$ is mixing. 

Let  $P\subset pMp$ be a separable von Neumann subalgebra, for some projection $p\in M$, and $\omega$ be a free ultrafilter on $\mathbb N$. Assume that $P'\cap (pMp)^{\omega}$ is diffuse and $P'\cap (pMp)^{\omega}\prec_{M^{\omega}}B^{\omega}$. 

Then $P\prec_{M}B$.
\end{lemma}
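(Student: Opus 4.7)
The plan is to exploit the partial isometry provided by the intertwining $P'\cap(pMp)^\omega\prec_{M^\omega}B^\omega$ to simultaneously embed $P$ itself into $B^\omega$, using the fact that $P$ centralizes $P'\cap(pMp)^\omega$ together with the mixing hypothesis, which should force the relative commutant (inside $M^\omega$) of a diffuse subalgebra of $B^\omega$ back into $B^\omega$.

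First I would apply the partial-isometry form of Theorem \ref{corner} inside $M^\omega$ to produce non-zero projections $q\in P'\cap(pMp)^\omega$ and $r\in B^\omega$, a non-zero partial isometry $v\in rM^\omega q$ with $v^*v=q$, and a normal $*$-homomorphism $\phi\colon q(P'\cap(pMp)^\omega)q\to rB^\omega r$ satisfying $\phi(a)v=va$. Because $P'\cap(pMp)^\omega$ is diffuse, by shrinking $q$ if necessary I may choose a diffuse abelian subalgebra $A_0\subset q(P'\cap(pMp)^\omega)q$ whose image $\phi(A_0)$ is a diffuse abelian subalgebra of $rB^\omega r$. Since every $x\in P$ commutes with $q$ (as $q\in P'\cap(pMp)^\omega$), the formula $\psi(x):=vxv^*$ defines a normal $*$-homomorphism $\psi\colon P\to rM^\omega r$ satisfying $\psi(x)v=vx$ for all $x\in P$. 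Moreover, because elements of $P$ commute with all of $A_0$, the image $\psi(P)$ sits inside $\phi(A_0)'\cap rM^\omega r$.

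The heart of the argument is then to establish that $\phi(A_0)'\cap rM^\omega r\subset rB^\omega r$. Given $x=(x_n)_n\in\phi(A_0)'\cap rM^\omega r$, choose unitaries $c^{(k)}\in\mathcal U(\phi(A_0))$ with $c^{(k)}\to 0$ weakly in $B^\omega$, represented as $c^{(k)}=(c^{(k)}_n)_n$ with $c^{(k)}_n\in\mathcal U(eBe)$ for a suitable projection $e\in B$. A diagonal extraction along $\omega$ should then produce a single sequence $d_n\in B$ of unitaries, obtained by diagonally choosing $k=k(n)$, such that $d_n\to 0$ weakly in $B$ along $\omega$ while simultaneously $\|[x_n,d_n]\|_2\to 0$ along $\omega$. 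Decomposing $x_n=E_B(x_n)+x_n^0$ with $x_n^0\in M\ominus B$, the commutator splits orthogonally in $L^2(M)$ as $[x_n,d_n]=[E_B(x_n),d_n]+[x_n^0,d_n]$, the first summand lying in $B$ and the second in $M\ominus B$, so both tend to zero in $\|\cdot\|_2$. The mixing of $B\subset M$ should then force $\|x_n^0\|_2\to 0$ along $\omega$, proving $x\in rB^\omega r$.

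Combining these, $\psi(P)\subset rB^\omega r$, so $v$ witnesses $P\prec_{M^\omega}B^\omega$. A standard descent using the separability of $P$, a countable $\|\cdot\|_2$-dense subset of $\mathcal U(P)$, and representatives $v=(v_n)_n$ then extracts finite subsets of $M$ certifying $P\prec_M B$. The main obstacle I expect is the third step: mixing of $B\subset M$ as stated in Definition \ref{mixdef} applies to fixed $x,y\in M\ominus B$ against varying $b_n\in(B)_1$ tending weakly to $0$, while here the relevant vectors $x_n^0$ also vary with $n$. Reconciling these requires either proving a suitable ultrapower version of mixing for the $B^\omega$-$B^\omega$ bimodule $L^2(M^\omega)\ominus L^2(B^\omega)$, or making a sufficiently careful joint choice of $d_n$ and of the representatives of $x$ to bypass the uniformity issue. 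This delicate point is presumably where the error acknowledged to Houdayer in the initial proof occurred.
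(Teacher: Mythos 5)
The central step of your argument --- that $\phi(A_0)'\cap rM^\omega r\subset rB^\omega r$ for a diffuse abelian subalgebra $\phi(A_0)\subset rB^\omega r$ --- is not merely delicate: it is false for general mixing inclusions, so this route cannot be completed. Concretely, let $\mathbb Z\curvearrowright L^\infty(X)$ be the Bernoulli (hence mixing) action, $M=L^\infty(X)\rtimes\mathbb Z$ and $B=L(\mathbb Z)$; as noted in the remarks after Definition \ref{mixdef}, the inclusion $B\subset M$ is then mixing. Take $A_0=B$ itself, a diffuse abelian subalgebra of $B\subset B^\omega$. Since the Bernoulli $\mathbb Z$-action is not strongly ergodic, there are projections $x_n\in L^\infty(X)$ of trace $\tfrac12$ with $\|\sigma_1(x_n)-x_n\|_2\to 0$; then $x=(x_n)_n$ commutes with every $u_k$, hence lies in $A_0'\cap M^\omega$, while $E_{L(\mathbb Z)}(x_n)=\tau(x_n)1$, so $\|x_n-E_B(x_n)\|_2^2=\tfrac14$ and $x\notin B^\omega$. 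The obstruction is exactly the uniformity issue you flag at the end: Definition \ref{mixdef} controls $E_B(cb_nd)$ only for \emph{fixed} $c,d\in M\ominus B$, with no uniformity over the unit ball of $M\ominus B$, and no diagonal choice of unitaries $d_n\in\mathcal U(\phi(A_0))$ can compensate for the fact that the components $x_n^0$ vary with $n$. There is no ultrapower version of mixing for $B^\omega\subset M^\omega$ to fall back on, and your $\phi(A_0)$ carries no extra structure that would exclude the phenomenon above.

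The paper's proof sidesteps this entirely. After reducing to the case $P'\cap pMp=\mathbb Cp$ (the general case is handled by splitting $P'\cap pMp$ into a diffuse central summand and minimal projections, and invoking Lemma \ref{commutant} for the diffuse part), it first upgrades the hypothesis $P'\cap M^\omega\prec_{M^\omega}B^\omega$ so that the witnessing elements $a_i,b_i$ lie in $M$ rather than $M^\omega$, by a diagonal argument over a countable $\|\cdot\|_2$-dense subset of $(P)_1$. It then takes a trace-zero unitary $u\in P'\cap M^\omega$ (so that $E_M(u)=0$) and shows that its orthogonal projection $\xi$ onto the $M$-$M$ bimodule $\mathcal K=\overline{\mathrm{span}}\{axb:\ a,b\in M,\ x\in B^\omega\ominus B\}$ is a non-zero $P$-central vector. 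If $P\nprec_MB$, one finds unitaries $y_n\in\mathcal U(P)$ with $\|E_B(a_2^*y_na_1)\|_2\to 0$ for all $a_1,a_2\in M$, and mixing is applied only in the legitimate form $E_{B^\omega}(cxd)=0$ for \emph{fixed} $c,d\in M\ominus B$ and varying middle term $x\in B^\omega\ominus B$, yielding $E_{B^\omega}(a_2^*y_na_1\,x\,b_1y_nb_2^*)=E_B(a_2^*y_na_1)\,x\,E_B(b_1y_nb_2^*)\to 0$ and hence $\langle y_n\xi y_n^*,\xi\rangle\to 0$, contradicting $P$-centrality of $\xi$. Your construction of $\psi$ from the intertwining data and your final descent from $P\prec_{M^\omega}B^\omega$ to $P\prec_MB$ are sound, but the bridge between them rests on the false relative-commutant claim.
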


\begin{proof} We first prove the conclusion under the additional assumption that $P'\cap pMp=\mathbb Cp$. We assume for simplicity that $p=1$, the general case being treated similarly.
Denote $P_{\omega}=P'\cap M^{\omega}$ and let $\{y_n\}_{n\geqslant 1}$ be a $\|.\|_2$ dense sequence in $(P)_1$.

Since $P_{\omega}\prec_{M^{\omega}}B^{\omega}$, we can find $a_1,a_2,...,a_n,b_1,b_2,...,b_n\in M^{\omega}$ and $\delta>0$ such that \begin{equation}\label{bomega} \sum_{i=1}^n\|E_{B^{\omega}}(a_iub_i)\|_2^2>\delta,\;\;\;\text{for all}\;\;\; u\in\mathcal U(P_{\omega}).\end{equation}
For every $i\in\{1,2,...,n\}$, write $a_i=(a_{i,k})_k$ and $b_i=(b_{i,k})_k$, for some $a_{i,k},b_{i,k}\in M$.

{\bf Claim 1.} There exists $k\in\mathbb N$ such that \begin{equation}\label{20} \sum_{i=1}^n\|E_{B^{\omega}}(a_{i,k}ub_{i,k})\|_2^2\geqslant \delta,\;\;\;\text{for all}\;\;\; u\in\mathcal U(P_{\omega}).\end{equation}

{\it Proof of Claim 1.}
Suppose that the claim is false and fix $k\in\mathbb N$. Then  there is a unitary $u_k\in P_{\omega}$ such that $\sum_{i=1}^n\|E_{B^{\omega}}(a_{i,k}u_kb_{i,k})\|_2^2<\delta$. Write $u_k=(u_{k,l})_l$, where $u_{k,l}\in\mathcal U(M)$. Then the last inequality rewrites as $\lim_{l\rightarrow\omega}\sum_{i=1}^n\|E_B(a_{i,k}u_{k,l}b_{i,k})\|_2^2<\delta$. 
Also, we have that $\lim_{l\rightarrow\omega}\|[u_{k,l},y_j]\|_2=\|[u_k,y_j]\|_2=0$, for all $j\geqslant 1$. It altogether follows that we can find $l\in\mathbb N$ such that   $U_k:=u_{k,l}$ satisfies $\sum_{i=1}^n\|E_{B}(a_{i,k}U_kb_{i,k})\|_2^2<\delta$ and $\sum_{j=1}^k\|[U_k,y_j]\|_2\leqslant\frac{1}{k}$.

It is then clear that the unitary $U=(U_k)_k$ belongs to $P_{\omega}$ and satisfies $\sum_{i=1}^n\|E_{B^{\omega}}(a_iUb_i)\|_2^2\leqslant\delta$. This contradicts inequality \ref{bomega}.\hfill$\square$

We next use an idea of S. Vaes (see the proof of \cite[Theorem 3.1]{Io11a}). 

Denote by $\mathcal K$ the $\|.\|_2$ closure of the linear span of the set $\{axb|a,b\in M,x\in B^{\omega}\ominus B\}$. Then $\mathcal K$ is a Hilbert subspace of $L^2(M^{\omega})$ that is an $M$-$M$ bimodule. Denote by $e$ the orthogonal projection from $L^2(M^{\omega})$ onto $\mathcal K$. 

Since $P_{\omega}$ is diffuse we can find a unitary $u\in P_{\omega}$ such that $\tau(u)=0$. 
Since $E_M(u)\in P'\cap M$ and $P'\cap M=\mathbb C1$, it follows that $E_M(u)=\tau(E_M(u))1=0$. 
 
 Let $\xi=e(u)$. We claim that $\xi\not=0$. Let $k\in\mathbb N$ as in Claim 1 and  $\eta=\sum_{i=1}^na_{i,k}^*E_{B^{\omega}}(a_{i,k}ub_{i,k})b_{i,k}^*$. Note that $E_B(E_{B^\omega}(a_{i,k}ub_{i,k}))=E_B(a_{i,k}ub_{i,k})=E_B(E_M(a_{i,k}ub_{i,k}))=E_B(a_{i,k}E_M(u)b_{i,k})=0$. Thus $E_{B^{\omega}}(a_{i,k}ub_{i,k})\in B^{\omega}\ominus B$, for all $i\in\{1,2,...,n\}$, hence $\eta\in\mathcal K$.
On the other hand, inequality \ref{20} rewrites as $\langle u,\eta\rangle\geqslant\delta$. Combining the last two facts gives that $\xi\not=0$.

Since $\mathcal K$ is an $M$-$M$ bimodule and $u$ commutes with $P$ it follows that $y\xi=\xi y$, for all $y\in P$. Thus $\langle y\xi y^*,\xi\rangle=\|\xi\|_2^2>0$, for all $y\in\mathcal U(P)$. To finish the proof we use a second claim. 

{\bf Claim 2.} Let $v_n, w_n\in (M)_1$ be two sequences such that $\|E_B(a_2^*v_na_1)\|_2\rightarrow 0$, for all $a_1,a_2\in M$. Then for all $\xi_1,\xi_2\in \mathcal K$ we have that $\langle v_n\xi_1w_n,\xi_2\rangle\rightarrow 0$, as $n\rightarrow\infty$.

{\it Proof of Claim 2.} It suffices to prove the conclusion for $\xi_1$ and $\xi_2$ of the form $\xi_1=a_1x_1b_1$ and $\xi_2=a_2x_2b_2$, for some $a_1,a_2,b_1,b_2\in M$ and $x_1,x_2\in (B^{\omega}\ominus B)_1$. In this case,  we have $$|\langle v_n\xi_1w_n,\xi_2\rangle|=|\tau (x_2^*a_2^*v_na_1x_1b_1w_nb_2^*)|\leqslant \|E_{B^{\omega}}(a_2^*v_na_1x_1b_1w_nb_2^*)\|_2.$$

Since the inclusion $B\subset M$ is mixing, we have $E_{B^{\omega}}(cxd)=0$, for all $c,d\in M\ominus B$ and 
$x\in B^{\omega}\ominus B$. Thus
$E_{B^{\omega}}(a_2^*v_na_1x_1b_1w_nb_2^*)=E_B(a_2^*v_na_1)x_1E_B(b_1w_nb_2^*)$. In combination with the last inequality this implies that $|\langle v_n\xi_1w_n,\xi_2\rangle\leqslant\|E_B(a_2^*v_na_1)\|_2\rightarrow 0$.\hfill$\square$

Now, if the conclusion $P\prec_{M}B$ is false, then we can find a sequence of unitary elements $y_n\in P$ such that 
$\|E_B(a_2^*y_na_1)\|_2\rightarrow 0$, for all $a_1,a_2\in M$. Claim 2 then implies that $\langle y_n\xi y_n^*,\xi\rangle\rightarrow 0$, contradicting the fact that $\langle y_n\xi y_n^*,\xi\rangle=\|\xi\|_2^2>0$, for all $n$.
This finishes the proof of Lemma \ref{ultrapower} under the additional assumption that $P'\cap pMp=\mathbb Cp$. 

In general, assume again for simplicity that $p=1$. Then we can find projections $\{p_n\}_{n\geqslant 0}\in P'\cap M$ such that $p_0\in\mathcal Z(P'\cap M)$ and $(P'\cap M)p_0$ is diffuse, $p_n\in P'\cap M$ is a minimal projection, for all $n\geqslant 1$, and  $\sum_{n\geqslant 0}p_n=1$. Since $P_{\omega}\prec_{M^{\omega}}B^{\omega}$ we can find $n$ such that $p_n\not=0$ and $p_nP_{\omega}p_n\prec_{M^{\omega}}B^{\omega}$.
To derive the conclusion, we treat separately two cases. 

Firstly, assume that $n=0$. Since $((Pp_0)'\cap p_0Mp_0)^{\omega}\subset (Pp_0)'\cap (p_0Mp_0)^{\omega}=p_0P_{\omega}p_0$ and $p_0P_{\omega}p_0\prec_{M^{\omega}}B^{\omega}$, it  easily follows that $(Pp_0)'\cap p_0Mp_0\prec_{M}B$. Since $(Pp_0)'\cap p_0Mp_0=(P'\cap M)p_0$ is diffuse, Lemma \ref{commutant} readily gives that $Pp_0\prec_{M}B$ and hence $P\prec_{M}B$.

Secondly, suppose that  $n\geqslant 1$. Since $p_n\in P'\cap M$ is a minimal projection we get that $(Pp_n)'\cap p_nMp_n=\mathbb Cp_n$. Also, we have that $(Pp_n)'\cap (p_nMp_n)^{\omega}=p_nP_{\omega}p_n$ is diffuse and satisfies $(Pp_n)'\cap (p_nMp_n)^{\omega}\prec_{M^{\omega}}B^{\omega}$. By applying the first part of the proof to the subalgebra $Pp_n\subset p_nMp_n$ we deduce that $Pp_n\prec_{M}B$ and hence that $P\prec_{M}B$.
\end{proof}

{\it Proof of Theorem  \ref{strongly}}.  Since the inclusions $B\subset M_1$, $B\subset M_2$ are mixing, it follows easily that the inclusion $B\subset M$ is mixing. We claim that the inclusion $M_i\subset M$ is also mixing, for $i\in\{1,2\}$. 

To this end, let $j\in\{1,2\}$ with $j\not=i$. Let $b_n\in (M_i)_1$ be a sequence such that $b_n\rightarrow 0$ weakly.  The claim is equivalent to showing that $\|E_{M_i}(x^*b_ny)\|_2\rightarrow 0$, for all $x,y\in M\ominus M_i$.
We may assume that $x,y$ are of the following form: $x=x_1x_2...x_m$ and $y=y_1y_2...y_n$, where   $x_1\in M_i$, $x_2\in M_j\ominus B$, $x_3\in M_i\ominus B...$ and $y_1\in M_i,y_2\in M_j\ominus B,y_3\in M_i\ominus B$..., for  some integers $m,n\geqslant 2$. We may also assume that $\|x_k\|\leqslant 1$ and $\|y_l\|\leqslant 1$, for all $1\leqslant k\leqslant m$ and $1\leqslant l\leqslant n$.

A simple computation shows  that $E_{M_i}(x^*b_ny)=E_{M_i}(x_m^*...x_3^*E_B(x_2^*E_B(x_1^*b_ny_1)y_2)y_3....y_n)$. Thus, we get that
$\|E_{M_i}(x^*b_ny)\|_2\leqslant\|E_B(x_2^*E_B(x_1^*b_ny_1)y_2)\|_2$. Since $b_n\rightarrow 0$ weakly, we have that $E_B(x_1^*b_ny_1)\rightarrow 0$ weakly. Since $x_2,y_2\in M_j\ominus B$ and the inclusion $B\subset M_j$ is mixing, it follows that $\|E_B(x_2^*E_B(x_1^*b_ny_1)y_2)\|_2\rightarrow 0$. This proves that $\|E_{M_i}(x^*b_ny)\|_2\rightarrow 0$ and implies the claim.

Now, to show that $M$ is strongly solid, fix a diffuse amenable von Neumann subalgebra
 $A\subset M$  and denote $P=\mathcal N_{M}(A)''$.
Suppose by contradiction that $P$ is not amenable and let $z\in \mathcal Z(P)$ be the largest projection such that $Pz$ is amenable.
Then $p=1-z\not=0$. 

By Theorem \ref{gammadec} we can find projections $e,f\in\mathcal Z((Pp)'\cap pMp)\cap\mathcal Z((Pp)'\cap (pMp)^{\omega})$ such that 
\begin{itemize}
\item $e+f=p$.
\item $((Pp)'\cap (pMp)^{\omega})e$ is completely atomic and $((Pp)'\cap (pMp)^{\omega})e=((Pp)'\cap (pMp))e$.
\item $((Pp)'\cap (pMp)^{\omega})f$ is diffuse.
\end{itemize}
Since $p\not=0$, we have that either
 $e\not=0$ or
 $f\not=0$.

In  the first case, let $e_0\in ((Pp)'\cap (pMp)^{\omega})e$ be a minimal non-zero projection.
Then we have that $e_0\in p(P'\cap M^{\omega})p\cap p(P'\cap M)p$ and $e_0(P'\cap M^{\omega})e_0=\mathbb Ce_0$.
Therefore, $Pe_0$ is a von Neumann subalgebra of $e_0Me_0$ such that $(Pe_0)'\cap (e_0Me_0)^{\omega}=\mathbb Ce_0$.
Note that $Pe_0\subset\mathcal N_{e_0Me_0}(Ae_0)''$. 
Theorem \ref{general} implies that either 
  $Ae_0\prec_{M}B$,
 $Pe_0\prec_{M}M_i$, for some $i\in\{1,2\}$, or 
$Pe_0$ is amenable relative to $B$.
Moreover if, $Ae_0\prec_{M}B$, then since the inclusion $B\subset M$ is mixing, Lemma \ref{commutant} gives that $Pe_0\prec_{M}B$.

In the second case, 
we have that $f\in  p(P'\cap M^{\omega})p\cap p(P'\cap M)p$ and that $f(P'\cap M^{\omega})f$ is diffuse. Thus, $Pf$ is a von Neumann subalgebra of $fMf$ such that $(Pf)'\cap (fMf)^{\omega}$ is diffuse. 
By applying Theorem \ref{afpgamma} to the subalgebra $Pf$ of $fMf$, we get that either $(Pf)'\cap (fMf)^{\omega}\prec_{M^{\omega}}B^{\omega}$, $Pf\prec_{M}M_i,$ for some $i\in\{1,2\}$, or $Pf_0$ is amenable relative to $B$, for some non-zero projection $f_0\in\mathcal Z(P'\cap M)f$. 
Moreover, if $(Pf)'\cap (fMf)^{\omega}\prec_{M^{\omega}}B^{\omega}$  then since $(Pf)'\cap (fMf)^{\omega}$ is diffuse, Lemma \ref{ultrapower} implies that $Pf\prec_{M}B$.

Altogether, since $e_0\leqslant p$, $f\leqslant p$ and $B\subset M_1\cap M_2$, we get that either  $Pp\prec_{M}M_i$, for some $i\in\{1,2\}$, or  $Pg$ is amenable relative to $B$, for some non-zero projection $g\in\mathcal Z(P)p$.
Since $B$ is amenable, the second condition implies that $Pp$ has an amenable  direct summand, which contradicts the maximality of $z$.

In order to finish the proof, assume that $Pp\prec_{M}M_i$, for some $i\in\{1,2\}$. 
Since $P'\cap M\subset P$, it follows that we can find  non-zero projections $p_0\in Pp$, $q\in M_i$, a partial isometry $v\in M$ such that $v^*v=p_0$ and $vv^*\leqslant q$, and a $*$-homomorphism $\phi:p_0Pp_0\rightarrow qM_iq$ such that $\phi(x)v=vx$, for all $x\in p_0Pp_0$. Since $\phi(p_0Pp_0)\subset qM_iq$ is a diffuse subalgebra and the inclusion $M_i\subset M$ is mixing, Lemma \ref{commutant} gives that $\phi(p_0Pp_0)'\cap qMq\subset qM_iq$ and thus $vv^*\in M_i$.

Hence, after replacing $P$ with $uPu^*$, for some unitary $u\in M$, we may assume that $p_0\in M_i$ and $p_0Pp_0\subset p_0M_ip_0$. Next, we can find a non-zero projection $p_1\in p_0Pp_0$ and partial isometries $v_1,v_2,...,v_n\in P$ such that $v_i^*v_i=p_1$, for all $i\in\{1,2,...,n\}$, and  $p'=\sum_{i=1}v_iv_i^*$ is a central projection of $P$. Since $p_1Pp_1\subset p_1M_ip_1$, there exists an embedding $\theta:Pp'\rightarrow\mathbb M_n(p_1M_ip_1)$.  

Since $M_i$ is  strongly solid, \cite[Proposition 5.2]{Ho09} gives that $\mathbb M_n(p_1M_ip_1)$ is also strongly solid. Since the inclusion $Ap'\subset Pp'$ is regular and $Ap'$ is a diffuse amenable von Neumann algebra, we deduce that $Pp'$ is amenable. Since $p'p\not=0$ (as we have $0\not=p_1\leqslant p\wedge p'$) we again get a contradiction with the maximality of $z$. This completes the proof of the theorem.
\hfill$\square$

We end with several consequences of Theorem \ref{strongly}.

\begin{corollary}\label{stro1}
Let $(M_1,\tau_1)$ and $(M_2,\tau_2)$ be strongly solid von Neumann algebras.

Then $M=M_1*M_2$ is strongly solid.
\end{corollary}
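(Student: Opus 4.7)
The plan is to obtain Corollary \ref{stro1} as an immediate application of Theorem \ref{strongly} with the trivial common subalgebra $B = \mathbb{C}1$. Since $M_1 *_{\mathbb{C}1} M_2$ is by definition the free product $M_1 * M_2$, it suffices to verify the two hypotheses of Theorem \ref{strongly} in this setting: that $\mathbb{C}1$ is amenable, and that the inclusions $\mathbb{C}1 \subset M_1$ and $\mathbb{C}1 \subset M_2$ are mixing in the sense of Definition \ref{mixdef}.

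The first condition is trivial. For the second, I would argue directly from Definition \ref{mixdef}. A sequence $b_n$ in the closed unit ball of $\mathbb{C}1$ is just a bounded sequence of scalars, and weak convergence $b_n \to 0$ is the same as $|b_n| \to 0$. Hence for any $x, y \in M_i \ominus \mathbb{C}1$ (where $i \in \{1,2\}$), we have
\[
\|E_{\mathbb{C}1}(x b_n y)\|_2 \;=\; |\tau(x b_n y)| \;=\; |b_n|\,|\tau(xy)| \;\longrightarrow\; 0,
\]
so the inclusion $\mathbb{C}1 \subset M_i$ is mixing. (Alternatively, one can invoke the remark following Definition \ref{mixdef}: since $L^2(M_i) \ominus \mathbb{C}1$ is separable and $L^2(\mathbb{C}1)\otimes L^2(\mathbb{C}1) \cong \mathbb{C}$, the former embeds as a $\mathbb{C}$-$\mathbb{C}$ bimodule into $\oplus_{n=1}^\infty (L^2(\mathbb{C}1)\otimes L^2(\mathbb{C}1))$ automatically.)

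Both hypotheses of Theorem \ref{strongly} are thus satisfied, and the conclusion that $M = M_1 * M_2$ is strongly solid follows immediately. There is no real obstacle here; the entire content of Corollary \ref{stro1} is already packaged inside Theorem \ref{strongly}, and the role of the corollary is merely to record the amenable-amalgam-free special case $B = \mathbb{C}$, which for applications is arguably the most natural one.
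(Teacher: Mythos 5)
Your proposal is correct and is exactly how the paper obtains Corollary \ref{stro1}: it is stated as an immediate consequence of Theorem \ref{strongly} with $B=\mathbb C1$, which is amenable and for which the inclusions $\mathbb C1\subset M_i$ are trivially mixing (your direct computation, or Remark (2) after Definition \ref{mixdef}, both suffice). Nothing further is needed.
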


\begin{corollary}\label{stro2}
Let $(M_1,\tau_1),(M_2,\tau_2),...,(M_n,\tau_n)$ be tracial amenable von Neumann algebras with a common von Neumann subalgebra $B$ such that ${\tau_1}_{|B}={\tau_2}_{|B}=...={\tau_n}_{|B}$. Assume that the inclusions $B\subset M_1,B\subset M_2,...,B\subset M_n$ are mixing. Denote $M=M_1*_{B}M_2*_{B}...*_{B}M_n$.

Then $M$ is strongly solid.
\end{corollary}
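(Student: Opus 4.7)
The plan is to proceed by induction on $n$, reducing everything to Theorem \ref{strongly}. For the base case $n = 2$, observe that each $M_i$, being amenable, is automatically strongly solid (any subalgebra of an amenable algebra is itself amenable, so in particular so is $\mathcal{N}_{M_i}(A)''$ for any diffuse amenable $A \subset M_i$). Moreover $B$ is amenable as a subalgebra of $M_1$. Thus all hypotheses of Theorem \ref{strongly} are satisfied, and the base case is immediate.

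For the inductive step, suppose $n \geq 3$ and the statement holds for $n-1$ algebras. Write $M = M_1 *_{B} N$, where $N = M_2 *_{B} M_3 *_{B} \cdots *_{B} M_n$. By the induction hypothesis $N$ is strongly solid, $M_1$ is strongly solid (being amenable), and $B$ is amenable. The hypothesis of Theorem \ref{strongly} that remains to verify is the mixing of $B \subset N$; once this is shown, the theorem gives that $M$ is strongly solid.

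To verify this, I would prove that the mixing property propagates through amalgamated free products over $B$: if $B \subset M_i$ is mixing for all $i \in \{2,\ldots,n\}$, then $B \subset N$ is mixing. Let $b_k \in (B)_1$ converge weakly to $0$ and let $x,y \in N \ominus B$. By Kaplansky's density theorem one may assume $x = x_1 x_2 \cdots x_p$ and $y = y_1 y_2 \cdots y_q$ are reduced alternating words with $x_l \in M_{i_l} \ominus B$, $y_l \in M_{j_l} \ominus B$, $i_l \neq i_{l+1}$, $j_l \neq j_{l+1}$. A direct computation using the freeness with amalgamation over $B$ shows that $E_B(x^* b_k y) = 0$ unless $i_1 = j_1$, in which case
\[
   E_B(x^* b_k y) \;=\; E_B\bigl(x_p^* \cdots x_2^* \, c_k \, y_2 \cdots y_q\bigr), \qquad c_k := E_B(x_1^* b_k y_1) \in B.
\]
The mixing of $B \subset M_{i_1}$ yields $\|c_k\|_2 \to 0$, while $\|c_k\|$ stays uniformly bounded, so $c_k \to 0$ in the strong operator topology and in particular weakly. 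An induction on $p + q$ (which decreases by $2$ at each step) then gives $\|E_B(x^* b_k y)\|_2 \to 0$.

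The only step of substance is this mixing propagation; it is essentially bookkeeping with reduced words in an AFP algebra, and the bound $\|c_k\|_2 \to 0$ feeds into the next iteration because $c_k$ is automatically bounded. Everything else in the argument is formal: once mixing of $B \subset N$ is established, the full strength of Theorem \ref{strongly} concludes the inductive step.
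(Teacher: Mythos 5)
Your proof is correct and follows essentially the same route as the paper: induction on $n$ together with Theorem \ref{strongly}, with the only substantive point being that mixing of the inclusions $B\subset M_i$ propagates to the amalgamated free product (a step the paper dismisses as ``easy to see'' and you spell out correctly via the reduced-word computation). The only cosmetic difference is that you peel off $M_1$ on the left rather than $M_n$ on the right, which is immaterial.
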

{\it Proof.} Since the inclusions $B\subset M_1,B\subset M_2,...,B\subset M_n$ are mixing, it is easy to see that the inclusion $B\subset M_1*_{B}M_2*_{B}...*_{B}M_i$ is mixing, for all $i\in\{1,2,...,n\}$. The conclusion then follows by using induction and Theorem \ref{strongly}.
\hfill$\square$

Corollary \ref{stro2} provides two new classes of strongly solid von Neumann algebras.

\begin{corollary}\label{strsolid} Let $\Gamma_1,\Gamma_2,...,\Gamma_n$ be countable amenable groups with a common subgroup $\Lambda$. Assume that  $g\Lambda g^{-1}\cap\Lambda$ is finite, for every $g\in(\cup_{i=1}^n\Gamma_i)\setminus\Lambda$.
Denote  $\Gamma=\Gamma_1*_{\Lambda}\Gamma_2*_{\Lambda}...*_{\Lambda}\Gamma_n$.

Then  $L(\Gamma)$ is  strongly solid.
\end{corollary}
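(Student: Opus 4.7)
The plan is to realize $L(\Gamma)$ as the amalgamated free product $L(\Gamma_1) *_{L(\Lambda)} L(\Gamma_2) *_{L(\Lambda)} \cdots *_{L(\Lambda)} L(\Gamma_n)$ (a standard consequence of the defining universal property of the group AFP construction) and apply Corollary \ref{stro2} with $M_i = L(\Gamma_i)$ and $B = L(\Lambda)$. Since each $\Gamma_i$ is amenable, each $L(\Gamma_i)$ is amenable, and in particular $B = L(\Lambda)$ is amenable. Thus the only non-trivial point left to verify is that each inclusion $L(\Lambda) \subset L(\Gamma_i)$ is mixing in the sense of Definition \ref{mixdef}.

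To verify mixing for a fixed $i$, it suffices by linearity and Kaplansky density to check that $\|E_{L(\Lambda)}(u_g\,b_n\,u_h)\|_2 \to 0$ whenever $g,h \in \Gamma_i \setminus \Lambda$ and $b_n \in (L(\Lambda))_1$ with $b_n \to 0$ weakly. Writing $b_n = \sum_{\lambda \in \Lambda} b_n(\lambda) u_\lambda$, a direct computation gives
\[
E_{L(\Lambda)}(u_g b_n u_h) = \sum_{\lambda \in S} b_n(\lambda)\, u_{g\lambda h}, \qquad S = \{\lambda \in \Lambda : g\lambda h \in \Lambda\}.
\]
If $S = \emptyset$ there is nothing to prove. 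Otherwise, fixing $\lambda_0 \in S$ and writing $\lambda = \lambda_0 \mu$ shows that $S = \lambda_0 \cdot (h\Lambda h^{-1} \cap \Lambda)$, which is finite by the hypothesis applied to $h \in \Gamma_i \setminus \Lambda$. Since $b_n(\lambda) \to 0$ for each fixed $\lambda$ (weak convergence) and $S$ is finite, $\|E_{L(\Lambda)}(u_g b_n u_h)\|_2^2 = \sum_{\lambda \in S} |b_n(\lambda)|^2 \to 0$, as desired.

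Having established mixing of each inclusion $L(\Lambda) \subset L(\Gamma_i)$, Corollary \ref{stro2} applies and yields that $L(\Gamma)$ is strongly solid. The only real content is the mixing check above, and that is essentially an unwinding of the hypothesis that $h\Lambda h^{-1} \cap \Lambda$ is finite for every $h \in (\bigcup_i \Gamma_i) \setminus \Lambda$; no genuine obstacle arises.
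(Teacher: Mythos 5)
Your argument is correct and follows essentially the same route as the paper: write $L(\Gamma)$ as the iterated amalgamated free product $L(\Gamma_1)*_{L(\Lambda)}\cdots *_{L(\Lambda)}L(\Gamma_n)$, verify that each inclusion $L(\Lambda)\subset L(\Gamma_i)$ is mixing by reducing via Kaplansky density to $x=u_g$, $y=u_h$ with $g,h\in\Gamma_i\setminus\Lambda$ and observing that $\{\lambda\in\Lambda: g\lambda h\in\Lambda\}$ is finite, and then invoke Corollary \ref{stro2}. Your explicit identification of that set as a coset of $h\Lambda h^{-1}\cap\Lambda$ is a slightly more detailed justification of the finiteness the paper simply asserts, but it is the same proof.
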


{\it Proof}. We claim that the inclusion $L(\Lambda)\subset L(\Gamma_i)$  is mixing, for every $i\in\{1,2,...,n\}$. 

To this end, let $b_n\in (L(\Lambda))_1$ be a sequence  converging weakly to $0$. We aim to show that $\|E_{L(\Lambda)}(xb_ny)\|_2\rightarrow 0$, for every $x,y\in L(\Gamma_i)\ominus L(\Lambda)$. By Kaplansky's density theorem we may assume that $x=u_h$ and $y=u_k$, for some $h,k\in\Gamma_i\setminus\Lambda$.
Then the set $F=\{g\in\Lambda|hgk\in\Lambda\}$ is finite. Since $b_n\rightarrow 0$ weakly we get that $$\|E_{L(\Lambda)}(u_hb_nu_h)\|_2^2=\sum_{g\in F}|\tau(b_nu_g^*)|^2\rightarrow 0.$$

Corollary \ref{stro2} now implies that $L(\Gamma)=L(\Gamma_1)*_{L(\Lambda)}L(\Gamma_2)*_{L(\Lambda)}....*_{L(\Lambda)}L(\Gamma_n)$ is strongly solid.
\hfill$\square$

 Corollary \ref{strsolid} generalizes the main result of \cite{Ho09}, where the same statement is proven under the additional assumption that for every $i\in\{1,2,...,n\}$ we can decompose $\Gamma_i=\Upsilon_i\rtimes\Lambda$, for some abelian group $\Upsilon_i$.

\begin{corollary} Let   $\Gamma$ be a countable amenable group and $(D_1,\tau_1),(D_2,\tau_2),...,(D_n,\tau_n)$ be tracial amenable von Neumann algebras. Let $\Gamma\curvearrowright^{\sigma_1} (D_1,\tau_1), \Gamma\curvearrowright^{\sigma_2} (D_2,\tau_2),...,\Gamma\curvearrowright^{\sigma_n} (D_n,\tau_n)$ be mixing trace preserving actions. Denote $D=D_1*D_2*...*D_n$ and endow $D$ with its natural trace $\tau$. Consider the free product action $\Gamma\curvearrowright^{\sigma} (D,\tau)$ given by $${\sigma(g)}(x_1x_2...x_n)={\sigma_1}(g)(x_1)\sigma_2(g)(x_2)...\sigma_n(g)(x_n),\;\;\;\text{for}\;\;x_1\in D_1,x_2\in D_2,...,x_n\in D_n.$$ 
Then $M=D\rtimes\Gamma$ is strongly solid.

\end{corollary}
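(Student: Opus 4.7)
The plan is to reduce this to Corollary \ref{stro2} by decomposing $M = D \rtimes \Gamma$ as an amalgamated free product over $L(\Gamma)$ of the crossed products $D_i \rtimes \Gamma$.

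First, I would observe that because $\sigma$ is the free product of the actions $\sigma_i$, the subalgebras $D_1, D_2, \ldots, D_n$ are $*$-free inside $D$ in a $\Gamma$-equivariant way. This means that inside $M$, the subalgebras $N_i := D_i \rtimes \Gamma$ all contain the common copy $L(\Gamma)$, and standard arguments (matching moments, or a direct check on alternating products) show that $N_1, N_2, \ldots, N_n$ are free with amalgamation over $L(\Gamma)$ and generate $M$. Thus
\begin{equation*}
M = N_1 *_{L(\Gamma)} N_2 *_{L(\Gamma)} \cdots *_{L(\Gamma)} N_n.
\end{equation*}

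Next, since $\Gamma$ is amenable and each $D_i$ is amenable, each crossed product $N_i = D_i \rtimes \Gamma$ is amenable, and $L(\Gamma)$ is amenable as well. To apply Corollary \ref{stro2} it remains to verify that each inclusion $L(\Gamma) \subset N_i$ is mixing in the sense of Definition \ref{mixdef}. This is exactly the content of remark (4) following that definition: mixingness of the action $\Gamma \curvearrowright D_i$ implies mixingness of $L(\Gamma) \subset D_i \rtimes \Gamma$. Concretely, any element of $N_i \ominus L(\Gamma)$ can be approximated by finite sums $\sum d_g u_g$ with $d_g \in D_i \ominus \mathbb{C}1$, and for any sequence $b_k \in (L(\Gamma))_1$ with $b_k \to 0$ weakly, a direct computation using the mixing property of $\sigma_i$ yields $\|E_{L(\Gamma)}(x b_k y)\|_2 \to 0$ for $x,y \in N_i \ominus L(\Gamma)$.

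Once these two ingredients are in place, Corollary \ref{stro2} applies directly and yields that $M$ is strongly solid. The only step that is not entirely automatic is the amalgamated free product decomposition of $M$, but this is a well-known consequence of the $\Gamma$-equivariant freeness built into the definition of the free product action, so no real obstacle arises; the proof is essentially an assembly of Corollary \ref{stro2} with the observation that crossed products by amenable groups behave well with respect to mixing inclusions.
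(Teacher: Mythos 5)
Your proposal is correct and follows essentially the same route as the paper: decompose $M$ as $N_1*_{L(\Gamma)}N_2*_{L(\Gamma)}\cdots*_{L(\Gamma)}N_n$ with $N_i=D_i\rtimes\Gamma$ amenable, note that mixingness of each action gives mixingness of $L(\Gamma)\subset N_i$, and invoke Corollary \ref{stro2}. No gaps.
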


\begin{proof} Denote $M_i=D_i\rtimes\Gamma$. Since the action $\Gamma\curvearrowright (D_i,\tau_i)$ is mixing, the inclusion $L(\Gamma)\subset M_i$ is mixing, for all $1\leqslant i\leqslant n$. Since $\Gamma$ as well as $D_1, D_2,...,D_n$ are amenable, we have that $M_1,M_2,...,M_n$ are amenable. Since $M=M_1*_{L(\Gamma)}M_2*...*_{L(\Gamma)}M_n$, the conclusion follows from Corollary \ref{stro2}.
\end{proof}

\vskip 0.7in

\begin{center}
{\boldmath\Large\bf  Appendix: spectral gap for inclusions of von Neumann algebras} 
\vspace{2ex}

{\sc Adrian Ioana and Stefaan Vaes\footnote{KU~Leuven, Department of Mathematics, Leuven (Belgium), stefaan.vaes@wis.kuleuven.be. Supported by Research Programme G.0639.11 of the Research Foundation~-- Flanders (FWO) and KU~Leuven BOF research grant OT/13/079.}}
\end{center}

\vspace{2ex}

Let $(M,\tau)$ be a von Neumann algebra equipped with a faithful normal tracial state. Let $P \subset M$ be a von Neumann subalgebra. In \cite[Section 2]{Po09}, Popa introduced the following two different notions of spectral gap for the inclusion $P \subset M$.
\begin{itemize}
\item[(a)] $P \subset M$ has \emph{spectral gap} if every net of unit vectors $\xi_i \in L^2(M)$ that asymptotically commutes with $P$, meaning that $\lim_i \|x \xi_i - \xi_i x\|_2 = 0$ for all $x \in P$, must lie asymptotically in $L^2(P' \cap M)$, namely $\lim_i \|\xi_i - E_{P' \cap M}(\xi_i)\|_2 = 0$.
\item[(b)] $P \subset M$ has \emph{$w$-spectral gap} if every net $\xi_i \in (M)_1$ in the unit ball of $M$ that asymptotically commutes with $P$, meaning that $\lim_i \|x \xi_i - \xi_i x\|_2 = 0$ for all $x \in P$, must lie asymptotically in $P' \cap M$, namely $\lim_i \|\xi_i - E_{P' \cap M}(\xi_i)\|_2 = 0$.
\end{itemize}
Here, $E_{P' \cap M}$ denotes the conditional expectation of $M$ onto $P' \cap M$, or its extension as the orthogonal projection of $L^2(M)$ onto $L^2(P' \cap M)$.

In \cite[Remark 2.2]{Po09}, the subtle difference between spectral gap and $w$-spectral gap is explained: concrete examples of inclusions without spectral gap, but yet having $w$-spectral gap are given, and the analogy with the difference between strong ergodicity and spectral gap for a probability measure preserving group action $\Gamma \actson (X,\mu)$ is explained, yielding the following example. Let $\Gamma=\F_n$ be a free group, for $n \geq 2$, and let $\Gamma\curvearrowright (X,\mu)$ be a measure preserving action on a standard probability space that is strongly ergodic but does not have spectral gap (see \cite[Example 2.7]{Sc81}). Denote $A=L^{\infty}(X)$, $M=A\rtimes\Gamma$ and $P=L(\Gamma)$. Since $\Gamma$ is not inner amenable and $\Gamma \actson (X,\mu)$ is strongly ergodic, it follows that $P \subset M$ has $w$-spectral gap. On the other hand, $P \subset M$ does not have spectral gap. Indeed,  let $\xi_n\in L^2(A)\ominus \C 1$ be a sequence of unit vectors such that $\|u_g\xi_n-\xi_nu_g\|_2\rightarrow 0$, for all $g\in\Gamma$. Let $x\in P$ and write $x={\sum_{g\in\Gamma}x_gu_g}$, where $x_g\in \C$. Then
$$\|x\xi_n-\xi_n x\|_2^2=\sum_{g\in\Gamma}|x_g|^2\;\|u_g\xi_n-\xi_n u_g\|_2^2 \quad\text{for all}\;\; n \; .$$
Since $\sum_{g\in\Gamma}|x_g|^2=\|x\|_2^2<\infty$, it follows that $\|x\xi_n-\xi_n x\|_2\rightarrow 0$.

Finally note that if $M$ is a II$_1$ factor and $P = M$, then both notions of spectral gap are equivalent by \cite[Theorem 2.1]{Co76}.

In the proof of Theorem \ref{relamen} above, the following technical property is needed.
This property sits, a priori, in between spectral gap and $w$-spectral gap.

\begin{itemize}
\item[(c)] Every net of unit vectors $\xi_i \in L^2(M) \ot \ell^2(\N)$ that asymptotically commutes with $P \ot 1$ and that is asymptotically subtracial, meaning that $\limsup_i \|(a \ot 1)\xi_i\|_2 \leq \|a\|_2$ and $\limsup_i \|\xi_i(a \ot 1)\|_2 \leq \|a\|_2$ for all $a \in M$, must lie asymptotically in $L^2(P' \cap M) \ot \ell^2(\N)$.
\end{itemize}

In the theorem below, we prove that this property (c) is equivalent to $w$-spectral gap. 

The difference between spectral gap and $w$-spectral gap arises when there do exist nontrivial unit vectors $\xi_i \in L^2(M)$ that asymptotically commute with $P$, but when these unit vectors necessarily have their support in a smaller and smaller corner of $M$ with the operator norm of $\xi_i$ becoming larger and larger. If now $\xi_i = \sum_k a_{i,k} \ot \delta_k$ is a net in $L^2(M) \ot \ell^2(\N)$ as in (c), then the subtraciality assumption guarantees that the small supports of the $a_{i,k}$ are evenly spread over $M$. Using a maximality argument, it should be possible to glue the $a_{i,k}$ together into a bounded net in $M$ that asymptotically commutes with $P$, as in \cite[Remark 2.4]{PP84}. We follow a slightly different approach, taking random linear combinations $\sum_k \zeta_k a_{i,k}$ with $\zeta_k \in \T$, very much inspired by \cite[Proof of Lemma 4.3]{Ha84}.

{\bf Theorem.} 
 {\it Let $(M,\tau)$ be a von Neumann algebra with a faithful normal tracial state. Let $P \subset M$ be a von Neumann subalgebra. The following two conditions are equivalent.
\begin{enumerate}
\item The inclusion $P \subset M$ does not have $w$-spectral gap: there exists a net $u_i \in (M)_1$ in the unit ball of $M$ satisfying $\lim_i \|x u_i - u_i x\|_2 = 0$ for all $x \in P$ and satisfying \linebreak $\liminf_i \|u_i - E_{P' \cap M}(u_i)\|_2 > 0$.
\item There exist a Hilbert space $H$ and a net of vectors $\xi_i \in L^2(M) \ot H$ satisfying the following properties:
\begin{itemize}
\item $\lim_i \|(x \ot 1) \xi_i - \xi_i(x \ot 1)\|_2 = 0$ for all $x \in P$,
\item $\liminf_i \|\xi_i - p_{L^2(P' \cap M) \ot H}(\xi_i)\|_2 > 0$,
\item $\limsup_i \|(a \ot 1)\xi_i\|_2 \leq \|a\|_2$ and $\limsup_i \|\xi_i(a \ot 1)\|_2 \leq \|a\|_2$ for all $a \in M$.
\end{itemize}
\end{enumerate}}

\begin{proof}
It is obvious that 1 implies 2 by taking $H = \C$ and $\xi_i = u_i$.

Assume that 2 holds. Write $\cP = p_{L^2(P' \cap M) \ot H}$ and $\mu_i = \cP(\xi_i)$. Obviously $(x \ot 1) \mu_i = \mu_i (x \ot 1)$ for all $x \in P$. Also,
$$\|(a \ot 1) \mu_i \|_2 = \|\cP((E_{P' \cap M}(a^* a)^{1/2} \ot 1) \xi_i)\|_2 \quad\text{for all}\;\; a \in M \;\;\text{and all}\;\; i \; .$$
Therefore, also $\limsup_i \|(a \ot 1)\mu_i\|_2 \leq \|a\|_2$ for all $a \in M$, and similarly with $\|\mu_i(a \ot 1)\|_2$. Replacing $\xi_i$ by $(\xi_i - \mu_i)/2$, we may from now on moreover assume that $\cP(\xi_i) = 0$ for all $i$.

Define the normal positive functionals $\om_i, \om'_i \in M_*$ given by $\om_i(a) = \langle (a \ot 1)\xi_i,\xi_i \rangle$ and $\om'_i(a) = \langle \xi_i (a \ot 1), \xi_i \rangle$. After passage to a subnet, we may assume that $\om_i \recht \om$ and $\om'_i \recht \om'$ weakly$^*$, where $\om,\om' \in M^*$ are nonzero positive functionals satisfying $\om,\om' \leq \tau$. Convex combinations of the functionals $\om_i$, resp.\ $\om'_i$, then converge in norm to $\om$, resp.\ $\om'$. Such convex combinations are canonically implemented by vectors in $H \ot \ell^2(\N)$. Therefore, replacing $H$ by $H \ot \ell^2(\N)$, we may assume that $\lim_i \|\om_i - \om\|_1 = \lim_i \|\om'_i - \om'\|_1 = 0$.

Write $\om = \tau(\,\cdot\, T)$ and $\om_i = \tau(\,\cdot\, T_i)$, where $T,T_i$ are positive elements in $L^1(M)$. We have $0 \leq T \leq 1$ and $\lim_i \|T_i -T\|_1 = 0$. Denote by $p_i \in M$ the spectral projection of $T_i$ corresponding to the interval $[0,2]$. We claim that $\lim_i \om_i(1-p_i) = 0$. Write $q_i = 1-p_i$. Then, $q_i T_i q_i \geq 2 q_i$. Also, $q_i T q_i \leq q_i$ because $T \leq 1$. Therefore, $q_i(T_i - T)q_i \geq q_i$. Since $\|q_i (T_i - T) q_i\|_1 \recht 0$, it follows that $\|q_i\|_1 \recht 0$. Then also $\|q_i T q_i\|_1 \recht 0$, so that $\|q_i T_i q_i\|_1 \recht 0$, proving the claim.

By the claim, we have that $\lim_i \|\xi_i - (p_i \ot 1) \xi_i\|_2 = 0$. We similarly define $p'_i$ and get that $\lim_i \|\xi_i - (p_i \ot 1) \xi_i (p'_i \ot 1)\|_2=0$. Replacing $\xi_i$ by $p_i \xi_i p'_i /2$, we now have the following properties.
\begin{itemize}
\item $\lim_i \|(x \ot 1) \xi_i - \xi_i(x \ot 1)\|_2 = 0$ for all $x \in P$,
\item $\lim_i \|\cP(\xi_i)\|_2 = 0$ and $\liminf_i \|\xi_i\|_2 > 0$,
\item $\|(a \ot 1)\xi_i\|_2 \leq \|a\|_2$ and $\|\xi_i(a \ot 1)\|_2 \leq \|a\|_2$ for all $i$ and all $a \in M$.
\end{itemize}

Define $\delta > 0$ such that $\liminf_i \|\xi_i\|_2^2 > 4 \delta$. Fix a finite subset $\cF \subset P$ satisfying $\cF = \cF^*$ and fix $\eps > 0$. We will construct an element $W \in M$ satisfying $\|W\|^2 \leq 8/\delta$, $E_{P' \cap M}(W) = 0$, $\|W\|_2^2 > \delta$ and $\|x W - W x\|_2 \leq \eps$ for all $x \in \cF$. Once we have done this for arbitrary finite $\cF \subset P$ and $\eps > 0$ (with the same fixed $\delta$ from the beginning), the net in 1 indeed exists.

Every vector $\xi \in L^2(M) \ot H$ belongs to $L^2(M) \ot H_0$ for some separable subspace $H_0 \subset H$. We can therefore find a \emph{sequence} of vectors $\xi_n \in L^2(M) \ot \ell^2(\N)$ satisfying
\begin{itemize}
\item $\lim_n \|(x \ot 1) \xi_n - \xi_n(x \ot 1)\|_2 = 0$ for all $x \in \cF$,
\item $\lim_n \|\cP(\xi_n)\|_2 = 0$ and $\liminf_n \|\xi_n\|_2^2 > 4\delta$,
\item $\|(a \ot 1)\xi_n\|_2 \leq \|a\|_2$ and $\|\xi_n(a \ot 1)\|_2 \leq \|a\|_2$ for all $n$ and all $a \in M$.
\end{itemize}
By the last property, we have $\xi_n = \sum_k a_{n,k} \ot \delta_k$ where $a_{n,k} \in M$ satisfies $\sum_k a_{n,k} a_{n,k}^* \leq 1$ and $\sum_k a_{n,k}^* a_{n,k} \leq 1$. Approximating $\xi_n$ by a finite sum, we may assume that for every $n$, there are only finitely many nonzero $a_{n,k}$.

Define $\cK \subset L^2(M) \ot \ell^2(\N)$ as the linear span of all $a \ot \delta_k$. Define the standard probability space $X = \T^\N$ as an infinite product of tori equipped with the Lebesgue measure. Write $\cM = L^\infty(X) \ovt M$ and define the linear map
$$\Theta : \cK \recht \cM : (\Theta(a \ot \delta_k))(\zeta) = \zeta_k a \quad\text{for all}\;\; a \in M, k \in \N, \zeta \in X \; .$$
Write $B = L^\infty(X) \ovt (P' \cap M)$. By a direct computation, using that the functions $\zeta \mapsto \zeta_i$ are orthogonal for distinct $i$, we get that
\begin{itemize}
\item $\Theta((x \ot 1) \xi (y \ot 1)) = (1 \ot x) \Theta(\xi) (1 \ot y)$ for all $x,y \in M$, $\xi \in \cK$,
\item $\|\Theta(\xi)\|_2 = \|\xi\|_2$ for all $\xi \in \cK$,
\item $E_B(\Theta(\xi)) = \Theta(\cP(\xi))$ for all $\xi \in \cK$.
\end{itemize}
Finally we prove that, if $\xi \in \cK$ is given by a finite sum $\xi = \sum_k a_k \ot \delta_k$ satisfying $\sum_k a_k a_k^* \leq 1$ and $\sum_k a_k^* a_k \leq 1$, then
\begin{equation}\label{eq.estim}
\tau\bigl( |\Theta(\xi)|^4 \bigr) \leq 2 \; .
\end{equation}
To prove \eqref{eq.estim}, first note that
$$|\Theta(\xi)|^4 (\zeta) = \sum_{i,j,k,l} \overline{\zeta_i} \, \zeta_j \, \overline{\zeta_k} \, \zeta_l \, a_i^* a_j a_k^* a_l \; .$$
The integral over $\zeta$ is zero, except in two cases: the case where $i=j$ and $k=l$, and the case where $i = l$ and $j=k$. Counting `twice' the case where $i=j=k=l$, we find that
$$E_{1 \ot M}\bigl( |\Theta(\xi)|^4 \bigr) = \Bigl(\sum_i a_i^* a_i \Bigr)^2 + \sum_{i} a_i^* \Bigl(\sum_j a_j a_j^* \Bigr) a_i - \sum_i (a_i^* a_i)^2 \; .$$
Using that $\sum_j a_j a_j^* \leq 1$ and $\sum_i a_i^* a_i \leq 1$, it follows that $E_{1 \ot M}\bigl( |\Theta(\xi)|^4 \bigr) \leq 2$. Applying $\tau$, we find that \eqref{eq.estim} holds.

Define the sequence $U_n \in \cM$ given by $U_n = \Theta(\xi_n)$. Fix a free ultrafilter $\omega$ on $\N$. We claim that $(U_n)$ defines an element in $L^2(\cM^\omega)$. For every $n \in \N$ and $\lambda > 0$, denote by $p_{n,\lambda}$ the spectral projection of $|U_n|$ corresponding to the interval $[0,\lambda]$. Write $q_{n,\lambda} = 1-p_{n,\lambda}$. Using \eqref{eq.estim} in the last inequality, we get that
$$\lambda^2 \, \|U_n \, q_{n,\lambda}\|_2^2 = \lambda^2 \, \tau(|U_n|^2 q_{n,\lambda}) \leq \tau(|U_n|^4 q_{n,\lambda}) \leq \tau(|U_n|^4) \leq 2 \; .$$
It follows that $(U_n p_{n,\lambda})_n$ belongs to $\cM^\omega$ and converges in $\|\,\cdot\,\|_2$ to $U = (U_n) \in L^2(\cM^\omega)$ as $\lambda \recht \infty$. We still have that $\tau(|U|^4) \leq 2$. The other properties of the sequence $(\xi_n)$ now translate to: $U$ commutes with $1 \ot \cF$, $\|U\|_2^2 > 4 \delta$ and $E_{B^\omega}(U) = 0$.

Put $\lambda = \sqrt{2/\delta}$ and denote by $p_\lambda$ the spectral projection of $|U|$ corresponding to the interval $[0,\lambda]$. Write $q_\lambda = 1 - p_\lambda$. Then, $p_\lambda \in \cM^\omega \cap (1 \ot \cF)'$ and, as above,
$$\|U q_\lambda\|_2^2 \leq \frac{2}{\lambda^2} = \delta \; .$$
Define $V = U p_\lambda$. Then, $V \in \cM^\omega \cap (1 \ot \cF)'$ and $\|V\| \leq \lambda$. Also,
\begin{align*}
\|V - E_{B^\omega}(V)\|_2^2 &= \|V\|_2^2 - \|E_{B^\omega}(V)\|_2^2 = \|V\|_2^2 - \|E_{B^\omega}(U) - E_{B^\omega}(U q_\lambda)\|_2^2 \\
&= \|V\|_2^2 - \|E_{B^\omega}(U q_\lambda)\|_2^2 = \|U\|_2^2 - \|U q_\lambda\|_2^2 - \|E_{B^\omega}(U q_\lambda)\|_2^2 \\
& \geq \|U\|_2^2 - 2 \delta > 2 \delta \; .
\end{align*}
Represent $V$ by a sequence $V = (V_n)$ with $V_n \in \cM$ and $\|V_n\| \leq \|V\| \leq \lambda$. Since $V$ commutes with $1 \ot \cF$, we fix $n$ close enough to $\omega$ such that
\begin{align}
& \sum_{x \in \cF} \|(1 \ot x) V_n - V_n (1 \ot x)\|_2^2 < \frac{\eps^2 \delta}{\lambda^2} \quad\text{and} \label{eq.good1}\\
& \|V_n - E_B(V_n)\|_2^2 > 2 \delta \; .\label{eq.good2}
\end{align}
From now on, we view $V_n$ as a measurable function from $X$ to $M$, with $\|V_n(\zeta)\| \leq \lambda$ for all $\zeta \in X$. Define the sets
\begin{align*}
X_0 &= \Bigl\{ \zeta \in X \Bigm| \sum_{x \in \cF} \|x V_n(\zeta) - V_n(\zeta) x\|_2^2 < \eps^2 \Bigr\} \; , \\
X_1 &= \Bigl\{ \zeta \in X \Bigm| \|V_n(\zeta) - E_{P' \cap M}(V_n(\zeta))\|_2^2 > \delta\Bigr\} \; .
\end{align*}
Because of \eqref{eq.good1}, we have that $\mu(X_0) > 1 - \delta / \lambda^2$. We claim that also $\mu(X_1) > \delta / \lambda^2$. Indeed, if $\mu(X_1) \leq \delta / \lambda^2$, using that $\|V_n(\zeta)- E_{P' \cap M}(V_n(\zeta))\|_2 \leq \|V_n(\zeta)\|_2 \leq \|V_n(\zeta)\| \leq \lambda$ for all $\zeta \in X$, it follows that
\begin{align*}
\|V_n & - E_B(V_n)\|_2^2 \\ &= \int_{X_1} \|V_n(\zeta) - E_{P' \cap M}(V_n(\zeta))\|_2^2 \; d\mu(\zeta) + \int_{X \setminus X_1} \|V_n(\zeta) - E_{P' \cap M}(V_n(\zeta))\|_2^2 \; d\mu(\zeta) \\
&\leq \mu(X_1) \lambda^2 + \mu(X\setminus X_1) \delta \leq 2 \delta \; .
\end{align*}
This contradicts \eqref{eq.good2} and the claim follows. But then $\mu(X_0 \cap X_1) > 0$ and we pick $\zeta \in X_0 \cap X_1$. Define $W = V_n(\zeta) - E_{P' \cap M}(V_n(\zeta))$. By construction, we have that $\|W\|^2 \leq (2 \lambda)^2 = 8/\delta$, $E_{P' \cap M}(W) = 0$, $\|W\|_2^2 > \delta$ and $\|x W - W x\|_2 < \eps$ for all $x \in \cF$.
\end{proof}

{\bf Corollary.} 
{\it Let $(M,\tau)$ and $(N,\tau)$ be von Neumann algebras with a faithful normal tracial state. Let $P \subset M$ be a von Neumann subalgebra. If $P \subset M$ has $w$-spectral gap, then also $P \ot 1 \subset M \ovt N$ has $w$-spectral gap.}

\begin{proof}
It suffices to put $H = L^2(N)$ and to view unitary operators in $M \ovt N$ as vectors in $L^2(M) \ot H$.
\end{proof}

\end{document}